\newtheorem{DE}{Definition}[section]
\newcommand {\sm} {\setminus}
 \newcommand{\qed}{\relax\ifmmode\hskip2em\Box\else\unskip\nobreak\hfill$\Box$\fi}
 \newtheorem{definition}[DE]{Definition}
\newtheorem{theorem}[DE]{Theorem}
\newtheorem{lemma}[DE]{Lemma}
\theoremstyle{break}\theorembodyfont{\rmfamily}}
\theoremstyle{break}\theorembodyfont{\rmfamily}}
\newcounter{claim}
\newenvironment{proof}[1][]%
	{\noindent {\setcounter{claim}{0}\it Proof. }{#1}{}}{\qed\vspace{2ex}}
\newenvironment{claim}[1][]%
	{\refstepcounter{claim}\vspace{1ex}\noindent {(\it\arabic{claim}) {#1}{}}\it}{\vspace{1ex}}
\newenvironment{proofclaim}[1][]%
	{\noindent {}{#1}{}}{This proves~(\arabic{claim}).\vspace{1ex}}
\renewcommand{\emptyset}{\varnothing}
\begin{document}

\title{When all holes have the same length}

\author{Jake Horsfield\thanks{School of Computing, University of Leeds, UK and
    Faculty of Computer Science (RAF), Union University, Belgrade,
    Serbia.  Partially supported by EPSRC grant EP/N0196660/1, and
    Serbian Ministry of Education and Science projects 174033 and
    III44006. sc15jh@leeds.ac.uk (Jake Horsfield),
    k.vuskovic@leeds.ac.uk}~, Myriam Preissmann\thanks{Univ. Grenoble
    Alpes, CNRS, Grenoble INP, G-SCOP, 38000 Grenoble, France.
    myriam.preissmann@grenoble-inp.fr, cleophee.robin@grenoble-inp.fr}~, Cl\'eoph\'ee  Robin\footnotemark[2]~\thanks{Univ Lyon, EnsL, UCBL, CNRS, LIP,
    F-69342, LYON Cedex 07, France. Partially
    supported by the LABEX MILYON (ANR-10-LABX-0070) of Universit\'e
    de Lyon, within the program ‘‘Investissements d'Avenir’’
    (ANR-11-IDEX-0007) operated by the French National Research Agency
    (ANR) and by Agence Nationale de la Recherche (France) under
    research grant ANR DIGRAPHS
    ANR-19-CE48-0013-01. nld.sintiari@gmail.com, nicolas.trotignon@ens-lyon.fr\newline}~,\\Ni Luh Dewi Sintiari\footnotemark[3]~, Nicolas Trotignon\footnotemark[3]~~and Kristina Vu\v skovi\'c\footnotemark[1]}

\maketitle

\begin{abstract}
  For every integer $\ell \geq 7$, we give a structural
  description of the class of graphs whose chordless cycles of length
  at least~4 all have length $\ell$.
\end{abstract}

\section{Introduction}

A \emph{hole} in a graph is an induced cycle of length at least~4.
For an integer $k\geq 7$, we study the class $\mathcal C_k$ of graphs
where every hole has length $k$. Note that when $k$ is even, this is a
class of perfect graphs, and when $k$ is odd, this is a class of
even-hole-free graphs. Both these classes are well studied and we do
not recall their definition. They have celebrated decomposition
theorems (see~\cite{chudnovsky.r.s.t:spgt} and~\cite{dsv:ehf}), but no
full structural description.  This motivates studying $\mathcal C_k$.

In~\cite{DBLP:journals/dam/Penev20}, the class of ($4K_1$, $C_4$,
$C_6$, $C_7$)-free graphs is studied. It is a subclass of
$\mathcal C_5$.  In \cite{DBLP:journals/gc/FoleyFHHL20}, the class of
($4K_1$, $C_4$, $C_6$)-free graphs is studied. In this class, every
hole has length~5 or~7.  In~\cite{DBLP:journals/jgt/BoncompagniPV19},
the class of rings of length $k$ is defined for every integer
$k\geq 4$ (see Section~\ref{sec:truemper} for the definition), and it
is used as a basic class for several decompositions theorems.  Rings
of length $k$ form a subclass of $\mathcal C_k$.
In~\cite{maffray2019coloring}, a polynomial time algorithm that colors
every ring is given. In~\cite{DBLP:journals/corr/abs-2007-11513}, it
is proved that for every fixed integer $k$, there exist rings of length
$k$ of arbitrarily large rankwidth.

In~\cite{bergerSeymourSpirkl} and~\cite{chiuLu}, polynomial-time algorithms that, given a
graph $G$ and $u, v \in V (G)$, decide whether there exists a path
from $u$ to $v$ that is not a shortest path are described.  
It is easy to deduce from such algorithms an
algorithm to recognize $\mathcal C_k$ in polynomial time.

Here, we provide first a structural description of graphs in
$\mathcal C_{2\ell +1}$ for any $\ell \geq 3$. It says that every
graph in the class is constructed in some precise way or has a
universal vertex or has a clique cut.  The formal statement is given
in Theorem~\ref{th:struct}. 
This work appears in the PhD thesis of Cl\'eoph\'ee
Robin~\cite{robin:these}.  Part of it and algorithmic applications
will appear in the PhD thesis of Jake Horsfield.
In the second part of this work, we 
provide a similar description of graphs in $\mathcal C_{2\ell}$ for
any $\ell \geq 4$.   The formal statement is given
in Theorem~\ref{th:structEven}.

A similar description was obtained independently by Linda Cook and
Paul Seymour. Much of it forms part of the PhD
thesis~\cite{phdthesis:cook} of Linda Cook and both groups decided to
write a joint work based on this version,
see~\cite{cookEtAl:holes}.  Our statement is different but equivalent to the one in~\cite{cookEtAl:holes} as will be shown in the PhD thesis of Jake Horsfield. We publish the present version as a
preprint because the approach is not the same  and for later reference. 

\section{Definition and notation}

We denote by $\overline{G}$ the complement of a graph $G$. 

When $x$ is a vertex of a graph $G$ and $A$ is a subset of vertices of
$G$ or an induced subgraph of $G$, we denote by $N_A(x)$ the set of
neighbors of $x$ that are in $A$. Note that $x\notin N_A(x)$.  We set
$N_A[x] = \{x\} \cup N_A(x)$.  If $X\subseteq V(G)$, we set
$N_A(X) = (\bigcup_{x\in X}N_A(x)) \sm X$ and $N_A[X] = N_A(X) \cup
X$. We sometimes write $N$ instead of $N_{V(G)}$ (when there is no risk of
confusion).

A set $X\subseteq V(G)$ is \emph{complete} to a set $Y\subseteq V(G)$
if they are disjoint and every vertex of $X$ is adjacent to every
vertex of $Y$.  A set $X\subseteq V(G)$ is \emph{anticomplete} to a
set $Y\subseteq V(G)$ if they are disjoint and no vertex of $X$ is
adjacent to a vertex of $Y$.  We sometimes say that $x$ is complete
(resp.\ anticomplete) to $Y$ to mean that $\{x\}$ is complete (resp.\
anticomplete) to $Y$.

A vertex $v$ in a graph $G$ is \emph{isolated} if it has no neighbors
in $G$. It is \emph{universal} if it is adjacent to all vertices of
$G\sm v$.  A graph $G$ is \emph{connected} if for every pair of
vertices $u$, $v$ there exists a path from $u$ to $v$ in $G$.  A graph
is \emph{anticonnected} if its complement is connected.  A
\emph{connected component} of a graph $G$ is a subset $X$ of $V(G$
such that $G[X]$ is connected and $X$ is maximal w.r.t.\ this
property. An \emph{anticonnected component} of a graph $G$ is a subset
$X$ of $V(G)$ such that $G[X]$ is anticonnected and $X$ is maximal
w.r.t.\ this property.

We will use the notion of \emph{hypergraph}; that is, a structure
similar to graphs except that the edges (called \emph{hyperedges}) may
contain an arbitrary positive number of vertices.  While all the
graphs that we use are simple, in hypergraphs, we allow hyperedges
that contain a single vertex and multiple hyperedges (that is, there
can be different hyperedges on the same set of vertices). Observe that
we do not allow an empty hyperedge.

A \emph{cutset} in a graph $G$ is a set $S$ of vertices such that
$G\sm S$ is disconnected. A \emph{clique} in a graph is a set of
pairwise adjacent vertices. In a graph, we view the empty set as a
clique, and as a clique cutset of any disconnected graph. 
A \emph{stable set} in a graph is a set of pairwise non-adjacent vertices.

For $k\geq 1$, we denote by $P_k$ the path on $k$ vertices, that is,
the graph with vertex-set $\{p_1,\dots, p_k\}$ and edge-set
$\{p_1p_2, \dots, p_{k-1}p_k\}$.  We denote it by $p_1p_2\dots p_k$.
If $1\leq i\leq j\leq k$, we then denote by $p_iPp_j$ the path
$p_i p_{i+1}\dots p_j$.  For $k\geq 3$, we denote by $C_k$ the cycle
on $k$ vertices; that is, the graph with vertex-set
$\{p_1,\dots, p_k\}$ and edge-set
$\{p_1p_2, \dots, p_{k-1}p_k, p_kp_1\}$.  We denote it by
$p_1p_2 \dots p_kp_1$. We denote it by
$p_1p_2 \dots p_kp_1$.  When $C_k$ is a subgraph of a graph $G$ (possibly not induced), an edge with both ends in $\{p_1,...,p_k\}$ that is not an edge of $C_k$ is called a \emph{chord} of $C_k$. We denote by $2K_2$ the complement of $C_4$.

We say that \emph{$P$ is a path in a graph $G$} (or \emph{$P$ is a
  path of $G$}) to mean that $P$ is a path that is an induced subgraph
of $G$.  A \emph{hole} in a graph $G$ is a cycle of length at least~4
that is an induced subgraph of $G$.  The \emph{length} of a path,
cycle or hole is the number of its edges.  A hole is \emph{even} or
\emph{odd} depending on the parity of its length.

A graph $G$ \emph{contains} a graph $H$ if $H$ is isomorphic to an
induced subgraph of $G$ and $G$ is \emph{$H$-free} if $G$ does not
contain $H$.  For a class of graphs $\mathcal H$, we say that $G$ is
$\mathcal H$-free, if $G$ is $H$-free for all $H$ in $\mathcal H$.

\section{A survey of some classes of graphs}

Here we present several known classes of graphs and their
properties. We do not need all of them, but we believe that presenting
them all gives a better understanding of the class we work on.

\subsection{Classes of perfect graphs}

A graph is \emph{chordal} if it is hole-free.  A graph is a \emph{cograph} if
it is $P_4$-free. A graph is a \emph{split graph} if it is ($C_4$, $C_5$,
$2K_2$)-free.  A graph is a \emph{quasi-threshold graph} if it is ($P_4$,
$C_4$)-free (quasi-threshold graphs are sometimes called \emph{trivially
perfect graphs}, see~\cite{DBLP:journals/dm/Golumbic78}).  A graph is a
\emph{threshold graph} if it is ($P_4$, $C_4$, $2K_2$)-free (threshold graphs
are sometimes called \emph{graphs with Dilworth number~1}).  A graph is a
\emph{half  graph} if it is $(3K_1, C_4, C_5)$-free.

Observe that these six classes are all classes of perfect graphs.  The
classes of cographs, split graphs and threshold graphs are
self-complementary while the classes of chordal graphs,
quasi-threshold and half graphs are not.  In Figure~\ref{f:venn}, a
Venn diagram of seven graph classes is represented
($\overline{\rm chordal}$ and $\overline{\rm quasi-threshold}$ mean
complements of chordal and quasi-threshold graphs respectively).  In
every set, a typical example of the class is represented. The diagram
provides several alternative definitions of the classes we work on
(for instance, a threshold graph is a split cograph, a split graph is
a chordal graph whose complement is chordal, and so on).  All the
information given by Figure~\ref{f:venn} is easily recovered from the
definitions of the corresponding classes.

\begin{figure}
\begin{center}
 \includegraphics[width=12cm]{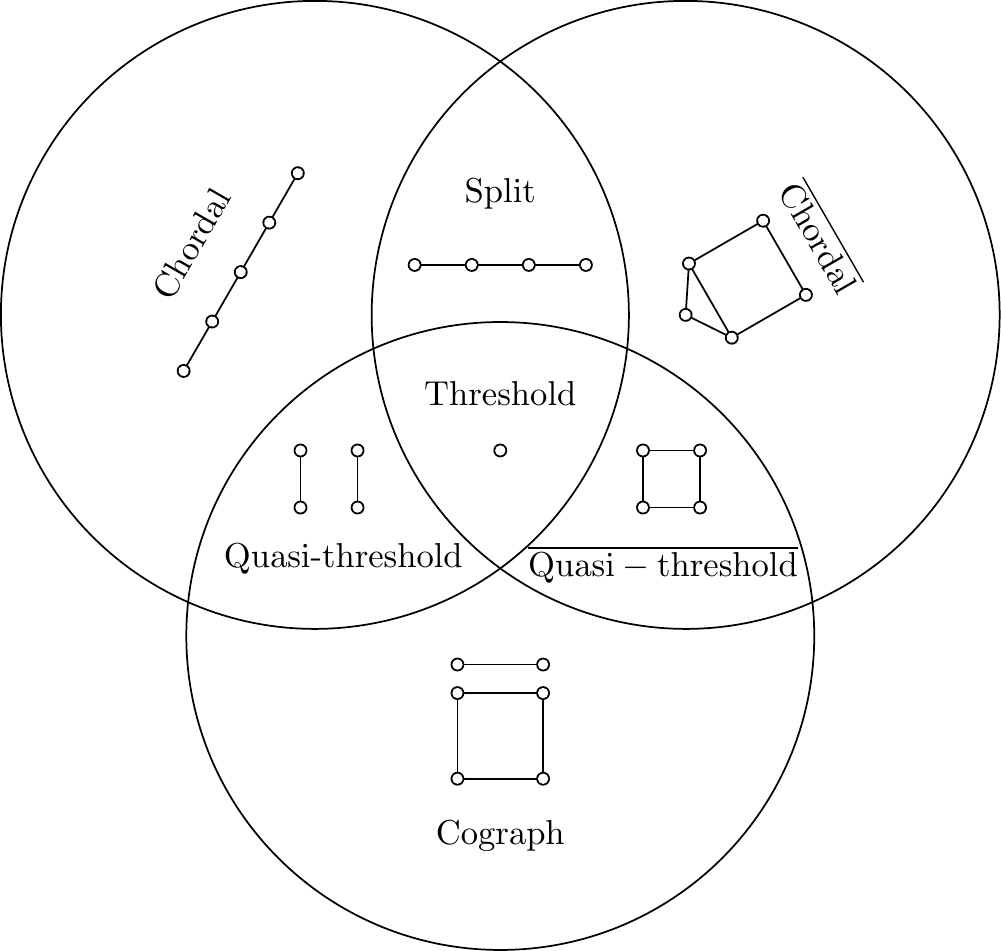}
\end{center}
\caption{Venn diagram of seven classes of graphs\label{f:venn}}
\end{figure}

\begin{theorem}[\cite{dirac:chordal}]
  \label{th:chordal}
  A graph $G$ is chordal if and only if every non-complete
  induced subgraph of $G$ has a clique cutset. 
\end{theorem}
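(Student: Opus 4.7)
The proof plan is to establish both implications separately, leveraging the hereditary nature of chordality and a minimal-separator argument.

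For the backward direction ($\Leftarrow$), I would proceed by contraposition: suppose $G$ contains a hole $H$ of length $k \geq 4$. Then $G[V(H)]$ is a non-complete induced subgraph of $G$, so by hypothesis it admits a clique cutset $K$. But in a chordless cycle of length at least $4$, every clique has at most two vertices (an edge of the cycle), and removing at most two consecutive vertices from $C_k$ leaves a path, which is connected. Hence $H$ has no clique cutset, contradiction. So $G$ is chordal.

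For the forward direction ($\Rightarrow$), since chordality is preserved under taking induced subgraphs, it suffices to show that every non-complete chordal graph $G$ has a clique cutset. If $G$ is disconnected, the empty set is a clique cutset (per the convention stated earlier in the paper), so I may assume $G$ is connected. Since $G$ is not complete, pick two non-adjacent vertices $a, b \in V(G)$ and let $S$ be a minimal $a,b$-separator: that is, $S$ is an inclusion-minimal subset of $V(G) \sm \{a,b\}$ such that $a$ and $b$ lie in distinct connected components $A$ and $B$ of $G \sm S$. The plan is to show $S$ is a clique.

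Suppose for contradiction $S$ contains two non-adjacent vertices $x, y$. By the minimality of $S$, each of $x$ and $y$ has a neighbor in $A$ and a neighbor in $B$ (otherwise $S \sm \{x\}$ or $S \sm \{y\}$ would still separate $a$ from $b$). Hence there exists a shortest path $P_A$ from $x$ to $y$ with all internal vertices in $A$, and similarly a shortest path $P_B$ from $x$ to $y$ with all internal vertices in $B$. Both paths have length at least $2$ since $x,y$ are non-adjacent, and their internal vertices lie in different components of $G \sm S$, hence are non-adjacent. Concatenating $P_A$ and $P_B$ therefore yields an induced cycle of length at least $4$ in $G$, contradicting chordality. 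Thus $S$ is a clique, and the desired clique cutset exists.

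The main subtle point is the forward direction: one must be careful to choose $S$ minimal (not merely minimum) so that both endpoints of a hypothetical non-edge of $S$ reach both sides, and one must take the paths $P_A, P_B$ to be shortest in their respective induced subgraphs $G[A \cup \{x,y\}]$ and $G[B \cup \{x,y\}]$ to ensure that $P_A \cup P_B$ is induced. The rest is routine.
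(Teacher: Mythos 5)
The paper states this theorem as a cited result of Dirac without supplying a proof, so there is no paper argument to compare against. Your proof is the standard one and it is correct: the backward direction observes that a hole $C_k$ (with $k\ge 4$) has only singletons and edges as cliques, whose removal leaves a path; the forward direction reduces by heredity to showing that a connected, non-complete chordal graph has a clique cutset, then takes a minimal $a,b$-separator $S$ and derives a hole from a non-edge in $S$ using two induced $x$--$y$ paths through the two sides. The one step you should state a bit more carefully is the claim that minimality of $S$ forces every vertex $s\in S$ to have a neighbor in both components $A$ and $B$: the reason is that if $s$ had no neighbor in $A$, then the component of $a$ in $G\sm(S\sm\{s\})$ would still be exactly $A$, so $S\sm\{s\}$ would still separate $a$ from $b$, contradicting minimality. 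With that spelled out, the argument is complete; the rest of your write-up (shortest paths in $G[A\cup\{x,y\}]$ and $G[B\cup\{x,y\}]$ being induced, and the two sides having no edges between them) is accurate and suffices to produce the hole.
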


\begin{theorem}[\cite{seinsche:P4}]
  \label{th:cograph}
  A graph $G$ is a cograph if and only if every induced subgraph of
  $G$ on at least two vertices is either not connected or not
  anticonnected.
\end{theorem}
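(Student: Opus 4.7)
The plan is to handle the two directions separately. The backward direction is essentially immediate: if $G$ is not a cograph, then $G$ contains an induced $P_4$; taking this $P_4$ itself as the induced subgraph $H$ on $4\geq 2$ vertices, we note that $P_4$ is connected and its complement is again $P_4$ (so connected), so $H$ is both connected and anticonnected, violating the stated condition. This establishes the contrapositive.

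For the forward direction, since being $P_4$-free is hereditary (every induced subgraph of a cograph is a cograph), it suffices to prove the following statement: every $P_4$-free graph $G$ with $|V(G)|\geq 2$ is itself either disconnected or has disconnected complement. I would prove this by induction on $|V(G)|$. The base case $|V(G)|=2$ is immediate, as either $G=K_2$ (complement disconnected) or $G=\overline{K_2}$ (disconnected). For the inductive step, suppose for contradiction that $G$ is $P_4$-free on $n\geq 3$ vertices and that both $G$ and $\overline{G}$ are connected. Pick any vertex $v$, and apply the inductive hypothesis to $G\sm v$: then either $G\sm v$ is disconnected or $\overline{G\sm v}$ is disconnected.

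In the first case, let $C_1,\dots,C_k$ be the components of $G\sm v$ with $k\geq 2$. Since $G$ is connected, $v$ has a neighbor in every $C_i$; since $\overline{G}$ is connected, $v$ is not universal and so has a non-neighbor in some $C_j$. Because $C_j$ is connected and contains both neighbors and non-neighbors of $v$, a path inside $C_j$ joining such vertices must cross the boundary, producing adjacent $a',b'\in C_j$ with $a'\sim v$ and $b'\not\sim v$. Pick any neighbor $c$ of $v$ in some other component $C_l$, $l\neq j$. Then $b'\text{-}a'\text{-}v\text{-}c$ is an induced $P_4$: the three edges hold by construction, the non-edges $b'v$, $b'c$, $a'c$ hold because $b'\not\sim v$ and because $a',b'$ lie in a different component of $G\sm v$ from $c$. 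This contradicts $P_4$-freeness. In the second case ($\overline{G\sm v}$ disconnected), applying the same argument to $\overline{G}$ yields an induced $P_4$ in $\overline{G}$, which is also an induced $P_4$ of $G$ since $P_4$ is self-complementary, again a contradiction.

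The main obstacle is the first case of the inductive step, specifically the idea that connectedness of a component $C_j$ forces an edge across the frontier separating neighbors and non-neighbors of $v$. Once such an edge is isolated, combining it with any vertex from a distinct component of $G\sm v$ produces the forbidden $P_4$ almost automatically.
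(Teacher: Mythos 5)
Your proof is correct; note that the paper simply cites Seinsche's theorem and gives no proof of its own, so there is nothing in the text to compare against. Both directions are handled soundly: the backward direction is the observation that $P_4$ is self-complementary and hence both connected and anticonnected, and the forward direction reduces by heredity to showing every $P_4$-free graph on at least two vertices is disconnected or co-disconnected, which you establish by a clean induction. The case analysis in the inductive step is complete --- the key move of finding an edge $a'b'$ inside a component $C_j$ of $G\setminus v$ with $a'\sim v$, $b'\not\sim v$, and then taking a neighbour $c$ of $v$ in a different component to obtain the induced path $b'a'vc$, is exactly the standard argument, and the symmetric treatment of the co-disconnected case (passing to $\overline G$ and using $\overline{G}\setminus v=\overline{G\setminus v}$) is justified correctly.
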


\begin{theorem}[\cite{foldesHammer:split}]
  \label{th:split}
  A graph $G$ is a split graph if and only if $V(G)$ can be
  partitioned into a (possibly empty) clique and a (possibly empty)
  stable set.
\end{theorem}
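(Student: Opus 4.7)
The plan is to prove both implications separately. For the forward direction, I would suppose $V(G)=K\cup S$ is a split partition and check that $G$ is free of each of $C_4$, $C_5$, $2K_2$ by a short pigeonhole argument on the vertex distribution: $C_5$ is impossible since $\alpha(C_5)=\omega(C_5)=2<5$; for $2K_2$, each of the two edges must have an endpoint in $K$, producing two non-adjacent vertices inside the clique $K$; for $C_4$, any vertex placed in $S$ forces its two cyclic neighbors into $K$ so that the diagonally opposite pair becomes a non-edge inside $K$.

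For the converse, I would assume $G$ is $(C_4,C_5,2K_2)$-free and choose a maximum clique $K$ that minimizes $|E(G[S])|$, where $S=V(G)\setminus K$. Suppose for contradiction that $uv\in E(G[S])$, and pick $a,b\in K$ with $ua,vb\notin E$ (which exist by maximality of $K$). The main case is $a\neq b$: on $\{u,v,a,b\}$ the edges $uv$ and $ab$ are present, $ua$ and $vb$ are absent, and the remaining pairs $ub,va$ split into four configurations. Of these, ``both edges'' produces an induced $C_4$ and ``both non-edges'' an induced $2K_2$, so up to swapping $(u,a)$ with $(v,b)$ I may assume $ub\in E$ and $va\notin E$, yielding an induced $P_4$ on $v{-}u{-}b{-}a$. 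A direct check on $\{u,v,c,b\}$ (ruling out $C_4$) and $\{u,v,a,c\}$ (ruling out $2K_2$) then forces $uc\in E$ for every $c\in K\setminus\{a,b\}$, so $K^{\ast}:=(K\setminus\{a\})\cup\{u\}$ is a maximum clique. The case $a=b$ would reduce to the above: since $(K\setminus\{a\})\cup\{u,v\}$ cannot be a clique, either $u$ or $v$ has an additional non-neighbor in $K\setminus\{a\}$ that plays the role of $a$ or $b$ in the previous analysis.

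The final and most delicate step, which I expect to be the main obstacle, is to contradict the minimality of $K$ by showing $|E(G[V\setminus K^{\ast}])|<|E(G[S])|$. Writing this change as $-d_{G[S]}(u)+|N_G(a)\cap(S\setminus\{u\})|$, it suffices to prove that every neighbor $w$ of $a$ in $S\setminus\{u\}$ is also a neighbor of $u$; this, together with $v\notin N_G(a)$, gives a strict inequality. For such a $w$, $2K_2$-freeness applied to $\{u,v,a,w\}$ gives $uw\in E$ or $vw\in E$; the troublesome subcase $uw\notin E$, $vw\in E$ splits again according to whether $wb\in E$, and the two remaining configurations are ruled out by a $C_4$ on $\{u,v,w,b\}$ and an induced $C_5$ on $u{-}v{-}w{-}a{-}b{-}u$, respectively. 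This last appeal to $C_5$-freeness is the one place in the proof where that hypothesis is actually used.
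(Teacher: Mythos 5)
Your proof is correct. The paper cites this as a known theorem of F\"oldes and Hammer and gives no argument, so there is nothing internal to compare against; your extremal approach (maximum clique $K$ minimizing $|E(G[V\setminus K])|$, then trading $a$ for $u$ to strictly decrease that count) is a clean, standard way to establish the non-trivial direction, and all of your case checks go through: $\{u,v,a,b\}$ with both or neither of $ub,va$ present yields $C_4$ or $2K_2$; assuming $uc\notin E$ for $c\in K\setminus\{a,b\}$, $\{u,v,a,c\}$ forces $vc\in E$ and then $\{u,v,c,b\}$ is a $C_4$; and for $w\in N(a)\cap(S\setminus\{u\})$ with $uw\notin E$, the split on $vw$ and $wb$ correctly produces $2K_2$, $C_4$, or the induced $C_5$ $u{-}v{-}w{-}a{-}b{-}u$ (all five chords $uw,ua,va,vb,wb$ are indeed non-edges by your prior deductions). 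Since $v\in N_{G[S]}(u)\setminus N(a)$, the inclusion $N(a)\cap(S\setminus\{u\})\subseteq N_{G[S]}(u)$ is strict, which completes the contradiction.
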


The \emph{line graph} of a hypergraph $\mathcal H$ is the graph $G$ whose
vertex-set is $E(\mathcal H)$ and where two hyperedges of $\mathcal H$ are adjacent
vertices of $G$ whenever their intersection is non-empty.  Recall that
in this paper, hypergraphs may have multiple hyperedges (that are
distinct hyperedges with the same vertices in them).  A hypergraph is
\emph{laminar} is for every pair $X, Y$ of hyperedges, either
$X\subseteq Y$ or $Y\subseteq X$ or $X\cap Y = \emptyset$.

\begin{theorem}[\cite{wolk:Laminar}]
  \label{th:laminar}
  For all graphs $G$ the following statements are equivalent.
  \begin{enumerate}
  \item $G$ is a quasi-threshold graph.
  \item Every induced subgraph of $G$ is disconnected or has a
    universal vertex. 
  \item $G$ is the line graph of a laminar hypergraph.
  \end{enumerate}
\end{theorem}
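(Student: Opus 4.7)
The plan is to establish the equivalences in the order $(1) \Leftrightarrow (2)$ and then $(2) \Leftrightarrow (3)$, using Theorem~\ref{th:cograph} for the first pair and an induction for the second.

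For $(1) \Rightarrow (2)$, I take an induced subgraph $G'$ of $G$ (which is still quasi-threshold) and assume $G'$ is connected. Since $G'$ is $P_4$-free, Theorem~\ref{th:cograph} yields that $\overline{G'}$ is disconnected, with anticonnected components $H_1,\dots,H_k$, $k\geq 2$. If some $H_i$ is a singleton $\{v\}$ then $v$ is universal in $G'$ and we are done. Otherwise every $H_i$ is anticonnected with at least two vertices, so inside each $H_i$ I find a non-edge of $G'$; picking such non-edges $a_1a_2$ in $H_1$ and $b_1b_2$ in $H_2$ and using the fact that $H_1$ is complete to $H_2$ in $G'$ yields an induced $C_4 = a_1b_1a_2b_2$, contradicting $C_4$-freeness. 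The converse $(2) \Rightarrow (1)$ is immediate by contrapositive: both $P_4$ and $C_4$ are connected without a universal vertex, so if $G$ contained one as induced subgraph, (2) would fail on that subgraph.

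For $(3) \Rightarrow (2)$, I consider an induced subgraph $G'$ corresponding to a subfamily $\mathcal H'$ of the given laminar family, which is again laminar. In a laminar family, two hyperedges intersect if and only if one contains the other. Let $M_1,\dots,M_r$ be the $\subseteq$-maximal hyperedges of $\mathcal H'$; by laminarity they are pairwise disjoint, and every hyperedge of $\mathcal H'$ is contained in exactly one $M_i$. If $r=1$, the unique maximum $M_1$ intersects every other hyperedge, hence corresponds to a universal vertex of $G'$. If $r\geq 2$, the partition of $\mathcal H'$ induced by which $M_i$ contains each hyperedge gives a partition of $V(G')$ into parts that are pairwise anticomplete, so $G'$ is disconnected.

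For $(2) \Rightarrow (3)$, I proceed by induction on $|V(G)|$, with the trivial base case of a single vertex associated to a hypergraph with one vertex and one hyperedge. If $G$ is disconnected with components $G_1,\dots,G_k$, I apply the induction hypothesis to each $G_i$ to obtain laminar hypergraphs $\mathcal H_i$ on pairwise disjoint vertex sets; their disjoint union is laminar and its line graph is $G$. If $G$ is connected, assumption (2) gives a universal vertex $v$, and applying induction to $G\sm v$ yields a laminar hypergraph $\mathcal H'$ on some vertex set $X$; I then build $\mathcal H$ by keeping every hyperedge of $\mathcal H'$ and adding one new hyperedge equal to $X$ itself for $v$. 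This hyperedge contains every other hyperedge (so laminarity is preserved) and intersects each of them, making $v$ universal in the line graph.

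I expect the main subtleties to lie in $(2) \Rightarrow (3)$, specifically in being precise about the handling of the universal-vertex step: one must verify that adding the full vertex set $X$ as a new hyperedge preserves laminarity (which works because multiple hyperedges are allowed and $X$ dominates everything) and yields exactly the desired adjacencies. The rest is either a direct $C_4$ argument or a clean structural observation about laminar families.
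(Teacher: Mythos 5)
The paper does not prove Theorem~\ref{th:laminar}: it cites it directly from \cite{wolk:Laminar}, so there is no in-paper argument to compare against, and a self-contained proof like yours is extra content rather than a restatement.

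Your proof is essentially correct. The chain $(1)\Leftrightarrow(2)$ via Seinsche's characterization plus the $C_4$ argument on anticomponents is clean, and the induction for $(2)\Rightarrow(3)$ correctly exploits the paper's convention that multiple hyperedges with the same vertex set are permitted, which is exactly what lets you append a fresh copy of the ground set $X$ as the hyperedge corresponding to the universal vertex without collapsing anything. One imprecision worth fixing is in $(3)\Rightarrow(2)$: you assert that the $\subseteq$-maximal hyperedges of $\mathcal H'$ are pairwise disjoint, but since the paper allows distinct hyperedges with identical vertex sets, two maximal hyperedges can coincide as sets and thus fail to be disjoint, and ``contained in exactly one $M_i$'' also breaks. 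The argument goes through once you replace ``maximal hyperedges'' by ``distinct maximal vertex sets occurring as hyperedges'' and group hyperedges by which maximal vertex set contains them: if there are two or more such sets the line graph splits into anticomplete parts, and if there is one, every hyperedge equal to it (as a set) is universal. With that rewording the proof is complete.
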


\begin{definition}
  If $u$ and $v$ are vertices of a  graph $G$ we write $u\leq_G v$ if $N(u)\sm
  \{v\}\subseteq N(v)\sm \{u\}$ and $u <_G v$ if $N(u)\sm \{v\}\subsetneq
  N(v)\sm \{u\}$.
\end{definition}

\begin{lemma}
  The relations $\leq_G$ and $<_G$ are transitive, that is, for all
  vertices $u, v, w$ of some graph $G$, if $u\leq_G v$ and $v\leq_G w$,
  then $u\leq_G w$ (resp.\ if $u<_Gv$ and $v<_G w$, then $u<_G w$).
\end{lemma}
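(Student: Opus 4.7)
The plan is to unpack the definition of $\leq_G$ and do a direct element chase, with a separate case for the one situation in which the removed vertices actually interfere. Assume $u \leq_G v$ and $v \leq_G w$, take an arbitrary $x \in N(u) \setminus \{w\}$, and aim to show $x \in N(w) \setminus \{u\}$; since $x \in N(u)$, the condition $x \neq u$ is automatic, so the real work is to show $x \in N(w)$.

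If $x \neq v$, this is a two-step push: $x \in N(u) \setminus \{v\} \subseteq N(v) \setminus \{u\}$ by $u \leq_G v$, and then $x \neq w$ gives $x \in N(v) \setminus \{w\} \subseteq N(w) \setminus \{v\}$ by $v \leq_G w$. The delicate case is $x = v$, because the definition of $\leq_G$ says nothing directly about the edge $vw$. Here I would argue by a double bounce through $u$: from $v \in N(u)$ we get $u \in N(v)$, and if $u = w$ we are done, otherwise $u \in N(v) \setminus \{w\} \subseteq N(w) \setminus \{v\}$ yields $w \in N(u)$, which then feeds back as $w \in N(u) \setminus \{v\} \subseteq N(v) \setminus \{u\}$ to give $v \in N(w)$. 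This sub-case is the only non-routine step and is exactly where the ``$\setminus\{u\}$'' and ``$\setminus\{w\}$'' in the definition matter.

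For the strict version, the non-strict part just proved already gives $u \leq_G w$, so it suffices to produce an element of $N(w) \setminus \{u\}$ that is not in $N(u) \setminus \{w\}$. From $u <_G v$, extract $z \in N(v) \setminus \{u\}$ with $z \notin N(u)$; from $v <_G w$, extract $y \in N(w) \setminus \{v\}$ with $y \notin N(v)$. If $z \neq w$, then $v \leq_G w$ gives $z \in N(w) \setminus \{v\}$, and $z$ witnesses strictness. If $z = w$, then $w \notin N(u)$, and I claim $y$ works: applying $u \leq_G v$ rules out $y \in N(u)$ (otherwise $y \in N(v)$, contradicting $y \notin N(v)$), and $y \neq u$ because $y \in N(w)$ while $u \notin N(w)$. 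The main obstacle throughout is the bookkeeping around the removed vertices, which is why the case $z = w$ has to be singled out; once the case split is in place, each implication is a single application of the hypotheses.
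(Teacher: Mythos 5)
The paper states this lemma without proof, so there is nothing to compare your argument against; the question is simply whether it is correct, and it is. Your case split is exactly right: in the nonstrict part the only nontrivial situation is $x=v$, and your ``double bounce'' (push $u$ through $v\leq_G w$ to get $w\in N(u)$, then push $w$ through $u\leq_G v$ to get $v\in N(w)$) correctly exploits the removal of endpoints in the definition, with the degenerate identifications ($u=v$, $v=w$, $u=w$) all handled or vacuous. In the strict part, your observation that a witness $z\in N(v)\setminus\{u\}$ outside $N(u)\setminus\{v\}$ automatically satisfies $z\neq v$ (hence $z\notin N(u)$) is the small but necessary point, and the fallback to $y$ in the case $z=w$ is airtight because $w\notin N(u)$ forces $y\neq u$, and $y\in N(u)$ would propagate to $y\in N(v)$, contradicting the choice of $y$.
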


We define $\geq_G$ and $>_G$ accordingly (i.e.\ $x\geq_G y$ if and only if
$y\leq_G x$) and extend these relations to sets of vertices $X$ and $Y$ as
follows: $X\leq_G Y$ if and only if for every $x\in X$ and $y\in Y$, $x\leq_G
y$ and so on.

\begin{theorem}[\cite{chvatalHammer:77}]
  \label{th:threshold}
  For all graphs $G$ the following statements are equivalent.
  \begin{enumerate}
  \item $G$ is a threshold graph.
  \item\label{i:elimination} Every induced subgraph of $G$ has an
    isolated vertex or a universal vertex.
  \item For all vertices $u$ and $v$ of $G$, $u\leq_G v$ or $v\leq_G
    u$.
  \end{enumerate}
\end{theorem}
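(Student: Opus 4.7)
The plan is to prove the cyclic chain of implications (1) $\Rightarrow$ (2) $\Rightarrow$ (3) $\Rightarrow$ (1). A useful observation throughout is that the class of threshold graphs is closed both under taking induced subgraphs and under complementation, the latter because $\overline{P_4}=P_4$, $\overline{C_4}=2K_2$, and $\overline{2K_2}=C_4$. For (1) $\Rightarrow$ (2), I would let $H$ be an induced subgraph of $G$. Then $H$ is threshold, hence in particular quasi-threshold, so Theorem~\ref{th:laminar} gives that $H$ is disconnected or has a universal vertex. In the latter case we are done, and in the former $\overline{H}$ is connected and again quasi-threshold, so a second application of Theorem~\ref{th:laminar} yields a universal vertex of $\overline{H}$, which is an isolated vertex of $H$.

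For (2) $\Rightarrow$ (3) I would argue by contraposition. If neither $u \leq_G v$ nor $v \leq_G u$, the definition yields vertices $a \in N(u) \sm (N(v) \cup \{v\})$ and $b \in N(v) \sm (N(u) \cup \{u\})$, and a routine check shows that $u, v, a, b$ are pairwise distinct. By construction the edges $ua$ and $vb$ are present and the edges $ub$ and $va$ are absent, so only the status of $uv$ and of $ab$ is undetermined. Splitting into the four cases accordingly, the induced subgraph on $\{u,v,a,b\}$ is a $C_4$ (both present), a $P_4$ (exactly one present), or a $2K_2$ (both absent); none of these graphs has an isolated or a universal vertex, contradicting~(2).

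For (3) $\Rightarrow$ (1), I verify that each of $P_4$, $C_4$, and $2K_2$ already contains two vertices that are incomparable under $\leq$ via witnesses lying inside the subgraph, and therefore remain incomparable in any ambient graph. In $P_4 = p_1p_2p_3p_4$ the pair $(p_1,p_4)$ works with witnesses $p_2$ and $p_3$; in $C_4 = c_1c_2c_3c_4c_1$ the adjacent pair $(c_1,c_2)$ works with witnesses $c_4$ and $c_3$; in $2K_2$ with edges $a_1a_2$, $b_1b_2$ the pair $(a_1,b_1)$ works with witnesses $a_2$ and $b_2$. Hence an induced $P_4$, $C_4$, or $2K_2$ in $G$ would refute~(3).

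The main difficulty is (2) $\Rightarrow$ (3): one must carefully extract $a$ and $b$ so that all four vertices are distinct with the claimed edges and non-edges, and then inspect four cases to check that each forbidden induced $4$-vertex subgraph indeed violates~(2). The other two implications reduce, respectively, to a double application of Theorem~\ref{th:laminar} exploiting the self-complementarity of the class, and to elementary verifications within the three forbidden subgraphs.
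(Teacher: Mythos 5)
The paper attributes Theorem~\ref{th:threshold} to Chv\'atal and Hammer and does not prove it, so there is no in-paper argument to compare against; I will only assess correctness. Your proof is correct. The cycle $(1)\Rightarrow(2)\Rightarrow(3)\Rightarrow(1)$ is complete: $(1)\Rightarrow(2)$ correctly exploits that the defining family $\{P_4, C_4, 2K_2\}$ is closed under complementation so that threshold graphs are self-complementary, and then applies Theorem~\ref{th:laminar} to $H$ and to $\overline{H}$; $(2)\Rightarrow(3)$ correctly extracts the witnesses $a,b$ (with $a\neq b$ since $a\in N(u)$, $b\notin N(u)$, and both distinct from $u,v$ by the ``$\setminus\{v\}$'' and ``$\setminus\{u\}$'' in the definition of $\leq_G$) and the four-case analysis does produce exactly $C_4$, $P_4$ (twice), and $2K_2$, none of which has an isolated or universal vertex; and $(3)\Rightarrow(1)$ correctly exhibits, inside each of $P_4$, $C_4$, $2K_2$, an incomparable pair whose witnesses lie among those four vertices, so the incomparability persists when the configuration appears as an induced subgraph of $G$. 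The only point you might make explicit is the standard convention that in item~(2) the empty induced subgraph is excluded (it has neither an isolated nor a universal vertex), but that is a matter of reading the statement, not a gap in your argument.
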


It is convenient to sort the vertices of a threshold graph.  Formally, an
ordering $v_1$, \dots, $v_k$ such that $v_i\leq_G v_j$ for all integers $i$ and
$j$ satisfying $1 \leq i\leq j \leq k$ is called a \emph{domination ordering}.
There is another convenient ordering of the vertices of a threshold graph.  By
characterization~\eqref{i:elimination} in Theorem~\ref{th:threshold}, every
threshold graph can be obtained by the following inductive process: start with
a vertex $u_1$, assume for some $k \geq 1$ that vertices $u_1, \dots, u_k$ are
already constructed, and then add a vertex $u_{k+1}$ that is either complete or
anticomplete to $\{u_1, \dots, u_k\}$.  The order $u_1, \dots, u_n$ is then
called an \emph{elimination ordering} of the threshold graph (and it is not a
domination ordering in general).

\begin{figure}

\includegraphics[height=5cm]{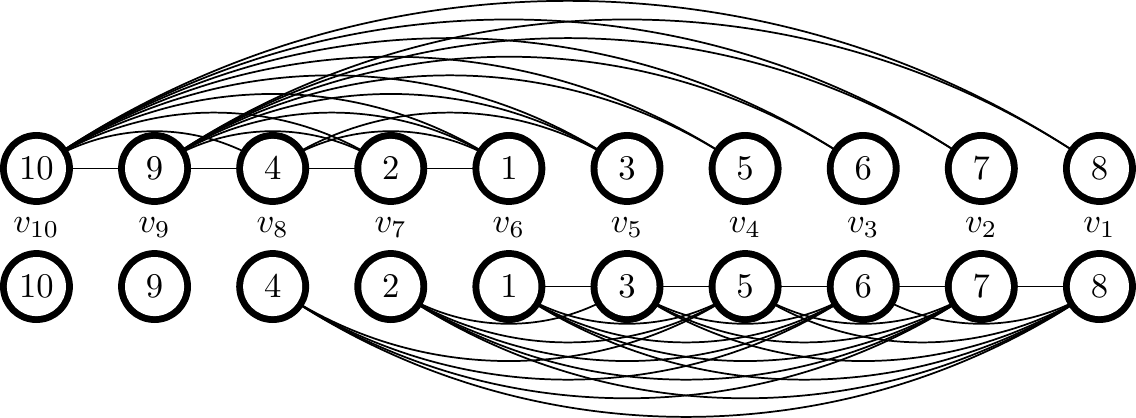}

\caption{A threshold graph and its complement\label{f:threshold}}
\end{figure}

An example is represented in Figure~\ref{f:threshold}.  On the top, a
threshold graph $J$ on $\{v_1, \dots, v_{10}\}$ is represented for which
$(v_1, \dots, v_{10})$ is a domination ordering. Vertices are circles
with a number in them that gives the place of the vertex in the
elimination ordering.  On the bottom, the complement $J'$ of $J$ is
represented.  It is also a threshold graph but the domination ordering
is reversed (it is $(v_{10}, \dots, v_{1})$), while the elimination
ordering remains the same.

\begin{theorem}[Folklore]\label{t:half-graph}
  A graph $G$ is a half graph if and only if $V(G)$ can be partitioned
  into two (possibly empty) cliques $K$ and $K'$ such that for all
  vertices $x$ and $y$ in $K$ (resp.\ in $K'$), either $x \leq_G y$ or
  $y \leq_G x$.
\end{theorem}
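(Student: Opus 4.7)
The plan is to prove each direction of the equivalence.

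\emph{Forward direction, partition.} Assume $G$ is $(3K_1, C_4, C_5)$-free. I would first show that $V(G)$ partitions into two cliques, equivalently that $\overline G$ is bipartite. Observe that $\overline G$ is triangle-free (since $G$ is $3K_1$-free), $2K_2$-free (since $G$ is $C_4$-free and $\overline{C_4}=2K_2$), and $C_5$-free (since $\overline{C_5}=C_5$). A shortest odd cycle in any graph is chordless, so if $\overline G$ had any odd cycle it would contain an induced one, necessarily of length at least $7$ by the first and third properties. But four vertices $u_1, u_2, u_4, u_5$ chosen consecutively along such an induced odd cycle span an induced $2K_2$, a contradiction. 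Hence $\overline G$ is bipartite and its two color classes are the required cliques $K, K'$ in $G$.

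\emph{Forward direction, ordering within each part.} I would then verify the total preorder inside $K$; the argument inside $K'$ is identical. Suppose for contradiction that $x, y \in K$ are incomparable under $\leq_G$: there exist $u \in N(x) \sm N[y]$ and $v \in N(y) \sm N[x]$, with $u \neq v$ (otherwise $u \sim x$ and $u \not\sim x$). Since $u \not\sim y$ and $K$ is a clique, $u \in K'$; similarly $v \in K'$; and since $K'$ is a clique, $u \sim v$. Then $\{x, u, v, y\}$ in this cyclic order induces a $C_4$ in $G$ (edges $xy$, $xu$, $uv$, $vy$; non-edges $xv$, $uy$), contradicting $C_4$-freeness.

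\emph{Converse direction.} Assume $V(G) = K \cup K'$ is a partition into two cliques satisfying the stated ordering property. Three pairwise non-adjacent vertices cannot fit in two cliques, so $G$ is $3K_1$-free; likewise $\omega(C_5) = 2$ forbids covering the five vertices of an induced $C_5$ by two cliques, so $G$ is $C_5$-free. Suppose $G$ contains an induced $C_4 = v_1 v_2 v_3 v_4 v_1$; its diagonal non-edges $v_1 v_3$ and $v_2 v_4$ each cross the partition, so up to relabeling the two vertices in $K$ are a consecutive pair $a, b$ on the cycle, and the other two $c, d$ lie in $K'$, with $a \sim d$, $b \sim c$, $a \not\sim c$, $b \not\sim d$. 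Then $d \in N(a) \sm N[b]$ and $c \in N(b) \sm N[a]$, so $a$ and $b$ are incomparable under $\leq_G$, contradicting the hypothesis.

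The main obstacle is proving bipartiteness of $\overline G$ in the forward direction: forbidding only $3K_1$ and $C_4$ in $G$ does not suffice, since long induced odd cycles in $\overline G$ could survive. The role of the $C_5$-freeness of $G$ is precisely to combine with the $2K_2$-freeness of $\overline G$ (coming from $C_4$-freeness) and the shortest-odd-cycle-is-induced argument to kill all induced odd cycles in $\overline G$.
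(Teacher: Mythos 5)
Your proof is correct and follows essentially the same route as the paper's: show $\overline{G}$ is bipartite via a shortest-odd-cycle analysis (no length 3 by $3K_1$-freeness, no length 5 by $C_5$-freeness, no length $\geq 7$ by $C_4$-freeness, since four vertices on such a cycle in $\overline{G}$ induce a $2K_2$), then derive the $\leq_G$ condition and the converse from $C_4$-freeness. You spell out the details that the paper leaves as ``clear'', but the underlying argument is the same.
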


\begin{proof}
  If $G$ is a half graph, then the complement of $G$ contains (as a subgraph,
  not necessarily induced) no cycle of odd length because a shortest such cycle
  cannot have length~3 (it would yield a $3K_1$ in $G$), cannot have length~5
  (it would yield a $C_5$ in $G$) and cannot have length at least~7 (it
  would yield a $C_4$ in $G$).  It follows that the complement of $G$ is a
  bipartite graph, so $V(G)$ can be partitioned into two cliques as claimed.
  The condition on $\leq_G$ then follows from the fact that $G$ contains no
  $C_4$.

  The converse statement is clear. 
\end{proof}

\subsection{Classes defined by excluding Truemper configurations}
\label{sec:truemper}

\emph{Truemper configurations} are graphs that play a role in many
decomposition theorems, see~\cite{vuskovic:truemper}. 
They are the prisms, thetas, pyramids and
wheels. Let us define them.

A \emph{prism} is a graph made of three vertex-disjoint paths
$P_1 = a_1 \dots b_1$, $P_2 = a_2 \dots b_2$, $P_3 = a_3 \dots b_3$ of
length at least 1, such that $a_1a_2a_3$ and $b_1b_2b_3$ are triangles
and no edges exist between the paths except those of the two
triangles.  

A \emph{pyramid} is a graph made of three paths
$P_1 = a \dots b_1$, $P_2 = a \dots b_2$, $P_3 = a \dots b_3$ of
length at least~1, two of which have length at least 2, vertex-disjoint
except at $a$, and such that $b_1b_2b_3$ is a triangle and no edges
exist between the paths except those of the triangle and the three
edges incident to $a$.  The vertex $a$ is called the \emph{apex} of
the pyramid. 

A \emph{theta} is a graph made of three internally vertex-disjoint
 paths $P_1 = a \dots b$, $P_2 = a \dots b$,
$P_3 = a \dots b$ of length at least~2 and such that no edges exist
between the paths except the three edges incident to $a$ and the three
edges incident to $b$.  

Observe that the lengths of the paths used in the three definitions
above are designed so that the union of any two of the paths induce a
hole.  A prism, pyramid or theta is \emph{balanced} if the three paths
in the definition are of the same length. It is \emph{unbalanced}
otherwise.

\begin{figure}[h]
\begin{center}
 \includegraphics[height=2cm]{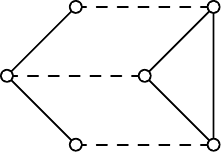}
 \hspace{.2em}
 \includegraphics[height=2cm]{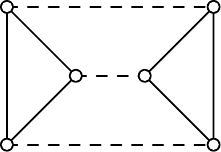}
 \hspace{.2em}
 \includegraphics[height=2cm]{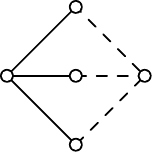}
 \hspace{.2em}
 \includegraphics[height=2cm]{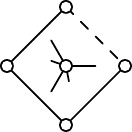}
\end{center}
\caption{Pyramid, prism, theta and wheel (dashed lines represent
 paths)\label{f:tc}}
\end{figure}

A \emph{wheel} $W= (H, c)$ is a graph formed by a hole $H$ (called the
\emph{rim}) together with a vertex $c$ (called the \emph{center}) that
has at least three neighbors in the hole.

A wheel is a \emph{universal wheel} if the center is adjacent to all vertices
of the rim.  A wheel is a \emph{twin wheel} if the center is adjacent to
exactly three vertices of the rim and they induce a $P_3$.  A wheel is
\emph{proper} if it is neither a twin wheel nor a universal wheel.

Truemper configurations are of interest here because of the following
easy observation.

\begin{lemma}
  \label{l:holeTruemper}
  Every unbalanced prism, every unbalanced pyramid, every unbalanced
  theta and every proper wheel contains holes of different lengths.

  Every pyramid contains an odd hole. Every prism and every theta
  contains an even hole.
\end{lemma}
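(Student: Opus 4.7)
The plan is to handle prisms, pyramids, thetas, and proper wheels separately, using in each case a short parametrization of the holes.

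For the prism, pyramid, and theta, I would start by observing that in each of these graphs, every hole is obtained by taking two of the three defining paths and closing them up with the corresponding triangle edges, apex, or endpoints. Setting $\ell_1,\ell_2,\ell_3$ to be the lengths of the three paths, this gives exactly three holes of lengths $\ell_i+\ell_j+2$ (prism), $\ell_i+\ell_j+1$ (pyramid), $\ell_i+\ell_j$ (theta), ranging over the three pairs $\{i,j\}\subset\{1,2,3\}$. If the configuration is unbalanced, then some two $\ell_i,\ell_j$ differ, and fixing the remaining index $k$ immediately yields two holes of different lengths, which takes care of the first assertion.

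For the odd/even statements, I would use the parity of the sum of the three hole lengths. In each case this sum equals $2(\ell_1+\ell_2+\ell_3)$ plus a constant: $6$ for the prism, $3$ for the pyramid, $0$ for the theta. So the parity of the number of odd holes equals the parity of the added constant; for the prism and theta this is even, so at least one of the three holes is even, and for the pyramid it is odd, so at least one of the three holes is odd.

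The wheel case is the main obstacle and requires a genuine case analysis. Let $W=(H,c)$ be a proper wheel, let $x_1,\dots,x_k$ be the neighbors of $c$ on $H$ in cyclic order, and let $s_1,\dots,s_k$ be the lengths of the $k$ sectors of $H$ they cut out (so $k\geq 3$ and $s_i\geq 1$). One checks directly that the holes of $W$ are exactly the rim $H$ (of length $L=\sum s_i$) together with, for each sector with $s_i\geq 2$, a unique hole through $c$ of length $s_i+2$. Being proper translates into: not universal, so some $s_i\geq 2$; and not twin, so either $k\geq 4$, or $k=3$ with at most one $s_i$ equal to $1$ (hence at least two $s_i\geq 2$). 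I would assume for contradiction that all holes have a common length $\ell\geq 4$, forcing every sector of length $\geq 2$ to equal $\ell-2$, and $L=\ell$. When $k=3$, either two sectors of length $\geq 2$ have different values (immediate contradiction), or they are equal, in which case the identity $L=\ell$ collapses to $s_2+s_3=2$ and contradicts $s_2\geq 2$. When $k\geq 4$, writing $p$ for the number of sectors of length $1$ and $q$ for the number of sectors of length $\ell-2$, the equation $p+q(\ell-2)=\ell$ together with $p+q=k\geq 4$ and $\ell\geq 4$ has no solution in nonnegative integers, which finishes the proof. The slightly delicate point is matching the "proper wheel" hypothesis to the right sub-case of this equation; everything else is routine arithmetic.
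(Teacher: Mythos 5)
Your proof is correct.  The paper's own argument is essentially the same for unbalanced prisms/pyramids/thetas (two of the three defining paths have different lengths, and closing each against the third gives two holes of different lengths), but diverges from yours on the other two points.  For the parity statement, the paper uses a direct pigeonhole: among three paths, two have the same parity, and their union (possibly with the apex or the two triangle edges) has the required parity.  Your count of the parity of $\sum_{\{i,j\}}(\ell_i+\ell_j+c) = 2\sum_i \ell_i + 3c$ is an equivalent but less direct reformulation.  For the wheel, the paper simply asserts ``the rim and a shortest hole are holes of different lengths'' with no further justification; that one-liner does conceal the genuine content you spell out, namely that in a proper wheel some sector of length $s_i\ge 2$ must satisfy $s_i+2 < L$.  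Your full case analysis on $k$ and the sector lengths proves this, though a shorter path is available: if $s_i+2\ge L=\sum_j s_j$ for some $s_i\ge 2$, then $\sum_{j\ne i} s_j\le 2$, which (as each $s_j\ge 1$) forces $k=3$ and $s_j=1$ for both $j\ne i$, i.e.\ a twin wheel.  So you prove somewhat more than necessary, but nothing is wrong.
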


\begin{proof}
  In a prism, pyramid or theta, the union of any two paths used in the
  definition induces a hole. Paths of different lengths are then
  easily used to provide holes of different lengths. In a proper
  wheel, the rim and a shortest hole are holes of different lengths. 

  In a pyramid, paths of the same parity, that exist since there are
  three paths, induce an odd hole. In thetas and prisms, they
  induce an even hole. 
\end{proof}

The following variant is more useful for our study.

\begin{lemma}
  \label{l:holeTruemperS}
  If $\ell\geq 2$ is an integer and $G\in \mathcal C_{2\ell+1}$, then
  every Truemper configuration of $G$ is a twin wheel, a universal
  wheel or a pyramid whose three paths all have length $\ell$.
  
  If $\ell\geq 2$ is an integer and $G\in \mathcal C_{2\ell}$, every Truemper configuration of $G$ is a twin wheel, a universal
  wheel, a theta whose three paths all have length $\ell$ or a prism whose three paths all have length $\ell-1$.
\end{lemma}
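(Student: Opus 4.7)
The plan is to invoke Lemma~\ref{l:holeTruemper} and then match the hole length forced by a \emph{balanced} Truemper configuration against the common hole length of $G$ (which is $2\ell+1$ or $2\ell$). Essentially no new idea is needed beyond the previous lemma: everything reduces to an arithmetic check on path lengths.

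First I would handle the case $G\in\mathcal C_{2\ell+1}$. Every hole of $G$ has the odd length $2\ell+1$. By Lemma~\ref{l:holeTruemper} all prisms and thetas produce an even hole, proper wheels produce two holes of different lengths, and unbalanced pyramids also produce two holes of different lengths. Thus the only Truemper configurations that can appear in $G$ are twin wheels, universal wheels, and balanced pyramids. In a balanced pyramid whose three paths all have common length $p$, any two of the paths together with the single triangle edge joining their far endpoints form a hole of length $p+p+1=2p+1$; equating with $2\ell+1$ forces $p=\ell$.

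The case $G\in\mathcal C_{2\ell}$ is symmetric. All holes of $G$ have even length $2\ell$, so Lemma~\ref{l:holeTruemper} rules out every pyramid (it contains an odd hole), every proper wheel, and every unbalanced prism or theta. The surviving configurations are twin wheels, universal wheels, balanced prisms and balanced thetas. In a balanced prism with paths of common length $p$, two paths together with the two triangle edges linking their endpoints yield a hole of length $p+p+2=2p+2$, so $p=\ell-1$. In a balanced theta with paths of common length $p$, two paths share both endpoints and so glue to a hole of length $p+p=2p$, so $p=\ell$.

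The only step deserving care is confirming the three hole-length formulas $2p+1$, $2p+2$ and $2p$ attached to balanced pyramids, prisms and thetas; these follow immediately from the definitions in Section~\ref{sec:truemper} together with the fact (already observed there) that the union of any two of the three paths in a pyramid, prism or theta induces a hole. There is no real obstacle beyond this bookkeeping.
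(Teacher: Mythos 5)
Your argument is correct and is just the explicit spelling-out of the one-line proof the paper gives ("Clear from Lemma~\ref{l:holeTruemper}"): you apply the earlier lemma to eliminate the excluded configurations and then perform the trivial arithmetic on the hole induced by two paths of a balanced pyramid, prism, or theta. The computations $2p+1$, $2p+2$, $2p$ all check out (e.g.\ for the balanced pyramid with apex $a$ and common path length $p$, the hole $P_i \cup P_j \cup \{b_ib_j\}$ indeed has $p+p+1$ edges), so the proposal is sound and matches the paper's intended reasoning.
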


\begin{proof}
  Clear from Lemma~\ref{l:holeTruemper}. 
\end{proof}

A graph $G$ is \emph{universally signable} if $G$ is (prism, pyramid,
theta, wheel)-free.

\begin{theorem}[\cite{confortiCKV97}]
  \label{th:us}
  A graph $G$ is universally signable if and only if every induced
  subgraph of $G$ is a hole, a complete graph or has a clique cutset.
\end{theorem}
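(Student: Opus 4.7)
For the $(\Leftarrow)$ direction, a short case analysis shows that every Truemper configuration (prism, pyramid, theta, or wheel) is neither a hole nor a complete graph, and admits no clique cutset. Hence under the hypothesis no induced subgraph of $G$ can be a Truemper configuration, so $G$ is universally signable.

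For the $(\Rightarrow)$ direction, I proceed by induction on $|V(G)|$. Since universal signability is hereditary, it suffices to show $G$ itself is a hole, a complete graph, or has a clique cutset. Assume $G$ is connected (else $\emptyset$ is a clique cutset), not complete, and not a hole. If $G$ is chordal, Theorem~\ref{th:chordal} directly yields a clique cutset. Otherwise, fix a hole $H$ of $G$; since $G \neq H$, we have $V(G) \sm V(H) \neq \emptyset$. The key claim is that for every connected component $C$ of $G \sm V(H)$, the set $K := N_{V(H)}(C)$ is a clique of $G$ (in particular $|K| \leq 2$, since a hole contains no clique of size at least $3$). Granting the claim, $V(H) \sm K$ is nonempty (as $|V(H)| \geq 4$), and removing $K$ separates $C$ from $V(H) \sm K$, yielding the desired clique cutset.

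To prove the claim, suppose for contradiction $K$ contains two vertices $u, u'$ non-adjacent on $H$; let $A_1, A_2$ be the two arcs of $H$ from $u$ to $u'$, each of length at least $2$. Among all non-adjacent pairs $u, u' \in K$ and all induced paths $P = v \ldots v'$ in $G[C]$ with $v \sim u$ and $v' \sim u'$, choose one minimizing first the $H$-distance between $u$ and $u'$, and then $|V(P)|$. Lemma~\ref{l:holeTruemper}, applied to the forbidden wheels and thetas, shows that every vertex of $C$ has at most two $H$-neighbors, and they are adjacent on $H$ if there are two. The minimality of the choice further implies that no internal vertex of $P$ has an $H$-neighbor, and that $v$ is adjacent to $u'$ neither as its unique nor as its second $H$-neighbor (and symmetrically for $v'$). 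Letting $Q := u v \ldots v' u'$, the three paths $A_1, A_2, Q$ are internally vertex-disjoint $u$-to-$u'$ paths; their only possible inter-path edges arise from triangles at $u$ or $u'$ produced when $v$ or $v'$ has a second $H$-neighbor (which is then a neighbor of $u$ or $u'$ on $H$). Depending on how many of $v, v'$ have such a second neighbor, and on which arc it lies, the induced subgraph on $V(A_1) \cup V(A_2) \cup V(Q)$ contains either a theta (zero triangles), a pyramid (one triangle), a prism (two triangles on opposite arcs), or a wheel (in the degenerate case where the second neighbors coincide with a common $H$-neighbor of $u$ and $u'$), contradicting universal signability. The main obstacle is this case analysis: the double minimality must be leveraged carefully to guarantee that the exhibited configuration is induced and that its path lengths match the definitions from Section~\ref{sec:truemper}.
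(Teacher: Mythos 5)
The paper does not prove Theorem~\ref{th:us}; it is quoted from~\cite{confortiCKV97}. So I can only assess your argument on its own terms. Your $(\Leftarrow)$ direction is fine, and your overall strategy for $(\Rightarrow)$ (attachments of components over a hole are cliques, hence clique cutsets) is the right one. But the core of the argument has a genuine gap.

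The claim that ``the minimality of the choice implies that no internal vertex of $P$ has an $H$-neighbor'' is false, and the case it misses is not covered by your final list of configurations. First, your lexicographic order is the wrong way round: if an internal vertex $v_i$ is adjacent to some $w\in V(H)$ with $w\not\sim u$, the shorter path $v\ldots v_i$ for the pair $(u,w)$ need not beat your choice, because $d_H(u,w)$ may exceed $d_H(u,u')$ (which you minimized first). Minimizing $|V(P)|$ first repairs this sub-case, but even then the statement fails: an internal vertex of $P$ may be adjacent to a \emph{common} $H$-neighbor $w$ of $u$ and $u'$ (so $uwu'$ is an arc of length $2$), and then neither sub-path of $P$ joins a non-adjacent pair of $K$. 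Concretely, take $H=u\,w\,u'\,a\,b\,c\,u$ and $C=\{v_1,v_2,v_3\}$ inducing a path with $v_1\sim u$, $v_2\sim w$, $v_3\sim u'$ and no other edges to $H$: here $(u,u')$ realizes the minimum $H$-distance $2$ and $v_1v_2v_3$ is the unique shortest connecting path, yet its middle vertex attaches to $H$. In this situation $G[V(A_1)\cup V(A_2)\cup V(Q)]$ is none of the configurations you enumerate via ``second neighbors of $v$ and $v'$'' (here $v$ and $v'$ have no second neighbors at all); the forbidden structure is instead a wheel centered at $w$ whose rim is the hole formed by $Q$ and the long arc, and exhibiting it requires verifying that $Q$ together with that arc is indeed a hole (which interacts with possible second neighbors of $v$ and $v'$, and with the case $|V(H)|=4$ where both arcs have length $2$). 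So the case analysis, as set up, is not exhaustive, and the minimality you invoke does not deliver the disjointness you rely on. To close the gap you must (i) reorder the minimization (path length first, or total length of $Q$), (ii) prove the correct, weaker statement that internal vertices of $P$ attach only to common $H$-neighbors of $u$ and $u'$ on arcs of length $2$, and (iii) add the wheel-on-a-different-rim case to the analysis.
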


A graph $G$ is a \emph{ring} if its vertex-set can be partitioned into
$k\geq 4$ sets $K_1, \dots, K_k$ such that (with subscripts understood to be taken
modulo $k$):

\begin{enumerate}
\item $K_1$, \dots, $K_k$ are cliques;
\item for all $i\in \{1, \dots, k\}$, $K_i$ is anticomplete to
  $V(G) \sm (K_{i-1} \cup K_i \cup K_{i+1})$;
\item for all $i\in \{1, \dots, k\}$, some vertex of $K_i$ is complete
  to $K_{i-1} \cup K_{i+1}$;
\item for all $i\in \{1, \dots, k\}$ and all $x, x' \in K_i$, either
  $x \leq_G x'$ or $x' \leq_G x$.
\end{enumerate}

The integer $k$ in the definition above is the \emph{length of the ring}.
Observe that when $k\geq 4$, the hole $C_k$ is a ring of length $k$.  Observe
also that, by Theorem~\ref{t:half-graph}, for any integer $1\leq i \leq k$, the
graph $G[K_i \cup K_{i+1}]$ is a half graph.  
We refer to the cliques $K_1, \ldots, K_k$ as the \emph{cliques of the ring $G$}.

\begin{lemma}
  Every hole in a ring $G$ of length $k$ has length $k$. 
\end{lemma}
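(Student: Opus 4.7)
The plan is to traverse $H$ cyclically as $v_1, \ldots, v_m$ and, for each $j$, record the index $i_j \in \{1, \ldots, k\}$ with $v_j \in K_{i_j}$. Properties (1) and (2) of rings guarantee that consecutive terms of the cyclic sequence $(i_j)$ differ by at most one modulo $k$, so it lifts to an integer sequence $\tilde{i}_1, \ldots, \tilde{i}_m$ with $|\tilde{i}_{j+1} - \tilde{i}_j| \leq 1$ which, after closing up, winds $w$ times around $\mathbb{Z}/k$. The goal is to force $w = 1$ together with the lift being strictly increasing, which yields $m = k$.

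First I would show the lift is monotone. A strict local maximum at $l$ would force $\tilde{i}_{l-1} = \tilde{i}_{l+1} = \tilde{i}_l - 1$, placing $v_{l-1}$ and $v_{l+1}$ in the common clique $K_{\tilde{i}_l - 1}$; being distinct and non-consecutive in $H$, they would contribute a chord. Strict local minima are ruled out symmetrically, so, after possibly reversing the traversal, the lift is non-decreasing. Because $H$ contains no triangle, it cannot lie in a single $K_i$, so the sequence is non-constant and $w \geq 1$.

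Next I would bound $w$ by $1$. Since the non-decreasing lift uses steps in $\{0, 1\}$ and has total displacement $wk$, it visits every integer in an interval of length $wk$. If $w \geq 2$, then some residue class $c$ modulo $k$ occurs at two positions $j < j'$ separated by at least $k \geq 4$ strictly increasing intermediate values; the corresponding vertices both lie in $K_c$, hence are adjacent, and being non-consecutive in $H$ they form a chord. So $w = 1$.

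The step I expect to be the main obstacle is ruling out flat steps when $w = 1$, as this is where the linear order on each clique becomes indispensable. Suppose $\tilde{i}_l = \tilde{i}_{l+1} = c$; then $v_l, v_{l+1} \in K_c$, and since $|V(H) \cap K_c| \leq 2$ (triangle-freeness of $H$), we must have $v_{l-1} \in K_{c-1}$ and $v_{l+2} \in K_{c+1}$. By condition (4) of the ring definition, either $v_l \leq_G v_{l+1}$ or $v_{l+1} \leq_G v_l$. In the first case, $v_{l-1}$, a neighbor of $v_l$ distinct from $v_{l+1}$, must also be a neighbor of $v_{l+1}$, giving the chord $v_{l-1}v_{l+1}$; the other case yields the chord $v_l v_{l+2}$ symmetrically. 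Hence the lift is strictly increasing, and the number of steps $m$ equals $wk = k$.
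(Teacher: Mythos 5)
Your proof is correct, but takes a more elaborate route than the paper's. The paper's argument is a single step: if $x, x' \in K_i \cap V(C)$ with $x \leq_G x'$, then any neighbor $y$ of $x$ in $C \sm \{x'\}$ is also a neighbor of $x'$; since $x x' \in E(G)$ (same clique), the set $\{x, y, x'\}$ induces a triangle, giving a chord of $C$. Hence $C$ has at most one vertex per clique, and the paper then concludes it has exactly one per clique and so length $k$. Your winding-number framing makes explicit the reasoning the paper leaves implicit (why \emph{at most} one per clique yields \emph{exactly} one per clique): the monotonicity step and the $w = 1$ step need only the cliqueness of the $K_i$ and the anticompleteness condition, and condition~(4) of the ring definition (the order $\leq_G$) is invoked only to eliminate flat steps, i.e.\ consecutive vertices of $C$ in the same clique. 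This localization of the use of $\leq_G$ is a nice observation; what the paper's version buys in exchange is brevity, since the structural fact that a chordless cycle must traverse every clique is treated as evident. One small imprecision to fix: in the $w \geq 2$ argument, the steps between positions $j$ and $j'$ need not all be strictly increasing (some may be flat). What you actually need is that $j' - j \geq k \geq 4$ \emph{and} that $m - (j' - j) \geq 2$, so that $v_j, v_{j'}$ are non-consecutive on $C$ in both directions; the latter holds because after position $j'$ the lift must still rise by at least $(w-1)k - 1 \geq k - 1 \geq 3$, but this should be stated.
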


\begin{proof}
  We prove that a hole $C$ in $G$ contains at most one vertex in each clique of
  the ring. Suppose otherwise. Let $x, x' \in K_i$ be two vertices of $C$ and
  suppose up to symmetry that $x \leq_G x'$.  Hence, the neighbor of $x$ in
  $C\sm x'$ is also adjacent to $x'$, so $C$ contains a triangle, a
  contradiction.

  Hence, $C$ contains exactly one vertex in each clique of the ring.  So, it
  has length~$k$.
\end{proof}

The following is a corollary of Theorem~1.6
from~\cite{DBLP:journals/jgt/BoncompagniPV19}.

\begin{theorem}
  \label{th:ring}
  If $G$ is (prism, theta, pyramid, proper wheel, $C_4$, $C_5$)-free,
  then one of the following holds.
  \begin{enumerate}
  \item $G$ is a ring of length at least 6;
  \item $G$ has a clique cutset;
  \item $G$ has a universal vertex.
  \end{enumerate}
\end{theorem}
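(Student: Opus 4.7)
The plan is to derive the statement as an essentially direct consequence of Theorem~1.6 in \cite{DBLP:journals/jgt/BoncompagniPV19}. That reference gives a full structural description of the class of (theta, prism, pyramid, proper wheel)-free graphs: every such graph either belongs to a short list of basic classes (complete graphs, holes, rings) or admits a decomposition, where the relevant decomposition here is a clique cutset. Since our $G$ satisfies exactly those four exclusions, we apply the theorem verbatim.

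Once invoked, we are reduced to inspecting the basic classes and rewriting them in the form asked for. Complete graphs are trivially handled because every vertex of a complete graph is universal, so this falls into conclusion~(c). Holes are particular cases of rings (a hole of length $k\geq 4$ is the ring whose cliques are all singletons), so they are rings of some length $k\geq 4$. Rings of length $k\geq 6$ are already the desired first conclusion, and we only have to rule out rings of length $4$ and~$5$.

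To rule those out, we use the hypotheses that $G$ is $C_4$-free and $C_5$-free. In a ring of length $k$, the definition forces, for every $i\in\{1,\dots,k\}$, the existence of a vertex $v_i\in K_i$ complete to $K_{i-1}\cup K_{i+1}$. Choosing such a $v_i$ in every clique and combining this with the anticomplete condition between non-consecutive cliques, one sees that $v_1v_2\cdots v_kv_1$ is an induced cycle of length $k$ in $G$. Hence a ring of length $4$ (resp.\ $5$) contains an induced $C_4$ (resp.\ $C_5$), which is excluded. Therefore, in the ring case the length is at least~$6$, giving conclusion~(a).

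The main (really the only) subtlety is bookkeeping: one needs to match precisely the list of basic classes appearing in Theorem~1.6 of \cite{DBLP:journals/jgt/BoncompagniPV19} with the three outcomes above, checking that no additional basic class survives the $C_4$- and $C_5$-freeness assumptions. Apart from that accounting step, the argument is a straightforward specialisation of the quoted theorem, and no new graph-theoretic work is needed.
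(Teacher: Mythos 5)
Your proposal is correct and takes essentially the same route as the paper: the paper itself states Theorem~\ref{th:ring} ``is a corollary of Theorem~1.6 from~\cite{DBLP:journals/jgt/BoncompagniPV19}'' and offers no further argument, and your write-up simply spells out that derivation. In particular, your observation that a ring of length $k$ contains an induced $C_k$ (choose in each $K_i$ a vertex complete to $K_{i-1}\cup K_{i+1}$; non-consecutive cliques are anticomplete) is exactly what is needed to see that the $C_4$- and $C_5$-freeness hypotheses push the ring length up to at least~$6$.
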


\section{Odd templates}

Here we define and study the main basic class of Theorem~\ref{th:struct}.

\subsection{Modules in threshold graphs}

Let $G$ be a graph.  A \emph{module} of $G$ is a set $X \subseteq V(G)$ such that
every vertex in $V(G)\sm X$ is either complete or anticomplete to $X$.  Observe
that all subsets of $V(G)$ of cardinality 0, 1 or $|V(G)|$ are modules of $G$.
We will use the notion of module only in the context of threshold graphs. The
reader can check that sets of vertices that are intervals for both elimination
and domination orderings are modules. We omit the proof since we do not
need this formally.  We now state three lemmas.

\begin{lemma}
  \label{l:ModuleThr}
  Let $J$ be a threshold graph and $X\subseteq V(J)$ such that
  $|X|\geq 2$. Then $\bar J$ is a threshold graph, $X$ is a
  module of $J$ if and only if it is a module of $\bar J$, and
  exactly one of $J[X]$ and $\bar J[X]$ is anticonnected.
\end{lemma}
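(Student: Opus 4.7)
The plan is to handle the three assertions of the lemma in turn. The first, that $\bar J$ is a threshold graph, is immediate from the characterization of threshold graphs as the $(P_4, C_4, 2K_2)$-free graphs: the family $\{P_4, C_4, 2K_2\}$ is self-complementary (since $\overline{P_4}\cong P_4$ and $\overline{C_4}\cong 2K_2$), so the class is closed under complementation. The second assertion, that $X$ is a module of $J$ iff $X$ is a module of $\bar J$, is the standard observation that moduleness is preserved under complementation: for any $v\in V(J)\sm X$, $v$ is complete to $X$ in $J$ iff $v$ is anticomplete to $X$ in $\bar J$, and vice versa, so the defining condition of a module is symmetric in $J$ and $\bar J$.

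For the third assertion, I would first rephrase it as: exactly one of $J[X]$ and $\bar J[X]$ is connected (since by definition, $J[X]$ is anticonnected iff $\bar J[X]$ is connected). I plan to establish this with two ingredients. On one hand, $J$ is a cograph because, as a threshold graph, it is $P_4$-free; hence so is $J[X]$. Applying Theorem~\ref{th:cograph} to $J[X]$ (which has at least two vertices by hypothesis) yields that $J[X]$ is not connected or not anticonnected, so at most one of $J[X]$ and $\bar J[X]$ is connected. On the other hand, I would invoke the classical fact that at least one of a graph and its complement is connected: if $J[X]$ splits into non-empty parts $V_1, V_2$ with no edges between them in $J$, then in $\bar J[X]$ every vertex of $V_1$ is adjacent to every vertex of $V_2$, making $\bar J[X]$ connected. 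Combining these two observations gives that exactly one of $J[X]$ and $\bar J[X]$ is connected, as required.

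The main obstacle is really just careful bookkeeping: translating the word \emph{anticonnected} into the appropriate connectedness statement of the complement, and knowing which of Theorem~\ref{th:cograph} (for the upper bound) and the classical complement-connectedness fact (for the lower bound) to invoke. No substantial difficulty arises, and each step relies either on a previously stated theorem or on a one-line elementary argument.
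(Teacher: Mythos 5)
Your proof is correct, but it takes a genuinely different route from the paper's. For the third assertion, the paper invokes Theorem~\ref{th:threshold}: $J[X]$ has an isolated vertex or a universal vertex, and since $|X|\geq 2$ it cannot have both, so exactly one of $J[X]$, $\bar J[X]$ has an isolated vertex and is therefore disconnected while the other has a universal vertex and is connected. You instead use the weaker fact that $J$ is merely a \emph{cograph}: Theorem~\ref{th:cograph} gives ``at most one of $J[X]$, $\bar J[X]$ is connected,'' and the classical ``$G$ or $\bar G$ is connected'' observation gives ``at least one.'' Both are clean; the paper's version exploits the full strength of the threshold hypothesis and is marginally more self-contained (one cited theorem instead of one theorem plus an auxiliary fact), while yours isolates exactly how little is needed -- $P_4$-freeness plus a standard complement argument -- which is arguably more transparent about where the hypothesis matters. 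Your handling of the first two assertions matches the paper's, which simply asserts closure under complementation without spelling out the self-complementarity of $\{P_4,C_4,2K_2\}$; your explicit justification is a harmless elaboration. One small bookkeeping point worth making explicit when you write this up: the equivalence ``exactly one of $J[X]$, $\bar J[X]$ is anticonnected'' $\iff$ ``exactly one of $J[X]$, $\bar J[X]$ is connected'' relies on $\overline{J[X]}=\bar J[X]$, i.e.\ that complementation commutes with taking induced subgraphs; you use this implicitly and correctly, but it deserves a word.
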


\begin{proof}
  Being a threshold graph and module are properties that are closed
  under taking the complement.  By Theorem~\ref{th:threshold}, exactly
  one of $J[X]$ or $\bar J[X]$ contains an isolated vertex, and
  the other one contains a universal vertex.  Hence, since
  $|X|\geq 2$, exactly one of $J[X]$ or $\bar J[X]$ is connected
  and the other one is anticonnected.
\end{proof}

\begin{lemma}
  \label{l:nxcc}
  Let $J$ be a threshold graph. If $X$ is an anticonnected module of
  $J$ that contains at least two vertices, then $N(X)$ is a clique
  that is complete to $X$.  Moreover, $N(X) >_J X$.
\end{lemma}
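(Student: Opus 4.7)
\medskip
\noindent\textbf{Plan.} The statement splits naturally into three parts, and I would prove them in the order: (i) $N(X)$ is complete to $X$; (ii) $N(X)$ is a clique; (iii) $N(X) >_J X$. Part (i) is immediate from the definition of a module: every vertex outside $X$ is either complete or anticomplete to $X$, so any vertex that has at least one neighbor in $X$ is in fact complete to $X$, which is exactly the description of $N(X)$.

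For part (ii), I would argue by contradiction. Suppose $y,y'\in N(X)$ are non-adjacent. Because $X$ is an anticonnected module with $|X|\ge 2$, Lemma~\ref{l:ModuleThr} applied to $J$ gives that $J[X]$ is in fact disconnected (its complement is connected, and exactly one of $J[X],\bar J[X]$ is anticonnected). In particular, $J[X]$ is not a clique, so there exist $x,x'\in X$ with $xx'\notin E(J)$. Combined with part (i), the vertices $y,y'$ are both adjacent to both $x,x'$, and since $xx',yy'\notin E(J)$, the set $\{x,y,x',y'\}$ induces a $C_4$ in $J$. This contradicts the fact that threshold graphs are $C_4$-free (see Theorem~\ref{th:threshold} via the Venn diagram, or directly from the forbidden-subgraph characterization). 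Hence $N(X)$ is a clique.

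For part (iii), I would fix arbitrary $u\in N(X)$ and $v\in X$ and prove $v<_J u$. First I check the inclusion $N(v)\sm\{u\}\subseteq N(u)\sm\{v\}$: any $w\in N(v)\sm\{u\}$ lies either in $X$, in which case $uw\in E(J)$ by part (i), or outside $X$, in which case $w\in N(X)$ and $uw\in E(J)$ by part (ii). For strictness, I need a witness in $N(u)\sm N(v)$. Using again that $\bar J[X]$ is connected and $|X|\ge 2$, the vertex $v$ has a neighbor in $\bar J[X]$, i.e.\ a non-neighbor $v'\in X\sm\{v\}$ in $J$. Since $u$ is complete to $X$ by part (i), $v'\in N(u)\sm\{v\}$, but $v'\notin N(v)$, giving the strict inclusion. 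Thus $v<_J u$, and since $u,v$ were arbitrary, $N(X)>_J X$.

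The only slightly delicate step is translating the anticonnectedness of $X$ into the two concrete consequences needed: the existence of a non-adjacent pair inside $X$ (for the $C_4$ in part (ii)), and the existence of a non-neighbor of $v$ inside $X$ (for strictness in part (iii)). Both follow cleanly from Lemma~\ref{l:ModuleThr} together with $|X|\ge 2$, so I do not anticipate a real obstacle; the work is mostly bookkeeping.
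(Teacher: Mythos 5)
Your proof is correct. Parts (i) and (ii) are the same argument as the paper's: $N(X)$ is complete to $X$ by the module property, and a non-edge in $N(X)$ together with a non-edge in $X$ produces a $C_4$, contradicting thresholdness. One small remark: your detour through Lemma~\ref{l:ModuleThr} to extract a non-adjacent pair in $X$ is more machinery than needed; an anticonnected set on at least two vertices contains a non-edge directly (its complement is connected on $\geq 2$ vertices, hence has an edge), which is what the paper uses.

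Part (iii) is where you take a genuinely different route. The paper proves $N(X)>_J X$ by contradiction: it assumes some $u\in N(X)$, $v\in X$ satisfy $v\geq_J u$ (which is the correct negation in a threshold graph, because Theorem~\ref{th:threshold} guarantees the domination order is total), and then deduces that $v$ would be complete to $X\sm\{v\}$, contradicting anticonnectedness. You instead verify $v<_J u$ directly: the inclusion $N(v)\sm\{u\}\subseteq N(u)\sm\{v\}$ follows from parts (i) and (ii) alone, and strictness is witnessed by a non-neighbor of $v$ inside $X$. Your argument is a bit longer to write out but has the nice feature of not invoking the total domination order of Theorem~\ref{th:threshold} at all — it relies only on $J$ being $C_4$-free (through part (ii)). Both are clean; the paper's is shorter, yours is more self-contained in that respect.
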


\begin{proof}
  Since $X$ is a module, $N(X)$ is complete to $X$. Suppose that $N(X)$ is not
  a clique and let $u$ and $v$ be two non-adjacent vertices in $N(X)$.  Since
  $|X|\geq 2$ and $X$ is anticonnected, $X$ contains two non-adjacent vertices
  $u', v'$ that together with $u$ and $v$ form a $C_4$ in $J$.  This
  contradicts $J$ being a threshold graph.

  Suppose that $N(X)>_J X$ does not hold. So, there exists $u\in N(X)$ and
  $v\in X$ with $v\geq_J u$.  Since $u\in N(X)$, $u$ is complete $X$, so $v$ is
  complete to $X\sm \{v\}$.  This contradicts $X$ being anticonnected. 
\end{proof}

\begin{lemma}
  \label{l:ModuleIso}
  Let $J$ be a threshold graph and $X\subseteq V(J)$ a module of
  $J$.  If $X$ contains some isolated vertices of $J$, then either $X$
  contains only isolated vertices of $J$, or $X$ contains all
  non-isolated vertices of $J$.
\end{lemma}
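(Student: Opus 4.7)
The plan is to assume $X$ contains an isolated vertex $v$ and also a non-isolated vertex $w$, and then show that every non-isolated vertex of $J$ lies in $X$. So, fix an arbitrary non-isolated vertex $u\in V(J)$ with $u\notin X$; the goal is to derive a contradiction. Since $X$ is a module, $u$ is either complete or anticomplete to $X$. If $u$ is complete to $X$, then $u$ is adjacent to $v$, contradicting that $v$ is isolated. So $u$ is anticomplete to $X$; in particular $u$ is not adjacent to $w$.

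Next I would pin down where the neighbors of $u$ and $w$ must live relative to $X$. Because $u$ is non-isolated, it has some neighbor $u'$. If $u'$ were in $X$, then (since $u\notin X$ and $X$ is a module) $u$ would be complete to $X$, hence adjacent to the isolated vertex $v$, a contradiction; so $u'\notin X$. Symmetrically, $w$ is non-isolated so it has a neighbor $w'$; if $w'$ were outside $X$, then (since $w\in X$ and $X$ is a module) $w'$ would be complete to $X$, hence adjacent to $v$, a contradiction; so $w'\in X$. Note also that $u'\neq w$ (since $u'\notin X\ni w$) and $w'\neq u$ (since $w'\in X\not\ni u$).

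Now I would invoke Theorem~\ref{th:threshold}\eqref{i:elimination}: $\leq_J$ is total, so either $u\leq_J w$ or $w\leq_J u$. If $u\leq_J w$, then $N(u)\setminus\{w\}\subseteq N(w)\setminus\{u\}$, so $u'\in N(w)$; but then $u'$ is a neighbor of $w$, and by the previous paragraph applied to $w$ every neighbor of $w$ lies in $X$, contradicting $u'\notin X$. If $w\leq_J u$, then $N(w)\setminus\{u\}\subseteq N(u)\setminus\{w\}$, so $w'\in N(u)$; but $u$ is anticomplete to $X$ and $w'\in X$, a contradiction. Either way we contradict the assumption $u\notin X$, so $X$ contains every non-isolated vertex, finishing the proof.

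I do not expect a serious obstacle here: the only subtle point is to make sure the vertex used in each comparison is genuinely distinct from the vertex being excluded (so that the neighborhood containment in the definition of $\leq_J$ can be applied), which is handled by observing that $u'\notin X$ while $w\in X$, and $w'\in X$ while $u\notin X$.
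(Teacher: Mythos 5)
Your proof is correct. However, you reach the conclusion by a different route than the paper. The paper sets $T$ to be the set of non-isolated vertices of $J$, observes that $J[T]$ is connected (it is a threshold graph with no isolated vertex, so by item~(b) of Theorem~\ref{th:threshold} it has a universal vertex), and then notes that if $X$ met $T$ but did not contain all of $T$, connectivity would force an edge between $T\cap X$ and $T\setminus X$, whose endpoint outside $X$ would be complete to $X$ and hence adjacent to an isolated vertex -- a contradiction. You instead invoke item~(c) of the same theorem, the totality of $\leq_J$, and compare an excluded non-isolated vertex $u$ against an included one $w$: whichever direction the comparison goes, a forced neighbor ($u'\notin X$ becomes a neighbor of $w$, or $w'\in X$ becomes a neighbor of $u$) violates the module/isolated-vertex structure. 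Both arguments are short and elementary; the paper's is slightly more compact because connectivity of $J[T]$ does the bookkeeping in one step, while yours spells out the two possible orientations of $\leq_J$ and the associated membership checks (a point you handle carefully, since those small distinctness verifications are exactly where such an argument could otherwise slip).
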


\begin{proof}
  Let $S$ be the set of all isolated vertices of $J$ and
  $T= V(J) \sm S$.  By assumption, $X$ contains a vertex of $S$. If
  $X$ contains only vertices of $S$, then the conclusion holds, so
  suppose that $X$ contains at least one vertex of $T$.  Suppose for a
  contradiction that $X$ does not contain all of $T$.  Since $J[T]$ is
  connected (because it is a threshold graph with no isolated
  vertices and hence by Theorem~\ref{th:threshold} 
  it contains a universal vertex), 
  there exists an edge $uv$ of $J$ with $u\in T\cap X$ and
  $v\in T\sm X$.  Since $X$ contains isolated vertices, this
  contradicts $X$ being a module.
\end{proof}

\subsection{Templates}

For an integer $\ell\geq 2$, an \emph{odd $\ell$-template} is any
graph $G$ that can be built according to the following process.

\begin{enumerate}
\item Choose a threshold graph $J$ on vertex set $\{1, \ldots,k\}$,
  $k\geq 3$.

\item Choose a laminar hypergraph $\mathcal H$ on vertex set
  $\{1, \ldots,k\}$ such that:
  \begin{enumerate}
  \item\label{a:module} every hyperedge $X$ of $\mathcal H$ is a
    module of $J$ of cardinality at least~$2$ and
  \item\label{a:universal} at least one hyperedge $W$ of $\mathcal H$
    contains all vertices of $\mathcal H$.
  \end{enumerate}

\item\label{i:linkP} For each $i \in \{1, \ldots,k\}$, $G$ contains
  two vertices $v_i $ and $v'_i$ that are linked by a path of $G$ of
  length $\ell-1$. The $k$ paths built at this step are vertex
  disjoint and are called the \emph{principal paths} of the odd
  template.

\item\label{tempd} The set of vertices of $G$ is $V(G)= A \cup A' \cup B \cup B' \cup I$ where:
\begin{enumerate}
\item $I$ is the set of all internal vertices of the principal paths,
\item
 $A = \{v_1, \ldots, v_k\}$, 
\item
 $A' = \{v'_1, \ldots, v'_k\}$, 
\item\label{a:chooseB}
 $B = \{v_X : X \text{ hyperedge of } \mathcal H \text{ such that } J[X] \text{ is anticonnected}\}$, 
\item
 $B' = \{v'_X : X \text{ hyperedge of } \mathcal H \text{ such that }\bar{J}[X] \text{ is anticonnected}\}$.
\end{enumerate}

Note that by Lemma~\ref{l:ModuleThr}, for every hyperedge $X$ of
  $\mathcal H$, either $v_X\in B$ or $v'_X\in B'$ (and not both). 

\item\label{tempe} The set of edges of $G$ is defined as follows.
\begin{enumerate}
\item\label{i:BJ} for every $v_i, v_j \in A$, $v_iv_j  \in E(G)$ if and only if $ij \in E(J)$, 
\item\label{i:BJP} for every $v'_i, v'_j \in A'$, $v'_iv'_j  \in E(G)$ if and only if $ij \notin E(J)$,
\item\label{a:last} for every $v_X, v_Y \in B$, $v_X v_Y \in E(G)$ if and only if $X \cap Y \neq \emptyset$,
\item\label{tempe4} for every $v'_X, v'_Y \in B'$, $v'_X v'_Y \in E(G)$ if and only if $X \cap Y \neq \emptyset$,
\item\label{a:makeit}  for every $v_i \in A$, $v_X \in B$, $v_i v_X \in E(G)$ if and only if $i \in N_J[X]$,
\item\label{tempe6} for every $v'_i \in A'$, $v'_X \in B'$, $v'_i v'_X \in E(G)$ if and only if $i \in N_{\bar J}[X]$,
\item\label{tempe7} for every $v\in I$, $v$ is incident to exactly two edges (those
  in its principal path).
\end{enumerate}
\end{enumerate}

The following notation is convenient.

\medskip

\noindent{\bf Notation:}  For every vertex $x\in B$ such that $x=v_X$
  where $X$ is a hyperedge of $\mathcal H$, we set
  $H_x=\{v_i: i\in X\}$. Similarly, for every vertex $x\in B'$ such
  that $x=v'_X$ where $X$ is a hyperedge of $\mathcal H$, we set
  $H'_x=\{v'_i: i\in X\}$.  

\medskip

We now list some properties of templates that follow directly from the definition.

\begin{enumerate}[label=(\roman*)]
\item\label{t:threshold} $G[A]$ is a threshold graph isomorphic to $J$
  and $G[A']$ is a threshold graph isomorphic to $\bar J$ (and hence
  to the complement of $G[A]$).

\item\label{t:antiMod} For all $x\in B$, $H_x$ is a
  module of $G[A]$ and $G[H_x]$ is anticonnected. Also for all
  $x\in B'$, $H'_x$ is a module of $G[A']$ and $G[H'_x]$ is
  anticonnected.

\item\label{t:qTh} $G[B]$ is isomorphic to the line graph of the hypergraph
  $\mathcal H_B$ on vertex set $A$ and hyperedge set
  $\{H_x : x\in B\}$.  Also $G[B']$ is isomorphic to the line graph of
  the hypergraph $\mathcal H_{B'}$ on vertex set $A'$ and hyperedge
  set $\{H'_x : x\in B'\}$.  Hence $G[B]$ and $G[B']$ are a
  quasi-threshold graphs by Theorem~\ref{th:laminar}.

\item\label{t:makeIt} There is an edge between $v_i \in A$ and
  $x \in B$ if and only if $v_i \in N_A[H_x]$, and there is an edge
  between $v'_i \in A'$ and $x \in B'$ if and only if
  $v'_i \in N_{A'}[H'_x]$.
\end{enumerate}

\begin{lemma}
  \label{l:ABuniv}
  There exist vertices $w$ and $w'$ that are universal vertices in
  respectively $G[A \cup B]$ and $G[A' \cup B']$, and such that either
  $w\in A$ and $w'\in B'$, or $w\in B$ and $w'\in A'$.
\end{lemma}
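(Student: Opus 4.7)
The plan is to apply Theorem~\ref{th:threshold} to the threshold graph $J$ (which lives on $k \geq 3$ vertices): $J$ must have either a universal vertex or an isolated vertex, and for $k\geq 2$ these alternatives are mutually exclusive. I will exhibit $w, w'$ explicitly in each case. Throughout, let $W$ denote the hyperedge of $\mathcal{H}$ guaranteed by condition~(b) of Step~2; since $\mathcal{H}$ has vertex set $\{1, \ldots, k\}$, necessarily $W = \{1, \ldots, k\}$, and in particular $|W|\geq 2$ so Lemma~\ref{l:ModuleThr} applies to $W$.

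Suppose first that $J$ has a universal vertex $u$. I would set $w = v_u \in A$. For every $v_j \in A \sm \{v_u\}$, $uj \in E(J)$ by universality, so rule~\ref{i:BJ} gives $v_u v_j \in E(G)$. For every $v_X \in B$, universality of $u$ in $J$ forces $u \in N_J[X]$ (either $u\in X$, or $u$ is adjacent to all of $X$), so rule~\ref{a:makeit} gives $v_u v_X \in E(G)$. Hence $v_u$ is universal in $G[A \cup B]$. For $w'$, I would take $v'_W$. First, $v'_W \in B'$ because $J$ is connected (it contains a universal vertex), so $\bar{J} = \bar{J}[W]$ is anticonnected; by Lemma~\ref{l:ModuleThr} this places $v'_W$ in $B'$. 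For every $v'_i \in A'$, we have $i \in W \subseteq N_{\bar{J}}[W]$, so rule~\ref{tempe6} gives $v'_W v'_i \in E(G)$. For every $v'_Y \in B' \sm \{v'_W\}$, $W \cap Y = Y \neq \emptyset$, so rule~\ref{tempe4} gives $v'_W v'_Y \in E(G)$. Hence $v'_W$ is universal in $G[A' \cup B']$, and $(w, w') \in A \times B'$.

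If instead $J$ has an isolated vertex $u$ (and no universal vertex), I would take $w' = v'_u \in A'$ and $w = v_W \in B$. Here $u$ is universal in $\bar{J}$, so the argument of the previous paragraph applied with the roles of $J,\bar J$ (and of $A,A'$, $B,B'$) swapped shows that $v'_u$ is universal in $G[A'\cup B']$; moreover $v_W$ is indeed in $B$ because $\bar{J}$ is connected (it contains the universal vertex $u$), which forces $J$ to be anticonnected, and the adjacency verifications for $v_W$ mirror those of the previous paragraph, yielding $(w, w') \in B \times A'$.

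The only delicate point is recognising that condition~(b) forces $W$ to equal the entire vertex set $\{1,\dots,k\}$, which is precisely what makes $v'_W$ (resp.\ $v_W$) reach every vertex of $A'$ (resp.\ $A$) via rule~\ref{tempe6} (resp.\ rule~\ref{a:makeit}). Beyond that the proof is a direct verification of the edge rules in Step~\ref{tempe}, combined with the Lemma~\ref{l:ModuleThr} dichotomy that routes $v_W$ or $v'_W$ into the correct one of $B,B'$.
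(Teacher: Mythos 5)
Your proof is correct and takes essentially the same approach as the paper: apply the universal/isolated-vertex dichotomy for threshold graphs from Theorem~\ref{th:threshold} and, in the isolated case, observe that the hyperedge $W$ of condition~\eqref{a:universal} forces $W=\{1,\dots,k\}$ and hence gives a vertex of $B$ (resp.\ $B'$) that is universal in $G[A\cup B]$ (resp.\ $G[A'\cup B']$); your write-up is somewhat more explicit about the mutual exclusivity of the two cases and about tracking which of $w, w'$ lands in $A$ versus $B$, but the core argument and the use of Lemma~\ref{l:ModuleThr} to route $v_W$ or $v'_W$ into the correct side are the same as in the paper.
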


\begin{proof}
  By Theorem~\ref{th:threshold}, $G[A]$ contains a vertex $u$ that is
  either universal or isolated.  If $u$ is universal, then for every
  $x\in B$, $u\in N(H_x)$ ($u$ cannot be in $H_x$ since $G[H_x]$ is
  anticonnected by property \ref{t:antiMod} of templates).  So, $u$ is adjacent to $x$ by
  property~\ref{t:makeIt} of templates. Hence, $u$ is a universal
  vertex of $G[ A\cup B]$.

  Otherwise, $u$ is an isolated vertex of $G[A]$.  So, $G[A]$ is
  anticonnected. Hence, the vertex $w$ corresponding to the hyperedge
  $W$ from condition~\eqref{a:universal} of templates is in $B$.  By
  property~\ref{t:makeIt} of templates, $w$ is a universal vertex of
  $G[A\cup B]$.
  
  The proof for $G[A' \cup B']$ is similar.  So, $w$ and $w'$ exist,
  and by the way we construct them, we see that either
  $w\in A$ and $w'\in B'$, or $w\in B$ and $w'\in A'$.
\end{proof}

Let $w$ and $w'$ be as in Lemma~\ref{l:ABuniv}. The 7-tuple
$(A, B, A', B', I, w, w')$ is then called an \emph{$\ell$-partition}
of $G$.

Let us give a simple example.  Consider an integer $\ell \geq 2$ and a
threshold graph $J$ on three vertices $\{1, 2, 3\}$ with no edges. So,
$G[A]$ has no edges, $G[A']$ is a triangle on three vertices
$v'_1, v_2', v'_3$, and for $i=1, 2, 3$, there is a path of length
$\ell - 1$ from $v_i$ to $v'_i$.  Consider $\mathcal H$ the hypergraph
on $\{1, 2, 3\}$ with a unique hyperedge that is $\{1, 2, 3\}$.  We
now see that $G$ is a balanced pyramid with apex $w$ and triangle
$v'_1v'_2v'_3$.  Under these circumstances, the sets
$A=\{v_1, v_2, v_3\}$, $B=\{w\}$, $A'= \{v'_1, v'_2, v'_3\}$,
$B'=\emptyset$, $I= V(G) \sm (A\cup B \cup A' \cup B')$, $w$ and
$v'_3$ form an $\ell$-partition of $G$.

It is worth noting that the $\ell$-partition above is not unique.
Here is another one. Call $u$ the neighbor of $v'_3$ in the path from
$v_3$ to $v'_3$ with interior in $I$ (possibly, $u=v_3$).  Set
$A_1 = \{w, v_1, v_2\}$, $B_1= \emptyset$, $A'_1= \{u, v'_1, v'_2\}$,
$B'_1=\{v'_3\}$ and $I_1= V(G) \sm (A_1\cup B_1 \cup A'_1 \cup B'_1)$.
It can be checked that $A_1$, $B_1$, $A'_1$, $B'_1$, $I_1$, $w$ and
$v'_3$ form another $\ell$-partition of $G$.  See
Figure~\ref{f:pyr}. Some edges are dashed in several ways, this will
be explained later, so far, they are just edges of $G$.

\begin{figure}
\begin{center}
 \includegraphics{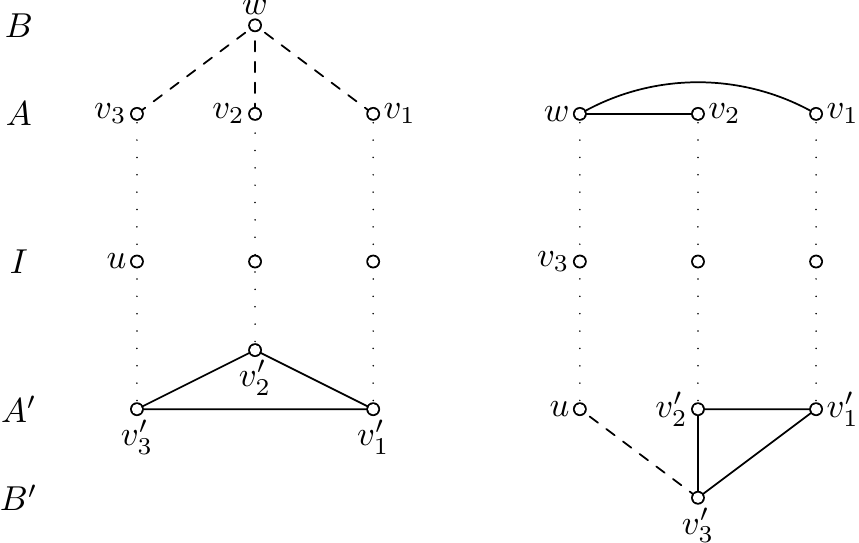}
\end{center}
\caption{Two $3$-partitions of a pyramid whose paths
 all have length 3\label{f:pyr}}
\end{figure}

\begin{lemma}
  \label{l:pyrTemplate}
  For all integers $\ell\geq 2$, every pyramid $\Pi$ such that $\Pi\in
  \mathcal C_{2\ell +1}$ is an odd $\ell$-template. 
\end{lemma}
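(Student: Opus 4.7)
The plan is to exhibit an explicit template construction realizing $\Pi$. First I would invoke Lemma~\ref{l:holeTruemperS}: since $\Pi\in\mathcal C_{2\ell+1}$ is a pyramid, its three paths must all have length exactly $\ell$. So write $\Pi$ as an apex $a$, a triangle $b_1b_2b_3$, and three internally vertex-disjoint paths $P_i$ of length $\ell$ from $a$ to $b_i$; since $\ell\geq 2$, each $P_i$ has an interior neighbor $a_i$ of $a$.

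Next, I would choose the ingredients of the template as follows: let $J$ be the edgeless threshold graph on $\{1,2,3\}$, and let $\mathcal H$ be the hypergraph on $\{1,2,3\}$ with the single hyperedge $X=\{1,2,3\}$. Clearly $J$ is a threshold graph, $\mathcal H$ is laminar, $X$ is a module of $J$ (being all of $V(J)$) of cardinality~$3$, and $X$ contains every vertex of $\mathcal H$, so conditions (2a) and (2b) of the template definition hold. Since $J[X]$ is edgeless on three vertices, it is anticonnected, so $v_X\in B$; since $\bar J[X]$ is the triangle, it is not anticonnected, so $v'_X\notin B'$. Hence $B=\{v_X\}$ and $B'=\emptyset$.

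Then I would identify $v_i:=a_i$, $v'_i:=b_i$ for $i\in\{1,2,3\}$, and $v_X:=a$, and take the principal path from $v_i$ to $v'_i$ to be the sub-path of $P_i$ from $a_i$ to $b_i$, of length $\ell-1$. These paths are internally vertex-disjoint because the $P_i$ share only $a$, and $I$ is their union of interiors. It remains to check that the edges of $\Pi$ match exactly those prescribed by items \ref{i:BJ}--\ref{tempe7} of the definition. The edges within principal paths agree by construction; $G[A]$ is edgeless (the $a_i$ lie on distinct paths with no edges between them save those at $a$ or in the triangle), matching $J$; $G[A']$ is the triangle $b_1b_2b_3$, matching $\bar J$; items \ref{a:last}, \ref{tempe4}, \ref{tempe6} are vacuous since $|B|=1$ and $B'=\emptyset$; and item \ref{a:makeit} requires $v_X$ to be adjacent to every $v_i$ since $N_J[X]=X=\{1,2,3\}$, which is exactly the relation $a\sim a_i$.

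There is no real obstacle — the verification is entirely mechanical — so the only step that requires a moment of attention is pointing out where the $\ell$-partition comes from: taking $w:=v_X=a$ (universal in $G[A\cup B]$ since it dominates $a_1,a_2,a_3$) and $w':=v'_3=b_3$ (universal in the triangle $G[A'\cup B']$) gives the pair required by Lemma~\ref{l:ABuniv} in the case $w\in B$, $w'\in A'$, so $(A,B,A',B',I,w,w')$ is a valid $\ell$-partition witnessing that $\Pi$ is an odd $\ell$-template.
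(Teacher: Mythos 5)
Your proof is correct and takes essentially the same approach as the paper: the paper's proof simply invokes the worked example preceding the lemma (edgeless threshold graph $J$ on $\{1,2,3\}$, hypergraph $\mathcal H$ with the single hyperedge $\{1,2,3\}$, the apex playing the role of $v_X$, the neighbors of the apex as $A$, and the triangle as $A'$), which is precisely the construction you spell out and verify. Your writeup is more explicit in checking each clause of the template definition, but the mathematical content is identical.
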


\begin{proof}
  Since $\Pi\in \mathcal C_{2\ell +1}$, its three paths have
  length~$\ell$.  The explanations above show it is an odd
  $\ell$-template. 
\end{proof}

We now give a more complicated example represented in
Figure~\ref{f:template}. The
threshold graph $J$ has 10 vertices.  Each vertex of $G[A]$ and
$G[A']$ is represented with a number in a circle that represents the
elimination ordering of the threshold graph it belongs to.  The
hypergraph $\mathcal H$ has the following hyperedges:
$X_1 = \{1, 2\}$, $X_2 = \{1, 2, 3\}$, $X_3 = \{9, 10\}$,
$X_4 = \{5, 6, 7\}$, $X_5 = \{5, 6, 7, 8\}$, $X_6 = \{4, 5, 6, 7, 8\}$
and $X_7 = \{1, \dots, 10\}$.  The vertex of $B\cup B'$ corresponding
to a hyperedge $X_i$ is denoted by $x_i$.

\begin{figure}

 \hspace{-1em}\includegraphics[height=18cm]{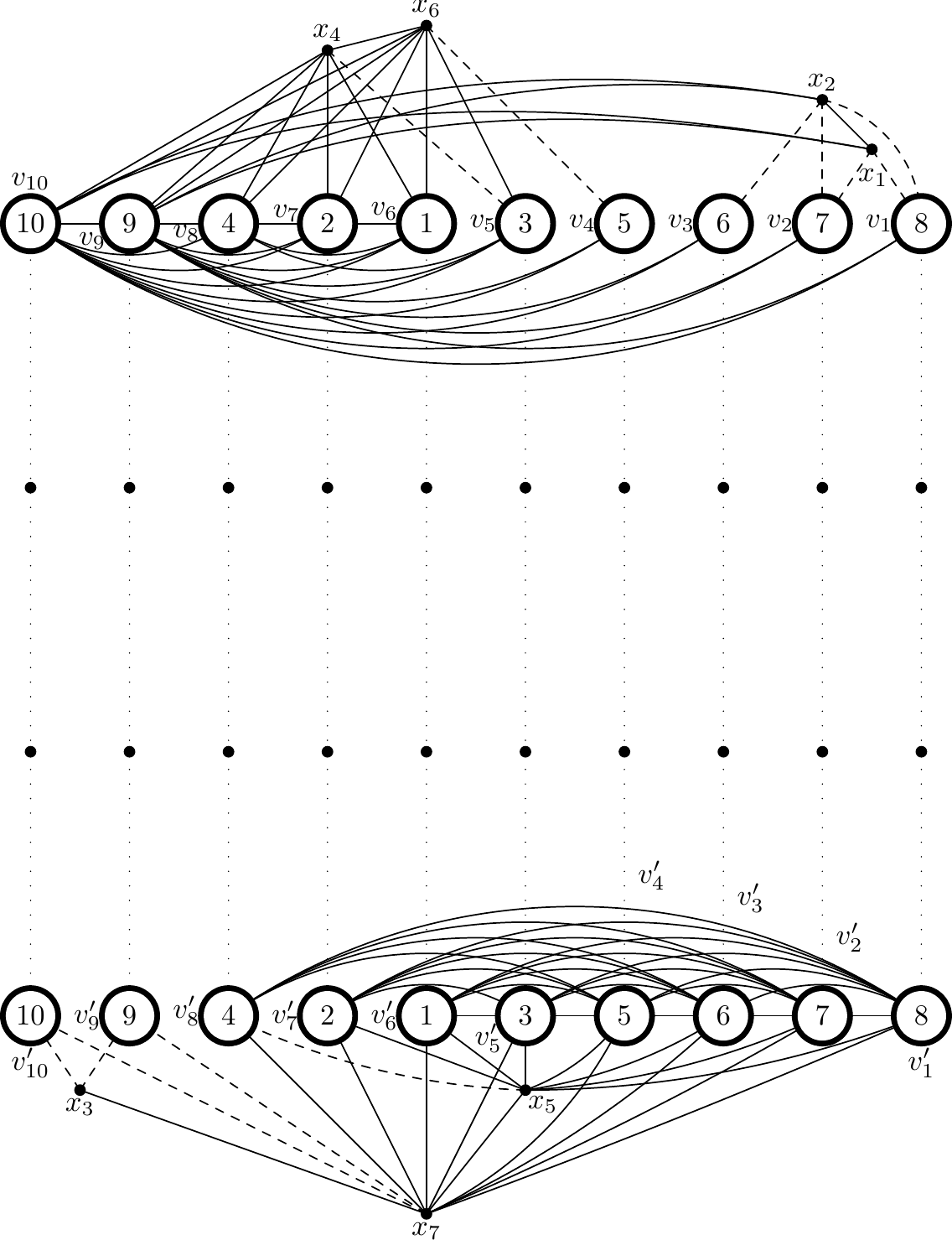}

 \caption{An odd $4$-template \label{f:template}}
\end{figure}

\subsection{Structure of odd templates}

Throughout this subsection, $\ell\geq 2$ is an integer and
$(A, B, A', B', I, w, w')$ is an $\ell$-partition of an odd
$\ell$-template $G$. 

\begin{lemma}
  \label{l:recoverHx}
  If $x\in B$ (resp. $x\in B'$), then $H_x$ (resp.\ $H'_x$) is the
  unique anticomponent of $G[N_A(x)]$ (resp.\ $G[N_{A'}(x)]$) that
  contains at least two vertices.
\end{lemma}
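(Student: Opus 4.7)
The plan is to handle the case $x\in B$ in detail (the case $x\in B'$ being symmetric by replacing $J$, $A$ with $\bar J$, $A'$ throughout and invoking Lemma~\ref{l:ModuleThr} to transfer the anticonnected-module status).

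Write $x=v_X$ for a hyperedge $X$ of $\mathcal H$, so by definition $H_x=\{v_i : i\in X\}$. First, I would identify $N_A(x)$ explicitly: by the edge rule~\ref{a:makeit} in the definition of an odd template, $v_i$ is adjacent to $x$ exactly when $i\in N_J[X]$, so $N_A(x)=H_x\cup N_A(H_x)$ (the latter computed inside $G[A]$, which is isomorphic to $J$ by property~\ref{t:threshold}). By condition~\ref{a:module}, $|H_x|=|X|\ge 2$, and by property~\ref{t:antiMod}, $H_x$ is an anticonnected module of $G[A]$.

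Next I would invoke Lemma~\ref{l:nxcc} on the threshold graph $G[A]$ with the anticonnected module $H_x$: it tells us that $N_A(H_x)$ is a clique of $G[A]$ that is complete to $H_x$. From here the structure of anticomponents of $G[N_A(x)]$ is immediate. Since $N_A(H_x)$ is complete to $H_x$ in $G$, no vertex of $N_A(H_x)$ is linked to any vertex of $H_x$ in the complement of $G[N_A(x)]$; since $N_A(H_x)$ is a clique in $G$, it becomes a stable set in that complement. Thus each vertex of $N_A(H_x)$ is isolated in $\overline{G[N_A(x)]}$, and so forms an anticomponent of $G[N_A(x)]$ by itself. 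On the other hand, $H_x$ is anticonnected, so it is entirely contained in a single anticomponent, and by the separation just established it is itself an anticomponent.

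This gives the anticomponent decomposition: one anticomponent $H_x$ of size $|H_x|\ge 2$, together with $|N_A(H_x)|$ singleton anticomponents. Hence $H_x$ is the unique anticomponent of $G[N_A(x)]$ with at least two vertices, as required. The $B'$ case proceeds identically, using property~\ref{t:threshold} (so $G[A']\cong\bar J$ is a threshold graph), the $B'$-analogue of property~\ref{t:makeIt} (rule~\ref{tempe6}), and Lemma~\ref{l:nxcc} applied to $\bar J$. No step looks genuinely hard; the only subtlety to keep track of is that the neighborhoods and moduleness in Lemma~\ref{l:nxcc} must be taken inside the appropriate threshold graph ($G[A]$ or $G[A']$), which is exactly what properties~\ref{t:threshold} and~\ref{t:antiMod} guarantee.
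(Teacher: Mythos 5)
Your proof is correct, and it reaches the same conclusion by a slightly different route. Both arguments start from the same structural input, property~\ref{t:antiMod} (that $H_x$ is an anticonnected module of $G[A]$ of cardinality at least two), but differ in how they get uniqueness. The paper argues abstractly: $G[N_A(x)]\subseteq G[A]$ is $C_4$-free, so its complement is $2K_2$-free and has at most one component with at least two vertices; then moduleness forces $H_x$ to be an anticomponent of $G[N_A(x)]$, which is therefore the unique one of size $\geq 2$. You instead exhibit the full anticomponent decomposition directly by invoking Lemma~\ref{l:nxcc}: after observing $N_A(x)=H_x\cup N_A(H_x)$, the fact that $N_A(H_x)$ is a clique complete to $H_x$ makes every vertex of $N_A(H_x)$ isolated in $\overline{G[N_A(x)]}$, leaving $H_x$ as the sole anticomponent of size $\geq 2$. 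Your version is more explicit, at the cost of going through Lemma~\ref{l:nxcc}; the paper's is marginally shorter. Since Lemma~\ref{l:nxcc} is itself a consequence of the threshold (hence $C_4$-free) structure of $G[A]$, the two arguments are essentially equivalent in content, and yours is sound.
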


\begin{proof}
  Since $G$ is $C_4$-free and $N_A(x)$ contains at least two
  non-adjacent vertices, $G[N_A(x)]$ contains a unique anticomponent
  $X$ of size at least~2.  Since by property~\ref{t:antiMod} of
  templates, $H_x$ is an anticonnected module of $G[A]$, it is also an
  anticonnected module of $G[N_A(x)]$. Since every vertex of $N_A(x)$
  is either in $H_x$ or complete to $H_x$, $H_x$ must be an
  anticomponent of $G[N_A(x)]$, and since it contains at least two
  vertices, it is equal to $X$.
\end{proof}

\begin{lemma}
  \label{lt:deuxsommetsdeB}
  If $x, y \in B$ (resp.\ $\in B'$) are such that $xy\notin E(G)$,
  then $H_x \cup \{x\}$ (resp.\ $H'_x \cup \{x\}$)  is anticomplete to
  $ H_y\cup \{y\}$ (resp.\ $H'_y \cup \{y\}$).   
\end{lemma}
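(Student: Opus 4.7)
The plan is to translate the non-adjacency of $x$ and $y$ back to the hypergraph $\mathcal{H}$ and then prove the corresponding statement about the hyperedges in the threshold graph $J$ (or $\bar J$). I treat only the case $x,y\in B$ in detail; the $B'$ case follows by applying the same argument with $J$ replaced by $\bar J$, using Lemma~\ref{l:ModuleThr} to see that $X$ is a module of $\bar J$ exactly when it is a module of $J$.

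Write $x=v_X$ and $y=v_Y$. By the edge rule~\ref{a:last}, $xy\notin E(G)$ forces $X\cap Y=\emptyset$. Both $X$ and $Y$ are modules of $J$ with $|X|,|Y|\geq 2$ by condition~\ref{a:module}, and both $J[X]$ and $J[Y]$ are anticonnected by the definition of $B$ in step~\ref{tempd}\ref{a:chooseB}.

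The heart of the proof is the following claim: \emph{$X$ is anticomplete to $Y$ in $J$.} Suppose for contradiction that some $i\in X$ is adjacent to some $j\in Y$. Since $Y$ is a module of $J$ and $i\notin Y$, the vertex $i$ is complete to $Y$; symmetrically, $j$ is complete to $X$. In particular every vertex of $Y$ has a neighbor in $X$, so $Y\subseteq N_J(X)$. By Lemma~\ref{l:nxcc} applied to the anticonnected module $X$, the set $N_J(X)$ is a clique of $J$, hence $Y$ is a clique. This contradicts $J[Y]$ being anticonnected with at least two vertices, proving the claim.

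It now suffices to translate this claim to $G$ using the edge rules of the template. Rule~\ref{i:BJ} gives that $H_x$ is anticomplete to $H_y$ in $G$. Rule~\ref{a:makeit} gives that $x=v_X$ is anticomplete to $H_y$ (every $j\in Y$ lies outside $X$ and outside $N_J(X)$ by the claim, so $j\notin N_J[X]$), and symmetrically $y$ is anticomplete to $H_x$; together with $xy\notin E(G)$, this yields that $H_x\cup\{x\}$ is anticomplete to $H_y\cup\{y\}$. The $B'$ version runs identically, using rules~\ref{i:BJP}, \ref{tempe6} and the fact that $G[A']\cong\bar J$. The only delicate step is the claim, and the obstacle there is really just the temptation to use $J[X]$ and $J[Y]$ being anticonnected directly; the clean way is to combine modularity (to propagate a single cross-edge to all of $Y$) with Lemma~\ref{l:nxcc} (to force $Y$ into a clique).
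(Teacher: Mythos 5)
Your proof is correct and follows essentially the same route as the paper's: both assume a cross-edge, use modularity to upgrade it to completeness between the two sets, invoke Lemma~\ref{l:nxcc} to force one of the sets to be a clique, and contradict anticonnectedness. The only cosmetic difference is that you work in the abstract threshold graph $J$ and then translate back through the edge rules of the template, whereas the paper argues directly in $G[A]$ using property~\ref{t:makeIt}; these are interchangeable since $G[A]\cong J$.
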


\begin{proof}
  Suppose $x,y\in B$ and $xy\notin E(G)$. Then by condition \eqref{a:last}
  of templates, $H_x$ and $H_y$ are disjoint.

  Suppose there is at least one edge from $H_x$ to $H_y$.  Since they
  are both modules of $G[A]$, it follows that $H_x$ is complete to
  $H_y$, so $H_y \subseteq N(H_x)$.  Hence, by Lemma~\ref{l:nxcc},
  $H_y$ is a clique. Since $H_y$ contains at least two vertices, this
  contradicts $H_y$ being anticonnected.  So, $H_x$ is anticomplete to
  $H_y$.  Hence, by property~\ref{t:makeIt} of templates, $x$ is
  anticomplete to $H_y$ and $y$ is anticomplete to $H_x$. So,
  $H_x \cup \{x\}$ is anticomplete to $ H_y\cup \{y\}$ because
  $xy\notin E(G)$ holds from our assumption.

  The proof for $x,y\in B'$ is similar. 
\end{proof}

\begin{lemma}
  \label{l:Bdeg3}
  Every vertex of $G$ has degree at least~2 and every vertex of
  $B\cup B'$ has degree at least~3.
\end{lemma}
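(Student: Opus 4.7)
The plan is to check the degree bound on each of the five parts of the partition separately. Vertices of $I$ are interior to principal paths, so item~\ref{tempe7} of the template construction immediately gives them degree exactly $2$.

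For a vertex $v_i\in A$, its principal path (of length $\ell-1\ge 1$) contributes one neighbor: either the internal vertex of the path adjacent to $v_i$ when $\ell\ge 3$, or $v'_i$ itself when $\ell=2$. A second neighbor is produced by the universal vertex $w$ of $G[A\cup B]$ from Lemma~\ref{l:ABuniv}: if $v_i\neq w$ then $v_i$ is adjacent to $w$, and if $v_i=w$ then $v_i$ is adjacent to every other vertex of $A\cup B$, of which there are at least $k-1\ge 2$. The case of $v'_i\in A'$ is symmetric, using $w'$ and $G[A'\cup B']$.

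For $v_X\in B$, property~\ref{t:makeIt} says that $v_X$ is complete to $H_X$ (since $X\subseteq N_J[X]$), so $v_X$ already has $|X|\ge 2$ neighbors there; the task is to exhibit a third. I again invoke the universal vertex $w$ of $G[A\cup B]$: if $w=v_X$ then $v_X$ is adjacent to every other vertex of $A\cup B$, in particular to all of $A$, yielding $k\ge 3$ neighbors; if $w\neq v_X$ then $v_X$ is adjacent to $w$, and this produces a third neighbor \emph{provided} $w\notin H_X$.

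The one point requiring more than a direct unpacking of the definitions, and hence the expected main obstacle, is ruling out $w\in H_X$. Suppose for contradiction that $w=v_j$ with $j\in X$. Then $w$ is universal in $G[A]$, so in the isomorphic copy $J$ the vertex $j$ is universal. But by property~\ref{t:antiMod}, $X$ is a module of $J$ with $|X|\ge 2$ and $J[X]$ anticonnected, so $j$ being universal in $J[X]$ would make $j$ isolated in $\overline{J[X]}$, forcing $\overline{J[X]}$ to be disconnected and contradicting the anticonnectedness of $J[X]$. Hence $w\notin H_X$, and $v_X$ has degree at least $3$. The case of $v'_X\in B'$ is completely symmetric, via $w'$ and $\overline{J}$.
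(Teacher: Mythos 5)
Your proof is correct and follows essentially the same route as the paper's: both argue that vertices of $I$ and $A\cup A'$ have degree at least~2 via their principal path and the universal vertex $w$ (resp.\ $w'$), and both obtain the degree-3 bound for $B\cup B'$ by combining the two neighbors in $H_x$ with $w$ (or $|A|\geq 3$ when $x=w$), using the anticonnectedness of $H_x$ to rule out $w\in H_x$. Your handling of the $\ell=2$ edge case for vertices of $A$ is slightly more careful than the paper's, but the substance is the same.
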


\begin{proof}
  Vertices in $I$ are all in the interior of some path, so they have
  degree at least~2. 
  
  Vertex $w$ has degree at least~2 since $|A|\geq 3$.  A vertex
  $v\in A\sm\{w\}$ therefore has degree at least~2 (one neighbor in
  $I$, and $w$). So every vertex of $A$ has degree at least 2. The
  proof for $A'$ is similar.

  Let $x$ be a vertex of $B$.  If $x=w$, then $x$ has degree at
  least~3 (because $|A|\geq 3$), so we may assume $x\neq w$.  By,
  property~\ref{t:antiMod} of templates, $|H_x|\geq 2$ and
  $w\notin H_x$ because $H_x$ is anticonnected.  So $x$ has degree at
  least~3 as claimed (at least two neighbors in $H_x$, and $w$). The
  proof for $x\in B'$ is similar.
\end{proof}

The following shows that odd templates can be considered as a
generalization of balanced pyramids (we do not need it and include it
because we believe it helps understanding the structure of the class
we work on).

\begin{lemma}
  For every integer $\ell\geq 2$, every odd $\ell$-template $G$
  contains a pyramid.
\end{lemma}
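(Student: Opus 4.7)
My plan is to use the $\ell$-partition $(A, B, A', B', I, w, w')$ given by Lemma~\ref{l:ABuniv} and to exhibit an explicit pyramid whose three paths all have length $\ell$. The template construction is symmetric under the exchange $J \leftrightarrow \bar J$, $A \leftrightarrow A'$, $B \leftrightarrow B'$: the hypergraph $\mathcal H$ is preserved, since modules of $J$ and of $\bar J$ coincide by Lemma~\ref{l:ModuleThr}, and each edge rule has its primed counterpart, so $(A', B', A, B, I, w', w)$ is also an $\ell$-partition (with $\bar J$ in place of $J$). I may therefore assume $w \in A$ and $w' \in B'$. Inspecting the proof of Lemma~\ref{l:ABuniv} then yields $w = v_j$ for some universal vertex $j$ of $J$, and $w' = v'_W$ where $W = \{1, \ldots, k\}$ is the ``top'' hyperedge from condition~\eqref{a:universal}.

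If $J$ is not complete, then $J\setminus\{j\}$ is not complete (since $j$ is universal), so I pick two distinct $i_1, i_2 \in \{1, \ldots, k\}\setminus\{j\}$ with $i_1 i_2 \notin E(J)$, and I propose the pyramid with apex $w$, triangle $\{v'_{i_1}, v'_{i_2}, w'\}$, and three paths of length $\ell$: the first two obtained by prepending $w$ to the principal paths of $v_{i_1}$ and $v_{i_2}$, and the third obtained by extending the principal path of $v_j = w$ by the edge $v'_j w'$. The three triangle edges are present because $v'_{i_1} v'_{i_2} \in E(G)$ (from $i_1 i_2 \notin E(J)$) and $w'$ is universal in $G[A' \cup B']$. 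If instead $J$ is complete, I choose any three distinct indices $i_1, i_2, i_3$ and propose the ``dual'' pyramid with apex $w'$, triangle $\{v_{i_1}, v_{i_2}, v_{i_3}\}$ (which is a triangle since $J$ is complete), and three paths of length $\ell$ obtained by prepending each edge $w' v'_{i_k}$ to the principal path of $v_{i_k}$; here $\{v'_{i_1}, v'_{i_2}, v'_{i_3}\}$ is an independent set of $G[A']$ because $\bar J$ has no edge on these indices.

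It remains, in each case, to check that the three proposed paths are induced and that the only edges between distinct paths are the three apex edges and the three triangle edges. This reduces to the template edge rules, which forbid edges between $A$ and $A'$, between $A$ and $B'$ (and symmetrically between $A'$ and $B$), and from an internal vertex of a principal path to anything outside that path; combined with, in the first case, the observation that $j$ being universal in $J$ forces $v'_j v'_{i_k}$ to be a non-edge of $G[A']$ for each $k$. This case-by-case bookkeeping is the only nonroutine step, and the main conceptual subtlety is realising that once $J$ becomes complete the natural location of the pyramid's apex flips from $w$ to $w'$.
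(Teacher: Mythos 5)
Your proof is correct. The underlying construction is the same one the paper uses — a universal vertex of $G[A\cup B]$ or $G[A'\cup B']$ attached to three principal paths whose far ends form a triangle — but the route to it is heavier than necessary. The paper simply picks an arbitrary triple $v_i,v_j,v_h\in A$, observes by complementation that exactly one of $G[\{v_i,v_j,v_h\}]$, $G[\{v'_i,v'_j,v'_h\}]$ is disconnected, and then attaches $w$ (necessarily distinct from the triple because $w$ is universal in $G[A\cup B]$) to the three principal paths; the triangle is automatically either $v'_iv'_jv'_h$ or $w$ together with the unique edge among $v_i,v_j,v_h$. That argument needs no symmetry swap, no case split on whether $J$ is complete, no appeal to the internal structure of the proof of Lemma~\ref{l:ABuniv} to identify $w$ with a specific $v_j$, and no use of $w'$ at all. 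Your version is correct but pays for the more explicit description with extra bookkeeping: one must justify that the swap of $(J,A,B)$ with $(\bar J,A',B')$ yields an $\ell$-partition, identify $w$ as the universal vertex of $J$, and handle the complete and non-complete cases by placing the apex at $w$ or at $w'$ respectively. The one slightly delicate point in your argument — that $j$ universal forces $v'_jv'_{i_k}\notin E(G)$ — is handled correctly, so there is no gap, only a longer path to the same destination.
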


\begin{proof}
  Consider three vertices $v_i, v_j$ and $v_h$ in $A$ and the corresponding
  vertices $v'_i, v'_j$ and $v'_h$ in $A'$.  Exactly one of $G[\{v_i, v_j,
  v_h\}]$ and $G'[\{v'_i, v'_j, v'_h\}]$ is connected (because they have three
  vertices and one is isomorphic to the complement of the other).  So, up to
  symmetry, we may assume that $G[\{v_i, v_j, v_h\}]$ is disconnected (and
  therefore contains at most one edge).

  Note that $w$ is distinct from $v_i, v_j$ and $v_h$ since $G[\{v_i, v_j, v_h\}]$ is
  disconnected.  We see that $w$ and the three principal paths linking $\{v_i,
  v_j, v_h\}$ to $\{v'_i, v'_j, v'_h\}$ form a pyramid (if $G[\{v_i, v_j,
  v_h\}]$ contains one edge $e$, then the triangle is formed by $e$ and $w$, and
  otherwise it is $v'_iv'_jv'_h$).
\end{proof}

From the definition of odd $\ell$-templates, every vertex $x\in B$
corresponds to a set $H_x\subseteq A$.  These sets form a hypergraph
$\mathcal H_B$ on the vertex-set $A$ (that is isomophic to a
sub-hypergraph of $\mathcal H$).  Let us build an extention
$\mathcal H_A$ of $\mathcal H_B$ by adding more hyperedges: for every
vertex $v\in A$, we add the hyperedge
$$H_v = N_A[v] \cap \{u \in A : u\leq_{G[A]} v\}.$$

Note that $v\in H_v$.

\begin{lemma}
  \label{l:abc}
  $\mathcal H_A$ is a laminar hypergraph and $G[A\cup B]$ is
  isomorphic to its line graph (in particular, $G[A\cup B]$ is a
  quasi-threshold graph and therefore a chordal graph).  A similar
  statements holds for $G[A'\cup B']$.
\end{lemma}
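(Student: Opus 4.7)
The plan is to establish laminarity of $\mathcal H_A$ by case analysis, then exhibit an explicit bijection with the line graph of $\mathcal H_A$, and finally invoke Theorem~\ref{th:laminar}. Since $\mathcal H_B = \{H_x : x \in B\}$ is (up to relabelling) a sub-hypergraph of $\mathcal H$ and therefore already laminar, the new content is the behaviour of the added hyperedges $\{H_v : v \in A\}$, both among themselves and against the old ones.

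For laminarity within $\{H_v : v \in A\}$, given $u, v \in A$ we may assume $u \leq_{G[A]} v$ by Theorem~\ref{th:threshold} applied to the threshold graph $G[A]$. If $u \in N_{G[A]}[v]$, then transitivity of $\leq_{G[A]}$ together with the inclusion $N(u) \sm \{v\} \subseteq N(v) \sm \{u\}$ yields $H_u \subseteq H_v$. Otherwise $u \neq v$ and $u \not\sim v$; any alleged $y \in H_u \cap H_v$ would have to be adjacent to both $u$ and $v$, and the inclusion $N(y) \sm \{u\} \subseteq N(u) \sm \{y\}$ coming from $y \leq_{G[A]} u$ forces $v \in N(u)$, a contradiction. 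For laminarity between $H_v$ and $H_x$ with $x \in B$, I use the fact that $H_x$ is a module of $G[A]$ to split on whether $v$ is in $H_x$, complete to $H_x$, or anticomplete to $H_x$. In the first case, $H_v \subseteq H_x$: any $y \in H_v \sm H_x$ would be either complete to $H_x$, contradicting $y \leq_{G[A]} v$ via Lemma~\ref{l:nxcc}, or anticomplete to $H_x$, contradicting $y \in N[v]$ with $v \in H_x$. In the second case, Lemma~\ref{l:nxcc} gives $v >_{G[A]} y$ for every $y \in H_x$, so $H_x \subseteq H_v$. In the third case, $H_v \cap H_x$ is empty because any element would lie in $N[v]$, contradicting anticompleteness.

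For the line graph statement, I take the map sending $v \in A$ to $H_v$ and $x \in B$ to $H_x$; since multiple hyperedges are permitted in $\mathcal H_A$, this is a bijection onto the hyperedge set by construction. Within $A$ the case analysis above already gives that $uv \in E(G)$ iff $H_u \cap H_v \neq \emptyset$; for $v \in A$ and $x \in B$, edge rule~\ref{a:makeit} says $vx \in E(G)$ iff $v \in N_A[H_x]$, and the three subcases show this is equivalent to $H_v \cap H_x \neq \emptyset$; and within $B$, rule~\ref{a:last} directly gives $xy \in E(G)$ iff $H_x \cap H_y \neq \emptyset$. Theorem~\ref{th:laminar} then yields that $G[A \cup B]$ is quasi-threshold, hence $(P_4, C_4)$-free, hence chordal. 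The analogous statement for $G[A' \cup B']$ follows by applying the same argument to the threshold graph $\bar J = G[A']$ and the hyperedge family $\{H'_x : x \in B'\}$, using Lemma~\ref{l:ModuleThr} to translate module and anticonnectivity conditions between $J$ and $\bar J$.

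The main obstacle is the middle case: laminarity between a newly added hyperedge $H_v$ and an old hyperedge $H_x$. It forces a three-way positional split of $v$ relative to $H_x$, with each subcase using a different ingredient (transitivity of $\leq_{G[A]}$, Lemma~\ref{l:nxcc}, or the module property). Everything else is essentially bookkeeping once the definitions are unfolded.
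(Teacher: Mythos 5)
Your proof is correct and follows essentially the same route as the paper: it analyzes pairs of hyperedges by whether the corresponding vertices of $G[A\cup B]$ are adjacent or not, reduces to the same three case splits ($A$--$A$, $A$--$B$, $B$--$B$), and hinges on Lemma~\ref{l:nxcc} and the module property of $H_x$ in exactly the same spots. The only organizational difference is that you prove laminarity and the line-graph isomorphism as two passes, whereas the paper proves both at once by showing that edges force inclusion and non-edges force disjointness.
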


\begin{proof}
  By construction, every vertex of $A \cup B$ corresponds to a
  hyperedge of $\mathcal{H_A}$.  We have to check that the ends of
  every edge of $G[A\cup B]$ correspond to hyperedges of $\mathcal H_A$
  that are included one in the other, and that the ends of every
  non-edge correspond to a pair of disjoint hyperedges. This will
  prove that $G[A\cup B]$ is isomorphic to the line graph of
  $\mathcal H_A$ and that $\mathcal H_A$ is laminar.  Let us check all
  the cases.

  For $x, y\in B$, since $\mathcal H_B$ is laminar and $G[B]$ is
  isomorphic to its line graph, we have nothing to prove.

  Let $u, v \in A$. By Theorem~\ref{th:threshold}, we may assume up to
  symmetry that $u\geq_{G[A]} v$.  If $uv\in E(G)$, then clearly
  $H_v\subseteq H_u$. Suppose $uv\notin E(G)$, and let $t$ be a vertex
  of $A$ such that $t\leq_{G[A]} v$.  So, $t\leq_{G[A]} u$.  If $tv\in E(G)$,
  then $uv\in E(G)$, a contradiction. So, $tv\notin E(G)$ and
  $H_v= \{v\}$.  Since $v\notin N[u]$, we have that
  $H_u\cap H_v = \emptyset$.

  Consider finally vertices $u \in A$ and $x\in B$.  Suppose first
  that $ux\in E(G)$.  By property~\eqref{t:makeIt} of templates, we have
  that $u \in N_A[H_x]$.  If $u\in H_x$, then $H_u\subseteq H_x$ by
  Lemma~\ref{l:nxcc} (specifically, we use $N_A(H_x)>_{G[A]} H_x$ to
  conclude that $N_A(H_x) \cap H_u = \emptyset$, and then since
  $H_u \subseteq N_A[H_x]$ it follows that $H_u \subseteq H_x$).  If
  $u\in N_A(H_x)$, then by Lemma~\ref{l:nxcc} (again also using that
  $N_A(H_x) >_{G[A]} H_x$), $H_x\subseteq H_u$.  So, an edge indeed yields
  an inclusion of the corresponding hyperedges.

  Suppose now that $ux\notin E(G)$. So, $u\notin N[H_x]$. Since $u$ is not in
  $H_x$ and has no neighbor in $H_x$, it follows that $H_u$ is disjoint from
  $H_x$.

  So $\mathcal H_A$ is a laminar hypergraph and $G[A\cup B]$ is
  isomorphic to its line graph. It follows from
  Theorem~\ref{th:laminar} that $G[A\cup B]$ is a quasi-threshold
  graph and therefore a chordal graph.
\end{proof}

\begin{lemma}
  \label{l:HinTemplate}
  Every hole of $G$ is formed by two principal paths of $G$ and a
  single vertex of $A\cup B\cup A' \cup B'$ that does not belong to
  these principal paths (it therefore has length $2\ell+1$).
\end{lemma}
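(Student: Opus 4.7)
The plan is to analyze how an arbitrary hole $H$ of $G$ interacts with the three parts of $G$: the graph $G[A\cup B]$, the graph $G[A'\cup B']$, and the set $I$ of interior vertices of principal paths. Two structural observations frame everything. First, every interior vertex of a principal path $P_i$ has degree exactly $2$ with both neighbors on $P_i$, so as soon as $H$ uses one such vertex it must contain the whole of $P_i$ from $v_i$ to $v'_i$. Second, $A\cup B$ is anticomplete to $A'\cup B'$: the only edges crossing between the two sides belong to principal paths. Consequently, $H$ decomposes cyclically as $P_{i_1}, c_1, P_{i_2}, c_2, \dots, P_{i_m}, c_m$, where the $P_{i_t}$ are distinct principal paths used in full and each connector $c_t$ is an induced sub-path of $H$ lying entirely in $G[A\cup B]$ or entirely in $G[A'\cup B']$, with the side alternating as $t$ increases. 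By Lemma~\ref{l:abc}, both $G[A\cup B]$ and $G[A'\cup B']$ are chordal, and in fact quasi-threshold, hence $P_4$-free, so every connector has length at most $2$.

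I would then control $m$. The case $m=0$ would put $H$ entirely inside $G[A\cup B]$ or $G[A'\cup B']$, contradicting chordality. The side-alternation forces $m$ to be even, so $m\ge 2$. To rule out $m\ge 4$, pick indices $i_1, i_3$ that are not consecutive in the cyclic order of principal paths on $H$. The cycle distance on $H$ from $v_{i_1}$ to $v_{i_3}$ is at least $2(\ell-1)+2 = 2\ell \ge 4$, so $v_{i_1}$ and $v_{i_3}$ are non-adjacent on $H$, and similarly for $v'_{i_1}, v'_{i_3}$. By the template edge rules, $v_{i_1}v_{i_3} \in E(G)$ iff $i_1i_3 \in E(J)$, while $v'_{i_1}v'_{i_3} \in E(G)$ iff $i_1i_3 \notin E(J)$, so exactly one of these pairs is adjacent in $G$; that edge is then a chord of $H$, a contradiction. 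Hence $m=2$.

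For the last step I consider $m=2$ with principal paths $P_i$ and $P_j$, an $A$-side connector $c_A$ between $v_i, v_j$ and an $A'$-side connector $c_{A'}$ between $v'_i, v'_j$. Again exactly one of $v_iv_j, v'_iv'_j$ is an edge of $G$; say $v_iv_j \in E(G)$ (the other case is symmetric). Then $c_A$ must be precisely the edge $v_iv_j$, for otherwise this edge would be a chord of $H$; so $|c_A|=1$. Because $v'_iv'_j \notin E(G)$, we have $|c_{A'}|\ge 2$, hence $|c_{A'}|=2$ from the $P_4$-free bound. Therefore $H$ has length $2(\ell-1)+1+2 = 2\ell+1$, and $H$ is formed by the two principal paths $P_i, P_j$ together with the unique interior vertex of $c_{A'}$, a vertex of $A'\cup B' \subseteq A\cup B\cup A'\cup B'$ outside the two principal paths, as required. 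I expect the main obstacle to be the $m\ge 4$ step, where the chord argument relies crucially on the template identity linking edges on the two sides to $J$ and $\bar J$.
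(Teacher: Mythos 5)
Your proof is correct and follows essentially the same strategy as the paper: decompose the hole into principal-path traversals and connector paths inside $G[A\cup B]$ and $G[A'\cup B']$, bound each connector's length by the $P_4$-freeness supplied by Lemma~\ref{l:abc}, and invoke the complementation $G[A]\cong\overline{G[A']}$ to rule out more than two traversed principal paths. The only minor divergence is in that exclusion step: the paper observes that with three or more traversed paths some endpoint would have three neighbors on the hole, whereas you first note the number of traversed paths is even and then exhibit an explicit chord between a non-consecutive pair of endpoints.
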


\begin{proof}
  By Lemma~\ref{l:abc}, a hole $C$ of $G$ cannot contain only vertices of
  $A\cup B$, and similarly, it cannot contain only vertices of $A'\cup B'$. So
  it must contain vertices of some principal path, and also of a second
  principal path. In fact, $C$ must go through exactly two principal paths,
  since $G[A]$ is isomorphic to the complement of $G[A']$, if three
  paths are involved, there would be a vertex of $C$ with three neighbors in
  $C$, a contradiction.

  Since $G[A]$ is isomorphic to the complement of $G[A']$, up to a
  symmetry, for some nonadjacent vertices $u,v \in A$, the hole $C$ is
  made of a path $P = u\dots v$ with interior in $I\cup A'$ (whose
  length is $2\ell -1$) and a path $Q = u\dots v$ of $G[A\cup B]$.  By
  Lemma~\ref{l:abc}, $Q$ has length at most~2 (because a
  quasi-threshold graph is $P_4$-free), and since $uv\notin E(G)$, it
  has length~2.  So, $C$ has length $2\ell+1$ as claimed.
\end{proof}

\subsection{Connecting vertices of a template}
\label{subs:connectF0}

\begin{lemma}\label{lt:cheminsentrepatatesBBP}
  If $x\in B$ and $y\in B'$, then there exists in $G$ two paths $P$
  and $Q$ of length $\ell +1$ from $x$ to $y$ such that $P$ (resp.\
  $Q$) contains a principal path $P_0$ (resp.\ $Q_0$), and
  $P_0\neq Q_0$.
\end{lemma}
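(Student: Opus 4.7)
The plan is to find two distinct indices $i,j \in \{1,\dots,k\}$ with $xv_i, v'_iy, xv_j, v'_jy \in E(G)$. Then, letting $P_0$ (resp.\ $Q_0$) denote the principal path from $v_i$ to $v'_i$ (resp.\ from $v_j$ to $v'_j$), the concatenations $P = xv_i \dots v'_iy$ (through $P_0$) and $Q = xv_j \dots v'_jy$ (through $Q_0$) each have length $1 + (\ell - 1) + 1 = \ell + 1$, and $P_0 \neq Q_0$ since $i \neq j$. Writing $x = v_X$ and $y = v'_Y$, properties~\ref{a:makeit} and~\ref{tempe6} of templates give $xv_i \in E(G) \Leftrightarrow i \in N_J[X]$ and $yv'_i \in E(G) \Leftrightarrow i \in N_{\bar J}[Y]$, so it suffices to show $|N_J[X] \cap N_{\bar J}[Y]| \geq 2$.

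I would then case-split on whether $X$ and $Y$ meet. If $X \cap Y \neq \emptyset$, laminarity of $\mathcal H$ forces (without loss of generality) $X \subseteq Y$; then $X \subseteq N_J[X] \cap N_{\bar J}[Y]$ trivially, and $|X| \geq 2$ by condition~\ref{a:module} of templates, so we are done. If $X \cap Y = \emptyset$, I would first establish the standard fact that two disjoint modules of $J$ are either mutually complete or mutually anticomplete in $J$ (a single cross-edge, combined with the module conditions on both sides, forces all pairs to be edges). In the anticomplete case, every $i \in X$ lies in $N_J[X]$ trivially, and in $N_{\bar J}(Y) \subseteq N_{\bar J}[Y]$ because $i$ is non-adjacent in $J$ to every vertex of $Y$; so $X \subseteq N_J[X] \cap N_{\bar J}[Y]$ and $|X| \geq 2$. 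In the complete case the roles swap: every $j \in Y$ lies in $N_J(X) \subseteq N_J[X]$ (adjacent in $J$ to every vertex of $X$) and in $Y \subseteq N_{\bar J}[Y]$; so $Y \subseteq N_J[X] \cap N_{\bar J}[Y]$ and $|Y| \geq 2$.

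Finally, I would verify that $P$ and $Q$ are induced. By the construction of $G$ there are no $B$-$B'$ edges (so $xy \notin E(G)$), $x \in B$ has no neighbors outside $A \cup B$, $y \in B'$ has no neighbors outside $A' \cup B'$, and by property~\ref{tempe7} internal vertices of principal paths have degree exactly $2$ in $G$ and see only their principal-path neighbors. Therefore the only edges among the vertices of $P$ (resp.\ $Q$) are the consecutive edges of the path, so $P$ and $Q$ are induced paths of length $\ell + 1$ that contain distinct principal paths, as required. The main obstacle is the case $X \cap Y = \emptyset$: once one recognizes (via modulehood on both $X$ and $Y$) that they must be either fully joined or fully unjoined in $J$, everything else is a direct unpacking of the template definition.
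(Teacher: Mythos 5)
Your proof is correct and follows essentially the same approach as the paper: both identify the hyperedges $X$ and $Y$ associated to $x$ and $y$, invoke laminarity of $\mathcal H$ to split into a nested case and a disjoint case, and in each case produce two indices $i\neq j$ with $xv_i, v'_iy, xv_j, v'_jy\in E(G)$ to build the paths $xv_iP_iv'_iy$ and $xv_jP_jv'_jy$. In the disjoint case your observation that two disjoint modules of $J$ are mutually complete or mutually anticomplete is a clean reformulation of the paper's argument, which instead picks four indices $i,j\in X$, $q,r\in Y$, assumes $v_iv_q\in E(G)$ up to the $G[A]\cong\overline{G[A']}$ symmetry, and then deduces via the modularity of $H'_y$ that both $q$ and $r$ lie in $N_J(X)$ -- the same underlying combinatorics.
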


\begin{proof}
  We set $X=\{i\in \{1, \dots, k\} : v_i\in H_x\}$ and
  $Y=\{i\in \{1, \dots, k\} : v'_i\in H'_{y}\}$.  So, $X$ and $Y$ are
  hyperedges of $\mathcal H$ and since $\mathcal H$ is laminar, either
  $X\subseteq Y$, $Y\subseteq X$ or $X\cap Y=\emptyset$.
  
  If $X\subseteq Y$, then let $i, j$ be distinct members of $X$ (and
  therefore of $Y$). The paths $xv_iP_iv'_iy$ and $xv_jP_jv'_jy$ are
  the paths we are looking for.  The proof is similar when
  $Y\subseteq X$.

  If $X\cap Y = \emptyset$, then let $i, j, q, r$ be distinct integers
  such that $i, j\in X$ and $q, r\in Y$.  Since $G[A]$ is isomorphic
  to the complement of $G[A']$, we may assume up to symmetry that
  $v_iv_q\in E(G)$.  So, $v'_iv'_q\notin E(G)$.  Since $H'_y$ is a
  module of $G[A']$, $v'_iv'_r\notin E(G)$.  It follows that
  $v_iv_r\in E(G)$.  So, $v_r, v_q\in N_A(H_x)$.  Hence, by
  property~\ref{t:makeIt} of templates, $xv_r, xv_q\in E(G)$.  It
  follows that $xv_qP_qv'_qy$ and $xv_rP_rv'_ry$ are the two paths we
  are looking for.
\end{proof}

\begin{lemma}\label{lt:cheminsentrepatates}
  If $x\in A\cup B$ and $y\in A'\cup B'$, then there exists in $G$ a
  path $P$ of length  $\ell-1$, $\ell$ or $\ell +1$ from $x$ to
  $y$ that contains a principal path.

  More specifically:
  \begin{itemize}
  \item
    If $x\in A$ and $y\in A'$, then $P$ has length $\ell-1$ or
    $\ell$.
  \item If $x\in A$ and $y\in B'$, or if $x\in B$ and $y\in A'$,
    then $P$ has length $\ell$ or $\ell+1$.
  \item If $x\in B$ and $y\in B'$, then $P$ has length
    $\ell+1$.
  \end{itemize}
\end{lemma}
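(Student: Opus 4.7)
The plan is to split into four subcases according to whether $x$ lies in $A$ or $B$ and whether $y$ lies in $A'$ or $B'$. The subcase $x\in B$, $y\in B'$ is immediate from Lemma~\ref{lt:cheminsentrepatatesBBP}, which already exhibits two such paths of length $\ell+1$, so only the other three subcases need attention. In each remaining subcase, the candidate path is built from a principal path by attaching at most one edge at each end.

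For $x=v_i\in A$ and $y=v'_j\in A'$: if $i=j$, the principal path $P_i$ itself is a path of length $\ell-1$ containing a principal path. Otherwise, since $G[A']$ is isomorphic to the complement of $G[A]$, exactly one of $v_iv_j$ and $v'_iv'_j$ is an edge of $G$; prepending $v_iv_j$ to $P_j$ in the first case, or appending $v'_iv'_j$ to $P_i$ in the second, yields a path of length $\ell$ from $x$ to $y$ that contains a principal path.

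For $x=v_i\in A$ and $y=v'_Y\in B'$: if $i\in N_{\bar J}[Y]$, then by rule~\ref{tempe6} the edge $v'_iy$ exists, and $v_iP_iv'_iy$ is the desired path of length $\ell$. Otherwise $i\notin Y$ and $i$ has no $\bar J$-neighbor in $Y$, which forces $i$ to be $J$-adjacent to every element of $Y$; picking any $j\in Y$, the edges $v_iv_j$ (by rule~\ref{i:BJ}) and $v'_jy$ (since $j\in Y\subseteq N_{\bar J}[Y]$, by rule~\ref{tempe6}) both exist, and $v_iv_jP_jv'_jy$ is a path of length $\ell+1$. The subcase $x\in B$, $y\in A'$ is handled by a symmetric argument, with the roles of $J$ and $\bar J$, and of $A,B$ and $A',B'$, interchanged.

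In each construction the candidate path must be shown to be induced, but this is routine: inspecting the edge rules of step~\ref{tempe} of the template definition, there are no edges between $A$ and $A'\cup B'$, between $A'$ and $A\cup B$, or between $B$ and $B'$, while by rule~\ref{tempe7} every vertex of $I$ has exactly its two principal-path neighbors, so no chord between the concatenated pieces is possible. The only genuinely combinatorial step is the deduction in the ``otherwise'' branch above, namely that $i\notin N_{\bar J}[Y]$ (a statement about $\bar J$) forces $v_i$ to be complete in $G[A]$ to $\{v_j:j\in Y\}$ (a statement about $J$); this is the small observation that keeps the length from growing beyond $\ell+1$ when the direct option via $P_i$ fails.
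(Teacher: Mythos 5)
Your proof is correct and follows the same case analysis as the paper; the only small difference is that in the mixed case ($x\in A$, $y\in B'$ and symmetrically) the paper routes the length-$(\ell+1)$ path through the universal vertex $w'$ of $G[A'\cup B']$ (i.e.\ $v_iP_iv'_iw'y$), whereas you route it through $v_j$ for some $j$ in the hyperedge $Y$ (i.e.\ $v_iv_jP_jv'_jy$), both of which work.
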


\begin{proof}
  Suppose first that $x\in A$, say $x=v_i$.  If $y\in A'$, then set
  $y=v'_j$. If $i=j$, then $P_i$ has length $\ell-1$. If $i\neq j$,
  then one of $v_iv_jP_jv'_j$ or $v_iP_iv'_iv_j'$ is a path of length
  $\ell$. If $y\in B'$, then one of $v_iP_iv'_iy$ or $v_iP_iv'_iw'y$
  is the path we are looking for.  The proof is similar when
  $y\in A'$.

  We may therefore assume that $x\in B$ and $y\in B'$. So one of the
  two paths obtained in Lemma~\ref{lt:cheminsentrepatatesBBP} can be
  chosen.
\end{proof}

\subsection{Odd pretemplates}

Checking that a graph is an odd $\ell$-template is tedious. We now
introduce a simpler notion that is in some sense equivalent.  For
every integer $\ell \geq 3$, an \emph{odd $\ell$-pretemplate} is a
graph $G$ whose vertex-set can be partitioned into five sets $A$, $B$,
$A'$, $B'$ and $I$ with the following properties.

\begin{enumerate}
\item\label{a:antiC} $N(B)\subseteq A$ and $N(A\cup B) \subseteq I$. 
\item\label{a:antiCP} $N(B')\subseteq A'$ and $N(A'\cup B') \subseteq I$. 
\item\label{a:cardinality} $|A|=|A'|=k \geq 3$, $A = \{v_1, \dots, v_k\}$ and
  $A' = \{v'_1, \dots, v'_k\}$.
\item\label{a:path} For every $i\in \{1, \dots, k\}$, there exists a
  unique path $P_i$ from $v_i$ to $v'_i$ whose interior is in $I$.
\item \label{a:interI} Every vertex in $I$ has degree 2 and lies on a path from $v_i$
  to $v'_i$ for some $i\in \{1, \dots, k\}$.
\item\label{a:sp} All paths $P_1$, \dots, $P_k$ have length $\ell-1$.
\item\label{a:cn} $G[A\cup B]$ and $G[A'\cup B']$ are both connected
  graphs. 
\item\label{a:tn} Every vertex of $B$ is in the interior of a path of
  $G[A\cup B]$ with both ends in $A$.
\item\label{a:tnP} Every vertex of $B'$ is in the interior of a path of
  $G[A'\cup B']$ with both ends in $A'$.
\end{enumerate}

We then say that $(A, B, A', B', I)$ is an \emph{$\ell$-pretemplate
  partition} of~$G$.  Note that templates are defined for all integers
$\ell \geq 2$, while pretemplates are defined only when $\ell\geq 3$.
In fact we do not need odd 2-templates, we defined them for possible
later use.

It is easy to check that when $\ell\geq 3$, the five first elements of
every $\ell$-partition of $G$ is an $\ell$-pretemplate partition.  The
condition on the connectivity of $G[A\cup B]$ and $G[A'\cup B']$
follows Lemma~\ref{l:ABuniv}.  The condition~\eqref{a:tn} follows from
the fact for every $x\in B$, $H_x$ contains two non-adjacent vertices,
so a vertex $x\in B$ lies on a path of length~2 with ends in $A$ and
condition~\eqref{a:tnP} holds similarly. Conversely, we prove the
following lemma (it is important to note that $\ell\geq 3$).
 
\begin{lemma}
  \label{th:ptist}
  Let $\ell\geq 3$ be an integer. If $G\in \mathcal C_{2\ell +1}$ is
  an odd $\ell$-pretemplate, then $G$ is an odd
  $\ell$-template. Moreover, for every odd $\ell$-pretemplate
  partition $(A, B, A', B', I) $ of $G$, there exist $w$ and $w'$ in
  $V(G)$ such that $(A, B, A', B', I, w, w')$ is an $\ell$-partition
  of $G$.
\end{lemma}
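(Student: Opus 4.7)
The plan is to reconstruct from $G$ a threshold graph $J$ and a laminar hypergraph $\mathcal H$ such that the template construction applied to $(J,\mathcal H)$ returns $G$ with the given partition, and then pick $w$ and $w'$ via Lemma~\ref{l:ABuniv}.

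First, I would establish the \emph{backbone property}: for every pair $i\neq j$ in $\{1,\dots,k\}$, exactly one of $v_iv_j$ or $v_i'v_j'$ is an edge of $G$. That both cannot hold uses properties~\ref{a:antiC},~\ref{a:antiCP} and~\ref{a:interI}, which forbid edges between $A\cup B$ and $A'\cup B'$ and between interiors of distinct principal paths; thus $v_iv_jP_jv_j'v_i'P_iv_i$ would be an induced cycle of length $2\ell$, contradicting $G\in\mathcal C_{2\ell+1}$. That at least one must hold follows by picking, if neither did, shortest paths $Q_1$ in $G[A\cup B]$ from $v_i$ to $v_j$ and $Q_2$ in $G[A'\cup B']$ from $v_i'$ to $v_j'$ (existing by~\ref{a:cn}, both of length at least~$2$, and induced as shortest paths) and concatenating with $P_i,P_j$ into a chordless cycle of length at least $2\ell+2$, another contradiction. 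Setting $J=G[A]$ with $v_i\leftrightarrow i$, the map $v_i\mapsto v_i'$ then defines an isomorphism from $\bar J$ to $G[A']$. From this one sees that $J$ is threshold by ruling out $C_4$, $2K_2$ and $P_4$ via Theorem~\ref{th:threshold}: $C_4$-freeness is inherited from $G$; an induced $2K_2$ in $J$ on $\{v_a,v_b\},\{v_c,v_d\}$ translates via the backbone into an induced $C_4$ on $v_a'v_c'v_b'v_d'$ in $G[A']$; and an induced $P_4=v_av_bv_cv_d$ in $J$ yields the chordless cycle $v_av_bv_cv_dP_dv_d'v_a'P_av_a$ of length $2\ell+2$ (using $v_a'v_d'\in E(G)$ from the backbone).

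With $J$ threshold, I would construct $\mathcal H$ on $\{1,\dots,k\}$ as follows. For each $x\in B$, define $H_x\subseteq A$ to be the unique anticomponent of $G[N_A(x)]$ of size at least~$2$: existence of two non-adjacent neighbors of $x$ in $A$ follows from~\ref{a:tn} combined with the threshold structure of $J$ and $C_4$-freeness; the module property and anticonnectedness of $H_x$ follow from $C_4$-freeness together with the arguments of Lemma~\ref{l:nxcc}. Symmetrically define $H'_{x'}$ for each $x'\in B'$. Let $\mathcal H$ have as hyperedges all the $H_x$ and $H'_{x'}$, viewed as subsets of $\{1,\dots,k\}$. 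Laminarity is then verified by case analysis on whether both hyperedges come from $B$, both from $B'$, or one of each: the same-side cases exploit the structure of $G[A\cup B]$ (resp.\ $G[A'\cup B']$), while the mixed case uses that an anticonnected module of $J$ and an anticonnected module of $\bar J$ cannot overlap nontrivially without producing a $C_4$ in $G$. The universal hyperedge required by~\ref{a:universal} is obtained via Theorem~\ref{th:threshold} applied to $J$ combined with~\ref{a:cn}: depending on whether $J$ has a universal or an isolated vertex, the full hyperedge $\{1,\dots,k\}$ is realized on the $B'$-side or on the $B$-side.

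Matching the edges of $G$ with those prescribed by the template construction applied to $(J,\mathcal H)$ is then immediate from the definitions of $H_x$, $H'_{x'}$ and the backbone, and Lemma~\ref{l:ABuniv} provides $w$ and $w'$. I expect the main obstacles to be (i) justifying that each $x\in B$ has two non-adjacent neighbors in $A$, which requires combining~\ref{a:tn} with the exclusion of short holes and of holes of length $2\ell+2$ via a careful shortening argument on a path witnessing~\ref{a:tn}, and (ii) proving laminarity of $\mathcal H$ across the $B/B'$ divide, which exploits the complementarity between $J$ and $\bar J$ and must be handled one configuration at a time.
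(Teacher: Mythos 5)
Your plan matches the paper's: reconstruct $J\cong G[A]$ and a laminar hypergraph $\mathcal H$ from the pretemplate partition, verify the template conditions, and invoke Lemma~\ref{l:ABuniv} for $w,w'$. The backbone claim, the $(P_4,C_4,2K_2)$-free derivation of the threshold structure, the anticomponent definition of $H_x$, and the treatment of condition~\eqref{a:universal} all line up with the paper's proof.

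There is, however, one genuine gap. You assert that the module property of $H_x$ in $G[A]$ ``follows from $C_4$-freeness together with the arguments of Lemma~\ref{l:nxcc}''. But Lemma~\ref{l:nxcc} presupposes that the set in question is already a module of the threshold graph, so it cannot be used to establish that $H_x$ is one; and $C_4$-freeness alone is also not sufficient. If $H_x$ were not a module, there would be $v_h\notin H_x$ adjacent to some $v_i\in H_x$ and non-adjacent to some $v_j\in H_x$ with $v_iv_j\notin E(G)$; since then $xv_h\notin E(G)$, the four vertices $v_h,v_i,x,v_j$ induce only a $P_4$, not a $C_4$. The paper's actual contradiction is a hole of length $2\ell+2$ built from $v_i$, $x$, the principal paths $P_j$ and $P_h$, and the backbone edge $v'_jv'_h$ --- a step of the same calibre as your obstacle~(i), but omitted from your list of obstacles. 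A smaller imprecision: verifying condition~\eqref{a:makeit}, i.e.\ $N_A(x)=N_A[H_x]$, is the place where Lemma~\ref{l:nxcc} is actually invoked (together with $C_4$-freeness, and only once the module property is already in hand), so that part is not ``immediate from the definitions'' as you claim; and the mixed-case laminarity is closed in the paper by a direct module contradiction (an edge $vt$ with $v\in H_x\cap H_y$, $t\in H_y\setminus H_x$, $u\in H_x\setminus H_y$ non-adjacent to $v$ and to $t$), not by exhibiting a $C_4$.
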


\begin{proof}
  Let $(A, B, A', B', I)$ be an $\ell$-pretemplate partition of
  $G$.  We first study the structure of $G[A]$ and $G[A']$.

\begin{claim}
  \label{l:comp}
  For all distinct $i, j \in \{1, \dots, k\}$, $v_iv_j\in E(G)$ if and
  only if $v'_iv'_j\notin E(G)$. In particular, $G[A]$ is isomorphic
  to the complement of $G[A']$.
\end{claim}

\begin{proofclaim}
  If $v_iv_j, v'_iv'_j \in E(G)$, then $P_i$ and $P_j$ form a hole of
  even length, a contradiction. If $v_iv_j, v'_iv'_j \notin E(G)$,
  then $P_i$, $P_j$, a path from $v_i$ to $v_j$ in $ G[A \cup B] $ and a
  path from $v'_i$ to $v'_j$ in $G[A' \cup B']$ form a hole of length at
  least $2\ell +2$, a contradiction.
\end{proofclaim}

\begin{claim}
  \label{c:length2}
  Every path of $G[A\cup B]$ with both ends in $A$ is of length at
  most~2. 
\end{claim}

\begin{proofclaim}
  Let $P=v_i\dots v_j$ be a path of $G[A\cup B]$ with both ends in
  $A$. If $P$ has length at least~3, then by~(\ref{l:comp}), paths $P$,
  $P_i$ and $P_j$ form a hole of length at least~$2\ell +2$, a
  contradiction.
\end{proofclaim}

\begin{claim}
  \label{l:A}
  $G[A]$ is a threshold graph.
\end{claim}

\begin{proofclaim}
  $G[A]$ is obviously $C_4$-free.  Since the complement of $C_4$ is
  $2K_2$ and since $G[A']$ is also $C_4$-free, it follows
  by~(\ref{l:comp}) that $G[A]$ is $2K_2$-free.  By~(\ref{c:length2}),
  $G[A]$ is $P_4$-free.  So $G[A]$ is ($P_4$, $C_4$, $2K_2$)-free and
  is therefore a threshold graph.
\end{proofclaim}

We now study the structure of $G[B]$ and its relation with $G[A]$.

\begin{claim}
  \label{l:twoInA}
  For every vertex $x \in B$, $G[N_A(x)]$ has a unique anticonnected
  component of size at least~2.
\end{claim}

\begin{proofclaim}
  By the definition of odd pretemplates, $x$ is in the interior of a path
  $P= v_i\dots v_j$ of $G[A\cup B]$ with both ends in $A$.
  By~(\ref{c:length2}), $P$ has length~2, so $x$ is adjacent to $v_i$
  and $v_j$.  Hence $G[N_A(x)]$ has an anticonnected component of size
  at least~2.  It is unique, for otherwise $G[A]$ contains a $C_4$.
\end{proofclaim}

For all $x\in B$, we define $H_x$ to be the anticonnected component
of $G[N_A(x)]$ of size at least~2 whose existence follows
from~(\ref{l:twoInA}).

\begin{claim}
  \label{l:PR}
  For every $x$ in $B$, $H_x$ is a module of $G[A]$.  
\end{claim}

\begin{proofclaim}
  Otherwise, since $H_x$ is anticonnected and is not a module, there
  exists $v_h \in A \sm H_x$ and non-adjacent $v_i, v_j\in H_x$ such
  that $v_iv_h\in E(G)$ and $v_jv_h\notin E(G)$.  Note that
  $xv_h\notin E(G)$ because otherwise, $v_h$ would be in $H_x$.
  Hence, $v_i$, $x$, $P_j$ and $P_h$ form a hole of length
  $2\ell + 2$, a contradiction.
\end{proofclaim}

\begin{claim}
  \label{l:edgexy}
  If $xy$ is an edge of $G[B]$, then $H_x \subseteq H_y$ or
  $H_y\subseteq H_x$.  
\end{claim}

\begin{proofclaim}
  Up to symmetry, we may assume that $N_A(x) \subseteq N_A(y)$, for
  otherwise vertices $v_i\in N_A(x)\sm N_A(y)$ and  $v_j\in
  N_A(y)\sm N_A(x)$ either form a $C_4$ with $x$ and $y$ or a hole of
  length $2\ell +2$ with $P_i$ and $P_j$.

  By~(\ref{l:twoInA}), $G[N_A(y)]$ has only one anticonnected component of size
  at least~2, namely $H_y$.  Since $H_x$ is anticonnected, has size
  at least~2 and is included in $N_A(y)$, it must be included in
  $H_y$.
 \end{proofclaim}

 \begin{claim}
   \label{l:nonedgexy}
   If $x$ and $y$ are non-adjacent vertices of $B$,\,then $H_x$ and
   $H_y$ are disjoint.
\end{claim}

\begin{proofclaim}
  On the contrary, suppose that $x$ and $y$ are nonadjacent vertices of $B$ but
  there exists a vertex $v \in H_x \cap H_y$.  Since $H_x$ is anticonnected and
  of size at least~2, there exists $v_i\in H_x$ non-adjacent to $v$. Note that
  $v_iy\notin E(G)$, for otherwise $x$, $y$, $v_i$ and $v$ form a $C_4$.
  Similarly, there exists a vertex $v_j\in H_y$ that is non-adjacent to $v$ and
  to $x$.  If $v_iv_j\in E(G)$, then $\{x, y, v, v_i, v_j\}$ induces a $C_5$, a
  contradiction.  Otherwise, $P_i$, $P_j$, $x$, $y$ and $v$ form a hole of
  length $2\ell+3$, a contradiction.
\end{proofclaim}

We are now ready to define the hypergraph $\mathcal H$.  For every
$x\in B$, we defined a set $H_x\subseteq A$.  We may define similarly
a set $H'_x\subseteq A'$ for every $x\in B'$.  From~\eqref{l:edgexy}
and~\eqref{l:nonedgexy}, the sets $H_x$ for $x\in B$ form a laminar
hypergraph $\mathcal H_B$ (with vertex set $A$).  Symetrically, the
sets $H'_x$ for $x\in B'$ form a laminar hypergraph $\mathcal H_{B'}$
(with vertex set $A'$).  Let $\mathcal H$ be the hypergraph whose
vertex set is $\{1, \dots, k\}$ and such that
$H\subseteq \{1, \dots, k\}$ is a hyperedge of $\mathcal H$ if and
only if $H=\{i : v_i\in H_x\}$ for some $x\in B$ or
$H = \{i : v'_i\in H'_x\}$ for some $x\in B'$.

\begin{claim}
  \label{c:global}
  The hypergraph $\mathcal H$ is laminar.
\end{claim}

\begin{proofclaim}
  If $\mathcal H$ is not laminar, then there exist
  $X, Y\in E(\mathcal H)$ such that $X\sm Y$, $Y \sm X$ and
  $X\cap Y$ are all non-empty.  Since $\mathcal H_B$ and
  $\mathcal H_{B'}$ are both laminar, there exists $x\in B$ such that
  $H_x = \{v_i : i\in X\}$ and $y\in B'$ such that
  $H'_y = \{v'_i : i\in Y\}$.

  We set $H_y = \{v_i: i\in Y\}$.  Note that $H_x\sm H_y$,
  $H_y\sm H_x$ and $H_x\cap H_y$ are all non-empty. Also, because of
  the properties of $H'_y$ and by \eqref{l:comp}, $G[H_y]$ is connected (because $G[H'_y]$
  is anticonnected) and $H_y$ is a module of $G[A]$.

  Since $G[H_x]$ is anticonnected, there exist non-adjacent vertices
  $u\in H_x\sm H_y$ and $v\in H_x\cap H_y$.  Since $G[H_y]$ is
  connected, there exists a path from $v$ to $t \in H_y\sm H_x$ and we may assume that $vt$ is an edge.  
  Since $H_y$ is a module of $G[A]$, $ut\notin E(G)$.  So, $t$ is adjacent
  to $v$ and non-adjacent to $u$. This contradicts $H_x$ being a
  module of $G[A]$.  
\end{proofclaim}

We may now finish the proof of Lemma~\ref{th:ptist}. We show how $G$
can be built by the process described in the definition of odd
templates. We start by setting $V(J)=\{1, \dots, k\}$, and by making
$i$ adjacent to $j$ in $J$ if and only if $v_iv_j\in
E(G)$. By~(\ref{l:A}), $J$ is a
threshold graph as required. Clearly condition~\eqref{tempd} of odd $\ell$-templates holds, 
the paths linking $A$ to $A'$ are as in condition~\eqref{i:linkP} of odd $\ell$-templates and condition~\eqref{tempe7} of odd $\ell$-templates holds. 
By~(\ref{l:comp}), conditions~\eqref{i:BJ} and~\eqref{i:BJP}
of odd $\ell$-templates hold. 
We then consider the hypergraph $\mathcal H$
defined above.  It is laminar by~\eqref{c:global}.  By~\eqref{l:PR},
condition~\eqref{a:module} of templates is satisfied.

By definition of $H_x$, for every $x$ in $B$,
$N_A(x) \subseteq N_A[H_x]$.  Suppose that there exists
$u\in N_A[H_x]\sm N_A(x)$.  Since by~(\ref{l:PR}) $H_x$ is a module, 
it follows from Lemma~\ref{l:nxcc} that $u$ is complete to $H_x$, so
$x$ and $u$ together with two non-adjacent vertices from $H_x$ induce
a $C_4$, a contradiction. Hence, $N_A(x) = N_A[H_x]$ and
condition~\eqref{a:makeit} of odd templates is satisfied.

By~(\ref{l:edgexy}) and~\eqref{l:nonedgexy}, condition~\eqref{a:last} of templates
is satisfied. 

By symmetry and by~\eqref{l:comp}, conditions~\eqref{tempe4} and~\eqref{tempe6} of templates are satisfied. Therefore condition~\eqref{tempe} of templates is satisfied.

To conclude the proof, let us check condition~\eqref{a:universal} of
templates.  By~\eqref{l:comp}, \eqref{l:A} and Theorem~\ref{th:threshold}, 
up to symmetry, we may assume that $G[A]$ contains an isolated
vertex $v_i$.  Since $G[A\cup B]$ is connected and $|A|\geq 3$ by the
definition of odd pretemplates, there exists a path $P$ in
$G[A\cup B]$ from $v_i$ to a vertex $u\in A\sm\{v_i\}$.
By~(\ref{c:length2}) and since $v_i$ has no neighbor in $A$, we have
that $P=uyv_i$ where $y\in B$.  So, $H_y$ contains $v_i$. We may
therefore consider the hyperedge $W$ of $\mathcal H$ that contains $i$
and that is inclusion wise maximal w.r.t.\ this property.  If there
exists $j\in \{1, \dots, k\} \sm W$, since
$v_jv_i\notin E(G)$, we deduce as above that $\mathcal H$ has a
hyperedge $Z$ that contains $i$ and $j$.  Because of $j$,
$Z\subseteq W$ is impossible; because of $i$, $W\cap Z=\emptyset$ is
impossible; and because of the maximality of $W$, $W\subsetneq Z$ is
impossible.  Hence, $W$ and $Z$ contradict $\mathcal H$ being
laminar.  This proves that $W = \{1, \dots, k\}$, as claimed in
condition~\eqref{a:universal} of templates.

Hence, $G[A\cup B]$ has universal vertex $w$. Also, $G[A'\cup B']$ has
a universal vertex $w'$ (we may apply Lemma~\ref{l:ABuniv} since we
now know that $G$ is an odd $\ell$-template).  So,
$(A, B, A', B', I, w, w')$ is an $\ell$-partition of $G$.
\end{proof}

\subsection{Twins and proper partitions}

Two distinct vertices $x$ and $y$ in a graph are \emph{twins} if
$N[x] = N[y]$ (in particular, $x$ and $y$ are adjacent).  A graph is
\emph{twinless} if it contains no twins.

\begin{lemma}
  \label{l:wTwins}
  Let $(A, B, A', B', I, w, w')$ be an $\ell$-partition of an odd
  $\ell$-template $G$.  Two vertices $x$ and $y$ of $G$ are twins if
  and only if $x, y\in B$ and $H_x=H_y$, or $x, y\in B'$ and
  $H'_x = H'_y$.
\end{lemma}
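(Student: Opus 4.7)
For the ``if'' direction, suppose $x, y \in B$ with $H_x = H_y$. Since $|H_x| \geq 2$ by property~\ref{t:antiMod}, we have $H_x \cap H_y = H_x \neq \emptyset$, so condition~\eqref{a:last} of templates gives $xy \in E(G)$. Property~\ref{t:makeIt} then yields $N_A(x) = N_A[H_x] = N_A[H_y] = N_A(y)$, and for any $z \in B \setminus \{x,y\}$, condition~\eqref{a:last} gives $z \sim x \iff H_z \cap H_x \neq \emptyset \iff z \sim y$. Since vertices of $B$ have no neighbors outside $A \cup B$, $N[x] = N[y]$ follows. The case $x, y \in B'$ is handled symmetrically.

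For the ``only if'' direction, I would run a case analysis on the positions of twins $x, y$ in the partition $(A, B, A', B', I)$, using that twins are adjacent and that the edges of $G$ are confined to: within each of $A$, $A'$, $B$, $B'$; between $A$ and $B$ (or $A'$ and $B'$); and along principal paths. Cases with $x \in I$ are disposed of quickly: an internal vertex has degree exactly $2$, yet on the induced principal path no two vertices share a closed neighborhood, and a potential partner $y = v_i$ (the endpoint of the same path) would need its unique $A \cup B$-neighbor to be the third vertex of $P_i$, which lies outside $A \cup B$---contradicting connectivity of $G[A \cup B]$ via the universal vertex from Lemma~\ref{l:ABuniv}, together with $|A| \geq 3$. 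The cases $x, y \in A$ and $x \in A$, $y \in B$ are immediate because $v_i \in A$ has a path-neighbor outside $A \cup B$ that distinguishes its closed neighborhood. The $A$-$A'$ case occurs only for $\ell = 2$ with $y = v'_i$, and would force the index $i$ to be isolated in both $J$ and $\bar J$, impossible for $k \geq 3$; mirror cases inside $A' \cup B'$ are symmetric.

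The decisive case is $x, y \in B$ (with $B'$ symmetric). Twinhood gives $N_A(x) = N_A(y)$, and Lemma~\ref{l:recoverHx} identifies $H_x$ (resp.\ $H_y$) as the unique anticomponent of $G[N_A(x)]$ (resp.\ $G[N_A(y)]$) of size at least two, whence $H_x = H_y$. The main obstacle I foresee is just the bookkeeping required to enumerate all cross-cases---especially the degenerate $\ell = 2$ case where $I = \emptyset$ and principal paths are edges---but once $I$ and the mixed cases are excluded, the conclusion follows almost immediately from Lemma~\ref{l:recoverHx}.
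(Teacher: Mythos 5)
Your proposal is correct, and the overall strategy matches the paper's: handle the ``if'' direction directly, then show that no vertex of $A\cup I\cup A'$ can have a twin, and finish the $B$--$B$ (resp.\ $B'$--$B'$) case via Lemma~\ref{l:recoverHx}. The paper packages the middle step more tightly than you do: instead of a case-by-case enumeration over the positions of $x$ and $y$ (which, as you note, creates bookkeeping headaches especially for the $\ell=2$ and $A$--$A'$ subcases), the paper exhibits for each $x\in A\cup I\cup A'$ two neighbors $a,b\in N(x)$ with $N[a]\cap N[b]=\{x\}$ (for $x\in I$, the two path-neighbors; for $x\in A$, the universal vertex $w$ of $G[A\cup B]$---or any other vertex of $A$ if $x=w$---together with the path-neighbor of $x$), which kills all candidate twins $y$ of $x$ in one stroke since $y\in N[a]\cap N[b]$. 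This trick avoids splitting on where $y$ might lie. Your route is less slick but sound; one small thing worth tidying is the phrase ``its unique $A\cup B$-neighbor'' for $y=v_i$, since $v_i$ may have several neighbors in $A\cup B$---what the argument actually needs is only that $v_i$ has \emph{some} neighbor in $A\cup B$ (the universal vertex supplied by Lemma~\ref{l:ABuniv} does the job when $v_i\neq w$, and $|A|\geq 3$ covers the case $v_i=w$) that cannot lie in $N[x]$ for an internal vertex $x$. Incidentally, your explicit handling of $\ell=2$ is a welcome bonus: the paper's phrase ``the neighbor of $x$ in $I$'' silently assumes $\ell\geq 3$, and its appeal to condition~(e7) is vacuous when $I=\emptyset$, so the published proof has a small gap there that your case analysis fills.
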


\begin{proof}
  If $x, y\in B$ and $H_x=H_y$, or $x, y\in B'$ and $H'_x=H'_y$, then
  $x$ and $y$ are obviously twins. 
  
  We claim that for all $x\in A\cup I\cup A'$, there exist two
  vertices $a,b\in N_G(x)$ such that $N[a]\cap N[b]=\{x\}$. If
  $x\in I$ choose $a$ and $b$ to be the only two neighbors of $x$. If
  $x\in A$, then set $a=w$ when $x\neq w$, and choose for $a$ any
  vertex of $A\sm \{x\}$ when $x=w$. Choose for
  $b$ the neighbor of $x$ in $I$. In both cases, by condition (e7) of
  templates, $N_G[a]\cap N_G[b]=\{x\}$.  The proof is similar when
  $x\in A'$.  So, $x$ has no twin in $G$.
\end{proof}

An $\ell$-partition $(A, B, A', B', I, w, w')$ of an odd
$\ell$-template $G$ is \emph{proper} if one of $G[A]$ or $G[A']$
contains at least two isolated vertices.

\begin{lemma}
  \label{l:TProper}
  For all integers $\ell \geq 3$, every twinless odd $\ell$-template
  $G$ admits a proper $\ell$-partition.
\end{lemma}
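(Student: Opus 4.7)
The plan is to take an arbitrary $\ell$-partition $(A, B, A', B', I, w, w')$ of $G$ and, if it is not already proper, to rearrange it by a single ``rotation'' along one principal path. By property~\ref{t:threshold} of templates, $G[A]$ and $G[A']$ are complementary threshold graphs; by Theorem~\ref{th:threshold} each of them contains an isolated or a universal vertex, and these cannot coexist since $|A|\geq 3$. If $G[A]$ or $G[A']$ already has two isolated vertices the partition is proper; otherwise one of $G[A], G[A']$ has a unique isolated vertex and the other a unique universal vertex. The definition of an $\ell$-partition is symmetric under $(A,B,A',B',I,w,w') \mapsto (A',B',A,B,I,w',w)$, so I may assume that $G[A]$ has a unique isolated vertex $v$ and $G[A']$ a unique universal vertex $v'$, lying on the same principal path $P_v$.

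The key step is to force $\deg_G(v) = 2$ using twinlessness. Since $G[A]$ has no universal vertex, Lemma~\ref{l:ABuniv} gives $w\in B$ with $H_w = A$. By Lemma~\ref{l:ModuleIso} applied to $J$, the only module of $J$ of size at least~$2$ containing the index of $v$ is $V(J)$, so the only hyperedge of $\mathcal H$ containing that index is $V(J)$. By Lemma~\ref{l:wTwins} the hypothesis that $G$ is twinless makes $w$ the unique vertex of $B$ with $H$-set $A$. Hence $v$'s only $B$-neighbor is $w$; combined with $v$ being isolated in $G[A]$ and having exactly one $I$-neighbor, this yields $\deg_G(v) = 2$.

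Letting $u'$ denote the $I$-neighbor of $v'$ on $P_v$, I set
\[A_1=(A\setminus\{v\})\cup\{w\},\ B_1=B\setminus\{w\},\ A'_1=(A'\setminus\{v'\})\cup\{u'\},\ B'_1=B'\cup\{v'\},\ I_1=(I\setminus\{u'\})\cup\{v\},\]
and replace $P_v$ by the induced path $w,v,p_1,\ldots,p_{\ell-3},u'$ of length $\ell-1$, where $p_1,\ldots,p_{\ell-2}=u'$ is the old interior of $P_v$. The technical work is then to verify the nine axioms of an $\ell$-pretemplate partition for $(A_1, B_1, A'_1, B'_1, I_1)$. The non-routine checks are condition~\eqref{a:interI} (provided by $\deg_G(v) = 2$), condition~\eqref{a:tn} (the conclusion above that $v\notin H_x$ for every $x\in B\setminus\{w\}$ ensures that the old witness path $a$--$x$--$b$ inside $A\cup B$ still sits in $A_1\cup B_1$), and condition~\eqref{a:tnP} (for each old $x'\in B'$ one uses Lemma~\ref{l:nxcc} to see that the universal vertex $v'$ lies outside every anticonnected module $H'_{x'}$ of size at least~$2$, so the old witness paths survive; the new vertex $v'\in B'_1$ admits a length-$2$ witness since it is adjacent to every vertex of $A'_1$). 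Once these axioms hold, Lemma~\ref{th:ptist} upgrades the partition to an $\ell$-partition of $G$.

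To conclude that the new partition is proper, I would exhibit two isolated vertices of $G[A'_1]$. First, $u'$ has only two neighbors in $G$ ($v'$, now in $B'_1$, and its other $P_v$-neighbor, in $I_1$), so it is isolated in $G[A'_1]$. Second, $G[A'\setminus\{v'\}]$ is threshold with no universal vertex---any such vertex would, together with $v'$, give a second universal vertex of $G[A']$, contradicting uniqueness---so by Theorem~\ref{th:threshold} it has an isolated vertex $y$; and $y$ is non-adjacent to $u'$ since $u'$'s neighbors in $G$ all lie on $P_v$. Hence $y$ and $u'$ are both isolated in $G[A'_1]$. The main obstacle is thus the twinlessness-dependent degree-$2$ conclusion for $v$, which is what makes the rearrangement legitimate; the remainder is a systematic pretemplate-axiom check.
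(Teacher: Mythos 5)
Your proposal is correct and takes essentially the same approach as the paper: it performs the identical rotation along the principal path through the unique isolated vertex of $G[A]$ (moving $v$ into $I$, $w$ into $A$, $v'$ into $B'$ and $v'$'s $I$-neighbor into $A'$), establishes $\deg_G(v)=2$ via Lemma~\ref{l:ModuleIso} and twinlessness exactly as the paper does, and then invokes Lemma~\ref{th:ptist} after checking the pretemplate axioms. The only (cosmetic) difference is that the paper reaches its conclusion through an extremal argument---choosing a partition maximizing the number of isolated vertices of $G[A]$ and deriving a contradiction from the rotated partition having more---whereas you apply the rotation once to an arbitrary non-proper partition and verify directly that the two vertices $u'$ and $y$ are isolated in $G[A'_1]$, which is the same computation the paper carries out with $v'^+_1$ and $v'_k$.
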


\begin{proof}
  Let $(A, B, A', B', I, w, w')$ be an $\ell$-partition of $G$ such
  that the number $M$ of isolated vertices of $G[A]$ is maximum.  We
  suppose that $v_1, \dots, v_k$ is a domination ordering of $G[A]$.
  
  By Theorem~\ref{th:threshold} and since we may swap $A, B, w$ and
  $A', B', w'$, by the maximality of $M$, $v_1$ is an isolated vertex
  of $G[A]$.  It follows that $w\in B$.  By definition of templates, $v'_1$ is a
  universal vertex of $G[A'\cup B']$.  Suppose for a contradiction that $v_{2}$
  is not isolated in $G[A]$.  So, $M=1$.

  Let $H_x$ be any hyperedge of $\mathcal H_B$ containing $v_1$.  By
  Lemma~\ref{l:ModuleIso}, since $H_x$ contains a non-isolated vertex
  of $G[A]$, it contains all of them. So, $H_x=A$.  Hence, $N[x]= N[w]$,
  so $x=w$ since $G$ is twinless.  This proves that
  $N(v_1) = \{w, v_1^+\}$ where $v_1^+$ is the neighbor of $v_1$ in
  $I$. Let $v'^+_1$ be the neighbor of $v'_1$ in $I$.  We now describe
  a new partition of the vertices of $G$.  We set:

  \begin{itemize}
  \item $A_1 = \{w, v_2, \dots, v_{k}\}$,
  \item $B_1 = B\sm \{w\}$, 
  \item $A'_1 = \{v'^+_1, v'_2, \dots, v'_{k}\}$,
  \item $B'_1 = B' \cup \{v'_1\}$ and
  \item $I_1 = \{v_1\} \cup I \sm \{v'^+_1\}$.
  \end{itemize}

  All conditions of the definition of a pretemplate are easily checked
  to be satisfied by $(A_1, B_1, A'_1, B'_1, I_1)$.  By
  Lemma~\ref{l:HinTemplate}, every hole in $G$ has length $2\ell +1$.
  We may therefore apply Lemma~\ref{th:ptist} to prove that
  $(A_1, B_1, A'_1, B'_1, I_1, w, v'_1)$ is an $\ell$-partition of
  $G$.  So, $(A'_1, B'_1, A_1, B_1, I_1, v'_1, w)$ contradicts that
  maximality of $M$ since $G[A'_1]$ has two isolated vertices, namely
  $v'^+_1$ and $v'_k$ (note that since $M=1$, it follows by Theorem~\ref{th:threshold} that $G[A \sm \{v_1\}]$ has a universal vertex ; in particular $v_k$ is a universal vertex of $G[A \sm \{v_1\}]$ and hence $v'_k$ is an isolated vertex of $G[A'_1]$).
\end{proof}

\begin{lemma}
  \label{l:wProper}
  Every proper $\ell$-partition $(A, B, A', B', I, w, w')$ of a
  twinless odd $\ell$-template satisfies one of the following:
\begin{itemize}
\item $w\in B$, $w$ is the unique universal vertex of $G[A\cup B]$,
  $G[A]$ contains at least two isolated vertices, $H_{w} = A$,
  $w'\in A'$ and $A'\sm \{w'\}$ contains at least one universal vertex
  of $G[A']$.
\item $w\in A$, $A\sm \{w\}$ contains at least one universal vertex of
  $G[A]$, $w'\in B'$, $w'$ is the unique universal vertex of
  $G[A'\cup B']$, $G[A']$ contains at least two isolated vertices and
  $H'_{w'} = A'$.
\end{itemize}
\end{lemma}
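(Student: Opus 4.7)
The plan is to exploit symmetry between $(A, B, w)$ and $(A', B', w')$: since the partition is proper, I may assume without loss of generality that $G[A]$ contains at least two isolated vertices, and prove the first bullet (the second follows by the symmetric argument). Under this assumption I first show that $w \in B$. By Lemma~\ref{l:ABuniv}, $w$ lies in $A$ or $B$; if $w \in A$, then since $|A| \geq 3$ and $w$ is universal in $G[A \cup B]$, the vertex $w$ would be universal in $G[A]$, contradicting the existence of the isolated vertices. Hence $w \in B$ and, by Lemma~\ref{l:ABuniv}, $w' \in A'$.

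Next I would prove $H_w = A$. Universality of $w$ in $G[A \cup B]$ combined with property~\ref{t:makeIt} of odd templates yields $N_A[H_w] = A$. Every isolated vertex of $G[A]$ must then lie in $H_w$ itself (it has no $A$-neighbor through which to belong to $N_A(H_w)$), so $H_w$ contains at least two isolated vertices by hypothesis. By Lemma~\ref{l:ModuleIso} applied to the module $H_w$ of $G[A]$, either $H_w$ consists only of isolated vertices of $G[A]$, or $H_w$ contains every non-isolated vertex. In the first case, $H_w$ being a module in which every vertex has no $A$-neighbor forces every vertex outside $H_w$ to be anticomplete to $H_w$, which together with $N_A[H_w] = A$ yields $A = H_w$; in the second case, $H_w$ also contains every isolated vertex by the argument above, so again $H_w = A$.

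For uniqueness of $w$: any universal vertex $w''$ of $G[A \cup B]$ must lie in $B$ by the same argument as for $w$, and then the reasoning above gives $H_{w''} = A = H_w$, so Lemma~\ref{l:wTwins} makes $w$ and $w''$ twins of $G$, contradicting twinlessness. Finally, by property~\ref{t:threshold} of templates $G[A']$ is isomorphic to the complement of $G[A]$, so $G[A']$ has at least two universal vertices; one of them is $w'$ (which is universal in $G[A' \cup B']$, and hence in $G[A']$), and therefore another universal vertex of $G[A']$ must lie in $A' \setminus \{w'\}$. The main obstacle is the module-theoretic step forcing $H_w = A$, where the dichotomy from Lemma~\ref{l:ModuleIso} must be combined with the equality $N_A[H_w] = A$ to rule out both alternatives in favour of $H_w = A$; once this is in hand, uniqueness via twinlessness and the count of universal vertices of $G[A']$ are routine.
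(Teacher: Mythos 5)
Your proof is correct. It follows the same skeleton as the paper's: reduce to one bullet by symmetry, deduce $w\in B$ and $w'\in A'$ from the isolated vertices and Lemma~\ref{l:ABuniv}, establish $H_w=A$, derive uniqueness of $w$ via Lemma~\ref{l:wTwins}, and count universal vertices of $G[A']$ via the complement. The one step you handle differently is $H_w=A$: the paper observes that $G[A]$ is anticonnected (two isolated vertices make the complement connected) and then invokes Lemma~\ref{l:recoverHx} directly, since $N_A(w)=A$ forces $H_w$ to be the unique large anticomponent of $G[A]$, namely $A$ itself. You instead compute $N_A[H_w]=A$ from property~\ref{t:makeIt}, place the isolated vertices inside $H_w$, apply the module dichotomy of Lemma~\ref{l:ModuleIso}, and check both branches. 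Your route is a few lines longer and more elementary; the paper's recovers $H_w$ in one stroke because Lemma~\ref{l:recoverHx} is tailor-made for exactly this situation. Both are valid, and the rest of your argument matches the paper essentially verbatim.
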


\begin{proof}
  Since $(A, B, A', B', I, w, w')$ is a proper $\ell$-partition, up to symmetry, we may assume that $G[A]$
  contains two isolated vertices.  So, $w\in B$.  By definition of
  $\ell$-partitions, it follows that $w'\in A'$.  Since $G$ is
  twinless and $G[A]$ contains isolated vertices, $w$ is the unique universal vertex of $G[A\cup B]$. By
  Lemma~\ref{l:recoverHx}, $H_{w} = A$.  Also $A'\sm \{w'\}$ contains
  at least one universal vertex of $G[A']$ since $G[A]$ contains two
  isolated vertices.
\end{proof}

We do not use the following lemma formally, but it illustrates a key
property of proper partitions.  In non-proper partitions, there may
exist vertices in $A$ that have degree~2 and have one neighbor in $A$
and one in $I$. These are hard to think of, because they yield edges
with both ends in $A$ that can be ``blown up'' into a general half
graph as we will see in the next section. The next lemma states that
this situation does not occur with proper partitions.

\begin{lemma}
  \label{l:properDeg3}
  Suppose $\ell \geq 3$ and $G$ is an odd $\ell$-template with a
  proper $\ell$-partition $(A, B, A', B', I, w, w')$.  If a vertex $v$ in
  $A \cup B \cup A' \cup B'$ has degree~2 (in $G$), then
  $v\in A\cup A'$ and $v$ is adjacent to a vertex of $B\cup B'$ and
  has its other neighbor in $I$.
\end{lemma}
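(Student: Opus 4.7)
The plan is to combine Lemma~\ref{l:Bdeg3} with the explicit structure of proper partitions given by Lemma~\ref{l:wProper}. First I would use Lemma~\ref{l:Bdeg3} to observe that every vertex of $B\cup B'$ has degree at least~$3$, so any degree-$2$ vertex $v$ must lie in $A\cup A'$, and by the symmetry between $(A,B,w)$ and $(A',B',w')$ I may assume $v=v_i\in A$. Since $\ell\geq 3$, the principal path $P_i$ has length at least~$2$, so the neighbor $v_i^+$ of $v$ on $P_i$ lies in $I$ and accounts for one of the two neighbors of $v$. By conditions~\eqref{i:BJ}--\eqref{tempe7} of the template definition, a vertex of $A$ has no neighbor in $A'\cup B'$, so the remaining neighbor of $v$ lies in $A\cup B$; the content of the lemma is that this neighbor is forced into $B$.

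To rule out the remaining case, I would argue by contradiction, assuming the second neighbor of $v$ lies in $A$, and split on the two alternatives of Lemma~\ref{l:wProper}. In the first alternative, $w\in B$ and $H_w=A$; by condition~\eqref{a:makeit} of templates, $w$ is then adjacent to every vertex of $A$, so $w\in N(v)$, contradicting the fact that the two neighbors of $v$ lie in $I\cup A$ whereas $w\in B$.

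The second alternative of Lemma~\ref{l:wProper} is where I expect the main (but short) obstacle: here $w\in A$ is universal in $G[A\cup B]$, and $A\sm\{w\}$ contains a further universal vertex $v_u$ of $G[A]$. A brief case analysis on whether $v=w$, $v=v_u$, or $v$ is distinct from both, shows in each subcase that either the universality of $w$ in $G[A\cup B]$ or the universality of $v_u$ in $G[A]$ supplies a neighbor of $v$ distinct from $v_i^+$ and from the putative second neighbor, yielding degree at least~$3$. The only delicate point to track is the use of $|A|\geq 3$ in the subcases where $v$ is itself universal (as $w$ or as $v_u$), which guarantees $|A\sm\{v\}|\geq 2$ and hence two distinct neighbors of $v$ inside $A$. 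No ingredient beyond Lemmas~\ref{l:Bdeg3} and~\ref{l:wProper} and the explicit edge rules of templates should be needed.
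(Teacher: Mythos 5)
Your argument tracks the paper's closely, but there is a small mismatch of hypotheses: you rely on Lemma~\ref{l:wProper}, which is stated only for \emph{twinless} odd $\ell$-templates, whereas Lemma~\ref{l:properDeg3} carries no twinlessness assumption. As written, your proof therefore covers only the twinless case. The paper's own proof bypasses Lemma~\ref{l:wProper}: it symmetrizes so that $G[A]$ is the side with two isolated vertices (so $w\in B$ at once, since a universal vertex of $G[A\cup B]$ lying in $A$ would be universal in the disconnected graph $G[A]$), rules out $v\in B\cup B'$ by Lemma~\ref{l:Bdeg3}, and rules out $v\in A'$ because $G[A']$, being the complement of a graph with two isolated vertices, has two universal vertices, giving every vertex of $A'$ degree at least~3.

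To repair your version without changing its structure, observe that what you actually extract from Lemma~\ref{l:wProper} is available without twinlessness: in the alternative where $G[A]$ is disconnected one has $w\in B$, and $w$ is adjacent to all of $A$ simply because $w$ is universal in $G[A\cup B]$ (you do not need $H_w=A$); in the other alternative, $G[A']$ is disconnected, so by Lemma~\ref{l:ABuniv} $w'\in B'$ and hence $w\in A$, and the two isolated vertices of $G[A']$ give, under the complement isomorphism, two universal vertices of $G[A]$, at least one distinct from $w$. The twinless hypothesis in Lemma~\ref{l:wProper} is only used there for the \emph{uniqueness} of the universal vertex, which your argument never needs. With that substitution your proof is correct; it is essentially the paper's argument with the symmetry applied to the location of $v$ (WLOG $v\in A$, then case on which side is disconnected), while the paper symmetrizes on the partition and then cases on the location of $v$. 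The two case analyses are isomorphic, and the use of $|A|\geq 3$ in your subcases $v=w$ and $v=v_u$ is the right way to handle them.
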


\begin{proof}
  Up to symmetry, suppose that $G[A]$ contains at least two isolated
  vertices. So, $w\in B$.  Consider a vertex $v \in A\cup B \cup A' \cup B'$ that is of degree $2$ in $G$.  
  By Lemma~\ref{l:Bdeg3}, $v\notin B\cup B'$.  Since $G[A']$ has two universal vertices, every
  vertex in $A'$ has degree at least~3, so $v\in A$, and $v$ is
  adjacent to $w\in B$ and to some vertex in $I$ as claimed.
\end{proof}

\section{Blowup}

Our goal in this section is to see how a bigger graph can be obtained
from a template $G$ by turning every vertex into a non empty clique. This will
be called \emph{blowing up $G$}.  In the blowup operation,
non-adjacent vertices yield cliques that are anticomplete to each
other. Adjacent vertices $u$ and $v$ yield cliques that are complete
to each other in some situations (when $uv$ is a so-called \emph{solid
  edge} of the template), but in some other situations, they may yield
pairs of cliques that induce a more general half graph, like when a
ring is obtained from ``blowing up'' a chordless cycle. This happens
when $uv$ is a so-called \emph{flat} or \emph{optional edge} of the
template.  We now define all this formally.

Throughout all this section, $\ell\geq 3$ is an integer, $G$ is a an
odd $\ell$-template with a fixed an $\ell$-partition
$(A, B, A', B', I, w, w')$.

\subsection{Flat, optional and solid edges} \label{def_edges_temp}

An edge of $G$ is \emph{flat} if at least one of its end is in $I$.
An edge of $G$ is \emph{optional} if one end is a vertex $x\in B$
(resp.\ $x\in B'$) and the other end is a vertex $u\in H_x$ that is an
isolated vertex of $G[H_x]$ (resp.\ a vertex $u\in H'_x$ that is an
isolated vertex of $G[H'_x]$).  An edge that is neither flat nor
optional is \emph{solid}.  See Figure~\ref{f:template} where solid
edges are represented by solid lines, flat edges by doted lines and
optional edges by dashed lines.

Observe that the status of an edge depends on the $\ell$-partition of
the odd $\ell$-template. See Figure~\ref{f:pyr}, where the same
template is represented with two different $\ell$-partitions.  Recall
that throughout this section, the $\ell$-partition is fixed, and so is
the status of the edges. 

\begin{lemma}
  \label{c:FXsubY}
  If $ux$ is an optional edge of $G$ with $u\in A$ and $x\in B$, then
  $N_{A}(H_x) = N_{A}(u)$.  Moreover, if $y\in B\sm \{x\}$ and
  $yu \in E(G)$, then $H_x\subseteq H_y$ or $H_y\subseteq H_x$ (in
  particular, $xy\in E(G)$).
\end{lemma}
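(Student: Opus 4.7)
The plan is to exploit two facts established earlier: by property~\ref{t:antiMod} of templates, $H_x$ is an anticonnected module of $G[A]$ with $|H_x|\geq 2$, and by Lemma~\ref{l:nxcc}, $N_A(H_x)$ is a clique that is complete to $H_x$. The hypothesis that $ux$ is optional gives us that $u\in H_x$ and $u$ has no neighbor in $H_x$.

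For the first equality $N_A(H_x) = N_A(u)$, I would argue directly from the module property. Since $u\in H_x$ is isolated in $G[H_x]$, any neighbor of $u$ in $A$ must lie in $A\sm H_x$. Now $H_x$ being a module of $G[A]$ means every vertex of $A\sm H_x$ is either complete or anticomplete to $H_x$; such a vertex is adjacent to $u$ exactly when it is complete to $H_x$, i.e.\ exactly when it lies in $N_A(H_x)$. This gives the claimed equality.

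For the second part, let $y\in B\sm\{x\}$ with $yu\in E(G)$. By property~\ref{t:makeIt} of templates, $yu\in E(G)$ means $u\in N_A[H_y]$, splitting into two cases. If $u\in H_y$, then $u\in H_x\cap H_y$, so by laminarity of the hypergraph $\mathcal H_B$ we get $H_x\subseteq H_y$ or $H_y\subseteq H_x$. If instead $u\in N_A(H_y)$, then $u$ is complete to $H_y$ (again using the module property), so $H_y\subseteq N_A(u)=N_A(H_x)$; but Lemma~\ref{l:nxcc} says $N_A(H_x)$ is a clique, while $H_y$ is anticonnected with $|H_y|\geq 2$, a contradiction. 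Hence only the first case occurs.

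In either sub-case of the first case, $H_x\cap H_y\neq\emptyset$, so condition~\eqref{a:last} of templates forces $xy\in E(G)$, completing the proof. The main technical content is really the short argument eliminating the case $u\in N_A(H_y)$, for which Lemma~\ref{l:nxcc} is doing all the work; once that is ruled out, laminarity finishes everything routinely.
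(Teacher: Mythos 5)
Your proof is correct and takes essentially the same approach as the paper: both establish $N_A(H_x)=N_A(u)$ directly from the module property together with $u$ being isolated in $H_x$, and both use Lemma~\ref{l:nxcc} to rule out $H_y\subseteq N_A(u)$ by showing $H_y$ would then be a clique, contradicting anticonnectedness. The only cosmetic difference is that you organize the second part as a direct case split on $u\in H_y$ versus $u\in N_A(H_y)$, while the paper runs a proof by contradiction starting from $H_x\cap H_y=\emptyset$; the logical content is identical.
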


\begin{proof}
  Since $H_x$ is a module of $G[A]$ and $u$ is isolated in $H_x$, we have
  $N_{A}(H_x) = N_{A}(u)$.  If the second conclusion fails, then since
  $\mathcal H_{B}$ is laminar, $H_y\cap H_x = \emptyset$.  So
  $xy \notin E(G)$.  Since $yu\in E(G)$ and $u\notin H_y$, we have
  $u\in N_A(H_y)$, so $u$ is complete to $H_y$ since $H_y$ is a
  module.  So $H_y\subseteq N_A(u) = N_A(H_x)$, which is a clique by
  Lemma~\ref{l:nxcc}.  This contradicts $H_y$ being anticonnected.
\end{proof}

A clique of $G$ is \emph{solid} if all its edges are solid.

\begin{lemma}
  \label{l:NuSolid}
  If $ux$ is an optional edge of $G$ such that $u\in A$ and $x\in B$,
  then $N_{A\cup B}(u)$ is a solid clique of $G$.
\end{lemma}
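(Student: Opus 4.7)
The plan is to verify the two claims separately: $K := N_{A\cup B}(u)$ is a clique, and every edge inside $G[K]$ is solid. The main leverage will come from Lemma~\ref{c:FXsubY}, which gives $N_A(u) = N_A(H_x)$, and says that any $y \in B$ adjacent to $u$ has $H_y$ comparable to $H_x$ under inclusion, together with Lemma~\ref{l:nxcc}, which says $N_A(H_x)$ is a clique complete to $H_x$ with $N_A(H_x) >_{G[A]} H_x$.

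To show $K$ is a clique, I would split on where the pair $p,q \in K$ lies. If $p,q \in A$, then $p,q \in N_A(u) = N_A(H_x)$, which is a clique by Lemma~\ref{l:nxcc}. If $p \in A$ and $q \in B$, the comparability of $H_q$ with $H_x$ gives two subcases: if $H_q \subseteq H_x$ then $p$ is complete to $H_x \supseteq H_q$, while if $H_x \subseteq H_q$ then $p$ is adjacent to some vertex of $H_x \subseteq H_q$ or itself lies in $H_q$; in either case $p \in N_A[H_q]$ and property~\ref{a:makeit} gives $pq \in E(G)$. If $p, q \in B$, the key is to show $u \in H_p \cap H_q$, after which property~\ref{a:last} gives $pq \in E(G)$. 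Comparability of $H_p$ with $H_x$ yields $u \in H_p$: if $H_x \subseteq H_p$ then directly $u \in H_x \subseteq H_p$; if $H_p \subseteq H_x$, the alternative $u \in N_A(H_p)$ would mean $u$ is complete to $H_p \subseteq H_x$, contradicting that $u$ is isolated in $G[H_x]$ (where $|H_p|\geq 2$). Symmetrically $u \in H_q$.

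For solidity, no edge of $G[K]$ can be flat because $K \subseteq A\cup B$ is disjoint from $I$. Suppose some edge $pq$ of $G[K]$ is optional; up to renaming, $q \in B$ and $p \in H_q \cap A$ is isolated in $G[H_q]$. Comparability of $H_q$ with $H_x$ holds (trivially if $q=x$, by Lemma~\ref{c:FXsubY} otherwise). If $H_q \subseteq H_x$, then $p \in H_x$ and $u \in H_x$ with $u$ isolated in $G[H_x]$, so $pu \notin E(G)$, contradicting $p \in N_{A\cup B}(u)$. If $H_x \subseteq H_q$, then $u \in H_x \subseteq H_q$ and $p \in H_q$ is isolated in $G[H_q]$, so again $pu \notin E(G)$, the same contradiction. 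Hence no edge of $G[K]$ is optional, and $G[K]$ is a solid clique.

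The main obstacle is the $p,q \in B$ subcase of the clique argument: comparability alone is not enough, one must use the isolation of $u$ in $G[H_x]$ to rule out the branch $u \in N_A(H_p)$ and pin $u$ down as an element of $H_p$ (and of $H_q$). Once that pivot is in place, the remainder of the argument is a mechanical case split driven by Lemma~\ref{c:FXsubY} and properties~\ref{a:makeit} and~\ref{a:last} of templates.
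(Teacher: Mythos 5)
Your proof is correct and rests on essentially the same ingredients as the paper's: Lemma~\ref{c:FXsubY} for $N_A(u)=N_A(H_x)$ and comparability of $H_y$ with $H_x$, Lemma~\ref{l:nxcc} for $N_A(H_x)$ being a clique complete to $H_x$, and the isolation of $u$ in $G[H_x]$ as the decisive obstruction. The only difference is bookkeeping: the paper first shows $N_A(u)$ and $N_B(u)$ are each solid cliques, then proves solid completeness between them (deriving $u\in H_y$ for $y\in N_B(u)$ along the way), whereas you do one pass to establish the clique property by cases on where $p,q$ lie (explicitly pinning down $u\in H_p\cap H_q$ in the $B$-$B$ case), then a second pass to rule out optional edges. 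Both routes pivot on the same fact that $u\in H_y$ for every $y\in N_B(u)$, so this is the same proof up to reorganization.
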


\begin{proof}
  By Lemma~\ref{c:FXsubY}, $N_A(H_x)=N_A(u)$.  By Lemma~\ref{l:nxcc},
  $N_A(H_x)=N_A(u)$ is a clique. It is solid because edges with both
  ends in $A$ are solid. Hence $N_A(u)$ is a solid clique.
 
    By Lemma~\ref{c:FXsubY} all vertices from $N_B(u)$ are adjacent
    since they correspond to hyperedges of $\mathcal H_B$ that are included in
    each other.  Therefore, $N_B(u)$ is a clique and it is solid
    because edges with both ends in $B$ are solid. Hence $N_B(u)$ is a solid clique.

    It remains to prove that $N_A(u)$ is complete to $N_B(u)$ and that
    all edges between these two sets are solid. So let $y\in B$ and
    $v\in A$ be two neighbors of $u$.  Note that $v\notin H_x$ and
    possibly $y=x$. If $u\in H_y$, then $vy$ is an edge because
    $v\in N_A(u)$ (and so $v \in N_A[H_y]$), and it is a solid edge
    because $v$ is not an isolated vertex of $H_y$.  If $u\notin H_y$,
    then by Lemma~\ref{c:FXsubY}, $H_y \subseteq H_x$.  So,
    $u\in N_A(H_y)$ since $uy\in E(G)$, and this contradicts $u$ being
    isolated in $H_x$.
\end{proof}

\begin{lemma}
  \label{l:removeFr}
  Let $C$ be a cycle of $G$ of length at least 4 with no solid chord. 
  If $C$ is not a hole then there exist three consecutive vertices
  $x$, $y$, $u$ in $C$ such that: 
  
  - $u\in A$, $x, y\in B$,
  $\{u\} \subseteq H_y \subseteq H_x$ and $u$ is an isolated vertex of
  $H_x$,  or 
  
  - $u\in A'$, $x, y\in B'$,
$\{u\} \subseteq H'_y \subseteq H'_x$ and $u$ is an isolated vertex of
  $H'_x.$ 
  In particular $ux$ is an optional edge of $G$ and a chord of
  $C$.
\end{lemma}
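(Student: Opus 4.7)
The plan is to pick an optional chord of $C$ minimizing the length of its shorter arc and then extract from that arc the desired consecutive triple. The first step rules out flat chords: a flat edge has an end in $I$, but every vertex of $I$ has degree~$2$ in $G$ (condition~(\ref{tempe7}) of the definition of odd templates), so any $I$-vertex appearing in $C$ has both its $G$-neighbors as cycle neighbors, leaving no chord incident to it. Hence every chord of $C$ is optional. Among all optional chords of $C$ I will pick one $xu$ minimizing the length $d$ of the shorter arc from $x$ to $u$ on $C$; by the $A\leftrightarrow A'$ symmetry I may assume $x\in B$, $u\in A$, and $u$ isolated in $H_x$.

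The second step is to show $d=2$. If $d\geq 3$, then the subcycle $C_1$ formed by the shorter arc together with the chord $xu$ has no chord: any such chord would also be a chord of $C$ whose shorter arc lies strictly inside that of $xu$, contradicting the minimality of $d$. So $C_1$ is a hole, and by Lemma~\ref{l:HinTemplate} has length $2\ell+1$; hence $d=2\ell$ and $C_1$ consists of two principal paths $P_i,P_j$ and the extra vertex $x$ with $u=v_i$. The cycle neighbor $y_2$ of $u$ on the longer arc of $C$ is then not $u^+$ (which lies on $C_1$) and so by Lemma~\ref{l:NuSolid} lies in the solid clique $N_{A\cup B}(u)$ together with $x$; since the longer arc has length at least $2\ell\geq 6$, $y_2$ is not a cycle neighbor of $x$, so $xy_2$ is a solid chord of $C$---contradiction. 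Hence $d=2$, giving three consecutive cycle vertices $x,y,u$ with $xu$ a chord.

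The third step shows $y\in B$. Since $y$ is adjacent to $x\in B$ and $N(B)\subseteq A$, we have $y\in A\cup B$. If $y\in A$, then $y\notin H_x$ (else $yu$ would contradict $u$'s isolation in $H_x$), so $y\in N_A(H_x)$ and is complete to $H_x$ by Lemma~\ref{l:nxcc}. A case split on the other cycle neighbor $x^-$ of $x$ (whether $x^-\in A$ or $x^-\in B$, and in the latter case using laminarity of $\mathcal H_B$) produces in every subcase a solid chord among $\{x^-u, yx^-\}$, a contradiction. So $y\in B$.

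The last and main step handles the hyperedges. The edge $xy$ lives in $G[B]$, so $H_x\cap H_y\neq\emptyset$ and by laminarity either $H_y\subseteq H_x$ or $H_x\subsetneq H_y$. In the first case, $u\in H_y$ follows because the alternative would force $u$ to be complete to $H_y\subseteq H_x$ and contradict $u$'s isolation in $H_x$; the triple $(x,y,u)$ then witnesses the lemma. The main obstacle is the case $H_x\subsetneq H_y$, where my plan is to locate the triple elsewhere, at $(x^-,x,y)$. I will first rule out $x^-\in B$: there $H_{x^-}\cap H_x\neq\emptyset$ implies $H_{x^-}\cap H_y\neq\emptyset$, so $yx^-$ would be a solid $B$-$B$ chord of $C$. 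Hence $x^-\in A$; then $x^-$ must lie in $H_x$ (else $x^-u$ is a solid $A$-$A$ chord) and be isolated in $H_y$ (else $x^-y$ is a solid $A$-$B$ chord, since $x^-\in H_x\subseteq H_y$). The triple $(x^-,x,y)$ then satisfies the lemma, with $x$ playing the middle role and $x^-$ the role of the $A$-vertex.
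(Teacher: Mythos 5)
Your overall structure is sound, and the step-4 hyperedge analysis, including the relocation of the triple to $(x^-,x,y)$ when $H_x\subsetneq H_y$, matches the paper. However, step~4 has a gap. You deduce $x^- \in H_x$ by asserting that otherwise $x^-u$ is a solid $A$-$A$ chord; but that presumes $x^-$ and $u$ are not $C$-neighbors, which you have not established. Your step~2 only gives $d=2$, hence $|C|\geq 4$, and if $|C|=4$ then $x^-u$ is a cycle edge of $C$, not a chord, so this step yields no contradiction. (The fact $x^-\in H_x$ is still true when $|C|=4$, but only because the other diagonal $yx^-$ is then a chord and is easily checked to be solid whenever $x^-\in N_A(H_x)$ --- and your argument never says this.) The clean fix is the observation the paper makes at the outset: the $C$-neighbor of $u$ other than $y$ must lie in $I$, since otherwise both $C$-neighbors of $u$ lie in the solid clique $N_{A\cup B}(u)$ of Lemma~\ref{l:NuSolid} and so are joined by a solid chord at distance two through $u$. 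Since every neighbor of $x\in B$ lies in $A\cup B$, this forces $|C|\geq 5$, after which your $x^-u$ argument is valid.

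As a secondary remark, that same observation would have made your step~2 unnecessary: once the $A\cup B$ neighbor of $u$ on $C$ is named $y$, the edge $xy$ exists and is solid ($B$-$B$ when $y\in B$, via Lemma~\ref{c:FXsubY}; $A$-$B$ with $y\in N_A(H_x)$ when $y\in A$), so it must be a cycle edge of $C$, and $x,y,u$ are consecutive. Your route through the minimality of the shorter arc and Lemma~\ref{l:HinTemplate} reaches the same point with heavier machinery.
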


\begin{proof}
 
  We may assume that $C$ has a chord $e$ for otherwise it is a hole.
  This chord cannot be a flat edge of $G$ because a flat edge contains
  a vertex of $I$, so a vertex of degree~2, and it therefore cannot be
  a chord of any cycle.  Hence, $e$ is an optional edge of $G$.  So,
  up to symmetry, we may assume that $e=ux$ with $u\in A$ and
  $x\in B$.  By definition of optional edges, $u$ is an isolated
  vertex of $G[H_x]$.

  Let $u'$ and $y$ be the two neighbors of $u$ along $C$.  If
  $u', y \in A\cup B$, then by Lemma~\ref{l:NuSolid}, $u'y$ is a
  solid chord of $C$, a contradiction.  So, up to symmetry,
  $y\in A\cup B$ and $u'\in I$. 

  Suppose first that $y\in A$.  Since $uy\in E(G)$ and $u$ is isolated
  in $H_x$, we have that $y\in N_A(H_x)$.  If follows that
  $xy\in E(G)$, and moreover, $xy$ is a solid edge since
  $y\notin H_x$.  Since $x$ and $y$ are both in $C$ and $C$ has no
  solid chord, $C$ visits consecutively $u'$, $u$, $y$ and $x$.  Let
  $x'$ be the neighbor of $x$ in $C\sm y$.  If $x'\in B$ then
  $H_x\cap H_{x'}\neq \emptyset$, and since $y$ is complete to $H_x$,
  $y$ has a neighbor in $H_{x'}$. It follows that $yx'$ is an edge of
  $G$, the edge $yx'$ is solid, and is therefore a solid chord of $C$,
  a contradiction.  Hence $x'\in A,$ and so since $xx'$ is an edge, $x' \in N_A[H_x]$.  
  If $x'\in H_x$, then $yx'$ is a
  solid chord of $C$, and if $x'\in N(H_x)$, then (since $H_x$ is a module of $G[A]$)
  $x'u$ is a solid
  chord of $C$, in each case a contradiction.

  Suppose now that $y\in B$. If $H_y \subseteq H_x$, then $xy$ is an
  edge that is solid and hence is an edge of $C$, so the conclusion of
  the lemma holds. So we may assume by Lemma~\ref{c:FXsubY} that
  $H_x \subseteq H_y$.  In particular, $xy$ is an edge, and since it
  is solid, $u$, $y$ and $x$ are consecutive along $C$.  Let $v$ be
  the neighbor of $x$ in $C\sm y$.  If $v\in B$, then
  $H_v\cap H_x\neq \emptyset$, so $H_v\cap H_y\neq \emptyset$, showing
  that $yv$ is a solid chord of $G$, a contradiction.  Hence,
  $v\in A$.  We have $uv\notin E(G)$ for otherwise $uv$ would be a
  solid chord of $G$.  Hence, $v\in H_x$ since $vx\in E(G)$ and $H_x$ is a module of $G[A]$. So,
  $v\in H_y$ (and hence $vy \in E(G)$) and $v$ is an isolated vertex
  of $H_y$, for otherwise $vy$ would be a solid chord of $G$.

  Now, we have three consecutive vertices $y$, $x$, $v$ in $C$ such
  that: $v\in A$, $y, x\in B$, $\{v\} \subseteq H_x \subseteq H_y$ and
  $v$ is an isolated vertex of $H_y$.  So, the conclusion of the lemma
  is satisfied again with these three vertices.
\end{proof}

\subsection{Blowups and holes} \label{def_blowup}

Let $G$ be a twinless odd $\ell$-template with an $\ell$-partition
$(A, B, A', B', I, w, w')$ A \emph{blowup} of $G$ is any graph $G^*$
that satisfies the following:

\begin{enumerate}
\item\label{i:buK} For every vertex $u$ of $G$ there is a clique
  $K_u$ in $G^*$ on $k_u\geq 1$ vertices $u_1, \dots, u_{k_u}$ such that
  $u_{k_u} = u$ ; for distinct vertices $u,v$ of $G$, $K_u \cap K_v = \emptyset$ and $V(G^*) = \bigcup_{u\in V(G)} K_u,$ so  $V(G) \subseteq V(G^*)$.

\item\label{i:KKhalf} For all vertices $u \in V(G)$ and all integers
  $1\leq i \leq j \leq k_u$,  in $G^*$ $N[u_i] \subseteq N[u_j]$ (in
  particular, for all $u, v \in V(G)$, $G^*[K_u\cup K_v]$ is a half
  graph).

\item\label{i:bKKanti} If $u$ and $v$ are non-adjacent vertices of
  $G$, then $K_u$ is anticomplete to $K_v$ (in particular
  $uv \notin E(G^*)$).

\item\label{i:Kcomp} If $uv$ is a solid edge of $G$, then $K_u$ is
  complete to $K_v$ (in particular $uv \in E(G^*)$).
  
\item\label{i:flat} If $uv$ is a flat edge of $G$, then $u$ is complete to
  $K_v$ and $v$ is complete to $K_u$ (in particular $uv\in E(G^*)$).

\item\label{i:optAB} If $ux$ is an optional edge of $G$ with $u\in A$
  and $x\in B$ (resp.\ $u\in A'$ and $x\in B'$), then $u$ is complete
  to $K_x$ (in particular $uv\in E(G^*)$).

\item\label{i:condOpt} If $ux$ and $uy$ are optional edges of $G$ with
  $u\in A$, $x, y\in B$ and $H_y\subsetneq H_x$ (resp.\ $u\in A'$,
  $x, y\in B'$ and $H'_y\subsetneq H'_x$), then every vertex of $K_u$
  with a neighbor in $K_y$ is complete to $K_x$.
  
\item\label{i:condIso} $w$ (resp.\ $w'$) is a universal vertex of
  $G^*[\bigcup_{u\in A\cup B} K_u]$ (resp.\
  $G^*[\bigcup_{u\in A'\cup B'} K_u]$).
\end{enumerate}

\vspace{2ex}

Observe that $G=G^*[V(G)]$ follows clearly from the definition, so
$G$ is an induced subgraph of $G^*$.  For every vertex $u$ of $G$, the
clique $K_u$ is called a \emph{blown up clique}, more specifically the
\emph{clique blown up from $u$}.

Note that to define the blowup of a graph, it is first needed to fix
an $\ell$-partition of it.  Also, it should be stressed that the
blowup is defined only for twinless graphs.  Hence, in
condition~\eqref{i:condOpt} of the definition, since $G$ is twinless,
when $x\neq y$, $H_y\subsetneq H_x$ is equivalent to
$H_y\subseteq H_x$ because $H_x=H_y$ would imply that $x$ and $y$ are
twins.

\begin{lemma}
  \label{l:atMost1}
  A hole $C$ in a blowup of a twinless odd $\ell$-template contains at
  most one vertex in each blown up clique.
\end{lemma}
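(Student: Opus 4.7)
The plan is to prove this by a short contradiction argument using only the half-graph property (condition~\ref{i:KKhalf}) together with the definition of a hole.

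Suppose for contradiction that some hole $C$ in $G^*$ contains two distinct vertices $u_i, u_j$ of the same blown up clique $K_u$. Since $K_u$ is a clique in $G^*$, $u_i$ and $u_j$ are adjacent, and since $C$ is an induced cycle of length at least~$4$, any two adjacent vertices of $C$ must appear consecutively on $C$ (otherwise the edge $u_iu_j$ would be a chord). So $u_i$ and $u_j$ are consecutive along $C$, and we can write the neighbors of $u_i$ and $u_j$ in $C$ other than each other as $x$ and $y$ respectively, with $x\neq y$ (because $C$ has at least four distinct vertices).

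By condition~\ref{i:KKhalf} of the blowup definition, indices in $K_u$ are totally ordered by inclusion of closed neighborhoods, so up to swapping the roles of $u_i$ and $u_j$ we may assume $i\le j$, hence $N[u_i]\subseteq N[u_j]$ in $G^*$. Since $x\in N(u_i)\subseteq N[u_j]$ and $x\neq u_j$, we obtain $xu_j\in E(G^*)$. But $x\neq y$ and $x\neq u_i$, so $xu_j$ is a chord of $C$, contradicting the fact that $C$ is a hole.

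The argument is completely local and uses none of the structural content of templates beyond the half-graph property~\ref{i:KKhalf}; no case split on which of the sets $A,B,A',B',I$ the vertex $u$ belongs to is needed, and conditions~\ref{i:Kcomp}--\ref{i:condIso} play no role here. There is no real obstacle: the only thing to be slightly careful about is observing that consecutive vertices of $C$ on either side of the pair $(u_i,u_j)$ are distinct, which is automatic from the hole having length at least~$4$.
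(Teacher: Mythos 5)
Your proof is correct and takes essentially the same route as the paper: both argue that the two vertices of $C$ in $K_u$ must be consecutive (since $K_u$ is a clique and $C$ is chordless), then use condition~\ref{i:KKhalf} to force a chord (equivalently, a triangle through the two clique vertices), contradicting $C$ being a hole.
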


\begin{proof}
  Since a hole is triangle-free, $C$ intersects any clique in at most
  two vertices. So suppose for a contradiction that some blown up
  clique $K_v$ contains two vertices $x$ and $y$ of $C$.  Let $x'$ be
  the neighbor of $x$ in $C\sm y$ and $y'$ be the neighbor of $y$ in
  $C\sm x$.  Since by condition \eqref{i:KKhalf} of the definition of the blowup we have that in $G^*,$  $N[x] \subseteq N[y]$ or
  $N[y] \subseteq N[x]$, one of $xyx'$ or $xyy'$ is a
  triangle of $C$, a contradiction.
\end{proof}

\begin{lemma}  \label{blowoddtemp}
  In a blowup $G^*$ of a twinless odd $\ell$-template $G$, every hole
  has length $2\ell + 1$.
\end{lemma}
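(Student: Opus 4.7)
The plan is to reduce the problem to Lemma~\ref{l:HinTemplate} by \emph{projecting} a hole of $G^*$ down to a cycle in $G$. Concretely, given a hole $C$ in $G^*$, for each vertex $v\in V(C)$ there is a unique $u=u(v)\in V(G)$ with $v\in K_u$; by Lemma~\ref{l:atMost1} the map $v\mapsto u(v)$ is injective on $V(C)$. For consecutive $v,v'$ on $C$ the edge $vv'\in E(G^*)$ together with condition~\eqref{i:bKKanti} of the blowup forces $u(v)u(v')\in E(G)$. Hence the image $\tilde C$ is a cycle of $G$ with the same length as $C$, in particular of length at least~$4$.

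Next I would rule out that $\tilde C$ has a solid chord. If $ab$ were a solid chord of $\tilde C$, with $a=u(v_a)$ and $b=u(v_b)$ for vertices $v_a,v_b$ of $C$ at distance at least~$2$ along $C$, then by condition~\eqref{i:Kcomp} of the blowup $K_a$ is complete to $K_b$, so $v_av_b\in E(G^*)$ is a chord of $C$, contradicting that $C$ is a hole. Thus $\tilde C$ has no solid chord. If in addition $\tilde C$ happens to be induced (a hole of $G$), then Lemma~\ref{l:HinTemplate} gives $|\tilde C|=2\ell+1$ and we are done, since $|C|=|\tilde C|$.

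The remaining case is when $\tilde C$ is not a hole in $G$. Here I would apply Lemma~\ref{l:removeFr}: up to the obvious $A/A'$--$B/B'$ symmetry, there exist three consecutive vertices $x,y,u$ on $\tilde C$ with $u\in A$, $x,y\in B$, $\{u\}\subseteq H_y\subseteq H_x$, $u$ isolated in $H_x$, and $ux$ an optional chord of $\tilde C$. Since $H_y\subseteq H_x$ and $u$ is isolated in $H_x$, also $u$ is isolated in $H_y$, so $uy$ is optional as well. Because $G$ is twinless and $x\ne y$, Lemma~\ref{l:wTwins} rules out $H_x=H_y$, giving the strict inclusion $H_y\subsetneq H_x$.

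Now I would lift this configuration back to $C$. Let $u_i\in K_u$, $y_j\in K_y$, $x_k\in K_x$ be the vertices of $C$ projecting to $u,y,x$, so that $u_i,y_j,x_k$ appear consecutively on $C$. Since $u_iy_j\in E(G^*)$, the vertex $u_i\in K_u$ has a neighbor in $K_y$, and optional edges $ux$, $uy$ of $G$ satisfy $H_y\subsetneq H_x$; condition~\eqref{i:condOpt} of the blowup then forces $u_i$ to be complete to $K_x$, giving the edge $u_ix_k\in E(G^*)$. But $u_i$ and $x_k$ are at distance~$2$ on $C$ (through $y_j$), so $u_ix_k$ is a chord of $C$, contradicting that $C$ is a hole. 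Hence $\tilde C$ must be a hole, and we conclude $|C|=|\tilde C|=2\ell+1$. The main obstacle is the last step: one must be careful that the vertex $u_i\in K_u$ lying on $C$---not the canonical representative $u$ itself---inherits the needed adjacency to $K_x$, which is exactly what condition~\eqref{i:condOpt} of the blowup was designed to guarantee.
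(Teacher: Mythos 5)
Your proof is correct and follows essentially the same approach as the paper: project the hole $C^*$ of $G^*$ to a cycle $C$ of $G$ via Lemma~\ref{l:atMost1} and condition~\eqref{i:bKKanti}, rule out solid chords via condition~\eqref{i:Kcomp}, apply Lemma~\ref{l:HinTemplate} if $C$ is a hole, and otherwise use Lemma~\ref{l:removeFr} together with condition~\eqref{i:condOpt} to derive a chord of $C^*$. Your explicit verification that $uy$ is also optional and that twinlessness (via Lemma~\ref{l:wTwins}) upgrades $H_y\subseteq H_x$ to the strict inclusion required by condition~\eqref{i:condOpt} is the same point the paper makes in the remark immediately following the definition of blowup.
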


\begin{proof}
  Let $C^*$ be a hole in $G^*$. By Lemma~\ref{l:atMost1}, it contains
  at most one vertex in each blown up clique. Let $C$ be the subgraph
  of $G$ that is induced by all vertices $v$ such that some vertex of
  $C^*$ is in $K_v$. By Lemma~\ref{l:atMost1}, $|V(C^*)|=|V(C)|$.  By
  the definition of blowup (specifically conditions~\eqref{i:bKKanti}
  and~\eqref{i:Kcomp}), $C^*$ is isomorphic to some graph obtained
  from $C$ by removing optional or flat edges of $G$.  Hence, $C$ is a
  cycle of $G$ with no solid chord. If $C$ is a hole of $G$, then
  since it has the same length as $C^*$, by Lemma~\ref{l:HinTemplate},
  $C^*$ has length $2\ell+1$.  Hence, we may assume that $C$ has
  chords, so by Lemma~\ref{l:removeFr}, without loss of generality, $C$ contains three consecutive
  vertices $x$, $y$, $u$ such that: $u\in A$,
  $x, y\in B$, $\{u\} \subsetneq H_y \subsetneq H_x$ and $u$ is
  an isolated vertex of $H_x$.  Note that it follows that both
  $ux$ and $uy$ are optional edges of $G$.  Because of $C^*$, the vertex $u_i$ of
  $K_u\cap V(C^*)$ has a neighbor in $K_y$.  So, by
  condition~\eqref{i:condOpt} of blowups, $u_i$ is complete to $K_x$.
  Hence, $C^*$ has a chord, a contradiction.
\end{proof}

\subsection{Preblowup} \label{def_preblowup}

Checking that a graph is the blowup of a template is tedious. Here we
provide a simpler notion and prove it is in some sense equivalent.

A \emph{preblowup} of an odd $\ell$-template $G$ with an
$\ell$-partition $(A, B, A', B', I, w, w')$ is any graph $G^*$
obtained from $G$ as follows.  Every vertex $u$ of $A\cup A' \cup I$
is replaced by a clique $K_u$ on $k_u \geq 1$ vertices such that
$u \in K_u$.  We denote by $A^*$ the set $\bigcup_{u\in A} K_u$ and use a
similar notation $A'^*$ and $I^*$.  The set $B$ (resp.\ $B'$) is
replaced by a set $B^*$ (resp.\ $B'^*$) of vertices such that
$B\subseteq B^*$ (resp.\ $B'\subseteq B'^*$).  So,
$V(G^*) = A^* \cup B^* \cup A'^* \cup B'^* \cup I^*$.  The sets $A^*$,
$B^*$, $A'^*$, $B'^*$, $I^*$ are disjoint.  Vertices of $G$ are
adjacent in $G^*$ if and only if they are adjacent in $G$, so $G$ is
an induced subgraph of $G^*$.  Finally, we require that the following
conditions hold (throughout $N$ refers to the neighborhood in $G^*$):

\begin{enumerate}
\item\label{pb:A} For all $u\in A$,
  $N(K_u)\subseteq A^*\cup B^* \cup K_{u^+}$ where $u^+$ is the
  neighbor of $u$ in $I$ and:
  
  \begin{enumerate}
  \item\label{pb:Acomp}
   For every $u^*\in K_u$, $N_A(u^*)=N_A[u]$.
     
  \item\label{pb:AI}
    Every vertex of $K_u$ has a neighbor in $K_{u^+}$.
  \end{enumerate}

\item\label{pb:B}$N(B^*) \subseteq A^*$ and: 
  \begin{enumerate}
  \item\label{pb:Bw} If $w\in B$, then there exists $w^*\in B^*$ that is complete to $A^*$.
  \item\label{pb:BAN} If $u^*\in B^*$, then there exist non-adjacent
    $a, b\in A$ such that $u^*$ has neighbors in both $K_a$ and $K_b$.
  \end{enumerate}

  \setcounter{enumi}{8}
\item\label{pb:I}For all $u\in I$, $N(K_u)\subseteq K_a \cup K_{b}$
  where $a$ and $b$ are the neighbors of $u$ in $G$, and:
  \begin{enumerate}
  \item\label{pb:II} Every vertex $u^*\in K_u$ has at least one
    neighbor in each of $K_a$ and $K_b$.
  \end{enumerate}
  
  Conditions (a$'$) and (b$'$) analogous to~\eqref{pb:A} and~\eqref{pb:B} hold for
  $A'$ and  $B'$.
\end{enumerate}
 
Recall that to blowup (resp. preblowup) a template, one needs to
first fix an $\ell$-partition.  If this partition is proper, the
blowup (resp. preblowup) is \emph{proper}.  Recall that by
Lemma~\ref{l:TProper}, a proper $\ell$-partition
$(A, B, A', B', I, w, w')$ exists for every twinless odd
$\ell$-template $G$ (but this remark will be used only in the next
section, so far we just assume the $\ell$-partition we work with is
proper).

When $G^*$ is a preblowup of a template
$G$, the \emph{domination score} of $G$ w.r.t.\ $G^*$ is (where $N$ refers to the neighborhood in $G^*$):
$$
s(G, G^*) = \sum_{x\in A\cup A' \cup I}\left| \left\{ x^*\in K_x :
    N[x^*] \subseteq N[x] \right\} \right|
$$

Observe that the
blowup is defined only for twinless templates while the preblowup is
defined for any template. It is straightforward to check that a blowup
is a particular preblowup. The following is a converse of this
statement.

\begin{lemma}
  \label{l:preblowup}
  Let $\ell \ge 3$ and let $G^*$ be a proper preblowup of an odd $\ell$-template
  with $k\geq 3$ principal paths. If $G^*\in \mathcal C_{2\ell+1}$,
  then $G^*$ is a proper blowup of a twinless odd $\ell$-template $G$
  with $k$ principal paths (in particular, $G$ is an induced subgraph
  of $G^*$).
\end{lemma}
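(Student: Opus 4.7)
The plan is to construct a twinless odd $\ell$-template $G$ as an induced subgraph of $G^*$ and then verify $G^*$ is a proper blowup of $G$. Let $(A, B_0, A', B'_0, I, w_0, w'_0)$ denote the proper $\ell$-partition of the underlying odd $\ell$-template of which $G^*$ is a preblowup. I first define an equivalence $\sim$ on $B_0^*$ by $u^* \sim v^*$ iff $N_{G^*}(u^*) \cap A = N_{G^*}(v^*) \cap A$, and analogously on $(B'_0)^*$. Using the preblowup condition that every $u^* \in B_0^*$ has two non-adjacent neighbors $a, b \in A$: if $u^* \neq v^*$ are equivalent but $u^*v^* \notin E(G^*)$, then $\{u^*, a, v^*, b\}$ induces a $C_4$, contradicting $G^* \in \mathcal C_{2\ell+1}$ since $2\ell+1 \geq 7$. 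Hence each class is a clique in $G^*$. Picking one representative per class, I let $B$ and $B'$ be the resulting representative sets, and set $G := G^*[A \cup B \cup A' \cup B' \cup I]$.

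Next I show that $G$ is a twinless odd $\ell$-template. I verify that $(A, B, A', B', I)$ is an $\ell$-pretemplate partition: items 3--6 (principal-path structure) are inherited since $A$, $A'$, $I$ are unchanged; items 1--2 (locality) follow from $N(B_0^*) \subseteq A^*$ restricted to $V(G)$; items 8--9 follow from the two non-adjacent $A$- (resp.\ $A'$-) neighbors supplied by the preblowup condition. For item 7 (connectivity), a path of the underlying template inside $A \cup B_0$ becomes a walk of $G[A \cup B]$ after replacing each $B_0$-vertex by its class representative, and two consecutive representatives not adjacent in $G^*$ are bridged by a common $A$-neighbor in the intersection of their associated modules, which is nonempty because adjacent $B_0$-vertices share an element of their $H$-sets. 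Since $G$ is an induced subgraph of $G^* \in \mathcal C_{2\ell+1}$, Lemma~\ref{th:ptist} identifies $G$ as an odd $\ell$-template; distinct representatives have distinct $A$-neighborhoods, so Lemma~\ref{l:wTwins} forces $G$ to be twinless, and properness is preserved since $A$ and $A'$ are unchanged.

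Finally I define $K_v$ to be the preblowup clique for $v \in A \cup A' \cup I$ and the equivalence class of $v$ for $v \in B \cup B'$, and verify blowup conditions (i)--(viii). Condition (i) is immediate. Conditions (ii)--(viii) demand adjacency patterns strictly stronger than the preblowup alone guarantees; for each of these, I argue by contradiction. Whenever a required pattern fails, I exhibit an induced cycle in $G^*$ of length $2\ell + 2$, built by taking a length-$(2\ell+1)$ hole of the underlying template through two principal paths joined via $w_0$ or $w'_0$, and inserting the offending vertex to create a detour. The preblowup locality conditions ($N(K_u) \subseteq A^* \cup B^* \cup K_{u^+}$ for $u \in A$, $N(K_u) \subseteq K_a \cup K_b$ for $u \in I$, and their $A'$-analogs) tightly restrict where chords can appear, so inducedness can be checked case by case. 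The resulting even length $2\ell + 2$ then contradicts $G^* \in \mathcal C_{2\ell+1}$.

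The main obstacle is precisely this induced-cycle exhibition for conditions (ii)--(viii). Each condition requires a tailored choice of hole and a tailored insertion, depending on where the offending vertices lie (in $A$, $A'$, $B$, $B'$ or $I$) and on the type of edge of $G$ involved (solid, flat, or optional); condition (vii) in particular involves the extra inclusion hypothesis $H_y \subsetneq H_x$ on the modules, and condition (ii) (domination nesting within each $K_v$) is handled by ordering $K_v$ by inclusion of $G^*$-neighborhoods and ruling out incomparability by the same forbidden-cycle technique.
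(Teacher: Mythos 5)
Your overall strategy (keep $A$, $A'$, $I$; re-choose $B$, $B'$ as equivalence-class representatives; then verify the blowup axioms by exhibiting forbidden induced cycles) is close in spirit to the paper's, but it has a genuine gap that the paper's proof is specifically designed to avoid.

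The preblowup hypothesis only guarantees $u \in K_u$ for $u \in A \cup A' \cup I$; it does \emph{not} guarantee that $u$ is dominant in $K_u$, i.e.\ that $N_{G^*}[u^*] \subseteq N_{G^*}[u]$ for all $u^* \in K_u$. But blowup conditions (a) and (b) together require exactly this: the template vertex must be $u_{k_u}$, the maximum element of $K_u$ in the nesting order. If the given template has some $u \in A \cup A' \cup I$ with a $u^* \in K_u$ satisfying $N[u] \subsetneq N[u^*]$ (perfectly compatible with all nine preblowup conditions), then your fixed choice of $A$, $A'$, $I$ cannot serve as the base of a blowup, no matter how you re-choose $B$ and $B'$. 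Your claim that condition (b) ``is handled by ordering $K_v$ by inclusion of $G^*$-neighborhoods'' establishes only that $K_v$ is linearly ordered; it does not establish that $v$ sits at the top. This is precisely why the paper begins by choosing, among \emph{all} templates of which $G^*$ is a proper preblowup, one maximizing the domination score $s(G, G^*) = \sum_{x \in A \cup A' \cup I} |\{x^* \in K_x : N[x^*] \subseteq N[x]\}|$. The maximality is then invoked in claims (6) and (9) of the paper's proof to rule out the bad situation by a replacement argument: if $N[u] \subsetneq N[u^*]$, replace $u$ by $u^*$ to get a new template with strictly larger score. Your proof has no analogue of this step and so does not rule out the bad case.

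A secondary issue: you should pick the \emph{maximum} element of each $\sim$-class in $B^*$ (ordered by inclusion of $A^*$-neighborhoods), not an arbitrary representative, for the same reason — otherwise conditions (a)--(b) again fail for the $B$-cliques. The paper's choice of ``maximal vertices'' is not incidental. Your connectivity argument for item (g) also implicitly leans on adjacency of the chosen representatives tracking adjacency of the original $B_0$-vertices, which is easiest to justify once representatives are maximal (Claim (8) in the paper); as written, with arbitrary representatives, two $B$-vertices corresponding to adjacent $B_0$-vertices need not be adjacent in $G^*$, and the bridging-vertex argument you sketch is not clearly available.
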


\begin{proof}
  Among all the induced subgraphs of $G^*$ that are odd
  $\ell$-templates and for which $G^*$ is a proper preblowup,
  we suppose that $G$ is one that maximizes $s(G, G^*)$. We denote
  by $(A, B, A', B', I, w, w')$ the proper $\ell$-partition of $G$ that
  is used for its preblowup and by $(A^*, B^*, A'^*, B'^*, I^*)$ the
  corresponding partition of the vertices of $G^*$.
   
  \begin{claim}\label{cb:chapeau}
    There exist vertices $w^*$ and $w'^*$ that are complete to
    respectively $A^*\sm\{w^*\}$ and $A'^* \sm\{w'^*\},$ and such that
    either $w^* \in B^*$ and $w'^*\in A'^*$, or $w^*\in A^*$ and
    $w'^*\in B'^*$.
  \end{claim}

  \begin{proofclaim}
    If $w\in A$, then from the definition of $w$ (see Lemma
    \ref{l:ABuniv}), the definition of $A^*$ and
    condition~\eqref{pb:Acomp}, it follows that $w^*=w$ is complete to
    $A^* \sm\{w^*\}$.  If $w\in B,$ by condition~\eqref{pb:Bw} there
    exists $w^*\in B^*$ that is complete to $A^*$.
    
    The statement about $w'^*$ holds by symmetry.  The last statement
    comes from the fact that by Lemma \ref{l:ABuniv} exactly one of
    $w, w'$ is in $A\cup A'$, and the other one is in $B\cup B'$.
  \end{proofclaim}

  \begin{claim}
    \label{c:principB}
    For every principal path $P_u=u\dots u'$ of $G$ and $u^*\in K_u$,
    there exists in $G^*$ a path $P_{u^*}$ of length $\ell-1$
    from $u^*$ to some ${u}'^*\in K_{u'}$ whose interior is in
    $\bigcup_{x\in I\cap V(P_u)} K_x$. Moreover, the interior of $P_{u^*}$ is
    anticomplete to $V(G^*) \sm \bigcup_{v\in V(P_u)} K_v$.
  \end{claim}
  
  \begin{proofclaim}
    The existence of a path from $u^*$ to some $u'^*\in K_{u'}$ whose
    interior is in $\bigcup_{x\in I\cap V(P)} K_x$ follows from conditions~\eqref{pb:A},~\eqref{pb:I},~\eqref{pb:II},  
    and~\eqref{pb:AI} of preblowup.  Its length is $\ell-1$ by
    condition~\eqref{i:linkP} of templates.  The statement about its interior follows from
    conditions~\eqref{pb:A}, \eqref{pb:B} and~\eqref{pb:I} of
    preblowup.
  \end{proofclaim}

  \begin{claim}\label{cb:Anticomplet_in_A}
    For all $u,v\in A$ such that $uv\notin E(G)$, $K_u$ is anticomplete
    to $K_v$. A similar statement holds for $A'$.
  \end{claim}
  
  \begin{proofclaim}
    Suppose that there exists $u^*\in K_u$ and $v^*\in K_v$ such that
    $u^*v^*\in E(G^*)$.  By condition~\eqref{pb:Acomp} of preblowup,
    $u\neq u^*$ and $v\neq v^*$.  Let $P_u = u\dots u'$ and
    $P_v=v\dots v'$ be principal paths. Denote by $u^+$ the neighbor
    of $u$ in $P_u$ and by $v^+$ the neighbor of $v$ in $P_v$. By
   property~\eqref{t:threshold} of a template,
    $u'v'\in E(G)$. Hence
    $uP_uu'v'P_vvv^*u^*u$ is a cycle $C$. By
    conditions \eqref{pb:A} and \eqref{pb:Acomp} of preblowup, the only
    possible chords in $C$ are $u^+u^*$ and $v^+v^*$. Without loss of
    generality,  we may assume that $u^*u^+\in E(G^*)$ for otherwise $C$ is a hole of
    length $2\ell +2$, a contradiction.
    
    Let $P_{v^*}$ be a path of length $\ell -1$ from $v^*$ to $v'^{*}$
    as defined in~\eqref{c:principB}. Since $v'^{*}\in K_{v'}$ and
    by~\eqref{pb:Acomp} applied to $A'$, $v'^{*}u'\in E(G^*)$ and
    $v^*P_{v^*}v'^{*}u'P_uu^+u^*v^*$ is a hole of length $2\ell$, a
    contradiction.
    
    The result for $A'$ holds symmetrically.
  \end{proofclaim}
  
  \begin{claim}\label{cb:Complet_in_A}
    For all $u,v\in A$ such that $uv\in E(G)$, $K_u$ is complete to
    $K_v$.  A similar statement holds for $A'$.
  \end{claim}
  
  \begin{proofclaim}
    Suppose that there exists $u^*\in K_u$ and $v^*\in K_v$ such that
    $u^*v^*\notin E(G^*)$. Let $P_{u^*} = u^*\dots u'^*$ and
    $P_{v^*} = v^*\dots v'^*$ be defined as
    in~\eqref{c:principB}. Observe that $u'^{*}\in K_{u'}$ and
    $v'^{*}\in K_{v'}$. Furthermore $u'v'\notin E(G)$ by
      property~\eqref{t:threshold} of templates.  Hence,
    by~\eqref{cb:Anticomplet_in_A}, $u'^{*}v'^{*}\notin E(G^*)$.

    We claim that there exists a vertex $a\in (A\cup B) \sm \{u,v\}$ that is
    adjacent to both $u^*$ and $v^*$. If $w^* \neq u,v$, then
    by~\eqref{cb:chapeau} and condition~\eqref{pb:AI}, we may choose
    $a=w^*$.  Otherwise, up to symmetry, $w^*=u$.  Since the
    $\ell$-partition of $G$ is proper, by Lemma~\ref{l:wProper}, $A$
    contains a universal vertex $x$ distinct from $w^*=u$.  If
    $x \neq v$, we set $a=x$. If $x=v$, then both $u$ and $v$ are
    universal vertices of $G[A]$ and we may choose for $a$ any vertex
    of $A\sm \{u, v\}$. This proves our claim.
    
    Now, $au^*P_{u^*}u'^{*}w'^*v'^{*}P_{v^*}v^*a$ is a hole of length
    $2\ell +2$, a contradiction.  The result for $A'$ holds
    symmetrically.
  \end{proofclaim}
  
  \begin{claim}\label{cb:Inested}
    For all $u\in I$ and $u_1, u_2\in K_u$, either
    $N[u_1]\subseteq N[u_2]$ or $N[u_2]\subseteq N[u_1]$.
  \end{claim}

  \begin{proofclaim}
    Otherwise, there exists $x^*_1\in N[u_1]\setminus N[u_2]$ and
    $x^*_2\in N[u_2] \setminus N[u_1]$.  Note that
    $x^*_1x^*_2\notin E(G^*)$ for otherwise,
    $\{x^*_1, x^*_2, u_1, u_2\}$ induces a $C_4$.  It follows that $x^*_1$ and $x^*_2$ 
    belong respectively to distinct cliques $K_{x_1}$ and $K_{x_2}$, 
    where $x_1$ and $x_2$ are the two neighbors of $u$
    along some principal path $P = v\dots v'$  of $G$.  Because of
    $x^*_1$, $x^*_2$ and condition~\eqref{pb:II} of
    preblowup, there exists a path $P^*$ of length $\ell$ from some
    $v^*\in K_v$ to some $v'^*\in K_{v'}$ whose interior is  in
    $\bigcup_{x\in I\cap V(P)} K_x$.

    Let $q \neq v$ be a vertex of $A$ and $Q = q\dots q'$ be a principal path of $G$,
    and suppose up to symmetry that $qv\notin E(G)$.  Now, by
    conditions~\eqref{pb:I} and~\eqref{pb:Acomp} of preblowup
    and~\eqref{cb:chapeau}, $P^*$, $Q$ and $w^*$ form a hole of length
    $2\ell+2$.
  \end{proofclaim}

  \begin{claim}\label{cb:Inestedu}
    For all $u\in I$ and $u^*\in K_u$, 
    $N[u^*]\subseteq N[u]$.
  \end{claim}

  \begin{proofclaim}
    Otherwise, by~\eqref{cb:Inested}, there exists a vertex
    $u^*\in K_u$ such that $N[u] \subsetneq N[u^*]$.  Hence
    $(V(G) \sm \{u\}) \cup \{u^*\}$ induces a subgraph $G_0$ of $G^*$
    and it is easy to verify that $G^*$ is a preblowup of $G_0$. This
    contradicts to the maximality of $s(G, G^*)$.
  \end{proofclaim}
  
  By~(\ref{cb:Inested}), for every $u\in I$, the clique $K_u$ can be
  linearly ordered by the inclusion of the neighborhoods as
  $u_1, \dots, u_{k_u}$ with $u=u_{k_u}$ by~\eqref{cb:Inestedu} (so,
  for $1\leq i \leq j \leq k_u$, $N[u_i] \subseteq N[u_j]$). From 
  condition~\eqref{pb:I} of the preblowup it also follows that, in $G^*$, $u$ is complete to the cliques
  associated to its two neighbors in~$G$.

  \begin{claim}\label{cb:Anested}
    For every $u\in A$ and $u_1,u_2\in K_u$, either
    $N[u_1]\subseteq N[u_2]$ or $N[u_2]\subseteq N[u_1]$. A similar statement holds for $A'$.
  \end{claim}
  
  \begin{proofclaim} 
    Otherwise, there exist $x_1\in N[u_1]\setminus N[u_2]$ and
    $x_2\in N[u_2]\setminus N[u_1]$.  Note that $x_1x_2\notin E(G^*)$
    for otherwise, $\{x_1, x_2, u_1, u_2\}$ induces a $C_4$.
    
    Observe first that by~\eqref{cb:Anticomplet_in_A} and~\eqref{cb:Complet_in_A}, $N_{A^*}[u_1]=N_{A^*}[u_2]$. Hence
    by~\eqref{pb:A} of preblowup, $x_1,x_2\in B^*\cup K_{u^+}$ where $u^+$ is the neighbor of $u$ in the principal path that contains $u$. Without loss of
    generality and since $K_{u^+}$ is a clique, $x_1\in B^*$.

    By condition~\eqref{pb:BAN}, there exist non-adjacent $a, b\in A$
    such that $x_1$ has neighbors $a^* \in K_a$ and $b^*\in K_b$, and  by~\eqref{cb:Anticomplet_in_A} $ a^*b^*\notin E(G^*).$
    Note that $a^*, b^*\neq u_2$ because $u_2x_1\notin E(G^*)$.  If
    $u_2$ is complete to $\{a^*, b^*\}$, then $\{u_2,a^*,x_1,b^*\}$
    induces a $C_4$, a contradiction.  So, up to symmetry
    $u_2a^*\notin E(G)$.  So, $a^*\notin K_u$ and
    by~\eqref{cb:Complet_in_A} and~\eqref{cb:Anticomplet_in_A}, $a^*u_1\notin E(G^*)$.
   Observe
    that $x_2a^*\notin E(G^*)$ for otherwise $\{a^*,x_1,u_1,u_2,x_2\}$
    induces a $C_5$.

    Suppose that $x_2\in B^*$. As above, we can show that $x_2$ has a neighbor
    $c^* \in A^*$ that is anticomplete to $\{u_1, u_2, x_1\}$. 
    Note that $a^*c^*\notin E(G^*)$ for otherwise
    $\{x_1,a^*,c^*,x_2,u_2,u_1\}$ induces a $C_6$. Let $P_{a^*} = a^*\dots a'^*$ and $P_{c^*} = c^*\dots c'^*$ 
    be defined as in \eqref{c:principB}.
    
    By~\eqref{cb:Anticomplet_in_A} and~\eqref{cb:Complet_in_A} and
    since $a^*c^*\notin E(G^*)$, $a'^*c'^*\in E(G^*)$. So, by conditions
    \eqref{pb:A}, \eqref{pb:B} and \eqref{pb:I},
    $u_1x_1a^*P_{a^*}a'^*c'^*P_{c^*}c^*x_2u_2u_1$ is a hole of length
    $2\ell+4$, a contradiction.

    So $x_2\in K_{u^+}$.  Hence by condition~\eqref{pb:II} of
    preblowup, there exists a path $Q$ of length $\ell-2$ from $x_2$
    to some $u'^*\in K_{u'}$. Now $x_2Qu'^*a'^*P_{a^*}a^*x_1u_1u_2x_2$
    is a hole of length $2\ell +2$, a contradiction.
    
    The result for $A'$ holds symmetrically.
  \end{proofclaim}

  \begin{claim}\label{cb:Anestedu}
    For all $u\in A$ and $u^*\in K_u$, $N[u^*]\subseteq N[u]$. A
    similar statement holds for $A'$.
  \end{claim}

  \begin{proofclaim}
    Otherwise, there exists a vertex $u^*\in K_u$ such that
    $N[u] \subsetneq N[u^*]$.  Hence, $(V(G)\sm \{u\}) \cup\{u^*\} $
    induces a subgraph $G_0$ of $G^*$ and it is easy to verify that
    $G^*$ is a preblowup of $G_0$ (that is a template by
    Lemma~\ref{th:ptist} and whose partition is proper
    by~\eqref{cb:Anticomplet_in_A} and~\eqref{cb:Complet_in_A}). This
    contradicts the maximality of $s(G, G^*)$.  The result for $A'$
    holds symmetrically.
  \end{proofclaim}

  By~(\ref{cb:Anested}), for every $u\in A \cup A'$, the clique $K_u$ can be
  linearly ordered by the inclusion of the neighborhoods as
  $u_1, \dots, u_{k_u}$, and by~(\ref{cb:Anestedu}) $u_{k_u} =u$ (so,
  for $1\leq i \leq j \leq k_u$, $N[u_i] \subseteq N[u_j]$).

  \begin{claim}
    \label{cb:Bnested}
    If $xy$ is an edge of $G[B^*]$, then either
    $N_{A^*}(x)\subseteq N_{A^*}(y)$ or
    $N_{A^*}(y)\subseteq N_{A^*}(x)$. 
  \end{claim}
  
  \begin{proofclaim}
    Otherwise, there exists $u^*\in N_{A^*}(x)\setminus N_{A^*}(y)$
    and $v^*\in N_{A^*}(y)\setminus N_{A^*}(x)$. Note that
    $u^*v^*\notin E$ for otherwise $\{u^*,x,y,v^*\}$ induces a
    $C_4$. 
    So, for some $u, v\in A$, we have $u^*\in K_u$ and
    $v^*\in K_v$. Hence, by \eqref{cb:Complet_in_A},
    $uv\notin E(G)$.  Let $P_{u^*} = u^*\dots u'^*$ and  
    $P_{v^*} = v^*\dots v'^*$ be defined 
    as in \eqref{c:principB}.  So, $xu^*P_{u^*}u'^*v'^*P_{v^*}v^*yx$ form a hole
    of length $2\ell+2$, a contradiction. 
  \end{proofclaim}

  \begin{claim}
    \label{c:nonAdjBstar}
    For every $x\in B^*$, there exist non-adjacent $u,v\in A$ such
    that $xu, xv \in E(G^*)$. 
  \end{claim}
    
  \begin{proofclaim}
    This follows from condition~\eqref{pb:BAN} of
    preblowup and from (\ref{cb:Anestedu}).
  \end{proofclaim}

  Two vertices $x, y$ in $B^*$ are \emph{equivalent} if
  $N_{A}(x) = N_{A}(y)$.

  \begin{claim}\label{cb:Bclique}
    If $x$ and $y$ are equivalent vertices of $B^*$, then
    $xy\in E(G^*)$.
  \end{claim}
  
  \begin{proofclaim}
    If $xy\notin E(G^*)$, then $x$, $y$ and two of their neighbors
    provided by~\eqref{c:nonAdjBstar} induce a $C_4$. 
  \end{proofclaim}

  Vertices of $B^*$ are partitioned into equivalence
  classes. By~\eqref{cb:Bclique}, each equivalence class is a clique
  $X$, and by~\eqref{cb:Bnested}, vertices of $X$ can be linearly
  ordered according to the inclusion of neighborhoods in $A^*$.  In each such a
  clique $X$ we choose a vertex $x$ maximal for the order and call
  $B_1$ the set of these maximal vertices.  For every $x\in B_1$, we
  denote by $K_x$ the clique of $B^*$ of all vertices equivalent to
  $x$. Observe that if $w^*\in B$, then $w^*$ is a maximal vertex of its
  clique. Hence, we can set $w^*\in B_1$.
 
So, for every $u\in B_1$, the clique $K_u$ can be linearly
  ordered by the inclusion of the neighborhod in $A^*$ as
  $u_1, \dots, u_{k_u}$ with $u=u_{k_u}$ (so, for
  $1\leq i \leq j \leq k_u$, $N_{A^*}(u_i) \subseteq N_{A^*}(u_j)$).
  
 Statements similar to \eqref{cb:Bnested}, \eqref{c:nonAdjBstar}, \eqref{cb:Bclique}
hold for $B'^*$ and we define $B'_1$ as well. 
  
We set $G_1 = G^*[A \cup B_1 \cup A' \cup B'_1 \cup I]$ and claim that
$(A, B_1, A', B'_1, I)$ is an $\ell$-pretemplate partition of $G_1$.
Since $G_1[A\cup I \cup A']$ is exactly $G[A\cup I \cup A']$,
conditions~\eqref{a:cardinality}, \eqref{a:path}, \eqref{a:interI} and \eqref{a:sp}
hold. Adding the fact that $N_{G_1}(B_1)\subseteq A^*\cap V(G_1)=A$
by condition \eqref{pb:B} of preblowup, condition \eqref{a:antiC} for
a pretemplate holds and symmetrically also condition
\eqref{a:antiCP}. Now condition \eqref{a:cn} holds because $w^*$ and
$w'^*$ are complete to respectively $A\cup B_1$ and $A'\cup B'_1$. By
\eqref{c:nonAdjBstar}, the last two conditions for a pretemplate are
fulfilled by $(A, B_1, A', B'_1, I)$. Hence, by Lemma \ref{th:ptist},
$G_1$ is a an odd $\ell$-template.  It is twinless by
Lemma~\ref{l:wTwins}. We also notice that by construction $w^*$ and
$w'^*$ belong to $G_1$. Furthermore, by \eqref{cb:chapeau}, $w^*$
(respectively $w'^*$) is complete to $A\sm\{w^*\}$ (respectively
$A'\sm\{w'^*\}$). From the definition of a template it is easy to
conclude that $w^*$ (respectively $w'^*$) is universal in
$G_1[A \cup B_1]$ (respectively $G_1[A' \cup B'_1]$). Hence
$(A, B_1, A', B'_1, I,w^*,w'^*)$ is a proper $\ell$-partition of
$G_1$.

   We now prove that
  $G^*$ is a proper blowup of $G_1$.

 By the definition of a preblowup and by~\eqref{cb:Bclique}, for all $u\in V(G_1)$, $K_u$ is a clique and $V(G^*)= \bigcup_{u\in V(G_1)} K_u$ 

  \begin{claim}\label{cb:anticomplete}
    If $u, v\in V(G_1)$ and $uv\notin E(G_1)$, then $K_u$ is anticomplete to $K_v$.
  \end{claim}

  \begin{proofclaim}
    Suppose $u, v\in V(G_1)$ and $uv\notin E(G_1)$.  If $u\in I$ or
    $v\in I$, the conclusion follows directly from
    condition~\eqref{pb:I} of preblowup.  So we may assume up to
    symmetry that $u\in A\cup B_1$.  By conditions~\eqref{pb:A}
    and~\eqref{pb:B} of preblowup, we may assume $v\in A \cup B_1$.
    If $u, v\in A$, then the result follows from
  \eqref{cb:Anticomplet_in_A}, so we may assume that $v\in B_1$. 
  
  Now suppose for a contradiction that there exist $u^*\in K_u$ and
  $v^*\in K_v$ such that $u^*v^*\in E(G_1)$. By the choice of vertices
  in $B_1$, for all $v^*\in K_v$, $N[v^*]\subseteq N[v]$. So
  $u^*v\in E(G_1)$. For the same reason or by~\eqref{cb:Anestedu}, for
  all $u^*\in K_u$, $N[u^*]\subseteq N[u]$. Hence $uv \in E(G_1)$, a
  contradiction.
  \end{proofclaim}

 \begin{claim}\label{cb:cliquescompletes}
   If $uv$ is a solid edge of $G_1$ then $K_u$ is complete to $K_v$.
  \end{claim}

  \begin{proofclaim}
    Otherwise, let $u^*\in K_u$ and $v^*\in K_v$ such that $u^*v^*\notin
    E(G)$. Since $uv$ is a solid edge, up to symmetry, $u,v\in A$ or
    $u,v\in B_1$ or $u\in A$, $v\in B_1$ and in this last case $u$ is not an isolated vertex
    of $G[H_v]$.
    
    By~\eqref{cb:Complet_in_A} the case where $u$ and $v$ are in $A$
    cannot happen.  Assume then that
    $v \in B_1$.  By Lemma \ref{l:recoverHx}, there exist
    $a,b \in H_v$ (and hence in $A$) that are not adjacent. Assume
    that $u$ is also in $B_1$. Since $u$ and $v$ are adjacent, by \eqref{cb:Bnested} we may
    assume without loss of generality that $H_v \subseteq H_u$ and so
    $a$ and $b$ belong to $H_u$ too.  Then, by the definition of $K_u$
    and $K_v$, we get a $C_4$ induced by $\{u^*,v^*, a,b\}$, a
    contradiction.  So $u$ should be in $A,$ and to avoid a $C_4$
    induced by $\{u^*,v^*, a,b\}$, $u^*$ should be non-adjacent to at
    least one of $a$ and $b$, say $a$. In particular, $a\neq u$.
    Then, by~\eqref{cb:Complet_in_A}, $ua \notin E(G_1)$. So $u$ does
    not belong to $N(H_v)$ and since $uv$ is an edge of $G_1$, we get
    that $u \in H_v$. Since $uv$ is solid, $u$ has at least one neighbor in
    $H_v$, and it is not adjacent to $a \in H_v$. Hence, as $H_v$ is anticonnected, there exist  non-adjacent
    vertices $c, d \in H_v$ such that $uc \notin E(G_1)$ and
    $ud \in E(G_1)$. Now $u^*P_{u^*}u'^*c'P_ccv^*du^* $ is a hole of
    length $2\ell+2$, a contradiction again.

  \end{proofclaim}

  \begin{claim}\label{cb:nested}
    For all $u\in V(G_1)$ and $1\leq i \leq j \leq k_u$, 
    $N[u_i]\subseteq N[u_j]$. 
  \end{claim}

  \begin{proofclaim}
    The result follows from how vertices are ordered after the proof of
    \eqref{cb:Inested} (vertices in $I$), \eqref{cb:Anested} (vertices in $A$ or $A'$) and
    \eqref{cb:Bclique} (vertices in $B_1$ or $B'_1$).
  \end{proofclaim}

  \begin{claim}
    \label{cb:flat} If $uv$ is a flat edge of $G_1$, then $u$ is
    complete to $K_v$ and $v$ is complete to $K_u$.
  \end{claim}
  
  \begin{proofclaim}
    By definition of a flat edge, either $u$ and $v$ are in $I$ or one
    is in $I$ and the other is in $A$ or in $A'$. The result follows
    from~\eqref{cb:Inestedu}, \eqref{cb:Anestedu}, and
    conditions~\eqref{pb:AI} (applied to $A$ or $A'$)
    and~\eqref{pb:II} of the preblowup.
  \end{proofclaim}

  \begin{claim}
    \label{c:optAB}
    If $ux$ is an optional edge of $G_1$ with $u\in A$
    and $x\in B_1$ (resp.\ $u\in A'$ and $x\in B_1'$), then $u$ is
    complete to $K_x$.
  \end{claim}

  \begin{proofclaim}
    The result follows from the definition of $K_x$ when $x \in B_1$. 
  \end{proofclaim}

  \begin{claim}\label{cb:cascade}
    If $ux$ and $uy$ are optional edges with $u\in A$, $x, y\in B_1$
    and $H_y\subsetneq H_x$ (resp.\ $u\in A'$, $x, y\in B_1'$ and
    $H'_y\subsetneq H'_x$), then every vertex of $K_u$ with a neighbor
    in $K_y$ is complete to $K_x$.
  \end{claim}

  \begin{proofclaim}
    Otherwise, let $u^*$ be a vertex in $K_u$ that has a neighbor
    $y^*$ in $K_y$ and a non-neighbor $x^*$ in $K_x$. Since
    $H_x$ and $H_y$ are not disjoint, $xy$ is a solid edge of $G_1$ and
    by~(\ref{cb:cliquescompletes}), $x^*y^*\in E(G_1)$.
    
  
  Since $x$ and $y$ are not equivalent, there exists a vertex $a$  such that $a\in N_A(y)\sm N_A(x)$ or $a\in N_A(x)\sm N_A(y)$. In the first case, by definition of a template, $a \in A \sm N_A[H_x]$. Then since $H_y\subsetneq H_x$ and $H_x$ is a module of $A$ we get that $a$ is anticomplete to $H_x$ and hence to $H_y$. So $a \notin N_A(y)$, a contradiction; we may then conclude that $a\in N_A(x)\sm N_A(y)$
  
  By definition of the cliques in $B$, $x^*a\in E(G^*)$ and $y^*a\notin E(G^*)$. Therefore, to avoid a $C_4$ induced by $\{x^*, y^*, u^*,a\}$, it should be that $u^*a\notin E(G^*)$. 

    
    Now $aP_aa'u'^*P_{u^*}u^*y^*x^*a$ is a hole of length $2\ell +2$ a
    contradiction.
 \end{proofclaim}

  \begin{claim}
    \label{cb:condIso} $w^*$ (resp.\ $w'^*$) is a universal vertex of
    $G^*[\bigcup_{u\in A\cup B_1} K_u]$ (resp.\
    $G^*[\bigcup_{u\in A'\cup B_1'} K_u]$).
  \end{claim}

  \begin{proofclaim}
  
    By \eqref{cb:chapeau}, $w^*$ is complete to $A^* \sm \{w^*\}$ and
    so to $\bigcup_{u\in A} K_u \sm \{w^*\}$. Furthermore, from the definition of
    $G_1$ we know that $w^*$ is complete to $B_1 \sm \{w^*\}$. If
    $w^*\in B_1$, since all edges between vertices in $B_1$ are solid,
    by~(\ref{cb:cliquescompletes}), $w^*$ is complete to
    $B^*\sm \{w^*\}$. Similarly, if $w^*\in A$,
    by~\eqref{cb:cliquescompletes} and~\eqref{c:optAB}, we get that
    $w^*$ is complete to $B^*$.  In both cases $w^*$ is a universal
    vertex of $G^*[\bigcup_{u\in A\cup B} K_u]$. The proof for $w'^*$ is
    symmetric.
  \end{proofclaim}

  From all the claims above, $G^*$ satisfies all conditions to be a
  proper blowup of $G_1$.
\end{proof}

\section{Graphs in $\mathcal C_{2\ell +1}$ that contain a pyramid}

The goal of this section is to prove the the following.

\begin{lemma}
  \label{l:GPy}
  Let $\ell\geq 3$ be an integer.  If $G$ is a graph in
  $\mathcal C_{2\ell +1}$ and $G$ contains a pyramid, then one of the
  following holds:

  \begin{enumerate}
  \item\label{c:Pyblowup} $G$ is a proper blowup of a twinless odd
    $\ell$-template;
  \item\label{c:Pyuniv} $G$ has a universal vertex;
  \item\label{c:Pycliquecut} $G$ has a clique cutset.
  \end{enumerate}
\end{lemma}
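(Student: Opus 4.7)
The plan is a maximality argument. By hypothesis $G$ contains a pyramid, which by Lemma~\ref{l:pyrTemplate} is itself a twinless odd $\ell$-template, and by Lemma~\ref{l:TProper} it admits a proper $\ell$-partition; taking this template as a degenerate preblowup of itself gives us a starting point. Accordingly, consider all pairs $(T,G_0)$ such that $T\subseteq G$ is an induced twinless odd $\ell$-template equipped with a proper $\ell$-partition, and $G_0\subseteq G$ is an induced subgraph of $G$ that is a proper preblowup of $T$. Choose such a pair with $|V(G_0)|$ maximum, and subject to this, with the domination score $s(T,G_0)$ maximum. Since $G\in\mathcal{C}_{2\ell+1}$ every hole of $G_0$ has length $2\ell+1$, so by Lemma~\ref{l:preblowup} this $G_0$ is in fact a proper blowup of a twinless odd $\ell$-template.

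If $V(G_0)=V(G)$, conclusion \ref{c:Pyblowup} holds. So assume $V(G_0)\subsetneq V(G)$. If no vertex of $V(G)\setminus V(G_0)$ has a neighbor in $V(G_0)$, then the empty set is a clique cutset of $G$ and conclusion \ref{c:Pycliquecut} holds. Otherwise, fix $v\in V(G)\setminus V(G_0)$ with $S:=N_{V(G_0)}(v)\neq\emptyset$. The remainder of the proof analyzes $S$ and shows that in every case one of the following occurs: (i) $v$ can be absorbed to produce a strictly larger proper preblowup of some twinless odd $\ell$-template $T'\subseteq G$, contradicting the choice of $(T,G_0)$; (ii) $v$ is a universal vertex of $G$, giving conclusion \ref{c:Pyuniv}; or (iii) $S$ is a clique of $G$ that separates $v$ from some principal path, giving conclusion \ref{c:Pycliquecut}.

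The main technical tools for this analysis are Lemmas~\ref{lt:cheminsentrepatates} and~\ref{lt:cheminsentrepatatesBBP}, which produce two internally disjoint paths through different principal paths between any prescribed pair of boundary vertices, of lengths in $\{\ell-1,\ell,\ell+1\}$; and Lemma~\ref{l:HinTemplate}, which pins down the holes of the underlying template. Together with the fact that every hole through $v$ in $G$ must have length exactly $2\ell+1$, these force $S$ into a very rigid shape. Splitting on the location of $S$, the case $S\cap I\neq\emptyset$ uses that interior vertices of principal paths have degree $2$ in $T$ to force $v$ to lie (after enlarging some $K_u$) inside a principal path, which either yields case~(i) or produces a forbidden hole of length $\neq 2\ell+1$. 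The case $S\subseteq A^*\cup B^*$ (and symmetrically $S\subseteq A'^*\cup B'^*$) uses the two disjoint paths across the template to force $S$ to be either (a) the union of a blown up module from a single hyperedge together with its template-neighborhood, in which case $v$ can be added as a new vertex of $B^*$ (case~(i)), or (b) a clique completed by $w$, and then all principal paths lie on the opposite side of $S$, giving case~(iii). Finally, if $S$ meets both $A^*\cup B^*$ and $A'^*\cup B'^*$, choosing a principal path $P_i$ disjoint from $S$ (which exists unless $S$ meets all of them, in which case a parity argument through $w,w'$ forces $v$ universal, i.e.\ case~(ii)) and pairing $P_i$ with connecting paths from the two sides produces holes whose length is $2\ell+1$ only under the absorption pattern of case~(i).

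The hardest part will be the bookkeeping in Step~4, in particular showing in the one-sided case that $S$ must be exactly of the form $N_{A^*}[K_{H_x}]\cup\{\text{relevant }K_y\}$ for a hyperedge $H_x$ (so that adjoining $v$ as a new element of $B^*$ respects the laminarity of $\mathcal{H}$) or else $S$ is a clique, and in the mixed case ensuring that no subtle attachment pattern escapes the hole-length constraint. The module and anticonnectedness properties of the $H_x$'s (Lemmas~\ref{l:nxcc}, \ref{l:ModuleIso}, \ref{l:recoverHx}) together with Lemma~\ref{c:FXsubY} are what keep the cutsets produced in case~(iii) genuinely cliques and prevent spurious extensions of the preblowup in case~(i).
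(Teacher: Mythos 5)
Your overall strategy (pick a maximal proper preblowup inside $G$, then argue about attachments) is essentially the strategy of the paper, and the absorption and hole-length arguments you sketch correspond to the paper's Lemmas~\ref{l:blowupInB}, \ref{l:blowupInA}, \ref{l:blowupInI} and \ref{l:attachVertex10}. But there is a genuine gap in how you reach conclusion~\eqref{c:Pycliquecut}: you analyze a \emph{single} vertex $v$ outside $G_0$ and its neighborhood $S=N_{V(G_0)}(v)$, and in your case~(iii) you claim that $S$ being a clique yields a clique cutset because it ``separates $v$ from some principal path.'' This does not follow. Removing $S$ from $G$ need not disconnect $G$ at all: $v$ may have many neighbors outside $G_0$, and those vertices may themselves attach to $G_0$ through cliques disjoint from $S$. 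The cutset you need is the neighborhood of the whole connected component $D$ of $G\sm F_2$ containing $v$, and showing that $N(D)$ is a clique requires a separate and substantially harder argument. This is exactly the paper's Lemma~\ref{l:attachComp}: one takes a minimal-length path $aPb$ inside $D$ whose endpoints' $F_1$-neighborhoods are jointly not a clique, introduces the sets $S_a$, $S_b$, $S_\circ$, and works through a long case analysis showing that either the path $P$ can be incorporated as a new principal path (contradicting maximality) or a forbidden hole appears. Your proposal has no analogue of this component-level argument, and the vertex-level claims alone cannot close the proof.

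A second, related issue is your maximization. You maximize $|V(G_0)|$ first and then the domination score. The paper instead fixes first $k$, the maximum number of principal paths of any odd $\ell$-template contained in $G$, and only then maximizes $|V(F_1)|$. This ordering matters in the component case: the contradictions in Lemma~\ref{l:attachComp} build a new template with $k+1$ principal paths, which immediately contradicts the definition of $k$; the new template may well have \emph{fewer} vertices than $F_1$, so it would not contradict a maximization of $|V(G_0)|$ alone. Without the $k$-first maximization you would need a further argument that a template with more principal paths always supports a blowup at least as large as $F_1$, which is not obvious.

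Finally, a smaller point: a vertex $v$ complete to $V(G_0)$ has $S=V(G_0)$, which is not a clique, so it does not land in your case~(iii); it is also not necessarily universal in $G$, so it does not automatically land in your case~(ii). The paper handles these vertices separately by defining $F_2$ (Lemma~\ref{l:F2-F1Clique}), and concluding that the universal-vertex outcome occurs exactly when $G\sm F_1$ is nonempty but $G\sm F_2$ is empty. Your write-up should make this intermediate layer explicit.
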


The rest of this section is devoted to the proof of
Lemma~\ref{l:GPy}. So from here on $\ell\geq 3$ is an integer and $G$
is a graph in $\mathcal C_{2\ell + 1}$ that contains a pyramid~$\Pi$.
By Lemma~\ref{l:holeTruemperS}, the three paths of $\Pi$ have
length~$\ell$.  By Lemma~\ref{l:pyrTemplate}, $\Pi$ is an odd
$\ell$-template.  Hence, we may define an integer $k$ and a sequence
$F_0, F_1, F_2$ of induced subgraphs of $G$ as follows.

\begin{itemize}
\item $k$ is the maximum integer such that $G$ contains
  an odd $\ell$-template with $k$ principal paths. Observe that by
  Lemma~\ref{l:wTwins}, $G$ in fact contains a twinless template with
  $k$ principal paths, because twins can be eliminated from templates
  by deleting hyperedges with equal vertex-set while there are some.

\item In $G$, pick a proper blowup $F_1$ of a twinless odd
  $\ell$-template $F_0$ with $k$ principal paths.  Note that $F_0$
  exists and the proper $\ell$-partition needed for the proper blowup
  exists by Lemma~\ref{l:TProper}.

\item Suppose that $F_0$ and $F_1$ are chosen subject to the
  maximality of the vertex-set of $F_1$ (in the sense of
  inclusion). Note that possibly $F_0$ is not a maximal template in
  the sense of inclusion, it can be that a smaller template leads to a
  bigger blowup (but $F_0$ has $k$ principal paths).

\item $F_2$ is obtained from $F_1$ by adding all vertices of
  $G\sm F_1$ that are complete to $F_1$. 
\end{itemize}

\begin{lemma}
  \label{l:F2-F1Clique}
  $V(F_2)\sm V(F_1)$ is a (possibly empty) clique that is
  complete to $F_1$.
\end{lemma}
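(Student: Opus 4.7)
The plan is to prove the two parts of the statement separately: first that $V(F_2)\setminus V(F_1)$ is complete to $F_1$, which is immediate from the very definition of $F_2$ as the union of $F_1$ with all vertices of $G\setminus F_1$ that are complete to $F_1$; and second that $V(F_2)\setminus V(F_1)$ is a clique, which I would prove by contradiction.

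For the clique part, I would suppose for contradiction that two distinct vertices $u,v \in V(F_2)\setminus V(F_1)$ are non-adjacent. The idea is then to produce an induced $C_4$ in $G$, which is forbidden since $G\in \mathcal C_{2\ell+1}$ and $2\ell+1\geq 7$. To do so, it suffices to exhibit two non-adjacent vertices $a,b \in V(F_1)$: since $u$ and $v$ are both complete to $V(F_1)$, the four edges $ua, av, vb, bu$ are all present, while $uv$ and $ab$ are non-edges, so $\{u,a,v,b\}$ induces a $4$-hole.

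The one small point to justify is the existence of such a pair $a,b$ in $V(F_1)$. For this I would use that $F_0$ has $k\geq 3$ principal paths, each of length $\ell-1 \geq 2$ (since $\ell\geq 3$). Picking any principal path $P_i = v_i\ldots v'_i$ of $F_0$, its endpoints $v_i$ and $v'_i$ are non-adjacent in $F_0$ (an induced path of length at least~$2$), and they lie in $V(F_0)\subseteq V(F_1)$; so they provide the required pair.

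I do not anticipate any genuine obstacle: the argument is essentially immediate from the definition of $F_2$, together with the observation that the hypothesis $\ell\geq 3$ guarantees that principal paths of $F_0$ are long enough to supply non-adjacent endpoints. The only thing to be slightly careful about is choosing a pair of non-adjacent vertices in $F_1$ (rather than, say, an arbitrary edge, which would not yield an induced $C_4$ because of the extra chord).
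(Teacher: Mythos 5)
Your proof is correct and is exactly the argument the paper's one-line proof ("Otherwise, $G$ contains a $C_4$") leaves implicit: two non-adjacent vertices of $V(F_2)\setminus V(F_1)$ together with any non-adjacent pair in $F_1$ (supplied, as you note, by a principal path of length $\ell-1\geq 2$) induce a $C_4$. Same approach, just fully spelled out.
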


\begin{proof}
  Otherwise, $G$ contains a $C_4$.  
\end{proof}

We now introduce some notation.  We denote by $(A, B, A', B', I, w, w')$ the
proper $\ell$-partition that is used to blow up $F_0$.  When $u$ is a vertex
of $F_0$, we denote by $K_u$ the clique of $F_1$ that is blown up from
$u$.  We set $A^* = \bigcup_{u\in A} K_u$.  We use a similar notation
$B^*$, $A'^*$, $B'^*$ and $I ^*$.

\subsection{Technical lemmas}

We now prove lemmas that sum up several structural properties of $G$.

\begin{lemma}\label{lb:universalevenforblow-up}
  If $u\in A\cup A' \cup I \cup \{w, w'\}$ and $v\in N_{V(F_0)}(u)$,
  then $u$ is complete to $K_v$. 
\end{lemma}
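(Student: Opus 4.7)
The plan is a straightforward case analysis on the location of $u$, matching each case to one of the defining conditions of a blowup.

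If $u\in I$, then every edge incident to $u$ is flat by the definition of flat edges (one endpoint lies in $I$), so condition~\eqref{i:flat} of the blowup immediately gives that $u$ is complete to $K_v$.

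If $u\in A$, the pretemplate conditions~\eqref{a:antiC} and~\eqref{a:interI} force $v\in A\cup B\cup I$. The subcase $v\in I$ is a flat edge handled by~\eqref{i:flat}. If $v\in A$ then $uv$ has no endpoint in $I$ and no endpoint in $B$, so it is neither flat nor optional; hence it is solid and condition~\eqref{i:Kcomp} applies. If $v\in B$ then $uv$ is either solid (condition~\eqref{i:Kcomp}) or optional with the $A$-endpoint $u$ isolated in $G[H_v]$, in which case condition~\eqref{i:optAB} is tailored precisely for this situation and yields $u$ complete to $K_v$. The case $u\in A'$ is symmetric, using the analogues of \eqref{a:antiCP}, \eqref{i:Kcomp}, \eqref{i:flat}, and \eqref{i:optAB}.

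Finally, consider $u\in\{w,w'\}$. By Lemma~\ref{l:wProper}, exactly one of $w,w'$ lies in $A\cup A'$ and so is already covered above. If the remaining one lies in $B\cup B'$, say $w\in B$, then condition~\eqref{i:condIso} of the blowup asserts that $w$ is universal in $G^*[\bigcup_{x\in A\cup B}K_x]$; combined with the pretemplate condition~\eqref{a:antiC}, which forces every $F_0$-neighbor of $w$ to lie in $A\cup B$, this yields $K_v\subseteq A^*\cup B^*$ and hence $w$ complete to $K_v$. The argument for $w'\in B'$ is symmetric.

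The entire proof is essentially a dictionary between the three edge types (flat, solid, optional) and the corresponding blowup conditions, with one appeal to~\eqref{i:condIso} to cover the special vertex in $B\cup B'$. There is no real obstacle; the only care required is ensuring every subcase of the location of $v$ is matched with the right blowup condition.
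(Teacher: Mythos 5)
Your proof is correct and follows the same approach as the paper's: a case analysis on the location of $u$, matching each case to the defining conditions~\eqref{i:Kcomp}, \eqref{i:flat}, \eqref{i:optAB}, and~\eqref{i:condIso} of a blowup. The paper's proof is terser (it merely cites the relevant conditions without splitting into subcases for $v$), while yours spells out the bookkeeping — in particular the observation that for $u\in A$ the edge $uv$ must be solid, flat, or optional according to where $v$ lies, and that for $w\in B$ the neighborhood $N_{V(F_0)}(w)$ lands in $A\cup B$ so that~\eqref{i:condIso} applies — but the underlying dictionary between edge types and blowup conditions is identical.
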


\begin{proof}
  We prove this lemma using the conditions from the definition of
  blowups. If $u\in \{w,w'\}$, then the result follows from
  condition~\eqref{i:condIso}. If $u\in A\cup A'$, then the conclusion
  follows from conditions~\eqref{i:Kcomp}, \eqref{i:flat} and~\eqref
  {i:optAB}.  If $u\in I$, then the conclusion follows from
  condition~\eqref {i:flat}.
\end{proof}

Very often, Lemma~\ref{lb:universalevenforblow-up} will be used in the
following way. Suppose there exists a principal path $P=u\dots u'$ of
$F_0$.  Suppose there exists a vertex $x$ of $P$ and $x^*\in K_x$.
Then by Lemma~\ref{lb:universalevenforblow-up} and
condition~\eqref{i:bKKanti} of blowups,
$\{x^*\} \cup( V(P) \sm \{x\})$ induces a path of $F_1$. If $y\neq x$
is a vertex of $P$ and $y^*\in K_y$, then
$\{x^*, y^*\} \cup (V(P) \sm \{x, y\})$ might fail to induce a path of
$F_1$, because it is possible that $xy\in E(G)$ while
$x^*y^*\notin E(G)$.  But under the assumption that $x^*y^*\in E(G)$
or $xy\notin E(G)$, we do have that
$\{x^*, y^*\} \cup (V(P) \sm \{x, y\})$ induces a path of
$F_1$. Several variant of this situation will appear soon and we will
simply justify them by refering to
Lemma~\ref{lb:universalevenforblow-up}.

When $u$ is a vertex in $A$, we denote by $P_u$ the unique principal
path of $F_0$ that contains $u$. Its end in $A'$ is then denoted by
$u'$.  We denote by $u^+$ the neighbor of $u$ in $P_u$. We denote by
$u^{++}$ the neighbor of $u^+$ in $P_u\sm u$.  Note that $u^+\in I$
and $u^{++}\in I\cup A'$ ($u^{++} \in A'$ if and only if $\ell=3$).

For any distinct $u, v\in A$, from the definition of templates,
exactly one of $V(P_u) \cup V(P_v) \cup \{w\}$ or
$V(P_u) \cup V(P_v) \cup \{w'\}$ induces a hole that is denoted by
$C_{u, v}$.  Such a hole is called a \emph{principal hole}.

Note that there are two kinds of principal holes: those that contain
$w$, and those that contain $w'$.  Recall that by
Lemma~\ref{l:HinTemplate}, every hole of a template contains two
principal paths plus an extra vertex, but it may fail to be a
principal hole (because it may fail to contain $w$ or $w'$).  Though
we do not use this information formally, it is worth noting that by
Lemma~\ref{lb:universalevenforblow-up}, when $C$ is a principal hole,
$\bigcup_{v\in V(C)} K_v$ induces a ring. But when $C$ is a non-principal
hole, it may happen that $\bigcup_{v\in V(C)} K_v$ does not induce a
ring (because there might be in $C$ an optional edge $uv$ with
$u\in A$ and $v\in B$, and after the blowup process, there might be no
vertex in $K_v$ that is complete to $K_u$).

\begin{lemma}
  \label{l:twoNonAdj}
  If $u\in V(F_0)$ and $u^*\in K_u$, then $u^*$ has two 
  neighbors in $V(F_0)\sm K_u$ that are not adjacent.
\end{lemma}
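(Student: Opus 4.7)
The plan is to prove the lemma by a direct case analysis on which part of the partition $(A, B, A', B', I, w, w')$ of $F_0$ the vertex $u$ lies in, using the structural conditions of a proper blowup to exhibit the required pair of neighbors of $u^*$ in $V(F_0)\sm K_u$.

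If $u\in I$, the two neighbors $a,b$ of $u$ on its principal path are non-adjacent (they sit at distance $2$ on a chordless path) and both edges $ua$, $ub$ are flat, so condition~\eqref{i:flat} of a blowup gives that $a$ and $b$ are each complete to $K_u$. In particular $u^*$ is adjacent to the non-adjacent pair $a,b\in V(F_0)\sm K_u$, which is what we want.

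Suppose next that $u\in A$, and let $u^+$ be its neighbor in $I$. The flat edge $uu^+$ yields $u^*\sim u^+$ by~\eqref{i:flat}. If $u$ has a neighbor $v\in A\sm\{u\}$, then $uv$ is solid (being an edge of $G[A]$), so $K_u$ is complete to $K_v$ by~\eqref{i:Kcomp}, whence $u^*\sim v$; moreover $v\not\sim u^+$ since the only neighbor of $u^+$ in $A$ is $u$, and we are done. Otherwise $u$ is isolated in $G[A]$; since the $\ell$-partition is proper and $|A|\geq 3$, Lemma~\ref{l:wProper} forces $w\in B$ with $H_w=A$. The edge $uw$ then exists, and the universality condition~\eqref{i:condIso} of a blowup directly gives $w\sim u^*$; moreover $w\not\sim u^+$ because $N_{F_0}(w)\subseteq A$ by the template conditions. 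Thus $\{w,u^+\}$ is the desired pair. The case $u\in A'$ is symmetric.

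Finally, if $u\in B$, then by property~\ref{t:antiMod} of templates $H_u$ is an anticonnected module of $G[A]$ of size at least $2$, so it contains two non-adjacent vertices $a,b\in A$; both are neighbors of $u$ in $F_0$. For each such vertex, say $a$, the edge $ua$ is either solid (in which case~\eqref{i:Kcomp} gives $K_u$ complete to $K_a$, hence $u^*\sim a$) or optional (in which case~\eqref{i:optAB} gives $a$ complete to $K_u$, again yielding $u^*\sim a$); the same applies to $b$. Thus $\{a,b\}\subseteq N(u^*)\cap V(F_0)\sm K_u$ is a non-adjacent pair. The case $u\in B'$ is symmetric. The one non-routine step is the ``isolated'' sub-case of $u\in A$: there the direct solid-edge argument fails and we must invoke properness, via Lemma~\ref{l:wProper}, to pull the second neighbor out of the universal vertex $w\in B$ by means of~\eqref{i:condIso}.
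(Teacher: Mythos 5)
Your argument is correct and follows the same case split as the paper's, choosing the same two non-adjacent neighbors in each case; the paper just routes all three cases through the auxiliary Lemma~\ref{lb:universalevenforblow-up} rather than invoking the blowup axioms \eqref{i:Kcomp}, \eqref{i:flat}, \eqref{i:optAB}, \eqref{i:condIso} directly as you do. One small imprecision: it is $N_{F_0}(w)\subseteq A\cup B$ rather than $N_{F_0}(w)\subseteq A$ (since $w$ may have neighbors in $B$), but the conclusion $w\not\sim u^+$ still holds because $u^+\in I$.
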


\begin{proof}
  If $u\in I$, then let $P$ be the principal path that contains $u$. By
  Lemma~\ref{lb:universalevenforblow-up}, $u^*$ is adjacent to the two
  neighbors of $u$ in $P$.

  If $u\in A\cup A'$, say $u\in A$ up to symmetry, then we claim that
  $u$ has a neighbor $z$ in $A\cup B$.  This is clear if $u$ is not
  isolated in $A$  and otherwise we set $z=w$.  By
  Lemma~\ref{lb:universalevenforblow-up}, $z$ and $u^+$ are
  non-adjacent neighbors of $u^*$.

  If $u\in B$, then by the definition of a template, $H_u$ contains two non adjacent vertices $a$ and $b$ that are neighbors of $u$.  
  By Lemma~\ref{lb:universalevenforblow-up}, $a$ and $b$ are
  both adjacent to $u^*$. 
\end{proof}

\begin{lemma}
  \label{l:htab}
  If $uv$ is an edge of $F_0[A\cup A' \cup I \cup \{w, w'\}]$, then
  some principal hole of $F_0$ goes through $uv$.
\end{lemma}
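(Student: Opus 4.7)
The plan is a straightforward case analysis on where $u$ and $v$ sit in the $\ell$-partition $(A,B,A',B',I,w,w')$. First I would note that the template structure forces any edge of $F_0$ with both ends in $A \cup A' \cup I \cup \{w,w'\}$ into one of three families: (1)~$uv$ lies on some principal path $P_a$ (this covers the cases $u,v\in I$, $u\in A$ and $v=u^+$, the $A'$-symmetric cases, and the case $u=w\in A$ with $v=w^+\in I$); (2)~$u,v\in A$, or $u,v\in A'$; or (3)~$u=w\in B$ with $v\in A$, or symmetrically $u=w'\in B'$ with $v\in A'$. This enumeration uses the fact that there are no edges between $A$ and $A'$, that interior vertices of $I$ have their two neighbors only on their own principal path (condition~\ref{tempe7}), and that $N(B) \subseteq A$ and $N(B') \subseteq A'$.

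Families~(1) and~(2) should be immediate from the definition of a principal hole. For family~(1), pick any $b \in A \setminus \{a\}$ (which exists since $|A| \geq 3$); the principal hole $C_{a,b}$ traverses every vertex of $P_a$, hence every edge of $P_a$, so it contains $uv$. For family~(2), say $u,v \in A$ (the $A'$-case is symmetric). By the complementation between $F_0[A]$ and $F_0[A']$ (property~\ref{t:threshold}), $uv \in E(F_0)$ forces $u'v' \notin E(F_0)$, where $u',v'$ are the $A'$-ends of $P_u,P_v$; the principal hole $C_{u,v}$ then consists of $uv$, $P_v$, the two edges $v'w'$ and $w'u'$, and $P_u$ traversed in reverse, visibly containing $uv$.

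The crux is family~(3). Suppose $u = w \in B$ and $v \in A$. Here I would invoke Lemma~\ref{l:wProper}: since the $\ell$-partition is proper and $w \in B$, the graph $F_0[A]$ contains at least two isolated vertices, so $v$ has some non-neighbor $b \in A \setminus \{v\}$ (either $v$ is itself isolated in $F_0[A]$ and any $b$ works, or $v$ is non-isolated and one of the two isolated vertices of $F_0[A]$ is distinct from $v$ and hence a non-neighbor). For this $b$, complementation gives $v'b' \in E(F_0)$, so the principal hole $C_{v,b}$ must use $w$ at the $A$-end; consequently $wv$ is an edge of $C_{v,b}$. The case $u = w' \in B'$ and $v \in A'$ is symmetric.

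The main obstacle lies in family~(3): it is the only step that truly needs the properness of the $\ell$-partition (and hence Lemma~\ref{l:wProper}) to secure the non-neighbor $b$. Without properness, $v$ could conceivably be a universal vertex of $F_0[A]$ and no such $b$ would exist, leaving the edge $wv$ outside every principal hole. Everywhere else the desired hole is read off directly from the complementation property and the definition of a principal hole.
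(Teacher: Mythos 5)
Your proof is correct and takes essentially the same route as the paper's: classify the edge $uv$ by where its ends sit, and exhibit a principal hole through it in each case. The one genuine difference is in your Family~(3), where you invoke Lemma~\ref{l:wProper} (and hence properness of the $\ell$-partition) to produce two isolated vertices of $F_0[A]$ and thereby a non-neighbor $b$ of $v$. The paper gets the non-neighbor more cheaply: from $w\in B$ alone (without properness) it follows that $w'\in A'$, so $w'$ is a universal vertex of $F_0[A']$, so $F_0[A]\cong\overline{F_0[A']}$ has an isolated vertex, and since $|A|\geq 3$ this rules out any universal vertex in $F_0[A]$; hence $v$ has a non-neighbor in $A$. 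So your closing remark that ``without properness, $v$ could conceivably be a universal vertex of $F_0[A]$ and no such $b$ would exist'' is not accurate --- the non-neighbor always exists for any $\ell$-partition with $w\in B$, whether proper or not. That said, in the context of Section~5 the partition is indeed proper, so your appeal to Lemma~\ref{l:wProper} is sound, just stronger than needed.
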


\begin{proof}
  If at least one of $u$, $v$ is in $I$ then $uv$ is an edge of a
  principal path and we know that this principal path belongs to a
  principal hole.  Else, since $A\cup \{w\}$ is
  anticomplete to $A'\cup \{w'\}$, up to symmetry both $u$ and $v$ are
  in $A$ or $u= w\in B$ and $v\in A$.

  If $u,v \in A$ then $C_{u,v}$ is a principal hole containing $uv$.

  If $u= w\in B$ and $v\in A$ : since $w$ is in $B$, $G[A]$ has no
  universal vertex and there exists $a \in A$ which is not adjacent to
  $v$. Now $w, P_v, P_a$ form a principal hole containing the edge
  $uv$.
\end{proof}

\begin{lemma}
  \label{l:onlyKaKb}
  If $K$ is a clique of $F_0$, $K^* = \bigcup_{v\in K} K_v$ and $D$ is a
  connected induced subgraph of $G\sm F_2$ such that
  $N_{V(F_1)}(D) \subseteq K^*$, then $N_{V(F_1)}(D)$ is a clique.
\end{lemma}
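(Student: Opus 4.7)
The plan is to proceed by contradiction. Suppose $N := N_{V(F_1)}(D)$ is not a clique, so there exist non-adjacent $x^*, y^* \in N \subseteq K^*$, with $x^* \in K_x$ and $y^* \in K_y$ for distinct $x, y \in K$. Since $K$ is a clique of $F_0$, $xy \in E(F_0)$; and by blowup condition~\eqref{i:Kcomp} the edge $xy$ cannot be solid, so it is either flat or optional. I then choose a shortest path $P$ from $x^*$ to $y^*$ in $G[D \cup \{x^*, y^*\}]$; it is induced, its interior lies in $D$, its length is some $s \geq 2$, and by the hypothesis on $D$ every $F_1$-neighbor of an interior vertex of $P$ lies in $K^*$. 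The goal is to exhibit an induced path $Q$ in $F_1$ from $x^*$ to $y^*$ of length exactly $2\ell$ whose interior is disjoint from $K^*$; then $Q \cup P$ is a hole in $G$ of length $2\ell + s \geq 2\ell + 2 > 2\ell + 1$, contradicting $G \in \mathcal C_{2\ell + 1}$.

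In the optional case, I take (by symmetry) $x \in A$, $y \in B$, and $x$ isolated in $H_y$. Since $H_y$ is anticonnected of size at least $2$, some $u \in H_y$ satisfies $ux \notin E(F_0)$; necessarily $u \notin K$, because $K$ is a clique through $x$. By template property~\eqref{t:threshold} we have $u'x' \in E(F_0)$, and by blowup conditions~\eqref{i:Kcomp} and~\eqref{i:optAB} both $u$ and $x$ are complete to $K_y$ in $F_1$. I define $Q$ as the concatenation of $P_x$ (from $x^*$ to $x'$), the edge $x'u'$, $P_u$ traversed in reverse (from $u'$ to $u$), and the edge $uy^*$; this has total length $(\ell-1) + 1 + (\ell-1) + 1 = 2\ell$.

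In the flat case, (by symmetry) $y \in I$. The only $F_0$-neighbors of $y$ are its two neighbors on the principal path $P_v$ containing $y$, and those two are themselves non-adjacent in $F_0$, so $K = \{x, y\}$. I then route $Q$ by leaving $x^*$ along $P_v$ toward one endpoint (say $v \in A$), crossing through the universal vertex $w$ to another principal path $P_u$ with $u \in A \setminus \{v\}$ non-adjacent to $v$ in $F_0$ (so $u'v' \in E(F_0)$), and returning along $P_v$ from $v'$ to $y^*$. If $v$ happens to be universal in $G[A]$ so that no such $u$ exists, I invoke Lemma~\ref{l:wProper} to pick a secondary universal vertex $z \in A \setminus \{v\}$ of $G[A]$, and route $Q$ through $P_z$ and the $B'$-universal vertex $w'$ instead. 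A short length computation gives $|Q| = 2\ell$ in every subcase.

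The hard part will be verifying that $Q \cup P$ is an induced cycle. By the choice of $P$ there are no chords within $P$ or between $\{x^*, y^*\}$ and the interior of $P$. Chords between the interior of $P$ and the interior of $Q$ cannot occur because interior vertices of $Q$ lie in cliques $K_v$ with $v \notin K$, hence outside $K^*$, whereas interior vertices of $P$ only have $F_1$-neighbors in $K^*$. The absence of chords inside $Q$ (or between $\{x^*, y^*\}$ and the interior of $Q$) is a routine case check using blowup condition~\eqref{i:bKKanti} ($K_u$ and $K_v$ are anticomplete whenever $uv \notin E(F_0)$) together with template property~\eqref{t:threshold}. Combined with $x^* y^* \notin E(G)$, this exhibits a hole of length $2\ell + s > 2\ell + 1$, the desired contradiction.
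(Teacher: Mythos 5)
Your proof is correct and matches the paper's argument almost step for step: the contradiction hypothesis, the observation that $x^*y^*\notin E(G)$ forces the $F_0$-edge $xy$ to be flat or optional (not solid, by condition~\eqref{i:Kcomp}), the shortest induced path $P$ from $x^*$ to $y^*$ through $D$, and the construction of a length-$2\ell$ induced path $Q$ in $F_1$ from $x^*$ to $y^*$ whose interior lies in blown-up cliques $K_z$ with $z\notin K$, so that $Q\cup P$ yields a hole of length at least $2\ell+2$. Your optional case is essentially identical to the paper's.

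The one genuine divergence is the flat case. The paper invokes Lemma~\ref{l:htab} to obtain a principal hole of $F_0$ through the flat edge $xy$ and then deletes that edge (replacing $x,y$ by $x^*,y^*$); you instead rebuild such a principal hole by hand, routing $v$--$w$--$u$--$P_u$--$u'$--$v'$ and falling back to Lemma~\ref{l:wProper} (to find a second universal vertex of $G[A]$ and route through $w'$) when $v$ is universal in $G[A]$. This is correct, but it amounts to re-deriving the content of Lemma~\ref{l:htab} inline; that lemma already records, in one stroke, that every edge of $F_0[A\cup A'\cup I\cup\{w,w'\}]$ lies on a principal hole, which is why the paper's flat case needs no fallback. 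Two smaller points worth tightening if you write this up: (a) you should note that a vertex $u\in H_y$ with $ux\notin E(F_0)$ exists because $x$ is isolated in $G[H_y]$ and $|H_y|\ge 2$ (any $u\in H_y\setminus\{x\}$ works), not merely because $H_y$ is anticonnected; and (b) verifying that $x^*$ is adjacent to its successor on $Q$ when $x\in A$ uses condition~\eqref{i:condIso} (that $w$ is complete to $\bigcup_{u\in A\cup B}K_u$), not just~\eqref{i:Kcomp} and~\eqref{i:optAB} --- without it the case where $vw$ is an optional edge with $v$ isolated in $H_w$ is not immediately covered.
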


\begin{proof}
  For suppose not.  This means that there exists
  $u^*, v^*\in K^*$ and $x_u, x_v\in D$ such that
  $u^*v^*\notin E(G)$ and $x_uu^*, x_vv^*\in E(G)$ (possibly
  $x_u=x_v$).  Since $D$ is connected, there exists a path $P$ in $D$
  from $x_u$ to $x_v$.  Suppose that $u^*$, $x_u$, $v^*$, $x_v$ and
  $P$ are chosen subject to the minimality of $P$.  It follows that
  $u^*x_uPx_vv^*$ is a path, and recall that by assumption
  its interior is anticomplete to $F_1\sm K^*$.
  
  Since $u^*v^*\notin E(G)$, $u^*$ and $v^*$ are in different blown-up
  cliques. Denote by $K_u$ and $K_v$ the blown-up cliques such that
  $u^*\in K_u$ and $v^*\in K_v$. By hypothesis, $uv\in K$ and so
  $uv\in E(G)$. Since $u^*v^*\notin E(G)$, by
  condition~\eqref{i:Kcomp} of blowups, $uv$ is not a solid edge of
  $G$.

  If $uv$ is a flat edge of $F_0$, then by Lemma~\ref{l:htab} a
  principal hole $C$ goes through $uv$. Note that apart from $u$ and
  $v$, no vertex of $C$ is in $K$ since $K$ is a clique. By
  Lemma~\ref{lb:universalevenforblow-up}, in $G$,
  $(\{u^*, v^*\}) \cup V(C))\sm \{u, v\}$ induces a path $Q$ of length
  $2\ell$.  So $P$ and $Q$ form a hole of length at least $2\ell +2$,
  a contradiction.

  If $uv$ is an optional edge of $F_0$, say with $u\in A$ and
  $v\in B$, then $u\in H_v$, and there exists $a$ in $H_v$ such that
  $au\notin E(F_0)$.  Therefore, $P_u$, $P_a$ and $v$ form a hole
  $C^*$. By condition~\eqref{i:optAB} of blowups (if $va$ is optional),
  or by condition~\eqref{i:Kcomp} (if $va$ is solid), $a$ is complete
  to $K_v$.  By Lemma~\ref{lb:universalevenforblow-up} it follows that
  $(\{u^*, v^*\}) \cup V(C^*))\sm \{u, v\}$ induces a path $Q$ of length
  $2\ell$.  So $P$ and $Q$ form a hole of length at least $2\ell +2$,
  a contradiction again.
\end{proof}

When $C$ is a hole of $G$, a vertex $v$ of $V(G)\sm V(C)$ is
\emph{minor} w.r.t.\ $C$ if $N_{V(C)}(v)$ is included in a 3-vertex
path of $C$.  A vertex of $V(G)\sm V(C)$ that is not minor w.r.t.\ $C$ is \emph{major}
w.r.t.\ $C$.

\begin{lemma}
  \label{l:xMinor}
  If $x\in V(G) \sm V(F_2)$ and $C$ is a principal hole of $F_0$, then $x$
  is minor w.r.t.\ $C$.
\end{lemma}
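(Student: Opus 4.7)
The plan is to argue by contradiction: assume $x$ is not minor with respect to $C$, and derive successive structural consequences (first that $x$ is complete to $V(C)$, then to $V(F_0)$, then to $V(F_1)$) ending in a contradiction with $x \notin V(F_2)$.

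\emph{Step~1: hole-length analysis.} List the neighbors of $x$ on $C$ in cyclic order as $x_1, \dots, x_k$, and let $l_1, \dots, l_k$ be the lengths of the $C$-arcs between consecutive neighbors, so that $\sum_{i=1}^{k} l_i = 2\ell + 1$. For each $i$ with $l_i \geq 2$, the subgraph on $\{x\}$ together with the vertices of the $i$-th arc induces a chordless cycle of length $l_i + 2$ (the arc is an induced subpath of the induced cycle $C$, and $x$ has no neighbor in its interior); since $G \in \mathcal{C}_{2\ell+1}$, $l_i + 2 = 2\ell + 1$, so $l_i = 2\ell - 1$. Hence every $l_i \in \{1, 2\ell-1\}$; setting $a = |\{i : l_i = 1\}|$ and $b = |\{i : l_i = 2\ell-1\}|$, the system $a + b = k$, $a + (2\ell-1)b = 2\ell+1$ admits only the nonnegative integer solutions $(a,b) = (2,1)$ (three consecutive neighbors on $C$, so $x$ is minor -- contrary to the assumption) and $(a,b) = (2\ell+1, 0)$ ($x$ complete to $V(C)$). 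Hence $x$ is complete to $V(C)$.

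\emph{Step~2: propagation to $V(F_0)$.} Write $V(C) = V(P_u) \cup V(P_v) \cup \{w_0\}$ with $u, v \in A$ and $w_0 \in \{w, w'\}$. For every $s \in A$, I would exhibit a hole of $F_0$ through $P_s$ on which $x$ already has at least $\ell + 1 \geq 4$ known neighbors, and then invoke Step~1 to force $x$ complete to this hole. Generically, one of the principal holes $P_u \cup P_s \cup \{w_0\}$ or $P_v \cup P_s \cup \{w_0\}$ exists (depending on the edge status of $us$ and $vs$ in $G[A]$), and it carries $V(P_u) \cup \{w_0\}$ or $V(P_v) \cup \{w_0\}$ as established neighbors of $x$. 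This yields $x$ adjacent to every principal path and to both $w$ and $w'$. For every $y \in B$, the anticonnectedness of $H_y$ provides non-adjacent $v_i, v_j \in H_y$; by Lemma~\ref{l:HinTemplate}, $V(P_i) \cup V(P_j) \cup \{y\}$ induces a hole of length $2\ell+1$ carrying $2\ell$ known neighbors of $x$, forcing $xy \in E(G)$. The case $y \in B'$ is symmetric, so $x$ is complete to $V(F_0)$.

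\emph{Step~3: propagation to $V(F_1)$ and contradiction.} For every $u \in V(F_0)$ and every $u^* \in K_u \setminus \{u\}$, I would pick a hole $H$ of $F_0$ containing $u$ and argue that $(V(H) \setminus \{u\}) \cup \{u^*\}$ is a hole of $G$ of length $2\ell+1$. By Lemma~\ref{lb:universalevenforblow-up} and the blowup conditions on flat, solid and optional edges (namely~\eqref{i:Kcomp}, \eqref{i:flat}, \eqref{i:optAB} and \eqref{i:condOpt}), $u^*$ is adjacent to the two $H$-neighbors of $u$; and since $K_u$ is anticomplete to $K_z$ whenever $uz \notin E(F_0)$ (condition~\eqref{i:bKKanti}), $u^*$ has no other neighbor in $V(H) \setminus \{u\}$. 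On the resulting hole $x$ has $2\ell$ known neighbors, so Step~1 forces $xu^* \in E(G)$. Iterating over all $u$ and $u^*$ yields $x$ complete to $V(F_1)$, hence $x \in V(F_2)$ by definition of $F_2$, contradicting $x \notin V(F_2)$.

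The main obstacle is the corner case of Step~2, arising when $w_0 = w$ and $s$ is adjacent in $G[A]$ to both $u$ and $v$ (or symmetrically $w_0 = w'$ with $s$ adjacent to neither): neither principal hole through $P_s$ and $P_u$ or $P_v$ then shares $w_0$ with $C$, so the established neighborhood of $x$ on the available principal hole collapses to just $V(P_u)$ or $V(P_v)$, which for $\ell = 3$ is only a three-vertex subpath and leaves Step~1 unable to force completeness. Resolving this case requires careful use of the non-principal holes supplied by Lemma~\ref{l:HinTemplate} through carefully chosen vertices of $B \cup B'$ whose associated hyperedges contain the relevant principal-path indices, together with the threshold-graph structure of $G[A]$. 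Step~3 is routine but requires a case analysis on the location of $u$ among $A$, $A'$, $B$, $B'$, and $I$.
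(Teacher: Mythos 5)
Your overall plan matches the paper's proof closely: Step~1 pins $x$ down as complete to $C$, Step~2 propagates to $V(F_0)$, Step~3 propagates to $V(F_1)$ and contradicts $x\notin V(F_2)$. Step~1 works, and your direct arc-length counting is a perfectly valid alternative to the paper's invocation of Lemma~\ref{l:holeTruemperS} (they are morally the same computation, packaged differently). Step~3 also works provided you pick principal holes for $u\in A\cup A'\cup I$ and holes of the form $P_i\cup P_j\cup\{u\}$ with $v_i,v_j\in H_u$ for $u\in B\cup B'$; the paper instead reaches the same conclusion in one line via Lemma~\ref{l:twoNonAdj} and a $C_4$, avoiding the hole-surgery bookkeeping altogether.

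The genuine gap is exactly the Step~2 corner case you flag, and your proposed fix heads in the wrong direction. When $w\in V(C)$ (so $uv\notin E(G)$) and $t\in A$ is adjacent to both $u$ and $v$, you worry that $C_{u,t}$ and $C_{v,t}$ both use $w'$ and so carry too few guaranteed neighbors of $x$. The paper resolves this not with non-principal holes (which would require you to already control $x$'s adjacency to $B\cup B'$, which you don't have yet at that point), but with a one-line $C_4$: since $uv\notin E(G)$, $tu,tv\in E(G)$, and $xu,xv\in E(G)$ by Step~1, the four vertices $\{t,u,v,x\}$ would induce a $C_4$ unless $xt\in E(G)$. Once $xt\in E(G)$, $x$ has the $\ell+1\ge 4$ consecutive vertices $V(P_u)\cup\{t\}$ as known neighbors on $C_{u,t}$, and your own Step~1 analysis then forces $x$ complete to $C_{u,t}$, hence to $P_t$. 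You should replace your sketch of Lemma~\ref{l:HinTemplate}-based holes with this $C_4$ observation; without it, your proposal does not close.
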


\begin{proof}
  Suppose up to symmetry that $w\in V(C)$ and suppose $C=C_{u, v}$.
  If $x$ is major w.r.t.\ $C$, then $C$ and $x$  form a theta or a
  wheel that is not a twin-wheel. So by Lemma~\ref{l:holeTruemperS},
  $x$ and $C$ form a universal wheel.  Let $P_t = t\dots t'$ be a
  principal path where $t\neq u, v$. If $t$ is complete to $\{u, v\}$,
  then $xt\in E(G)$ for otherwise $\{t, u, v, x\}$ induces a $C_4$.
  Hence $x$ has at least~4 neighbors in $C_{u, t}$, so by
  Lemma~\ref{l:holeTruemperS}, $x$ is complete to $P_t$.  If $t$ is not
  complete to $\{u, v\}$, say $tu\notin E(G)$, then $x$ again has at
  least~4 neighbors in $C_{u, t}$ because $w\in V(C_{u, t})$, so again
  $x$ is complete to $P_t$.

  We proved that $x$ is complete to all principal paths, so to
  $I\cup A \cup A'$.  Let $y\in B\cup B'$.  By definition of a
  template $y$ has two neighbors $a$ and $b$, both in $A$ or both in
  $A'$, that are non-adjacent. Therefore $a$, $b$, $y$ and $x$ form a
  $C_4$, unless $x$ is adjacent to $y$.  This proves that $x$ is
  complete to $B\cup B'$, and so to $V(F_0)$.

  Let $z$ be a vertex of $F_0$ and $z^*\in K_z$.  By
  Lemma~\ref{l:twoNonAdj}, there exists $a,b \in V(F_0)$ such that $z^*a, z^*b \in E(G)$
  and $ab \notin E(G)$, so since there is no $C_4$ in $G$ it should be that $xz^*\in E(G)$.  
  This proves that $x$ is complete to $F_1$.  Hence, $x\in V(F_2)$, a
  contradiction.
\end{proof}

\begin{lemma}
  \label{l:xKaKb}
  Let $a$ and $b$ be two non-adjacent vertices of some principal hole
  $C$ of $F_0$.  If some vertex $x$ of $V(G)\sm V(F_2)$ has neighbors
  in both $K_a$ and $K_b$, then $a$ and $b$ have a common neighbor $c$
  in $C$, $x$ is adjacent to $c$, and $x$ is anticomplete to every
  $K_d$ such that $d\in V(C) \sm \{a, b, c\}$.
\end{lemma}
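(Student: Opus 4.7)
The plan is to form a modified hole $C^*$ in $F_1$ of length $2\ell+1$ by swapping $a$ with $a^*$ and $b$ with $b^*$ in $C$, apply a Truemper-configuration analysis in the spirit of Lemma~\ref{l:xMinor} to constrain $N_{V(C^*)}(x)$, and finally extend the conclusion to all of $K_d$ by an analogous substitution.

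First, the cycle $C^*$ on vertex set $(V(C) \setminus \{a, b\}) \cup \{a^*, b^*\}$ obtained from $C$ by replacing $a$ with $a^*$ and $b$ with $b^*$ is an induced hole of length $2\ell+1$ in $G$. The required cyclic adjacencies of $a^*$ and $b^*$ follow from Lemma~\ref{lb:universalevenforblow-up}, since every vertex of $V(C)$ lies in $A \cup A' \cup I \cup \{w, w'\}$ and is hence complete to $K_a$ and $K_b$ at the relevant positions. The absence of chords follows from condition~\eqref{i:bKKanti} of the blowup, together with $ab \notin E(F_0)$ (giving $a^* b^* \notin E(G)$).

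Next, I show that $x$ is minor with respect to $C^*$. If $x$ is major, then $\{x\} \cup V(C^*)$ induces a Truemper configuration: either a theta (when $x$ has exactly two non-adjacent neighbors in $C^*$ at distance at least $3$), a proper wheel, a twin wheel (impossible since $x$ is assumed major), or a universal wheel. Lemma~\ref{l:holeTruemperS} forbids thetas and proper wheels in $\mathcal{C}_{2\ell+1}$, so $(C^*, x)$ must be a universal wheel and $x$ is adjacent to all $2\ell - 1 \geq 5$ vertices of $V(C) \setminus \{a, b\}$, contradicting Lemma~\ref{l:xMinor} applied to the principal hole $C$. Hence $x$ is minor with respect to $C^*$: $N_{V(C^*)}(x)$ lies in a $3$-vertex path of $C^*$ whose non-adjacent endpoints must be $a^*$ and $b^*$, so its middle vertex $c$ lies in $V(C^*) \setminus \{a^*, b^*\} = V(C) \setminus \{a, b\}$ and is a common neighbor of $a$ and $b$ in $V(C)$, with $xc \in E(G)$, and $x$ is not adjacent to any vertex of $V(C) \setminus \{a, b, c\}$.

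For the anticompleteness, fix $d \in V(C) \setminus \{a, b, c\}$ and suppose $xd^* \in E(G)$ for some $d^* \in K_d$; the goal is a contradiction via a hole $\widetilde C$ obtained from $C$ by substituting $a \mapsto a^\dagger$, $b \mapsto b^*$, $d \mapsto d^*$, where $a^\dagger \in K_a \cap N(x)$ (with a symmetric choice for $b$ when $d$ is adjacent to $b$ in $C$). If $d$ is not adjacent to $a$ in $C$, take $a^\dagger = a^*$; if $ad$ is a solid edge, take $a^\dagger = a^*$ and use condition~\eqref{i:Kcomp}. The delicate remaining cases occur when $ad$ is flat or optional: blowup conditions~\eqref{i:flat} and~\eqref{i:optAB} guarantee $ad^* \in E(G)$; if $a^* d^* \in E(G)$, take $a^\dagger = a^*$, and otherwise apply $C_4$-freeness to $\{x, a^*, a, d^*\}$ (whose four remaining edges $xa^*$, $a^*a$, $ad^*$, $d^*x$ are all present) to force $xa \in E(G)$, and take $a^\dagger = a$. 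One then verifies that $\widetilde C$ is an induced hole of length $2\ell+1$ using Lemma~\ref{lb:universalevenforblow-up} and condition~\eqref{i:bKKanti}. Finally, the three neighbors $a^\dagger, b^*, d^*$ of $x$ in $V(\widetilde C)$ cannot form a $3$-vertex path because the unique length-$2$ $\widetilde C$-path between $a^\dagger$ and $b^*$ still passes through $c \neq d^*$; so $x$ is major with respect to $\widetilde C$, and the Truemper analysis of the previous paragraph yields a contradiction.

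The main obstacle is the last step: the threefold substitution risks breaking when $d$ is a neighbor of $a$ (or $b$) in $C$ via a non-solid edge, since then $K_a$ and $K_d$ are not complete to each other; the $C_4$-freeness trick exploiting that $a$ itself is complete to $K_d$ in those cases is the delicate point that makes the argument go through.
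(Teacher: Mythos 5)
Your Part~1 (forming $C^*$ by swapping $a,b$ for $a^*,b^*$ and running the Truemper analysis, then using Lemma~\ref{l:xMinor} to rule out the universal wheel) is essentially the paper's argument, but there is a small gap at the end: you deduce $xc\in E(G)$ from $x$ being minor with respect to $C^*$, whereas minority only gives $N_{V(C^*)}(x)\subseteq\{a^*,c,b^*\}$ and leaves open the possibility $N_{V(C^*)}(x)=\{a^*,b^*\}$ with $a^*,b^*$ at distance exactly~$2$ in $C^*$. In that case $x$ is still minor, yet $\{x\}\cup V(C^*)$ is a theta; your parenthetical covers the theta only when the two neighbours are at distance at least~$3$, i.e.\ only inside the ``$x$ major'' branch, so the distance-$2$ theta must be invoked separately to force the third neighbour. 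The paper does this explicitly with ``since $x$ is adjacent to $a^*$ and $b^*$, by Lemma~\ref{l:holeTruemperS}, $x$ has another neighbor $c$ in $C^*$.''

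For the anticompleteness to $K_d$ you take a genuinely different route. The paper re-runs the Part~1 argument twice: first on the pair $(c,d)$, which is non-adjacent on $C$ because $c$'s only two $C$-neighbours are $a$ and $b$, so $c$ and $d$ must have a common $C$-neighbour, necessarily $a$ or $b$, say $a$; and then on $(b,d)$, which is non-adjacent and has no common $C$-neighbour since $C$ has length $2\ell+1\geq 7$, a contradiction. You instead build a single modified hole $\widetilde C$ via the triple substitution $a\mapsto a^\dagger$, $b\mapsto b^*$, $d\mapsto d^*$ and show $x$ is major with respect to it. Your construction is sound (the key observation that a chord of the would-be $C_4$ on $\{x,a^*,a,d^*\}$ forces $xa\in E(G)$ is correct, and every vertex of a principal hole lies in $A\cup A'\cup I\cup\{w,w'\}$, so $a$ is complete to $K_d$ by Lemma~\ref{lb:universalevenforblow-up}), but it is more work: you have to split on whether $ad$ (or $bd$) is solid/flat/optional, whereas the paper only ever substitutes at two non-adjacent positions, so the ``$K_a$ not complete to $K_d$'' difficulty never arises. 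As a side remark, the edge facts you derive piecemeal from conditions~\eqref{i:flat} and~\eqref{i:optAB} (and would additionally need~\eqref{i:condIso} when $a$ or $d$ equals $w$) follow uniformly from Lemma~\ref{lb:universalevenforblow-up}.
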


\begin{proof}
  Let $a^* \in K_a$ and $b^* \in K_b$ be two neighbors of $x$.
  Since $ab\notin E(G)$, by Lemma~\ref{lb:universalevenforblow-up},
  $\{a^*, b^*\} \cup V(C)\sm \{a, b\}$ induces a hole~$C^*$.  Since
  $x$ is adjacent to $a^*$ and $b ^*$, by Lemma~\ref{l:holeTruemperS}, $x$
  has another neighbor $c$ in $C^*$ (and in fact in $C$ since $c\neq
  a^*, b^*$).  If $c$ is not adjacent to $a^*$ and $b^*$, then $x$ is major
  w.r.t.\ $C^*$, so by Lemma~\ref{l:holeTruemperS}, $C^*$ and
 $x$ form a universal wheel.  It follows that $x$ is major w.r.t.\
 $C$, a contradiction to Lemma~\ref{l:xMinor}. 

 We proved that $a$ and $b$ have a common neighbor $c$ in $C$ and that
 $x$ is adjacent to $c$.  Suppose for a contradiction that $x$ has a
 neighbor $d^*\in K_d$ where $d\in V(C) \sm \{a, b, c\}$.  By the same
 argument as above, since $x$ has neighbors in $K_d$ and $K_c$, $c$
 and $d$ must have a common neighbor in $C$, and this common neighbor
 must be $a$ or $b$, say $a$ up to symmetry.  So, $x$ has neighbors in
 $K_d$ and $K_b$ while $b$ and $d$ have no common neighbors in $C$, so
 we may reach a contradiction as above. 
\end{proof}

\subsection{Connecting vertices of $F_1$}

We here explain how lemmas of Subsection~\ref{subs:connectF0} are
extended from $F_0$ to $F_1$.

\begin{lemma}
  \label{l:patatesBlowUp}
  If $u^* \in A^*\cup B^*$ and $v^*\in A'^* \cup B'^*$, then there exists
  in $F_1$ a path $P^*$ of length $\ell-1$, $\ell$ or $\ell +1$ from $u^*$
  to $v^*$ that contains the interior of a principal path.

  More specifically:
  \begin{itemize}
  \item
    If $u^*\in A^*$ and $v^*\in A'^*$, then $P^*$ has length $\ell-1$ or
    $\ell$.
  \item If $u^*\in A^*$ and $v^*\in B'^*$, or if $u^*\in B^*$ and $v^*\in A'^*$,
    then $P^*$ has length $\ell$ or $\ell+1$.
  \item If $u^*\in B^*$ and $v^*\in B'^*$, then $P^*$ has length
    $\ell+1$.
  \end{itemize}
\end{lemma}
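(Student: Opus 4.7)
The plan is to lift Lemma~\ref{lt:cheminsentrepatates}, which is the template-level analogue of the present statement, to the blowup $F_1$. Let $u\in A\cup B$ and $v\in A'\cup B'$ be the unique vertices of $F_0$ with $u^*\in K_u$ and $v^*\in K_v$. Applying Lemma~\ref{lt:cheminsentrepatates} to $u,v$ inside $F_0$ yields an induced path $P$ from $u$ to $v$ whose length lies in $\{\ell-1,\ell,\ell+1\}$, refined exactly as in the present statement, and $P$ contains a principal path $P_i$ of $F_0$.

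I would then form $P^*$ from $P$ by replacing only the two endpoints $u,v$ with $u^*,v^*$; the interior is untouched, so $P^*$ has the same length as $P$ and still contains the interior of $P_i$. It remains to verify that $P^*$ is an induced path in $F_1$. Since $F_1[V(F_0)]=F_0$ (which follows from the blowup conditions~\eqref{i:bKKanti}--\eqref{i:optAB}), adjacencies and non-adjacencies within the interior of $P^*$ are inherited from $P$. The only new checks concern pairs involving $u^*$ or $v^*$. For the two endpoint-incident edges of $P^*$, let $p_u,p_v$ be the neighbors of $u,v$ in $P$; inspection of the possible forms of $P$ produced by the proofs of Lemmas~\ref{lt:cheminsentrepatates} and~\ref{lt:cheminsentrepatatesBBP} shows that each of the $F_0$-edges $up_u$ and $vp_v$ is either solid, flat, or optional with the blown-up side belonging to $B$ (resp.~$B'$), so one of blowup conditions~\eqref{i:Kcomp}, \eqref{i:flat}, or~\eqref{i:optAB} (applied on the correct side) directly gives $u^*p_u,v^*p_v\in E(F_1)$. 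For the non-adjacencies: since $P$ is induced in $F_0$, condition~\eqref{i:bKKanti} propagates every non-edge between $u$ (resp.~$v$) and any non-consecutive vertex of $P$ to $u^*$ (resp.~$v^*$); and $u^*v^*\notin E(F_1)$ because $A\cup B$ is anticomplete to $A'\cup B'$ in $F_0$, so $uv\notin E(F_0)$ and~\eqref{i:bKKanti} applies.

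The only obstacle is the bookkeeping required to check that the first and last edges of $P$ always fall into one of the three "complete-to-blown-up-clique" clauses of the blowup definition, and not into the single remaining configuration (an optional edge whose $A$- or $A'$-side endpoint is the one being blown up) not covered by those clauses. This configuration is ruled out by the structural form of $P$ produced by Lemma~\ref{lt:cheminsentrepatates}: when $u\in A$, the first vertex of $P$ after $u$ always lies in $A\cup I$, and when $u\in B$, it always lies in $A$, and symmetrically for $v$.
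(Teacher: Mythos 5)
Your proof is correct and takes essentially the same approach as the paper: apply Lemma~\ref{lt:cheminsentrepatates} to $u,v$ in $F_0$, then replace the endpoints by $u^*,v^*$. The paper accomplishes the lifting step by a direct appeal to Lemma~\ref{lb:universalevenforblow-up} (together with the remark following it about condition~\eqref{i:bKKanti}); you instead unpack that lemma's proof inline by classifying the two endpoint edges of $P$ as solid, flat, or optional-with-$B$/$B'$-side-blown-up, and the structural observation you rely on to rule out the bad optional configuration — that the $F_0$-neighbor of $u$ along $P$ lies in $A\cup I$ when $u\in A$ and in $A$ when $u\in B$ (symmetrically for $v$) — is precisely what makes Lemma~\ref{lb:universalevenforblow-up} applicable.
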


\begin{proof}
  Let $u$ and $v$ be such that $u^*\in K_u$ and $v^*\in K_v$.  Let $P$
  be a path in $F_0$ like in Lemma~\ref{lt:cheminsentrepatates}
  from $u$ to $v$ (so $P$ contains the interior of some principal path $Q$). By Lemma~\ref{lb:universalevenforblow-up},
  $\{u^*, v^*\} \cup V(P) \sm \{u, v\}$ induces a path of the same length
  as $P$ that contains the interior of $Q$.
\end{proof}

\begin{lemma}\label{lt:patateBBPblowup}
  If $u^*\in B^*$ and $v^*\in B'^*$, then there exist in $G$ two paths
  $P^*$ and $Q^*$  from $u^*$ to $v^*$ both of length at most $\ell +1$ such
  that $P^*$ (resp.\ $Q^*$) contains the interior of a principal path
  $P$ (resp.\ $Q$), and $P\neq Q$.
\end{lemma}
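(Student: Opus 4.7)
Let $u\in B$ and $v'\in B'$ be such that $u^*\in K_u$ and $v^*\in K_{v'}$. Applying Lemma~\ref{lt:cheminsentrepatatesBBP} to $u$ and $v'$ in $F_0$, we obtain two paths $P$ and $Q$ of length $\ell+1$ from $u$ to $v'$ in $F_0$, each containing a distinct principal path. From the proof of that lemma, $P$ has the form $u,v_i,P_i,v'_i,v'$ and $Q$ has the form $u,v_j,P_j,v'_j,v'$, where $P_i$ and $P_j$ are distinct principal paths of $F_0$ and $v_i,v_j\in N_A(u)$, $v'_i,v'_j\in N_{A'}(v')$.

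The plan is to obtain $P^*$ (resp.\ $Q^*$) by replacing $u$ by $u^*$ and $v'$ by $v^*$ in $P$ (resp.\ $Q$); that is, $P^*=u^*v_iP_iv'_iv^*$ and $Q^*=u^*v_jP_jv'_jv^*$. Since $v_i,v_j\in A$ and $v'_i,v'_j\in A'$, and since $u\in N_{V(F_0)}(v_i)\cap N_{V(F_0)}(v_j)$ and $v'\in N_{V(F_0)}(v'_i)\cap N_{V(F_0)}(v'_j)$, Lemma~\ref{lb:universalevenforblow-up} (applied with $v_i,v_j,v'_i,v'_j$ playing the role of ``$u$'' in that lemma) yields that $u^*$ is adjacent to $v_i$ and $v_j$ and that $v^*$ is adjacent to $v'_i$ and $v'_j$. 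Hence $P^*$ and $Q^*$ are cycles through $u^*$ and $v^*$, each of length $\ell+1$, and each containing the interior of a principal path (namely $P_i$ for $P^*$ and $P_j$ for $Q^*$, with $P_i\neq P_j$).

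It remains to check that $P^*$ is an induced path of $G$; the argument for $Q^*$ is identical. The internal edges between $v_i$, the interior of $P_i$, and $v'_i$ are edges of $F_0\subseteq F_1\subseteq G$, so we only need to rule out chords incident to $u^*$ or $v^*$. The vertices $u$ and $v'_i$, as well as $u$ and any interior vertex of $P_i$ (which lies in $I$), are non-adjacent in $F_0$: the former because $u\in B$ and $N(B)\subseteq A$ in $F_0$, the latter for the same reason. By condition~\eqref{i:bKKanti} of the definition of a blowup, $K_u$ is then anticomplete to $K_{v'_i}$ and to each blown-up clique of an interior vertex of $P_i$, so $u^*$ has no chord in $P^*$. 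Symmetrically, using $v'\in B'$ and $N(B')\subseteq A'$, we get that $v^*$ has no chord in $P^*$. Hence $P^*$ is an induced path of $G$ of length $\ell+1$ containing the interior of the principal path $P_i$, as required.
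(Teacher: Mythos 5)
Your proof is correct and follows essentially the same route as the paper's: take the two paths from Lemma~\ref{lt:cheminsentrepatatesBBP} in $F_0$ and substitute $u^*$, $v^*$ for their endpoints, invoking Lemma~\ref{lb:universalevenforblow-up} to get the adjacencies and condition~\eqref{i:bKKanti} of blowups to rule out chords. One small wording slip: you write that $P^*$ and $Q^*$ are ``cycles'' in the second paragraph when you mean paths (or walks), and you should also note explicitly that $u^*v^*\notin E(G)$ (which follows from $u\in B$, $v'\in B'$ being non-adjacent in $F_0$ and the same anticompleteness argument).
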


\begin{proof}
  Let $u$ and $v$ be such that $u^*\in K_u$ and $v^*\in K_v$.  Let
  $P = u\dots v$ and $Q=u\dots v$ be as in the
  conclusion of Lemma~\ref{lt:cheminsentrepatatesBBP}. By
  Lemma~\ref{lb:universalevenforblow-up},
  $\{u^*, v^*\} \cup V(P) \sm \{u, v\}$ and
  $\{u^*, v^*\} \cup V(Q) \sm \{u, v\}$ are the desired paths.
\end{proof}

\begin{lemma}
  \label{l:suspect}
  If some vertex $x$ of $G$ is adjacent to the ends of a path $P$ of length
  at most $\ell+1$ of $G\sm x$, then $x$ is complete to $V(P)$.  
\end{lemma}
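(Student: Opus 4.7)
The plan is to argue by contradiction, using only the fact that every hole of $G$ has length exactly $2\ell+1$ together with the standing hypothesis $\ell \geq 3$. Suppose that $x$ has at least one non-neighbor on $V(P)$. Write $P = u_0 u_1 \dots u_k$ with $k \leq \ell+1$ and $u_0, u_k \in N(x)$. Since $N_P(x)$ contains both endpoints but misses some vertex of $P$, there must exist indices $0 \leq i < j \leq k$ with $j - i \geq 2$ such that $u_i, u_j \in N(x)$ while every $u_{i+1}, \dots, u_{j-1}$ is non-adjacent to $x$ (just take a maximal gap in $N_P(x)$ between two neighbors).

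Next I would consider the cycle $C := x u_i u_{i+1} \dots u_j x$. Since $P$ is an induced path, no two non-consecutive vertices $u_a, u_b$ with $i \leq a < b \leq j$ are adjacent; and by the choice of $i, j$, the only neighbors of $x$ among $u_i, \dots, u_j$ are $u_i$ and $u_j$ themselves. Hence $C$ is chordless, i.e.\ a hole, of length $j - i + 2$.

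Finally, I would bound this length: we have $4 \leq j - i + 2 \leq k + 2 \leq \ell + 3$. The quick arithmetic check $\ell + 3 < 2\ell + 1$ (which is equivalent to $\ell > 2$, so valid since $\ell \geq 3$) shows that $C$ has length strictly less than $2\ell+1$ and at least $4$, contradicting $G \in \mathcal C_{2\ell+1}$. So no non-neighbor exists and $x$ is complete to $V(P)$.

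There is essentially no obstacle here; the lemma is an immediate exploitation of the length constraint in $\mathcal C_{2\ell+1}$, and the only place where the hypothesis $k \leq \ell+1$ is used is to force the would-be hole to be too short.
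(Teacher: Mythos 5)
Your proof is correct and takes essentially the same approach as the paper: both argue that a non-neighbor of $x$ on $P$ forces a hole in $G[V(P)\cup\{x\}]$ of length between $4$ and $\ell+3$, which is shorter than $2\ell+1$ since $\ell\geq 3$. You simply make explicit the "maximal gap" construction that the paper's one-line appeal to a shortest cycle leaves implicit.
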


\begin{proof}
  Otherwise, a shortest cycle in $G[V(P) \cup \{x\}]$ has length at
  least~4 and at most $\ell + 3$. Since
  $\ell\geq 3$ implies $\ell+3 < 2 \ell +1$, this is a contradiction.
\end{proof}

\subsection{Attaching a vertex to $F_1$}

In this subsection, we show that for all vertices $x$ of $G\sm F_2$,
$N_{V(F_1)}(x)$ is a clique (see Lemma~\ref{l:attachVertex10}).  In
Figure~\ref{f:attach}, several situations where $N_{V(F_1)}(x)$ is not
a clique are represented and we explain informally how they lead to a
contradiction. The first figure is an odd 3-template $F_0$ with its
vertices $w$ and $w'$, and here $F_1=F_0$.  Then, vertex $x_1$ can be
included in $K_{y_1}$, a contradiction to the maximality of $F_1$ (see
Lemma~\ref{l:blowupInB}).  The vertex $x_2$ cannot be included in an
existing blown up clique, but it can be added to $F_0$ to yield a
bigger template (see Lemma~\ref{l:blowupInB}). The vertex $x_3$ can be
added to $K_{u_6}$ (see Lemma~\ref{l:blowupInA}).  The vertex $x_4$
can be added to $K_{u_1}$, but at the expense of modifying the
template (see Lemma~\ref{l:blowupInA}).  The vertex $x_5$ can be added
to $K_{i_3}$ (see Lemma~\ref{l:blowupInI}).

The vertex $x_6$ is kind of pathological because it cannot be
added to any blown-up clique, and does not increase the template.  The idea
for this one is to observe that
$\{x_6\} \cup V(F_0) \sm \{y_1, u_6^+\}$ induces a template and that $y_1$ can be incorporated in
the set $K_{u_6}$ and  $K_{x_6}=K_{u_6^+}\cup \{x_6\}$ (see
Lemma~\ref{l:attachVertex10}).  Note that in this case, we increase
the size of the blowup while decreasing the size of the template. 

In each case, we prove that adding $x$ yields a preblowup of $F_0$, so
that the maximality of $F_1$ is contradicted.  

\begin{figure}
\begin{center}
\includegraphics[width=5cm]{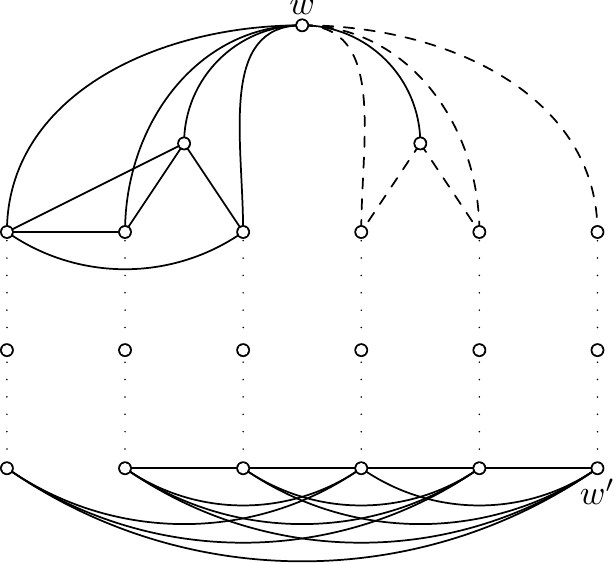}  \hspace{2em}
\includegraphics[width=5cm]{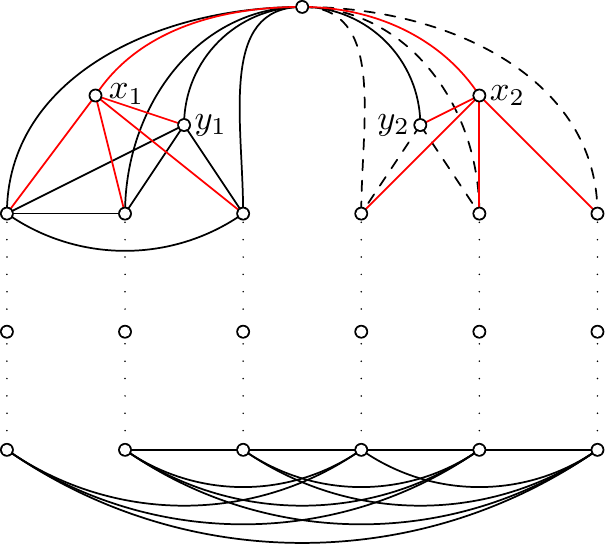}

\rule{0em}{3ex}

\includegraphics[width=5.8cm]{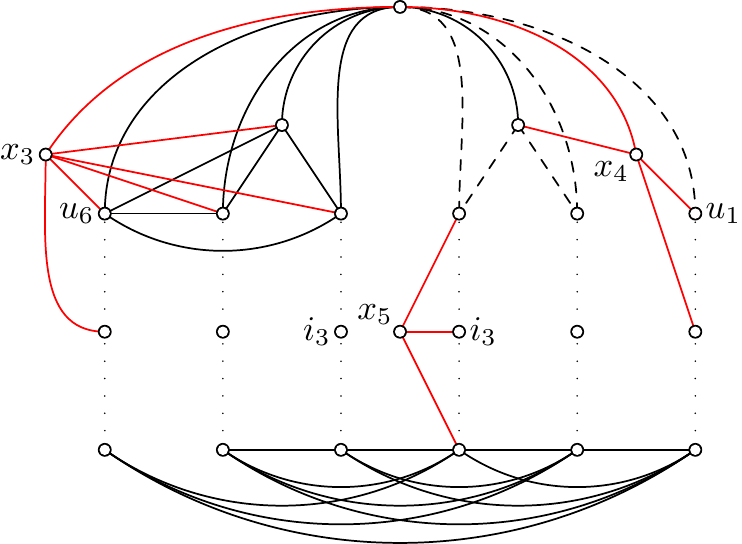}  \hspace{2em}
\includegraphics[width=5.6cm]{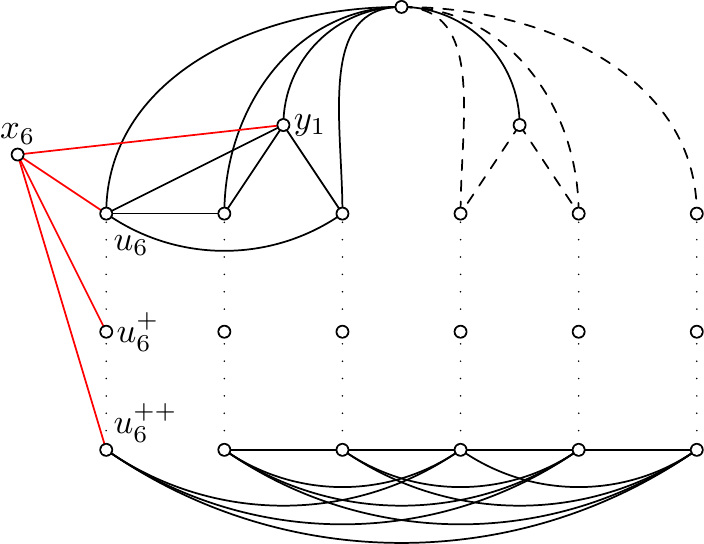}

\end{center}

\caption{Vertices attaching to an odd 3-template\label{f:attach}}
\end{figure}

\begin{lemma}
  \label{l:blowupInB}
  If $x\in G\sm F_2$ has no neighbor in $I^*$, then $N_{V(F_1)}(x)$ is a 
  clique.
\end{lemma}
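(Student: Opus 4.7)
I would argue by contradiction. Assume $N_{V(F_1)}(x)$ is not a clique, and pick non-adjacent neighbors $u^*, v^* \in V(F_1)$ of $x$. Since $x$ has no neighbor in $I^*$, both $u^*$ and $v^*$ belong to $A^* \cup B^* \cup A'^* \cup B'^*$. The proof splits according to whether $u^*$ and $v^*$ lie on opposite sides of the template or on the same side.

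For the opposite-sides case, say $u^* \in A^* \cup B^*$ and $v^* \in A'^* \cup B'^*$, I would apply Lemma~\ref{l:patatesBlowUp} to obtain a path $P^*$ of length at most $\ell+1$ from $u^*$ to $v^*$ in $F_1$ whose interior contains the interior of a principal path. The cycle $C := xu^*P^*v^*x$ has length at most $\ell+3$, which is strictly less than $2\ell+1$ for $\ell \geq 3$, so $C$ is not a hole. Since $P^*$ is induced, any chord of $C$ must involve $x$, and since $x$ has no neighbor in $I^*$, the chord must target one of the (at most two) non-$I^*$ interior vertices of $P^*$, which are explicitly identified in the paths constructed in the proof of Lemma~\ref{lt:cheminsentrepatates}. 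Cutting $P^*$ at the chord closest to an endpoint yields a strictly shorter cycle whose interior lies wholly in $I^*$, producing a chordless hole of length between $\ell+1$ and $\ell+2$, contradicting $G \in \mathcal{C}_{2\ell+1}$.

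For the same-side case, say $u^*, v^* \in A^* \cup B^*$ with $u^* \in K_a$ and $v^* \in K_b$, I would first rule out $ab$ being a solid edge (by condition~\eqref{i:Kcomp}) or an optional edge (using conditions~\eqref{i:optAB}--\eqref{i:condOpt} together with Lemma~\ref{c:FXsubY}), so that $ab \notin E(F_0)$. If $x$ has a neighbor $y^* \in A'^* \cup B'^*$ with $y^*$ non-adjacent to at least one of $u^*, v^*$, I reduce to the opposite-sides case. Otherwise $N_{V(F_1)}(x) \subseteq A^* \cup B^*$; the half-graph condition~\eqref{i:KKhalf} gives $xa, xb \in E(F_0)$; and Lemma~\ref{l:xMinor} applied to the principal hole $C_{a,b}$ (which exists since $ab \notin E(F_0)$ and $w \in B$) combined with the forbidden $C_4$ forces $xw \in E(G)$. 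I then extend $F_1$ by placing $x$ as a new vertex of $B$ in $F_0$, associated with a suitably chosen anticonnected module $H_x \subseteq A$ containing $a$ and $b$ (or alternatively by inserting $x$ into an existing blown-up clique $K_y$ if $x$'s neighborhood permits), and invoke Lemma~\ref{l:preblowup} to convert the resulting preblowup into a blowup of a twinless odd $\ell$-template with $k$ principal paths but strictly more vertices than $F_1$, contradicting the maximality of $|V(F_1)|$.

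The principal obstacle is the same-side case, where the challenge is to identify the correct extension---a new template vertex in $B$ with an appropriate $H_x$, or a new clique element in some $K_y$---and to verify the module property of $H_x$ in $G[A]$ and the laminarity of the extended hypergraph. Here Lemma~\ref{l:xKaKb} applied to $C_{a,b}$ is key: it confirms that $x$ is anticomplete to the cliques $K_d$ for $d \in V(C_{a,b}) \setminus \{a,b,w\}$, which is exactly the information needed to check that the candidate $H_x$ behaves as a module with respect to the rest of $G[A]$ and that the new hyperedge fits into $\mathcal{H}$ laminarly.
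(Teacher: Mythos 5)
The high-level strategy you identify — grow $F_1$ to a larger preblowup and invoke Lemma~\ref{l:preblowup} against the maximality of $F_1$ — is the paper's, and your handling of the opposite-sides case (via Lemma~\ref{l:patatesBlowUp}, ruling out a neighbor in each of $A^*\cup B^*$ and $A'^*\cup B'^*$) is essentially the paper's first claim, phrased through chord-cutting rather than the cleaner Lemma~\ref{l:suspect}. But the same-side analysis has real gaps.

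First, starting from non-adjacent $u^*\in K_a$, $v^*\in K_b$ and trying to conclude $ab\notin E(F_0)$ does not work for optional edges. Conditions~\eqref{i:optAB}--\eqref{i:condOpt} and Lemma~\ref{c:FXsubY} describe how optional edges blow up, but an optional $ab$ (with, say, $a\in A$, $b\in B$, $a$ isolated in $H_b$) perfectly well permits a non-edge $u^*v^*$ between $K_a$ and $K_b$ — condition~\eqref{i:optAB} only guarantees that $a$ itself is complete to $K_b$, not that all of $K_a$ is. The paper instead invokes Lemma~\ref{l:onlyKaKb} (applied to $D=\{x\}$): if $N_{V(F_1)}(x)$ were contained in $\bigcup_{v\in K}K_v$ for a single clique $K$ of $F_0$, it would be a clique; hence there exist $a,b\in V(F_0)$ non-adjacent \emph{in $F_0$} with $x$ meeting both $K_a$ and $K_b$. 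This is the tool you need, and your argument doesn't supply a substitute.

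Second, even granting non-adjacent $a,b$, the paper's Claim~2 must still push both into $A$ (this is exactly preblowup condition~\eqref{pb:BAN}), handling separately the cases $a\in A,b\in B$ and $a,b\in B$ via Lemma~\ref{lt:deuxsommetsdeB} and explicit long-hole constructions. Your reliance on the principal hole $C_{a,b}$ presupposes $a,b\in A$ — there is no $C_{a,b}$ when $a$ or $b$ lies in $B$ — and the assertion that the half-graph condition~\eqref{i:KKhalf} gives ``$xa,xb\in E(F_0)$'' doesn't parse: $x\notin V(F_1)$, and~\eqref{i:KKhalf} constrains neighborhoods inside $F_1$ only, saying nothing about adjacencies of $x$.

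Finally, once these two claims are in hand, you should not need to ``identify the correct extension'' of $F_0$ nor verify a module/laminarity condition for a putative $H_x$: that is precisely the bookkeeping Lemma~\ref{l:preblowup} is built to eliminate. One simply declares $x$ to be a new element of the set $B^*$ of the preblowup (not a new vertex of the template $F_0$); conditions~\eqref{pb:B} and~\eqref{pb:BAN} hold by Claims~1 and~2, and Lemma~\ref{l:preblowup} does the rest. Your version re-derives obligations the abstraction already discharges.
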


\begin{proof}
  Suppose for a contradiction that $N_{V(F_1)}(x)$ is not a 
  clique. 

  \begin{claim}
    \label{c:bBnoNeigh}
    We may assume that $N_{V(F_1)}(x) \subseteq A^*\cup B^*$.  
  \end{claim}

  \begin{proofclaim}
    If $x$ has neighbors in both $A^* \cup B^*$ and $A'^* \cup B'^*$,
    then consider a path $P$ as in Lemma~\ref{l:patatesBlowUp} from a
    neighbor of $x$ in $A^* \cup B^*$ to a neighbor of $x$ in
    $A'^* \cup B'^*$.  By Lemma~\ref{l:suspect}, $x$ is complete to
    $V(P)$.  This is a contradiction since $x$ has no neighbor in
    $I^*$.  Hence $x$ does not have neighbors in both $A^* \cup B^*$ and
    $A'^* \cup B'^*$, and our claim follows up to symmetry.
  \end{proofclaim}

  \begin{claim}
    \label{c:bCondPT}
    There exist non-adjacent $a, b\in A$ such that $x$ has neighbors
    in both $K_a$ and $K_b$.  
  \end{claim}
  
  \begin{proofclaim}
        
    By Lemma~\ref{l:onlyKaKb}, since $N_{V(F_1)}(x)$ is not a clique,
    there should exist two non-adjacent vertices $a,b \in V(F_0)$ such
    that $x$ has a neighbor $a^* \in K_a$ and a neighbor
    $b^* \in K_b.$ By~\eqref{c:bBnoNeigh}, $a,b \in A\cup B.$

    If $a, b\in A$, then our conclusion holds, so we may assume that
    $b\in B$.

    If $a\in A$, then since $ab\notin E(G)$, $H_b$ is anticomplete to
    $a$. Let $P^*_a$ be the path induced by
    $\{a^*\} \cup (V(P_a) \sm \{a\})$. Let $v\in H_b$.  We may assume
    that $xv\notin E(G)$ for otherwise our claim holds (with $a$ and
    $v$). Note that since $ab, av \notin E(G),$ by~\eqref{i:bKKanti} of blowup, $a^*b^*, a^*v \notin E(G)$.
    Now, the paths $P_a^*$, $P_v$, $a^*xb^*v$ form a hole of
    length $2\ell +2$, a contradiction.  Hence, we may assume
    $a\in B$. 

    Since $ab\notin E(G)$, by Lemma~\ref{lt:deuxsommetsdeB},
    $\{a\}\cup H_a$ is
    anticomplete to $\{b\} \cup H_b$.  We may assume that $x$ is
    anticomplete to $H_a\cup H_b$ for otherwise we may apply the proofs
    above. Hence, for $u\in H_a$ and $v\in H_b$, the
    two paths $P_u$ and $P_v$ together with the path $ua^*xb^*v$ form
    a hole of length $2\ell+3$. 
  \end{proofclaim}

  Now the sets $K_u$ for all $u\in A\cup A' \cup I$, $B^*\cup \{x\}$
  and $B'^*$ form a preblowup of $F_0$.  All conditions are easily
  checked. In particular $x$ satisfies condition~\eqref{pb:B}
  by~\eqref{c:bBnoNeigh} and~\eqref{pb:BAN} by~\eqref{c:bCondPT}).
  So, by Lemma~\ref{l:preblowup}, $G[V(F_1)\cup \{x\}]$ is a proper
  blowup of some $\ell$-template with $k$ principal paths. This contradicts
  the maximality of $F_1$.
\end{proof}

\begin{lemma}
  \label{l:blowupInA}
  If there exist $x\in V(G)\sm V(F_2)$ and $u\in A$ such that $x$
  has neighbors in both $K_u$ and $K_{u^{+}}$ and is anticomplete to
  $K_{u^{++}}$, then $N_{V(F_1)}(x)$ is a clique.
\end{lemma}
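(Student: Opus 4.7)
The plan is to reduce the lemma to establishing the containment $N_{V(F_1)}(x)\subseteq K_u\cup K_{u^+}$. Since $\{u,u^+\}$ is a clique of $F_0$ (because $uu^+$ is an edge of the principal path $P_u$), Lemma~\ref{l:onlyKaKb} applied to the connected set $D=\{x\}$ then immediately gives that $N_{V(F_1)}(x)$ is a clique.

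To prove the containment I argue by contradiction: suppose $x$ has a neighbor $y^*\in K_v$ for some $v\in V(F_0)\sm\{u,u^+\}$, and fix $u^*_0\in K_u\cap N(x)$ and $v^*_0\in K_{u^+}\cap N(x)$ provided by the hypothesis. The case $v=u^{++}$ contradicts the hypothesis directly, so I may assume $v\in V(F_0)\sm\{u,u^+,u^{++}\}$. My main tool is a path-through-$K_{u^{++}}$ argument: since $u^+\in I$, the preblowup structure yields a vertex $u^{++*}\in K_{u^{++}}\cap N(v^*_0)$. Starting from $v^*_0, u^{++*}$, I would walk along the blown-up principal path $P_u^*$ to reach a vertex of $K_{u'}$, then continue to $y^*$ either via a solid edge in $A'^*$ or through the universal vertex $w'$ of $G[A'\cup B']$ given by Lemma~\ref{l:ABuniv}. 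For $v\in A'\cup B'$ or $v$ an interior vertex of $P_u$ beyond $u^{++}$, the resulting path has length at most $\ell+1$, so Lemma~\ref{l:suspect} forces $x$ to be complete to it and in particular adjacent to $u^{++*}\in K_{u^{++}}$, contradicting the hypothesis.

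When $v\in(A\cup B)\sm\{u\}$ or $v$ lies on a principal path $P_r$ with $r\in A\sm\{u\}$, the preceding path can exceed length $\ell+1$ and Lemma~\ref{l:suspect} does not apply. For these cases I would instead construct a short induced path $P$ from $v^*_0$ to $y^*$ in $F_1\sm x$, typically of the form $v^*_0\to u\to w\to r\to r^+\to\cdots\to y^*$, or a shortcut via $v^*_0\to u\to y^*$ when $uv\in E(F_0)$ is a solid edge, where $w$ is the universal vertex of $G[A\cup B]$ from Lemma~\ref{l:ABuniv}. Since the underlying template vertices of $v^*_0$ and $y^*$ lie in anticomplete blowup cliques, $v^*_0 y^*\notin E(G)$; thus appending the edges $xv^*_0$ and $xy^*$ produces a cycle, and taking the shortest induced cycle in $G[\{x\}\cup V(P)]$ yields a hole of length between $4$ and $\ell+3<2\ell+1$, contradicting $G\in\mathcal C_{2\ell+1}$.

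The main obstacle will be this second case: I expect a delicate case analysis depending on whether $uv\in E(F_0)$ (which may introduce shortcuts in the constructed cycle), on whether $v\in B$ (where extra adjacencies of $y^*$ to members of the hyperedge $H_v$ may introduce chords on $P$), and on whether $x$ happens to be adjacent to the template vertex $u$ itself (which can collapse the short cycle into a non-induced one and force a re-route through $P_u^*$ and $w'$). The small case $\ell=3$, where $u^{++}=u'\in A'$ and the walk through $K_{u^{++}}$ degenerates into a single step, will also require separate treatment.
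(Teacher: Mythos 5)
Your reduction to the containment $N_{V(F_1)}(x)\subseteq K_u\cup K_{u^+}$ is logically valid: since $u^+\in I$ has only $u$ and $u^{++}$ as neighbours in $F_0$, any clique containing a vertex of $K_{u^+}$ must lie in $K_u\cup K_{u^+}\cup K_{u^{++}}$, so the containment and the conclusion of the lemma are equivalent. The first part of your plan (neighbours $y^*\in K_v$ with $v\in A'\cup B'$ or $v$ an interior vertex of $P_u$ past $u^{++}$, handled by a path through $K_{u^{++}}$ and Lemma~\ref{l:suspect}) is sound and essentially tracks the paper's own Claim~(1), which shows $x$ is anticomplete to $A'^*\cup B'^*\cup(I^*\sm K_{u^+})$ via Lemma~\ref{l:xKaKb}.

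The gap is in the second case. When $y^*\in K_v$ with $v\in N_A(u)$, there is in general \emph{no} short induced cycle to be found, and the paper's proof does not even try to produce one. You propose to take the shortest induced cycle in $G[\{x\}\cup V(P)]$ for a short path $P$ in $A^*\cup B^*$; but if $x$ happens to be complete to $V(P)$ --- and nothing you know rules that out, since the only vertices you know $x$ misses lie in $K_{u^{++}}$, which $P$ does not meet --- then $G[\{x\}\cup V(P)]$ has no hole and the argument collapses. In fact this configuration is structurally consistent: $x$ adjacent to $u$, to part of $K_u$, to $v^*_0\in K_{u^+}$ and to $y^*\in K_v$ with $v\in N_A(u)$ is exactly what an extra blown-up vertex of $K_u$ would look like. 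The paper handles this case not with a hole but by showing, under the contradiction hypothesis, that $N_A(x)\sm\{u\}=N_A(u)$, that $x$ is complete to $K_u$, and that $(A\sm\{u\})\cup I\cup A'$, $K_u\cup\{x\}$, $B^*$, $B'^*$ form a preblowup of $F_0$; then Lemma~\ref{l:preblowup} turns this into a strictly larger proper blowup, contradicting the maximality of $|V(F_1)|$. This extremal/maximality argument is the engine of the proof, and your proposal omits it entirely. Without it, the case $v\in(A\cup B)\sm\{u\}$ cannot be closed by hole-chasing, no matter how delicate the case analysis becomes.
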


\begin{proof}
  Suppose for a contradiction that $N_{V(F_1)}(x)$ is not a clique.

  \begin{claim}
    \label{c:bAxanti}
    $x$ is anticomplete to $A'^* \cup B'^* \cup (I^*\sm K_{u^+})$.
  \end{claim}
  
  \begin{proofclaim}
    If $x$ has a neighbor $t^*$ in some $K_t$ such that
    $t\in (A' \cup I) \sm \{u^+\}$, then note that $t\neq u^{++}$ by
    assumption.  Let $C$ be a principal hole that contains $t$
    and~$u$.  There is a contradiction to Lemma~\ref{l:xKaKb} because by~\eqref{i:bKKanti} of blowup 
    $u$, $u^+$ and $t$ cannot be consecutive along $C$.
    
    It remains to prove that $x$ is anticomplete to $B'^*$. Otherwise,
    $x$ has a neighbor $t\in B'^*$. Consider a path $P$ from $t$ to
    the neighbor of $x$ in $K_u$ as in Lemma~\ref{l:patatesBlowUp} and let $Q$ be the principal
    path whose interior is contained in $P$.  By
    Lemma~\ref{l:suspect}, $x$ is complete to $V(P)$.  This is a
    contradiction because if $Q=P_u$ then $x$ is anticomplete to
    $K_{u^{++}}$, and if $Q\neq P_u$ then we already proved that $x$
    is anticomplete to $(A'^* \cup I^*) \sm K_{u^+}$.
  \end{proofclaim}

  From here on, $u^*$ and $u^{+*}$ are neighbors of $x$ in
  respectively $K_{u}$ and $K_{u^+}$.  Note that $x$ has a neighbor
  $y^*\in K_y$ for some $y \in A \cup B\sm \{u\}$, for otherwise,
  by~\eqref{c:bAxanti}, $N_{V(F_1)}(x) \subseteq K_u\cup K_{u^+}$ and
  by Lemma~\ref{l:onlyKaKb}, $N_{V(F_1)}(x)$ is a clique, a
  contradiction.

  \begin{claim}\label{c:wstaricomplete}
    If $w \in B$, then $x$ has a neighbor $w^* \in B^*$ that is
    complete to $A^*$.
  \end{claim}
  
  \begin{proofclaim}
    We may assume that $x$ is non-adjacent to $w$, for otherwise by
    condition~\eqref{a:tn} of blowups, we may choose $w^*=w$.  In
    particular $y^*\neq w$.

    We claim that we may assume that $y^*$ has a non-neighbor $v^*$
    such that $v^*\in K_v$, $v\in A$ and $v\neq u$.

    If $y^*\in B^*$, this is because we may assume that $y^*$
    has a non-neighbor $v^*\in A^*$ (so $v^*\in K_v$ for some $v\in A$)
    for otherwise we choose $w^* = y^*$ from the start.  It remains to
    check that $u\neq v$.  This is because if $u=v$, then there exists
    a path $Q$ of length 1, 2 or 3 from $x$ to $v^*$ with interior in
    $K_{u^+}$ (through $xv^*$, $u^+$, $u^{+*}$ or $u^+u^{+*}$).
    Hence, $xQv^*wy^*x$ is a hole of length~4, 5 or~6, a
    contradiction.
    
    If $y^*\in A^*$, then $u^*y^*\in E(G)$ for otherwise,
    $\{x,y^*,w,u^*\}$ induces a $C_4$. By condition~\eqref{i:bKKanti}
    of blowups, $uy \in E(G)$.  It follows that none of $u$ and $y$ is
    isolated in $G[A]$, so the existence of $v^*$ follows from
    Lemma~\ref{l:wProper} that guarantees the existence of isolated
    vertices in $G[A]$ since $w\in B$ by assumption.

    So, our claim is proved.  Note that $xv^*\notin E(G)$ for
    otherwise $\{x,y^*,w,v^*\}$ induces a $C_4$. Now either
    $xy^*wv^*v^+P_vv'w'u'P_uu^{++}u^{+*}x$ is a hole of length
    $2\ell+3$ (in case $u'v'\notin E(G)$) or
    $xy^*wv^*v^+P_vv'u'P_uu^{++}u^{+*}x$ is a hole of length $2\ell+2$
    (in case $u'v'\in E(G)$). In both cases we get a contradiction.
  \end{proofclaim}

  \begin{claim}  
    \label{c:bAneigheq}
    $N_{A}(x) \sm \{u\} = N_{A}(u)$.
  \end{claim}
  
  \begin{proofclaim}
    If there exists $v\in N_{A}(x)\sm N_{A}[u]$, then
    $vP_vv'u'P_uu^{++}u^{+*}xv$ is a hole of length $2\ell$, a
    contradiction.

    Conversely, suppose there exists $v\in N_{A}(u) \sm N_{A}(x)$.  We
    claim that there exists a path $Q$ of length~2 from $x$ to
    some $z\in N_A(u)$  with interior in $(A^* \cup B^*) \sm (K_u \cup K_z)$.  
    
    If $w\in B$, then we may choose $z=v$ and $Q=xw^*z$
    by~\eqref{c:wstaricomplete}.

    Otherwise, $w\in A$.  So, by Lemma~\ref{l:wProper}, $G[A]$
    contains at least two universal vertices.  So, let
    $t\in A\sm \{u, v\}$ be adjacent to $u$ and $v$ (if $u$ and $v$
    are the universal vertices of $G[A]$, $t$ can be any vertex of
    $A\sm \{u, v\}$ and otherwise choose $t$ to be a universal
    vertex).

    If $x$ has a neighbor $t^*$ in $K_t$, then we choose $Q=xt^*v$.
    So, suppose $x$ is anticomplete to $K_t$ (in particular,
    $y\neq t$).  If $x$ has a neighbor $v^*$ in $K_v$, then we choose
    $Q=xv^*t$.  So, suppose $x$ is anticomplete to $K_v$ (in
    particular, $y\neq v$).  Now, by the way we chose $v$ and $t$, one
    of $v$ or $t$ is a universal vertex of $G[A]$ and therefore a
    universal vertex of $G[A^*\cup B^*]$.  So, we may choose $Q=xy^*v$
    or $Q=xy^*t$.

    So, our claim is proved. Hence $z'P_zzQxu^{+*} u^{++}P_uu'w'z'$ is
    a hole of length $2\ell+2$, a contradiction.
  \end{proofclaim}

  \begin{claim}\label{c:EtLaClique}
    $x$ is complete to $K_u$.
  \end{claim}
  
  \begin{proofclaim}
    Suppose there exists $r\in K_u$ such that $rx\notin E(G)$.  We
    claim that $x$ and $r$ have a common neighbor $z$ in $(A^*\cup B^*)
    \sm K_u$.

    If $w\in B$, then $rw^*\in E(G)$ by \eqref{c:wstaricomplete} so we
    may choose $z=w^*$.  If $w\in A$, then by Lemma~\ref{l:wProper},
    some vertex $z\in A\sm \{u\}$ is a universal vertex of $G[A]$, and
    by~\eqref{c:bAneigheq}, $z$ is adjacent to $x$.  So, $z$ exists as
    claimed.

    If $xu^+\in E(G)$ then $\{r,z,u^+,x\}$ induces a $C_4$, a
    contradiction. Hence $xu^+\notin E(G)$.  Now by condition
    \eqref{i:flat} of blowups, either $\{x,z,r,u^{+*}\}$ induces a
    $C_4$ or $\{x,z,r,u^+,u^{+*}\}$ induces a $C_5$.
  \end{proofclaim}

  Now, the sets $K_v$ for all $v\in (A\sm{u})\cup I\cup A'$,
  $K_u\cup \{x\}$, $B^*$ and $B'^*$ form a preblowup of $F_0$.  All
  conditions are easy to check. In particular, $K_u\cup \{x\}$ is a
  clique by \eqref{c:EtLaClique}, conditions \eqref{pb:A},
  \eqref{pb:B} and \eqref{pb:I} follows from \eqref{c:bAxanti},
  condition \eqref{pb:Acomp} from \eqref{c:bAneigheq}, condition
  \eqref{pb:Bw} from \eqref{c:wstaricomplete} and condition
  \eqref{pb:AI} from our assumptions.

  Hence, by Lemma~\ref{l:preblowup} $G[V(F_1)\cup \{x\}]$ is a
  proper blowup of some twinless odd $\ell$-template with $k$
  principal paths that is an induced subgraph of $G$ a contradiction
  to the maximality of $F_1$.
\end{proof}

\begin{lemma}
  \label{l:blowupInI}
  If $x\in V(G)\sm V(F_2)$ has no neighbor in $B^* \cup B'^*$, then
  $N_{V(F_1)}(x)$ is a clique.
\end{lemma}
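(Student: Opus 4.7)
My plan is to suppose for contradiction that $N := N_{V(F_1)}(x)$ is not a clique, and to construct a preblowup $G_0$ of $F_0$ with $|V(G_0)|>|V(F_1)|$; by Lemma~\ref{l:preblowup} this contradicts the maximality of $F_1$. By Lemma~\ref{l:blowupInB}, $x$ must have a neighbor in $I^*$, so I fix some $i\in I$ with $K_i\cap N\neq\emptyset$, and write $P_u = u,u^+,\dots,u'$ for the principal path of $F_0$ through $i$, with $i^-,i^+$ the two neighbors of $i$ on $P_u$.

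The heart of the proof is to localize the set $S := \{y\in V(F_0) : K_y\cap N\neq\emptyset\}$ to three consecutive vertices of $P_u$ centered at some $j\in I$. For each $y\in S\setminus\{i,i^-,i^+\}$ we have $iy\notin E(F_0)$, so I would pick an appropriate principal hole $C_{u,v}$ containing both $i$ and $y$ and apply Lemma~\ref{l:xKaKb}: this forces $i$ and $y$ to share a common neighbor $c\in V(C_{u,v})$ adjacent to $x$, with $c\in N_{F_0}(i)=\{i^-,i^+\}$, and forces $x$ to be anticomplete to $K_d$ for every other $d\in V(C_{u,v})$. Tracing the neighbors of $i^-$ and $i^+$ in $F_0$, the only options for $y$ are: $y\in\{(i^-)^-, (i^+)^+\}$ when $i$ is strictly interior to $P_u$; $y\in N_A(u)\setminus\{u\}$ or $y=w$ when $i=u^+$; and the symmetric statement when $i=(u')^+$. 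Varying $v$, the anticompleteness clauses rule out the simultaneous presence of $(i^-)^-$ and $(i^+)^+$ in $S$, and rule out $w,w'$ in the strictly-interior case (where $i$ and $w$ share no common neighbor on $V(C_{u,v})$). Every remaining boundary configuration places $x$ in the hypothesis of Lemma~\ref{l:blowupInA} (or its $A'$-symmetric analogue): for instance, if $i=u^+$ and $S$ contains some $v\in N_A(u)\setminus\{u\}$, then the anticompleteness clause forces $x$ anticomplete to $K_{u^{++}}$, and $x$ already has neighbors in $K_u$ and $K_{u^+}$, so Lemma~\ref{l:blowupInA} concludes $N$ is a clique, contradicting our assumption. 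What remains is $S\subseteq\{j^-,j,j^+\}$ for some $j\in I\cap V(P_u)$ (namely $j=i$, or $j=i^-$ when $(i^-)^-\in S$, or $j=i^+$ when $(i^+)^+\in S$); since neither $\{j^-,j\}$ nor $\{j,j^+\}$ suffices to make $N$ a clique via Lemma~\ref{l:onlyKaKb}, $x$ must have neighbors in both $K_{j^-}$ and $K_{j^+}$.

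Next I would show that $x$ is complete to $K_j$. Condition~\eqref{i:KKhalf} of blowups forces the neighbors of $x$ in each $K_v$ to form an upper segment of its linear order, so in particular $x$ is adjacent to $j^-$ and $j^+$ (the maxima of $K_{j^-}$ and $K_{j^+}$). If some $j_{m-1}\in K_j$ were a non-neighbor of $x$ while $j_m,\dots,j_{k_j}=j$ are neighbors of $x$, then $\{x,j^-,j_{m-1},j^+\}$ would induce a $C_4$ in $G$: the edges $j^-j_{m-1}$ and $j^+j_{m-1}$ come from condition~\eqref{i:flat} (since $j^\pm$ are complete to $K_j$), while $j^-j^+\notin E(G)$ by condition~\eqref{i:bKKanti} and $xj_{m-1}\notin E(G)$ by the choice of $j_{m-1}$. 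This contradicts $G\in\mathcal C_{2\ell+1}$ since $2\ell+1\geq 7$ rules out $C_4$.

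To conclude, I would form $G_0$ from $F_1$ by replacing $K_j$ with $K_j\cup\{x\}$ while leaving all other blown-up cliques unchanged. Condition~\eqref{pb:I} at $j$ follows from $N(x)\subseteq K_{j^-}\cup K_j\cup K_{j^+}$ established in Step~1, condition~\eqref{pb:II} at $j$ follows from $x$ having neighbors in both $K_{j^-}$ and $K_{j^+}$, and the other preblowup conditions of $F_0$ are inherited verbatim from $F_1$; Step~2 ensures $K_j\cup\{x\}$ is a clique. Lemma~\ref{l:preblowup} then turns $G_0$ into a proper blowup of a twinless odd $\ell$-template with $k$ principal paths, and $|V(G_0)|>|V(F_1)|$ contradicts the maximality of $F_1$. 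The hardest part is the case analysis inside Step~1, where one must track the anticompleteness conclusions of Lemma~\ref{l:xKaKb} across several principal holes simultaneously and identify precisely which boundary configurations already fall within the scope of Lemma~\ref{l:blowupInA} (or its $A'$-symmetric version) so that only the canonical ``three-consecutive-in-$I$'' situation remains to be absorbed into the blowup.
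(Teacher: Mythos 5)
Your overall strategy matches the paper's: rule out non-clique support via Lemma~\ref{l:blowupInB}, then localize the support of $x$ to three consecutive vertices $j^-,j,j^+$ of a single principal path with $j\in I$ (using Lemma~\ref{l:xKaKb}, Lemma~\ref{l:onlyKaKb}, and Lemma~\ref{l:blowupInA} for the boundary cases), show $x$ is complete to $K_j$, and then absorb $x$ into $K_j$ to get a larger preblowup, contradicting the maximality of $F_1$ through Lemma~\ref{l:preblowup}. The paper organizes Step~1 more tightly by fixing one principal path $P_v$ and defining $a,b$ as the extreme vertices of $P_v$ whose cliques carry a neighbor of $x$, then splitting into $a=b$, $ab\in E(G)$, and $ab\notin E(G)$; your version goes through several principal holes simultaneously and is more convoluted, but the conclusion is the same.

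The genuine gap is in your Step~2. You assert that ``Condition~\eqref{i:KKhalf} of blowups forces the neighbors of $x$ in each $K_v$ to form an upper segment of its linear order,'' and deduce that $x$ is adjacent to the maxima $j^-$ and $j^+$. Condition~\eqref{i:KKhalf} states that $N[u_i]\subseteq N[u_j]$ \emph{inside $G^*=F_1$}; it says nothing about how a vertex $x\in V(G)\sm V(F_1)$ attaches to the clique $K_v$. In particular, $x$ could be adjacent to some $\alpha\in K_{j^-}$ with $\alpha\neq j^-$ without being adjacent to $j^-$ itself, so the $C_4$ you build on $\{x,j^-,j_{m-1},j^+\}$ is not available. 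The paper proves the same claim (that $x$ is complete to $K_j$) by a different route: take a hypothetical non-neighbor $u^*\in K_j$ and build two short paths from $u^*$ to $x$, one through $K_{j^-}$ of the form $xa^*u^*$ or $xa^*j^-u^*$ (using that the $F_0$-vertex $j^-$ is complete to $K_j$ by condition~\eqref{i:flat}), and one similarly through $K_{j^+}$; since $K_{j^-}$ and $K_{j^+}$ are anticomplete, these two paths together give a hole of length $4$, $5$, or $6$, which is impossible. If you replace your upper-segment argument by this $4$--$5$--$6$ hole argument, the rest of your proposal goes through.
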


\begin{proof}
  Suppose for a contradiction that $N_{V(F_1)}(x)$ is not a clique.  By
  Lemma~\ref{l:blowupInB}, $x$ has neighbors in $I^*$.  So $x$ has a
  neighbor in a clique blown up from an internal vertex of some
  principal path $P_v = v\dots v'$.  Let $a$ (resp.\ $b$) be  the
  vertex of $P_v$ closest to $v$ (resp.\ to $v'$) along $P_v$ and such
  that $x$ has a neighbor in $K_a$ (resp.\ $K_{b}$).

  Suppose first that $a=b$ (so $a \in I$).  Then $x$ has a neighbor in
  some $K_y$ with $y \in V(F_0) \sm \{a\}$, and since by assumption
  $x$ has no neighbor in $B^* \cup B'^*$, $y\in A\cup A' \cup I$. So,
  $y$ and $a$ are non-adjacent members of some principal hole.  By
  Lemma~\ref{l:xKaKb}, $x$ has a neighbor in some clique $K_{d}$
  where $d$ is adjacent to $a=b$, a contradiction to $a=b$.  
  
  Suppose now that $ab\in E(G)$. If both $a$ and $b$ are internal
  vertices of $P_v$, then as in the previous paragraph, we may deduce from
  Lemma~\ref{l:xKaKb} that $N_{V(F_1)}(x) \subseteq K_a \cup K_{b}$.  So,
  by Lemma~\ref{l:onlyKaKb}, $N_{V(F_1)}(x)$ is a clique, a
  contradiction.  It follows that at least one of $a$ or $b$ is an
  end of $P_v$.  Up to symmetry, we may assume that $a=v$ and
  $b=v^+$.  Note that $x$ is then anticomplete to $K_{v^{++}}$.  Hence, by
  Lemma~\ref{l:blowupInA}, $N_{V(F_1)}(x)$ is a clique, a contradiction.

  Hence, $a\neq b$ and $ab\notin E(G)$.  So, by Lemma~\ref{l:xKaKb},
  $a$ and $b$ have a common neighbor $u$ in $P_v$. So, $a$, $u$ and
  $b$ are consecutive along $P_v$ (in particular, $u\in I$).

  \begin{claim}
    \label{c:xuInE}
    $x$ is complete to $K_u$. 
  \end{claim}

  \begin{proofclaim}
    Otherwise, let $u^*\in K_u$ be a non-adjacent to $x$. There exists
    a path $Q_a$ of length 2 or 3 from $u^*$ to $x$ with interior in
    $K_a$ (either $xa^*u^*$, or $xa^*au^*$ for some $a^*$ in $K_a$).
    There exists a similar path $Q_b$.  So, $Q_a$ and $Q_b$ form a
    hole of length 4, 5 or 6, a contradiction.
  \end{proofclaim}

  \begin{claim}
    \label{c:BlowIanti}
    $x$ is anticomplete to $V(F_1) \sm (K_a \cup K_u \cup K_b)$. 
  \end{claim}

  \begin{proofclaim}
    This follows from Lemma~\ref{l:xKaKb} and from the fact that $x$ is anticomplete to $B^* \cup B'^*$.
  \end{proofclaim}

  \begin{claim}
    \label{c:xInotIso}
    $x$ has neighbors in each of $K_a$, $K_b$.  
  \end{claim}

  \begin{proofclaim}
    This follows from the definition of $a$ and $b$. 
  \end{proofclaim}

  Now the sets $K_v$ for all $v\in (A \cup A' \cup I) \sm \{u\}$, $K_u\cup \{x\}$,
  $B^*$ and $B'^*$ form a preblowup of $F_0$. All conditions are
  easily checked, in particular $K_u\cup \{x\}$ is a clique
  by~\eqref{c:xuInE}, it satisfies condition~\eqref{pb:I}
  by~\eqref{c:BlowIanti} and condition~\eqref{pb:II}
  by~\eqref{c:xInotIso}.
  
  Hence by Lemma \ref{l:preblowup}, $G[V(F_1)\cup \{x\}]$ is a
  proper blowup of some twinless odd $\ell$-template with $k$
  principal paths that is an induced subgraph of $G$. This contradicts
   the maximality of $F_1$.
\end{proof}

\begin{lemma}
  \label{l:attachVertex10}
  For all vertices $x$ of $G\sm F_2$, $N_{V(F_1)}(x)$ is a clique. 
\end{lemma}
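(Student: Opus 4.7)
The plan is to handle, by contradiction, the unique configuration not directly covered by Lemmas~\ref{l:blowupInB}, \ref{l:blowupInA} and~\ref{l:blowupInI}, namely the pathological setting depicted by vertex $x_6$ in Figure~\ref{f:attach}. Assume a vertex $x\in V(G)\sm V(F_2)$ has $N_{V(F_1)}(x)$ that is not a clique. Lemma~\ref{l:blowupInB} forces $x$ to have a neighbor in $I^*$, hence in the interior of some principal path $P_v=v\dots v'$; Lemma~\ref{l:blowupInI} forces $x$ to have a neighbor in $B^*\cup B'^*$; and the contrapositive of Lemma~\ref{l:blowupInA} (together with its $A'$-analogue) gives that whenever $x$ has neighbors in $K_u$ and $K_{u^+}$ for $u\in A\cup A'$, it must also have a neighbor in $K_{u^{++}}$. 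Up to the $(A,B,w)\leftrightarrow(A',B',w')$ symmetry, I may assume that $x$ has a neighbor $y^*\in K_y$ with $y\in B$.

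I would then pin down $N_{V(F_1)}(x)$ using Lemma~\ref{l:xKaKb} on the principal holes $C_{v,t}$ as $t$ ranges over $A\sm\{v\}$, together with Lemma~\ref{l:suspect} applied to the short paths of length at most $\ell+1$ produced by Lemmas~\ref{l:patatesBlowUp} and~\ref{lt:patateBBPblowup} (if $x$ were adjacent to both ends of such a path, it would be complete to the path and hence lie in $V(F_2)$, a contradiction). This should show that $N_{V(F_1)}(x)\subseteq K_v\cup K_{v^+}\cup K_{v^{++}}\cup N_{B^*}(x)$, that $x$ has no neighbor in $A'^*\cup B'^*$, and that every $z\in B$ with $N(x)\cap K_z\neq\emptyset$ corresponds to a hyperedge $H_z$ of $\mathcal H$ with $v\in H_z$.

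The core of the argument is then the template-surgery step. I would pick $y\in B$ with $v\in H_y$ and $y^*\in K_y$ adjacent to $x$, and consider the induced subgraph $F_0^{\mathrm{new}}=G[(V(F_0)\sm\{v^+,y\})\cup\{x\}]$ with the partition obtained from $(A, B, A', B', I)$ by deleting $y$ from $B$ and replacing $v^+$ in $I$ by $x$. One has to check that $(A,B\sm\{y\},A',B',(I\sm\{v^+\})\cup\{x\})$ is an odd $\ell$-pretemplate partition of $F_0^{\mathrm{new}}$: the principal path $P_v$ retains its length $\ell-1$ (with $x$ in place of $v^+$), conditions~(\ref{a:antiC})--(\ref{a:sp}) are immediate, condition~(\ref{a:cn}) on the connectivity of $G[A\cup(B\sm\{y\})]$ follows because the maximal hyperedge $W$ of $\mathcal H$ that contains all of $\{1,\dots,k\}$ (see condition~(\ref{a:universal}) of templates) still yields a universal vertex of the new $G[A\cup(B\sm\{y\})]$ via Lemma~\ref{l:ABuniv}, and conditions~(\ref{a:tn})--(\ref{a:tnP}) survive since every remaining vertex of $B$ still lies on a length-two path in $G[A\cup B]$ via its own hyperedge. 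Lemma~\ref{th:ptist} then promotes $F_0^{\mathrm{new}}$ to an odd $\ell$-template with $k$ principal paths.

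Finally, I would verify that $G[V(F_1)\cup\{x\}]$ is a proper preblowup of $F_0^{\mathrm{new}}$ under the blown up clique assignment $K_v^{\mathrm{new}}=K_v\cup\{y\}$, $K_x^{\mathrm{new}}=K_{v^+}\cup\{x\}$, and $K_u^{\mathrm{new}}=K_u$ otherwise, the preblowup conditions~(\ref{pb:A})--(\ref{pb:I}) following from the structural description of $N_{V(F_1)}(x)$ obtained above and from the analogous preblowup properties enjoyed by $F_1$ with respect to $F_0$. Lemma~\ref{l:preblowup} then yields a proper blowup of a twinless odd $\ell$-template with $k$ principal paths whose vertex set strictly contains $V(F_1)$, contradicting the maximality of $|V(F_1)|$. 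The main obstacle is the careful verification, in the surgery step, that the removal of $y$ does not break condition~(\ref{pb:BAN}) of the preblowup for the remaining vertices of $B^*\cup B'^*$ nor create a shorter even hole; both are controlled by the structural analysis of $N_{V(F_1)}(x)$ carried out via Lemmas~\ref{l:xKaKb} and~\ref{l:suspect}.
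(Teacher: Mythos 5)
Your plan follows the paper's informal description of the ``pathological'' case $x_6$ (surgery on the template itself), but the paper's formal proof takes a different, and in fact safer, route: it never modifies $F_0$. After establishing $N_{V(F_1)}(x)\subseteq K_{u^{++}}\cup K_{u^+}\cup K_u\cup K_y$ for a \emph{single} $y\in B$ with $N_A(y)=N_A[u]$, it reassigns blown-up cliques as $K_u^{\mathrm{new}}=K_u\cup N_{K_y}(x)$ and $K_{u^+}^{\mathrm{new}}=K_{u^+}\cup\{x\}$, shrinks $B^*$ to $B_0=B^*\sm N_{K_y}(x)$, and verifies that the result is a preblowup of the \emph{same} $F_0$. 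Lemma~\ref{l:preblowup} then does the rest.

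Your surgery has two genuine gaps. First, you make $x$ an internal vertex of $P_v$ in $F_0^{\mathrm{new}}=G[(V(F_0)\sm\{v^+,y\})\cup\{x\}]$, so conditions~\eqref{a:path}--\eqref{a:sp} require $x$ to be adjacent to $v$ and to $v^{++}$ in $G$. But the available claims only give that $x$ has \emph{some} neighbor $u^{++*}\in K_{u^{++}}$, not that $xu^{++}\in E(G)$. Since $x\notin V(F_1)$, condition~\eqref{i:KKhalf} of blowups (which controls neighborhoods only within $F_1$) cannot be invoked to upgrade $xu^{++*}\in E(G)$ to $xu^{++}\in E(G)$, and indeed $\{x,u^{++*},u^{++},u^+\}$ induces $K_4$ minus the edge $xu^{++}$, which contains no $C_4$, so no contradiction forces that edge to be present. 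The paper dodges this entirely by placing $x$ in $K_{u^+}$, where condition~\eqref{pb:II} only needs $x$ to have \emph{a} neighbor in $K_{u^{++}}$. Second, your clique reassignment $K_v^{\mathrm{new}}=K_v\cup\{y\}$ moves only the single representative vertex $y$ out of $B^*$, but $x$ may have other neighbors in $K_y$ (indeed your $y^*$ may differ from $y$). Those vertices of $N_{K_y}(x)\sm\{y\}$ remain in the new $B^*$ yet are adjacent to $x\in I^*$, violating condition~\eqref{pb:B} of preblowups, which requires $N(B^*)\subseteq A^*$. The paper moves all of $N_{K_y}(x)$, which it has already shown (via $N_A(y)=N_A[u]$ and condition~\eqref{i:Kcomp} of blowups) forms a clique together with $K_u$, so the enlarged $K_u^{\mathrm{new}}$ is legitimate. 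As a smaller point, your parenthetical ``hence lie in $V(F_2)$'' is wrong: being complete to a short path does not place $x$ in $F_2$ (which requires completeness to all of $F_1$); the actual contradiction the paper derives from Lemma~\ref{l:suspect} is that $x$ would then meet the interiors of two distinct principal paths.
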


\begin{proof}
  Suppose for a contradiction that $N_{V(F_1)}(x)$ is not a clique.

  \begin{claim}
    \label{l:atMost1internal}
    There exists a principal path $P_u= u\dots u'$ of $F_0$ such that
    $x$ is anticomplete to $I^*\sm \bigcup_{v\in V(P_u)} K_v$.
  \end{claim}

  \begin{proofclaim}
    Otherwise, there exist two principal paths $P$ and $Q$ of $F_0$,
    $a$ in the interior of $P$ and $b$ in the interior of $Q$ such
    that $x$ has neighbors in both $K_a$ and~$K_b$.  Note that $P$ and
    $Q$ are in some principal hole $C$ of $F_0$.  By
    Lemma~\ref{l:xKaKb}, $a$ and $b$ have a common neighbor $c$ in
    $C$.  This contradicts $a$ and $b$ being in the interior of
    distinct principal paths.
  \end{proofclaim}

  \begin{claim}
    \label{l:notBBP}
    We may assume that $x$ has no neighbor in $B'^*$ and has a
    neighbor $y^*\in K_y$ where  $y\in B$.
  \end{claim}

  \begin{proofclaim}
    Suppose that $x$ has a neighbor $u^*\in B^*$ and a neighbor
    $v^*\in B'^*$.  Let $P$ and $Q$ be like in
    Lemma~\ref{lt:patateBBPblowup}.  By Lemma~\ref{l:suspect}, $x$ is
    complete to both $V(P)$ and $V(Q)$.  In particular, $x$ has
    neighbors in the interior of two distinct principal paths, a
    contradiction to~\eqref{l:atMost1internal}. So, up to symmetry, we
    may assume that $x$ has no neighbor in $B'^*$.  Hence, by
    Lemma~\ref{l:blowupInI}, $x$ has neighbors in $B^*$.
  \end{proofclaim}

  \begin{claim}
    \label{c:pathI}
    $x$ is adjacent to $u$, $u^+$ and has a neighbor in
    $K_{u^{++}}$. Moreover, $x$ is anticomplete to
    $(A^*\cup I^* \cup A'^* \cup B'^*) \sm (K_u\cup K_{u^+} \cup
    K_{u^{++}})$.
  \end{claim}

  \begin{proofclaim}
    By Lemma~\ref{l:blowupInB}, $x$ has at least one neighbor in $I^*$
    and by~\eqref{l:atMost1internal}, such a neighbor is in a clique
    blown up from an internal vertex of $P_u$.  So, let $v$ be the
    vertex of $P_u$ closest to $u'$ along $P_u$ such that $x$ has a
    neighbor $v^*\in K_v$. So $v\neq u$ and $v\in A'\cup I$. We set
    $Q = y^*uP_uv$ if $y^*u\in E(G)$ and $Q=y^*wuP_uv$ otherwise.  Let
    $Q^*$ be the path induced by $\{v^*\} \cup (V(Q) \sm \{v\})$ and
    observe that $Q^*$ has length at most $\ell +1$.  By
    Lemma~\ref{l:suspect}, $x$ is complete to $Q^*$.  If
    $v\notin \{u^+, u^{++}\}$, then $x$ has neighbors in at least~4
    cliques blown up from vertices of $P_u$ and this contradicts
    Lemma~\ref{l:xKaKb}.  If $v=u^+$, $x$ is adjacent to $u$ (since
    $x$ is complete to $Q^*$) and anticomplete to $K_{u^{++}}$, so by Lemma~\ref{l:blowupInA},
    $N_{V(F_1)}(x)$ is a clique, a contradiction.  So, $v=u^{++}$,
    meaning that $x$ is adjacent to $u$ and $u^+$, and is anticomplete
    to $I^* \sm (K_{u^+} \cup K_{u^{++}})$
    by~\eqref{l:atMost1internal}.
    
    If $x$ has neighbors in some $K_a$ for $a\in A\sm \{u\}$ then $x$
    and $C_{u,a}$ contradict Lemma~\ref{l:xKaKb}. Hence $x$ is
    anticomplete to $A^*\sm \{K_u\}$.

    By~\eqref{l:notBBP}, $x$ is anticomplete to $B'^*$.  It remains to
    check that $x$ is anticomplete to $A'^*\sm K_{u^{++}}$.  So,
    suppose $x$ has a neighbor $z^*$ in some $K_z$ where
    $z\in A'\sm \{u^{++}\}$. Then a principal hole that contains $z$
    and $u$ contradicts Lemma~\ref{l:xKaKb}.
  \end{proofclaim}

  Let $u^{++*}$ be a neighbor of $x$ in $K_{u^{++}}$ and $P_u^*$ be
  the path induced by $(V(P_u)\sm \{u^{++}\}) \cup \{u^{++*}\}$.

  \begin{claim}
    \label{c:yTwinu}
    For every $z\in B$ such that $x$ is adjacent to some $z^*$ in
    $K_z$ we have $N_A(z) = N_A[u]$ (in particular $N_A(y) = N_A[u]$).
  \end{claim}

  \begin{proofclaim}
    Suppose there exists $v\in N_A(z) \sm N_A[u]$.  By
    condition~\eqref{i:Kcomp} or~\eqref{i:optAB} of blowups,
    $vz^*\in E(G)$.  So, by \eqref{c:pathI},
    $xz^*vP_vv'u'P^*_uu^{++*}x$ is a hole of length $2\ell$, a
    contradiction.  This proves that
    $N_A(z) \subseteq N_A[u]$.  In particular, $u$ has at
    least one neighbor in $H_z$, so by condition~\eqref{a:makeit} of
    templates, $uz\in E(G)$.
    
    Suppose there exists $v\in N_A(u) \sm N_A(z)$ (so $z$ and $v$ are
    not universal vertices of $G[A\cup B]$). By
    condition~\eqref{i:bKKanti} of blowups, $vz^*\notin
    E(G)$. By~\eqref{c:pathI}, $xv\notin E(G)$.  Hence $xd\in E(G)$
    for every universal vertex $d$ of $G[A\cup B]$, for otherwise
    $xz^*dvP_vv'w'u'P_u^*u^{++*}x$ is a hole of length $2\ell +2$.

    Now, by~\eqref{c:pathI} and Lemma~\ref{l:wProper}, $w\in B$. So,
    there exists an isolated vertex $c\in A$. Again
    by~\eqref{c:pathI}, $xc\notin E(G)$ and $xwcP_cc'u'P_u^*u^{++*}x$
    is a hole of length $2\ell$, a contradiction.
  \end{proofclaim}

  \begin{claim}
    \label{c:xanticomAB}
    $N_{F_1}(x)\subseteq K_{u^{++}}\cup K_{u^+}\cup K_u\cup K_y$
  \end{claim}
  
  \begin{proofclaim}
    By \eqref{c:pathI}
    $N_{F_1}(x)\subseteq K_{u^{++}}\cup K_{u^+}\cup K_u\cup
    B^*$. Suppose there exists $z^*\in K_z$ such that $xz^*\in E(G)$
    and $z\in B\sm \{y\}$.  By~\eqref{c:yTwinu}, $N_A(z)= N_A[u]$ and
    $N_A(y)= N_A[u]$.  So, by Lemma~\ref{l:wTwins}, $y$ and $z$ are
    twins of $F_0$, a contradiction.
  \end{proofclaim}

  \begin{claim}
    \label{c:yneqw}
    $y\neq w$. 
  \end{claim}

  \begin{proofclaim}
    If $y=w$, then $w\in B$.  So by Lemma~\ref{l:wProper}, there exist
    isolated vertices in $G[A]$. But by~\eqref{c:yTwinu},
    $N_A(w) = N_A[u]$ so $u$ is a universal vertex of $G[A]$, so
    $G[A]$ has a universal vertex and an isolated vertex, a
    contradiction.
  \end{proofclaim}

  \begin{claim}\label{c:lemevoisindans1}
    $N_{K_y}(x)$ is complete to $N_A[u]$.
  \end{claim}
  
  \begin{proofclaim}
    By \eqref{c:yTwinu}, $N_A(y)=N_A[u]$. The result follows from
    conditions~\eqref{i:Kcomp} and~\eqref{i:optAB} of blowups.
  \end{proofclaim}

  \begin{claim}\label{c:Uneseuleclique1}
    $x$ is complete to $K_{u^+}$.
  \end{claim}
  
  \begin{proofclaim}
    By~\eqref{c:pathI}, $ux\in E(G)$.  Suppose for a contradiction
    that there exists $u^{+*}\in K_{u^+}$ non-adjacent to $x$. By
    condition~\eqref{i:flat} of blowups,
    $u^{+*}u, u^{+*}u^{++}\in E(G)$. Hence $xu^{++}\notin E(G)$ for
    otherwise $\{x,u^{++},u^{+*},u\}$ induces a $C_4$. But now, either
    $\{x,u^{++*},u^{+*},u\}$ induces a $C_4$ (if
    $u^{+*}u^{++*}\in E(G)$) or $\{x,u^{++*},u^{++},u^{+*},u\}$
    induces a $C_5$ (if $u^{+*}u^{++*}\notin E(G)$), a contradiction.
  \end{proofclaim}

  \begin{claim}\label{c:Uneseuleclique2}
    $K_u\cup K_y$ is a clique.
  \end{claim}
  
  \begin{proofclaim}
    Since by~\eqref{c:yTwinu} $N_A(y) = N_A[u]$, $u$ cannot be an
    isolated vertex of $H_y$.  Hence, $uy$ is a solid edge. So, by
    condition~\eqref{i:Kcomp} of blowups, $K_u$ is complete $K_y$. 
  \end{proofclaim}
  
  We define $B_0=B^*\sm N_{K_y}(x)$. 
  
  Now the sets $K_v$ for all $v\in (A\cup I\cup A')\sm \{u,u^+\}$,
  $K_u\cup N_{K_y}(x)$, $K_{u^+}\cup \{x\}$, $B_0$ and $B'^*$ form a
  preblowup of $F_0$. All conditions are easy to check. In particular,
  $K_u\cup N_{K_y}(x)$ is a clique by \eqref{c:Uneseuleclique2},
  $K_{u^+}\cup \{x\}$ is a clique by \eqref{c:Uneseuleclique1},
  conditions \eqref{pb:A}, \eqref{pb:B} and \eqref{pb:I} follows from
  \eqref{c:xanticomAB}, condition \eqref{pb:Acomp} from
  \eqref{c:lemevoisindans1}, condition \eqref{pb:AI} holds because $x$
  is complete to $N_{K_y}(x)$, condition \eqref{pb:II} follows from
  \eqref{c:pathI} and condition \eqref{pb:Bw} holds because
  \eqref{c:yneqw} implies that if $w\in B$ then $w\in B_0$.

  Hence, by Lemma \ref{l:preblowup}, $G[V(F_1)\cup \{x\}]$ is a proper
  blowup of some twinless odd $\ell$-template with $k$ principal paths
  that is an induced subgraph of $G$, a contradiction to the maximality
  of $F_1$.
\end{proof}

\subsection{Attaching a component}

\begin{lemma}
  \label{l:attachComp}
  If $D$ is a connected component of $G\sm F_2$, then $N(D)$ is a
  clique. 
\end{lemma}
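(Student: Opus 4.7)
The plan reduces the claim to Lemma~\ref{l:onlyKaKb}. Since by Lemma~\ref{l:F2-F1Clique} the set $V(F_2)\sm V(F_1)$ is a clique complete to $V(F_1)$, it suffices to show that $N(D)\cap V(F_1)$ is a clique. Let $T := \{u\in V(F_0) : K_u \cap N(D)\neq \emptyset\}$. If $T$ is a clique of $F_0$ then $N(D)\cap V(F_1)\subseteq \bigcup_{u\in T}K_u$, so Lemma~\ref{l:onlyKaKb} applied with $K=T$ gives the conclusion. The whole task therefore reduces to showing that $T$ is a clique of $F_0$.

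Suppose for contradiction that $T$ is not a clique: there exist $u,v\in T$ with $uv\notin E(F_0)$, witnessed by $u^*\in K_u\cap N(x_u)$ and $v^*\in K_v\cap N(x_v)$ for some $x_u,x_v\in D$. Choose such a counterexample minimizing $d_D(x_u,x_v)$. Condition~\eqref{i:bKKanti} of the blowup gives $u^*v^*\notin E(G)$, and Lemma~\ref{l:attachVertex10} applied to $x_u$ (resp.\ $x_v$) forces $v^*\notin N(x_u)$ (resp.\ $u^*\notin N(x_v)$), since otherwise the clique $N(x_u)\cap V(F_1)$ would contain the non-adjacent pair $u^*,v^*$. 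In particular $x_u\neq x_v$, and letting $P$ be a shortest $x_ux_v$-path in $D$, the minimality of $d_D(x_u,x_v)$ forces $Q:=u^*x_uPx_vv^*$ to be an induced path in $G$ (any chord from $u^*$ or $v^*$ to an internal vertex $z$ of $P$ would yield a violating quadruple with smaller $d_D$).

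From here I plan to complete $Q$ to an induced cycle of length $\ne 2\ell+1$ by choosing a second path $R$ from $v^*$ to $u^*$ in $F_1$. The construction splits on the positions of $u,v$ in the $\ell$-partition $(A,B,A',B',I,w,w')$: when $u,v\in A\cup B$ with $u,v\neq w$, Lemma~\ref{lb:universalevenforblow-up} gives the length-$2$ closure $R=u^*wv^*$; symmetrically $w'$ handles $A'\cup B'$; when $u$ and $v$ lie on opposite sides, Lemma~\ref{l:patatesBlowUp} supplies an $R$ of length at most $\ell+1$ through a principal path; and when $u\in I$ or $v\in I$, condition~\eqref{pb:I} of preblowups restricts $u^*$'s neighborhood to $K_a\cup K_b$ for the two $F_0$-neighbors $a,b$ of $u$, reducing to an earlier case. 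In each subcase, Lemma~\ref{l:xMinor}, Lemma~\ref{l:xKaKb}, Lemma~\ref{l:suspect} and the minimality of $d_D$ are used together to eliminate chords on the constructed cycle.

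The hardest subcase will be the $u,v\in A\cup B$ branch with $wx_u,wx_v\in E(G)$, where the length-$2$ closure $u^*wv^*$ alone gives a cycle with chords. To resolve this I plan to close $Q$ along the principal hole $C_{u,v}^*$ lifted to $F_1$ via Lemma~\ref{lb:universalevenforblow-up} from a principal hole $C_{u,v}$ of $F_0$ (obtained as in Lemma~\ref{l:htab}), substituting $Q$ for the length-$2$ sub-path $u^*wv^*$ of $C_{u,v}^*$ to obtain a cycle of length $2\ell+|P|+1\ge 2\ell+2$. The only candidate chords are $x_uu^+$ and $x_vv^+$, where $u^+,v^+\in I$ are the interior neighbors of $u,v$ on the principal paths, and each is ruled out by a uniform argument: $wu^+\notin E(F_0)$ (hence $\notin E(F_1)$ by~\eqref{i:bKKanti}), so if $x_uu^+$ were an edge, the clique $N(x_u)\cap V(F_1)$ would contain the non-adjacent pair $w,u^+$, a contradiction. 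This yields an induced hole of length $\ne 2\ell+1$, as required.
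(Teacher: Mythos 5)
The key idea of your proposal --- close the path $Q = u^*x_uPx_vv^*$ with a path $R$ inside $F_1$ to get an induced cycle of length $\neq 2\ell+1$ --- has a genuine gap: it simply cannot succeed in all cases. The problem is that when $u^*\in B^*$ and $v^*\in B'^*$, every available closing path $R$ (from Lemma~\ref{l:patatesBlowUp} or~\ref{lt:patateBBPblowup}) has length exactly $\ell+1$, so if $|P|=\ell-2$ the resulting cycle has length $(\ell-2)+2+(\ell+1)=2\ell+1$, which is a \emph{legal} hole and yields no contradiction. A similar calculation shows that $|P|=\ell-1$ with $R$ of length $\ell$ also gives $2\ell+1$. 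These are not pathological corner cases: the paper's own Claim~\eqref{d:length} establishes precisely that $|P|\in\{\ell-1,\ell-2\}$, and the terminal contradiction is obtained not from a hole of wrong length but from building an odd $\ell$-\emph{pretemplate} with $k+1$ principal paths, contradicting the maximality of $k$. That pretemplate-extension step is entirely absent from your plan, and without it the argument cannot close.

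There are two further difficulties. First, your ``only candidate chords are $x_uu^+$ and $x_vv^+$'' claim ignores the internal vertices of $P$: they lie in $D$, not in $F_2$, and hence may have neighbors in $F_1$, giving chords from $\text{int}(P)$ to $R$. The paper controls this by introducing $S^*_{\circ}=N_{V(F_1)}(\text{int}(P))$ and proving (Claims~\eqref{d:attachpath:pas_de_voisin},~\eqref{d:Pdisjoint}) that $S^*_\circ$ is a clique, complete to $S^*_a\setminus S^*_\circ$ and $S^*_b\setminus S^*_\circ$, and eventually empty; you would need an analogue of this before any chordlessness claim can be made. Second, your ``hardest subcase'' resolution via the lifted principal hole $C^*_{u,v}$ is only available when $u,v\in A$, since principal holes are defined for pairs of principal paths; when one of $u,v$ lies in $B$ this construction does not apply and you would need the pyramid-based structural Claim~\eqref{d:holeTS} of the paper (or something equivalent) to proceed. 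In short, the reduction to $T$ being a clique of $F_0$ is a reasonable reformulation, but the hole-of-wrong-length strategy cannot replace the paper's two-stage argument of (a) pinning down $|P|$ via a pyramid structure, then (b) extending the template.
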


\begin{proof}
  Suppose that $N(D)$ is not a clique.  By Lemma~\ref{l:F2-F1Clique},
  $N_{V(F_1)}(D)$ is not a clique.  So, there exist $a$ and $b$ in $D$
  such that $N_{V(F_1)}(a) \cup N_{V(F_1)}(b)$ is not a clique, and a
  path $P$ from $a$ to $b$ in $D$.  We choose $a$ and $b$ subject to
  the minimality of the length of $P$.  By
  Lemma~\ref{l:attachVertex10}, $a \neq b$ (so $P$ has length at
  least~1).

  We set $S^*_a = N_{V(F_1)}(a)$ and $S^*_b= N_{V(F_1)}(b)$.  By
  Lemma~\ref{l:attachVertex10}, $S^*_a$ and $S^*_b$ are both cliques.
  Note that possibly $S^*_a\cap S^*_b\neq \emptyset$. We denote by
  $\text{int}(P)$ the set of the internal vertices of $P$.  We set
  $S^*_{\circ} = N_{V(F_1)}(\text{int}(P))$.

  We set $S_a = \{t\in V(F_0) : S_a^*\cap K_t \neq \emptyset\}$. We
  define $S_b$ and $S_{\circ}$ similarly.  Note that $S_a$ is possibly
  not included in $S_a^*$, and the same remark holds for $S_b$ and
  $S_{\circ}$.

  \begin{claim}
    \label{d:PCL}
    There exist non-adjacent $x^*_a\in S^*_a$ and $x^*_b\in S^*_b$. Moreover,
    for all such $x^*_a$ and $x^*_b$, $x^*_aaPbx^*_b$ is a  path.
  \end{claim}

  \begin{proofclaim}
    The existence of $x^*_a$ and $x^*_b$ follows from the definition of $a$
    and $b$, and $x^*_a a P b x^*_b$ is a path because of the
    minimality of $P$.
  \end{proofclaim}

  \begin{claim}\label{d:attachpath:pas_de_voisin}
    $S^*_a\cup S^*_\circ$ and $S^*_b\cup S^*_{\circ}$ are cliques (in
    particular, $S^*_\circ$ is a (possibly empty) clique of $F_1$ that
    is complete to both $S^*_a \sm S^*_{\circ}$ and
    $S^*_b \sm S^*_{\circ}$).
  \end{claim}
  
  \begin{proofclaim}
    If $S^*_a \cup S^*_{\circ}$ is not a clique, then let $x^*y^*$ be
    a non-edge in $S^*_a \cup S^*_{\circ}$.  Since $S^*_a$ is a clique
    by Lemma~\ref{l:attachVertex10}, we may assume
    $y^*\in S^*_{\circ}$.  By definition of $S^*_{\circ}$, $y^*$ has a
    neighbor in $\text{int}(P)$, and then $x^*,y^*$ and some subpath of $P$
    contradict the minimality of $P$.

    The proof is similar for $S^*_b\cup S^*_{\circ}$.
  \end{proofclaim}

  Note that while $S^*_a\cup S^*_b$ is not a
  clique by assumption, it might be that $S_a\cup S_b$ is a clique
  (for instance when $S_a = \{u\}$, $S_b=\{v\}$ and $uv$ is an
  optional edge of $F_0$).

  \begin{claim}\label{d:pathF0}
    $S_a\cup S_\circ$ and $S_b\cup S_{\circ}$ are cliques of $F_0$ (in
    particular, $S_a$ and $S_b$ are (non-empty) cliques of $F_0$ and
    $S_\circ$ is a (possibly empty) clique of $F_0$ that is complete
    to both $S_a \sm S_{\circ}$ and $S_b \sm S_{\circ}$).
  \end{claim}
  
  \begin{proofclaim}
    If $S_a \cup S_{\circ}$ is not a clique, then let $xy$ be a
    non-edge of $S_a \cup S_{\circ}$.  Since
    $x\in S_a \cup S_{\circ}$, there exists
    $x^*\in K_x\cap (S^*_a \cup S^*_{\circ})$ and
    $y^*\in K_y\cap (S^*_a \cup S^*_{\circ})$.  By
    condition~\eqref{i:bKKanti} of blowups, since $xy\notin E(G)$,
    $K_x$ is anticomplete to $K_y$.  So, $x^*y^*\notin E(G)$, a
    contradiction to~\eqref{d:attachpath:pas_de_voisin}.

    The proof is similar for $S_b\cup S_{\circ}$. 
  \end{proofclaim}

\begin{claim}\label{d:holeTS}
  If a hole $C$ of $F_1$ contains two non adjacent vertices
  $x\in S^*_a$ and $y\in S^*_b$, then $P$ and $C$ form a pyramid
  $\Pi_{C,x,y}$. More specifically, $C$ contains a vertex $z$ such
  that either:

    \begin{itemize}
    \item $S^*_a\cap V(C) = \{x, z\}$, $S^*_b\cap V(C) = \{y\}$ ; the
      apex of $\Pi_{C,x,y}$ is $y$, its triangle is $axz$, and its
      three paths, all of length $\ell$, are the path from $x$ to $y$ in
      $C\sm z$, the path from $y$ to $z$ in $C\sm x$, and the path
      from $a$ to $y$ obtained by adding the edge $by$ to $P$ ; or

    \item $S^*_b\cap V(C) = \{y,z\}$, $S^*_a\cap V(C) = \{x\}$ ; the
      apex of $\Pi_{C,x,y}$ is $x$, its triangle is $byz$, and its
      three paths, all of length $\ell$, are the path between $y$ and
      $x$ in $C\sm z$, the path from $z$ to $x$ in $C\sm y$, and
      the path from $b$ to $x$ obtained by adding the edge $ax$ to
      $P$.
    \end{itemize}
  \end{claim}
  
  \begin{proofclaim} 
  Note that since $S^*_a$ is a clique,
    $S^*_a\cap V(C)$ contains $x$ and at most one other vertex which
    should be adjacent to $x$. The same holds for $S^*_b$ and $y$.

    Let us assume that $S^*_\circ \cap V(C) \neq \emptyset$.  Then
    by~(\ref{d:attachpath:pas_de_voisin}), there exists a unique
    vertex $t \in S^*_\circ \cap V(C)$,
    $S^*_a\cap V(C) \subseteq \{x, t\}$ and
    $S^*_b\cap V(C) \subseteq \{y, t\}$. Hence $C$ and $P$ form a
    proper wheel centered at $t$, a contradiction to
    Lemma~\ref{l:holeTruemperS}.  So,
    $S^*_\circ \cap V(C) = \emptyset$.

    If $a$ and $b$ have a common neighbor $t$ in $C$, then $x$ and $y$
    are the two neighbors of $t$ in $C$ and so, $C$ and $P$ form a
    proper wheel centered at $t$, again a contradiction to
    Lemma~\ref{l:holeTruemperS}. So the neighborhoods of $a$ and $b$
    in $C$ are disjoint.

    From this, we obtain that $C$ and $P$ form a theta, a prism or a
    pyramid. So, by Lemma~\ref{l:holeTruemperS}, $C$ and $P$ form a
    pyramid whose three paths have length $\ell$. This can happen only
    if we are in one of the two cases described in (\ref{d:holeTS}).
  \end{proofclaim}

  \begin{claim}\label{d:pasI}
    $S_a\cap I = S_b \cap I = \emptyset$. 
  \end{claim}

\begin{proofclaim}
  Otherwise, up to symmetry, $S_a\cap I \neq \emptyset$. So, there
  exists a principal path $P_u=u\dots u'$ of $F_0$ whose interior
  intersects $S_a$. By~\eqref{d:pathF0}, $S_a$ is a clique, so
  $1\leq |S_a|\leq 2$ and $S_a\subseteq V(P_u)$.  
  We now break into three cases.

  \medskip
  
  \noindent{\bf Case 1:} $S_b\subseteq V(P_u)$. 
  
  By~\eqref{d:PCL} there exist vertices $x_a$ and
 $x_b$ of $P_u$ such that there exist non adjacent vertices $x^*_a\in S^*_a \cap K_{x_a}$ and $x^*_b\in S^*_b \cap K_{x_b}$.
 
 We first show that there exist such $x_a$ and $x_b$ that are not adjacent. Otherwise, and since $S_a, S_b\subseteq V(P_u)$, we have that  $S_a \cup S_b= \{x_a,x_b\}$. By replacing $x_a$ and $x_b$ by $x^*_a$ and $x^*_b$ in any principal hole $C$ containing $P_u$ we obtain a path $P_C$ of length $2\ell$ and $V(P_C) \cup V(P)$ induces a hole
  of length at least $2\ell+3$, a contradiction. So we may assume that $x_a$ and $x_b$ are not adjacent.

  Let $C$ be any principal hole of $F_0$ that contains
  $P_u$. By Lemma~\ref{lb:universalevenforblow-up},
  $\{x^*_a, x^*_b\} \cup (V(C) \sm \{x_a, x_b\})$ induces a hole
  $C^*$. Let us apply~\eqref{d:holeTS} to $C^*$, $x^*_a$ and
  $x^*_b$. We obtain that the shortest path in $C^*$ between $x^*_a$
  and $x^*_b$ has length $\ell$. However $x^*_a$ and $x^*_b$ both
  belong to the path of length $\ell-1$, contained in $C^*$, which is
  obtained from $P_u$ by replacing $x_a$ by $x^*_a$ and $x_b$ by
  $x^*_b$, a contradiction.

  \medskip
  
  \noindent{\bf Case 2:} $S_b$ contains a vertex of some principal
  path $P_v$ distinct from $P_u$.  Up to symmetry, since $S_b$ is a
  clique (by~\eqref{d:pathF0}), we assume that $b$ is anticomplete to $K_{v'}$.

  Let $y$ be the vertex of $P_u$ closest to $u'$ such that $a$ has a
  neighbor $y^*\in K_y$.  Let $z$ be the vertex of $P_v$ closest to
  $v$ such that $b$ has a neighbor $z^*\in K_z$.  Possibly $y=u'$
  and $z=v$, but $y\neq u$ since $a$ has a neighbor in
  $I^*$ by assumption, and $z\neq v'$ since $b$ is anticomplete to $K_{v'}$.  In particular, $yz\notin E(G)$  and by
  condition~\eqref{i:bKKanti} of blowups, $y^*z^*\notin E(G)$.

  Let $C$ be the principal hole of $F_0$ that contains $P_u$ and
  $P_v$.  By Lemma~\ref{lb:universalevenforblow-up},
  $\{y^*, z^*\} \cup (V(C) \sm \{y, z\})$ induces a hole $C^*$.
  Applying ~\eqref{d:holeTS} to $C^*$, $y^*$ and $z^*$, we obtain that
   $P$ has length $\ell-1$ and that $y^*$ and $z^*$ are at distance $\ell$ on $C^*$.  Hence $y^*$ and $z^*$ have no common neighbor in $F_0$ and $S_\circ = \emptyset$ by \eqref{d:pathF0}. 
   We denote by $P^*_u$ the path
  obtained from $P_u$ by replacing $y$ by $y^*$ and by $P^*_v$ the
  path obtained from $P_v$ by replacing $z$ by $z^*$. Let $P^*$ be
  the path $vP^*_vz^*bPay^*P^*_uu'$ (in case $z=v$ one should replace $vP^*_vz^*$ by $z^*$, and in case $y= u'$ one should replace $y^*P^*_uu'$ by $y^*$).  The length of $P^*$ is at least
  $\ell+1$.

  Consider now any principal path $P_r$ for $r \in A \sm \{u,v\}$.
  Depending on the adjacencies of $r$ with $u$ and $v$, one of
  $rvP^*u'r'P_rr$ or $rwvP^*u'r'P_rr$ or $rvP^*u'w'r'P_rr$ or
  $rwvP^*u'w'r'P_rr$ (with possibly $u'$ replaced by $y^*$ when
  $u'=y$) is a cycle of length at least $2 \ell +2$ with at most one
  chord that must be $br$ (observe that $ar'$ cannot be an edge since $S_a\subseteq V(P_u)$).  The only possibility which avoids a hole
  of forbidden length is if $z=v$, $y=u'$ and $br, u'r', vr$ are edges
  of $G$.  This proves that $v$ is complete to $A\sm \{u, v\}$ and
  $u'$ is complete to $A' \sm \{u', v'\}$.

  Hence, $G[A]$ has at most one isolated vertex (namely $u$), and
  $G[A']$ has at most one isolated vertex (namely $v'$).  This
  contradicts $(A, B, A', B', I, w, w')$ being a proper
  $\ell$-partition of $F_0$.

  \medskip

  \noindent{\bf Case 3:} we are neither in Case~1 nor in Case~2.

  Since we are not in Case~1, $S_b$ contains a vertex of $F_0\sm P_u$,
  and since we are not in Case~2, this vertex must be in $B\cup B'$.
  Up to symmetry, we assume that $S_b\cap B\neq \emptyset$.  Since
  $S_b$ is a clique  (by~\eqref{d:pathF0}), $S_b\cap (B' \cup A' \cup I) = \emptyset$.  Since
  we are not in Case~2, $S_b\cap (A \sm \{u\}) = \emptyset$.
  Hence, $S_b \subseteq B\cup \{u\}$ and there exists $x\in B\cap S_b$. Let $x^* \in K_x \cap S^*_b$.

Let $u_a$ be the vertex of $S_a$ which is the closest to $u$ in
  $P_u$ and let $u'_a$ be the vertex of $S_a$ which is the closest to
  $u'$ in $P_u$. Notice that, since $S_a$ is a clique (by~\eqref{d:pathF0}), either $u_a=u'_a$ or $u_au'_a$ is an edge. So it may be that $u_a= u$ or $u'_a= u'$ but since $S_a \cap I \neq \emptyset$ we know that $u_a \neq u'$ and $u'_a \neq u$. Let now
  $u^*_a \in K_{u_a} \cap S^*_a$ and $u'^*_a \in K_{u'_a} \cap S^*_a$. We denote by $P^*_u$ the path obtained from $P_u$ by replacing $u_a$ by  $u^*_a$
  and, in case $u_a \neq u'_a$,  by replacing $u'_a$ by $u'^*_a$.
 Notice that if $u^*_a \neq u'^*_a$ then
  $u^*_au'^*_a\in E(G)$ since $S_a^*$ is a clique.
 
 Suppose that $u'_a = u^+$, where $u^+$ is the neighbor of $u$ in $P_u$. Since $H_x$ contains at least two vertices there exists $v \in H_x \setminus \{u\}$. By~\eqref{d:pathF0} and the fact that $P$ contains at least one edge, depending on the adjacency of $u$ and~$v$, one of $aPbx^*vP_vv'u'P^*_uu'^*_aa$ or $aPbx^*vP_vv'w'u'P^*_uu'^*_aa$ is a hole of length at least $2\ell+2$, a contradiction. Hence  from now on, we may assume that $u_a \neq u$ (hence $a$ is not adjacent to $u$) and that if $u_a= u^+$ then $u'_a \neq u_a$. Now  by~\eqref{d:pathF0} we get that $S_\circ = \emptyset$.

 Suppose that $x$ is adjacent to $u$ in $F_0$. Depending on whether $b$ is
adjacent to $u$ not, one of $u^*_aaPbuP^*_uu^*_a$ or
$u^*_aaPbx^*uP^*_uu^*_a$  is a hole, implying that $P$ has length at
 least $\ell$. 
 Let us choose any vertex  $v\in H_x$ distinct from $u$ (since $H_x$ has cardinality at least 2, such a vertex do exist). 
 Then $aPbx^*vP_vv'(w')u'P_uu'^*_aa$ is a hole of length at least $2\ell +2$, a contradiction. Hence, from here on, we may assume that no vertex in
  $B \cap S_b$ is adjacent to~$u$.
 
So $x$ is not adjacent to $u$ in $F_0$. Hence $x^*\neq w$, $u \neq w$ and $w \notin S_b$.  Then to avoid a $C_4$ $bx^*wub$, $b$ is not adjacent to $u$
  and $u^*_a a P b x^* w u P^*_u u^*_a$ is a hole implying that $P$ has length at
  least $\ell-1$.  So, for any $v\in H_x$, the hole
  $x^*bPau'^*_a P^*_uu' v' P_v v x^*$ (in case $u'_a=u'$ one should
  replace $u'^*_a P^* u'$ by $u'^*_a$) has length at least $2\ell +2$,
  a contradiction.

\end{proofclaim}

\begin{claim}\label{d:attachpath:pas_trop_prche}
  We may assume that $S_a \subseteq A \cup B$ and
  $S_b \subseteq A' \cup B'$.  
\end{claim}

\begin{proofclaim}
  Otherwise, by~\eqref{d:pasI} and since $S_a$ and $S_b$ are cliques (by~\eqref{d:pathF0}),
  we may assume that $S_a, S_b \subseteq A\cup B$.

  We claim that there exist non-adjacent vertices $x^*\in S^*_a$ and
  $y^*\in S^*_b$, and a path $Q^*$ from $x^*$ to $y^*$ of length at
  least $2\ell - 1$ that forms a hole together with $P$. This is a
  contradiction because it implies that $P$ has length at most~$0$.
  So, to conclude the proof, it remains to prove the existence of
  $Q^*$.

  By~\eqref{d:PCL}, there exist non-adjacent $x^*_a\in S^*_a$ and
  $x^*_b\in S^*_b$. Let $x_a$ and $x_b$ be the vertices of $F_0$ such
  that $x^*_a \in K_{x_a}$ and $x^*_b \in K_{x_b}$.  Note that
  possibly $x_ax_b$ is an edge, but this happens only if $x_ax_b$ is
  an optional edge of $F_0$ (since $x^*_ax^*_b$ is not an edge). We break into three cases.

\medskip

  \noindent{\bf Case 1:} $x_a, x_b\in A.$
  
  Then $x_ax_b\notin E(G)$ (otherwise it would be a solid edge of
  $F_0$), so from the definition of templates, there exists a path $Q$
  of length $2\ell -1$ from $x_a$ to $x_b$ whose interior is in
  $I\cup A'$. By Lemma~\ref{lb:universalevenforblow-up},
  $\{x^*_a, x^*_b\} \cup (V(Q) \sm \{x_a, x_b\})$ induces the path
  $Q^*$ that we are looking for.  Note that $Q^*$ and $P$ form a hole
  by~\eqref{d:attachpath:pas_de_voisin}, our assumption that
  $S_a, S_b \subseteq A\cup B$,  and~\eqref{d:pathF0}.

\medskip

\noindent{\bf Case 2:} $x_a \in A$ and $x_b \in B.$

Whether $x_ax_b$ is an optional edge or a non-edge, an immediate
consequence of the definition of a template is that there exists a
vertex $z\in H_{x_b}$ that is non-adjacent to $x_a$.  We may
furthermore assume that $z\notin S_b$ since else we are in the same
situation as in Case 1. By definition of a template, there exists a
path $Q_0$ of length $2\ell -1$ between $x$ and $z$ whose interior is
in $I\cup A'$. Then $x_bzQ_0x_a$ is a path of length
$2\ell$ and by replacing in this path $x_a$ and $x_b$ by respectively
$x^*_a$ and $x^*_b,$ we obtain by
Lemma~\ref{lb:universalevenforblow-up} a path $Q^*$ of the same
length. Note that $Q^*$ and $P$ form a hole
by~\eqref{d:attachpath:pas_de_voisin}, ~\eqref{d:pathF0}, our assumption that
$S_a, S_b \subseteq A\cup B$, $z\notin S_b$ and $z\notin S_a$ (since
$S_a$ is a clique).
  
\medskip

 \noindent{\bf Case 3:} $x_a, x_b\in B.$

 Then $x_ax_b\notin E(G)$ (otherwise it would be a solid edge of
 $F_0$).  Hence, by Lemma~\ref{lt:deuxsommetsdeB},
 $H_{x_a} \cup \{x_a\}$ is anticomplete to $H_{x_b}\cup \{x_b\}$.  So,
 let $u_a\in H_{x_a}$ and $u_b\in H_{x_b}$, there exists then a path
 $Q_0=u_a \dots u_b$ of length $2\ell-1$ with interior in
 $I\cup A' $.  By Lemma~\ref{lb:universalevenforblow-up},
 $Q^*=x^*_au_aQ_0u_bx^*_b$ is also a path, it is of length $2\ell +1$.
 We may assume that $u_a\notin S_a$ and $u_b \notin S_b$ since else we
 are in the same situation as in Case 2. Now,
 by~\eqref{d:attachpath:pas_de_voisin} and~\eqref{d:pathF0}, $Q^*$ and $P$ form a hole of
 length at least $2 \ell +4$.

   \end{proofclaim}

\begin{claim}\label{d:Pdisjoint}
  $S_\circ=\emptyset$.
\end{claim}

\begin{proofclaim}
  By~\eqref{d:attachpath:pas_trop_prche} and~\eqref{d:pathF0}, if
  $S_\circ\neq \emptyset$, then $\ell=3$, and there exists a unique principal
  path $P_u=u\dots u'$ of $F_0$ such that $S_a=\{u\},$ $S_b=\{u'\}$
  and $S_\circ = \{c\}$ where $c$ is the unique internal vertex of
  $P_u$.  Let $u^*\in K_u\cap S^*_a$, $c^*\in S^*_\circ$ and
  $u'^*\in K_{u'} \cap S^*_b$.  Observe that by \eqref{d:attachpath:pas_de_voisin}
%
  $c^*u^*, c^*u'^*\in E(G)$. 

  Let $P_v=v\dots v'$ be a principal path distinct from $P_u$ and
  suppose up to symmetry that $uv\in E(G)$.  Now by~\eqref{d:pathF0}, $P_v$, $P$, $u^*$,
  $u'^*$, $w'$ and $c^*$ form a proper wheel centered at $c^*$, a
  contradiction to Lemma~\ref{l:holeTruemperS}.
\end{proofclaim}

\begin{claim}\label{d:length}
  $P$ has length $\ell -1$, or P has length $\ell-2$ and we may assume
  that $S_a\cap A = \emptyset$.
\end{claim}

\begin{proofclaim}
  By~\eqref{d:PCL} and~\eqref{d:attachpath:pas_trop_prche}, consider
  non-adjacent $x^*\in S^*_a$ and $y^*\in S^*_b$ where $x^*\in K_x \cap S^*_a$ and $y^*\in K_y \cap S^*_b$ for $x\in S_a\cap (A\cup B)$ and $y\in S_b \cap (A' \cup B')$.

  If $x\in A$ and $y\in A'$, then let $C$ be a principal hole that
  contains $x$ and $y$.  By Lemma~\ref{lb:universalevenforblow-up},
  $\{x^*, y^*\} \cup (V(P) \sm \{x, y\})$ induces a hole $C^*$. We may
  apply~\eqref{d:holeTS} to $C^*$, $x^*$ and $y^*$.  It follows that
  $P$ has length $\ell-1$.  By symmetry we may therefore assume from
  here on that $S_a \cap A=\emptyset$.
   
  Let $y'^*$ be a vertex in $S^*_b$ which is the closest to $x^*$ in $F_1$. By Lemma~\ref{l:patatesBlowUp}, there exists a path $Q$ in $F_1$
  from $x^*$ to $y'^*$ of length $\ell$ or $\ell+1$. From our assumption on $y'^*$ we get that $Q$ and $P$
  form a hole (since $S_\circ=\emptyset$
  by~\eqref{d:Pdisjoint}). Therefore, if $Q$ has length $\ell$, then
  $P$ has length $\ell-1$ and if $Q$ has length $\ell+1$, then $P$ has
  length $\ell-2$.
\end{proofclaim}

We may now conclude the proof. 

If $P$ has length $\ell-1$, then we set $A_0= A \cup \{a\}$,
$A'_0= A' \cup \{b\}$ and $I_0= I \cup \text{int}(P)$.  We claim that
$(A_0,B, A'_0,B', I_0)$ is an $\ell$-pretemplate partition of
$G[A_0 \cup B \cup A'_0 \cup B' \cup I_0]$. All conditions are easily
checked to hold (in particular conditions~\eqref{a:antiC}, \eqref{a:antiCP} 
and~\eqref{a:cn} are satisfied because
by~\eqref{d:attachpath:pas_trop_prche}, $a$ (resp.\ $b$) has a
neighbor in $G[A\cup B]$ (resp.\ $G[A' \cup B']$), condition~\eqref{a:interI} holds by~\eqref{d:Pdisjoint} and
conditions~\eqref{a:tn} and~\eqref{a:tnP} hold because they hold in
$F_0$). Then, by Lemma~\ref{th:ptist}, $G$ contains an odd
$\ell$-template with $k+1$ principal paths, a contradiction to the
maximality of~$k$.

So, by~\eqref{d:length}, $P$ has length $\ell-2$ and we may assume that
$S_a \cap A=\emptyset$ and $S_a\cap B \neq \emptyset$ (recall that
by~\eqref{d:attachpath:pas_trop_prche}, $S_a \subseteq A \cup B$ and
$S_b \subseteq A' \cup B'$).  Let us choose $x \in S_a\cap B$ such
that $H_x$ is maximal (note that $x$ is unique because $S_a\cap B$ is
a clique and $F_0$ is twinless).  Let $x^*\in K_x\cap S^*_a$.  We set
$A_0= A \cup \{x^*\}$, $B_0=B\sm S_a$, $A'_0 = A' \cup \{b\}$ and
$I_0= I \cup \text{int}(P) \cup \{a\}$.  Note that the path $x^*aPb$
has length $\ell - 1$ and has interior in $I_0$.
We break into two cases.

\medskip

  \noindent{\bf Case 1:} $b$ has a neighbor  in $A' \cup B'^*$.  

We claim that in that case
$(A_0,B_0, A'_0, B'^*, I_0)$ is an $\ell$-pretemplate partition of
$G[A_0 \cup B_0 \cup A'_0 \cup B'^* \cup I_0]$.    All conditions are
easily checked to hold (in particular condition~\eqref{a:cn} is
satisfied for $A_0\cup B_0$ because if $x=w$, then $x^*$ is complete
to $(A_0\cup B_0) \sm \{x^*\}$, and otherwise, by the maximality of
$H_x$, $w\in A_0\cup B_0,$ condition~\eqref{a:cn} is satisfied for
$A'_0\cup B'^*$ because $b$ has a neighbor in $A' \cup B'^*$ and by the rules of the blowup, 
conditions~\eqref{a:antiC},~\eqref{a:antiCP},~\eqref{a:tn} and~\eqref{a:tnP} hold because they hold in
$F_0$ and by the rules of the blowup). Then, by Lemma~\ref{th:ptist}, $G$ contains an odd
$\ell$-template in with $k+1$ principal paths, a contradiction to the
maximality of~$k$.
\medskip

\noindent{\bf Case 2:} $b$ has no neighbor in $A' \cup B'^*$.

Then, by~\eqref{d:attachpath:pas_trop_prche} there exists $x'^* \in K_{x'} \cap S^*_b$ for some $x' \in A'$. 

Let $A'_1=(A'_0\cup \{x'^*\})\sm \{x'\}$. If $w' \in B'$ we set $B'_1 = \{w'\}$  and else we set $B'_1=\emptyset$. 
We claim that $(A_0,B_0, A'_1,B'_1, I_0)$ is an $\ell$-pretemplate partition of
$G[A_0 \cup B_0 \cup A'_1 \cup B'_1 \cup I_0]$. 

Most conditions are easily checked to hold as in the previous case. Notice that conditions~\eqref{a:cn} and~\eqref{a:tnP} hold because $x'^*$ is by definition adjacent to $b$ and by the rules of the blowup, $G[A'_1\sm \{b\}]$ is isomorphic to $G[A'_0\sm \{b\}]$ and $x'^*$ is adjacent to $w'$.
Then, by Lemma~\ref{th:ptist}, $G$ contains an odd
$\ell$-template in with $k+1$ principal paths, a contradiction to the
maximality of~$k$.
\end{proof}

\subsection{End of the proof}

We may now conclude the proof of Lemma~\ref{l:GPy}. 
 If $G\sm F_1$ is empty, then conclusion~\eqref{c:Pyblowup}
holds.  If $G\sm F_1$ is non-empty and $G\sm F_2$ is empty, then
conclusion~\eqref{c:Pyuniv} holds.  Otherwise, we consider a connected
component $D$ of $G\sm F_2$ and apply Lemma~\ref{l:attachComp}.  We
then see that $G$ has a clique cutset, so
conclusion~\eqref{c:Pycliquecut} holds. 

\section{Proof of Theorem~\ref{th:struct}}

\begin{theorem}
  \label{th:struct}
  Let $\ell \ge 3$ be an integer.  If $G$ is a graph in $\mathcal C_{2\ell +1}$ then
  one of the following holds:

  \begin{enumerate}
  \item\label{c:ring} $G$ is a ring of length $2\ell+1$;
  \item\label{c:blowup} $G$ is a proper blowup of a twinless odd
    $\ell$-template;
  \item\label{c:univ} $G$ has a universal vertex or
  \item\label{c:cliquecut} $G$ has a clique cutset.
  \end{enumerate}
\end{theorem}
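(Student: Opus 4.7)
The plan is to split the proof of Theorem~\ref{th:struct} into two cases according to whether $G$ contains a pyramid, since Lemma~\ref{l:GPy} already packages the pyramid case into exactly the three non-ring conclusions we want.

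First I would dispose of the case where $G$ contains a pyramid. Here Lemma~\ref{l:GPy} applies directly and gives that $G$ is a proper blowup of a twinless odd $\ell$-template, or $G$ has a universal vertex, or $G$ has a clique cutset; these are conclusions~\eqref{c:blowup}, \eqref{c:univ}, and~\eqref{c:cliquecut} of Theorem~\ref{th:struct}.

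Next I would handle the pyramid-free case by reducing it to Theorem~\ref{th:ring}. Since $G \in \mathcal{C}_{2\ell+1}$ with $\ell \ge 3$, every hole of $G$ has length at least $7$, so $G$ is automatically $C_4$-free and $C_5$-free. By Lemma~\ref{l:holeTruemperS}, every Truemper configuration of $G$ is a twin wheel, a universal wheel, or a pyramid whose three paths have length $\ell$; since prisms, thetas, and proper wheels belong to none of these types, and pyramids are excluded by the case assumption, $G$ is $(\text{prism},\text{theta},\text{pyramid},\text{proper wheel})$-free. All hypotheses of Theorem~\ref{th:ring} are thus satisfied, so $G$ is a ring of length at least $6$, or $G$ has a clique cutset, or $G$ has a universal vertex. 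In the ring subcase, the lemma stated just after the definition of a ring gives that every hole of a ring of length $k$ has length $k$; combined with the fact that each ring of length $k$ actually contains such a hole (take a vertex of each $K_i$ that is complete to $K_{i-1}\cup K_{i+1}$, using condition~3 of the ring definition), we conclude $k = 2\ell+1$, yielding conclusion~\eqref{c:ring}.

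I do not anticipate any real obstacle: all of the structural work has been concentrated in Lemma~\ref{l:GPy} (whose proof occupies the preceding section) and in Theorem~\ref{th:ring} from the paper on rings. The proof of Theorem~\ref{th:struct} itself is just the case split and the verification that the hypotheses of Theorem~\ref{th:ring} are met, followed by a one-line argument that forces the ring length to match $2\ell+1$. The only tiny subtlety is that this argument does not need to consider degenerate hole-free cases separately, because Theorem~\ref{th:ring} already absorbs them into the clique-cutset or universal-vertex conclusions.
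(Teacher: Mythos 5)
Your proof is correct and takes essentially the same approach as the paper: case-split on whether $G$ contains a pyramid, invoke Lemma~\ref{l:GPy} in the pyramid case, and reduce the pyramid-free case to Theorem~\ref{th:ring} via Lemma~\ref{l:holeTruemperS}. The one thing you add beyond the paper's terse proof is the explicit verification that the ring length returned by Theorem~\ref{th:ring} must equal $2\ell+1$ (because a ring of length $k$ always contains a hole of length $k$); the paper leaves this implicit, and your spelling it out is a small improvement in completeness rather than a different route.
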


\begin{proof}
  By Lemma~\ref{l:holeTruemperS}, $G$ contains no prism no theta and no
  proper wheel.  Also, clearly $G$ contains no $C_4$ and no $C_5$.
  Hence, by Theorem~\ref{th:ring}, we may assume that $G$ contains a
  pyramid for otherwise one of the conclusions~\eqref{c:ring},
  \eqref{c:univ} or~\eqref{c:cliquecut} holds.  The result then follows
  from Lemma~\ref{l:GPy}. 
\end{proof}

\section{Even templates}

\subsection{Even $\ell$-template partitions}

 For an integer $\ell \geq 4$,  an \emph{even $\ell$-template partition} of a graph $G$ is a
 partition of the vertex-set of $G$ into five sets $A$, $B$,
$A'$, $B'$ and $I$ satisfying the following conditions. 

%


\begin{enumerate}

\item \label{eventa1} $A=A_K \cup A_S$ where
 $A _K= \{v_1, \ldots, v_k\}$, $A_S= \{v_{k+1}, \ldots, v_{k+s}\}$ and $k+s \ge 3$.
\item \label{eventa2} $A'=A'_K \cup A'_S$ where 
 $A' _K= \{v'_1, \ldots, v'_k\}$ and $A'_S= \{v'_{k+1}, \ldots, v'_{k+s}\}$.
 \item \label{eventa3}
 For each $i \in \{1, \ldots,k\}$,  $v_i $ and $v'_i$ are linked by a path of $G$ of
  length $\ell-1$ and for each $i \in \{1, \ldots,s\}$,  $v_{k+i} $ and $v'_{k+i}$ are linked by a path of $G$ of
  length $\ell-2$. These $k+s$ paths  are vertex
disjoint and they are called the \emph{principal paths} of the partition.
  \item \label{eventa4} $I$ is the set of all internal vertices of the principal paths, every vertex in $I$ has degree $2$ in $G$.
  \item \label{eventa5}
  Both $A_K$ and $A'_K$ are cliques of $G$ and both $A_S$ and $A'_S$ are stable sets of $G$. For $i \in \{1, \ldots,k\}$ and $j \in \{1, \ldots,s\}$, exactly one of $v_iv_{k+j}$ and $v'_iv'_{k+j}$ is an edge. Furthermore $G[A]$ and hence $G[A']$ are threshold graphs.
  
  \item \label{eventa6} There exists a laminar hypergraph $\mathcal H$ on vertex set
  $\{v_1, \ldots,v_{k+s}\}$ such that:
  \begin{itemize}
  \item[-] \label{b:module} every hyperedge $X$ of $\mathcal H$ is an anticonnected
    module of $G[A]$ of cardinality at least~$2$ and 
    \item[-] \label{notconunivA} if $G[A]$ is not connected then at least one hyperedge of $\mathcal H$ contains all vertices of $A$.
  \end{itemize}
   \item \label{laminar2} There exists a laminar hypergraph $\mathcal H'$ on vertex set
  $\{v'_1, \ldots,v'_{k+s}\}$ such that:
  \begin{itemize}
  \item[-]\label{c:module} every hyperedge $X'$ of $\mathcal H'$ is a
    module of $G[A']$ of cardinality at least~$2$ and 
    \item[-]  \label{notconunivB} if $G[A']$ is not connected then least one hyperedge of $\mathcal H'$ contains all vertices of $A'$.
     \end{itemize}
    

\item\label{b:chooseB}
 $B = \{v_X : X \text{ hyperedge of } \mathcal H \}$,  $B' = \{v'_X : X \text{ hyperedge of } \mathcal H' \}$.


The set of edges of $G$ incident to vertices in $B \cup B'$ is defined as follows:
\item\label{b:last} for every $v_X, v_Y \in B$, $v_X v_Y \in E(G)$ if and only if $X \cap Y \neq \emptyset$,
\item\label{eventempe4} for every $v'_X, v'_Y \in B'$, $v'_X v'_Y \in E(G)$ if and only if $X \cap Y \neq \emptyset$,
\item\label{b:makeit}  for every $v_i \in A$, $v_X \in B$, $v_i v_X \in E(G)$ if and only if $v_i \in N_{G[A]}[X]$,
\item\label{eventempe6} for every $v'_i \in A'$, $v'_X \in B'$, $v'_i v'_X \in E(G)$ if and only if $v'_i \in N_{G[A']}[X]$.
\end{enumerate}


The following notation is convenient.

\medskip

\noindent{\bf Notation:}  For every vertex $x\in B$ such that $x=v_X$
  where $X$ is a hyperedge of $\mathcal H$, we set
  $H_x= X$. Similarly, for every vertex $x\in B'$ such
  that $x=v'_X$ where $X$ is a hyperedge of $\mathcal H'$, we set
  $H'_x=X$.  

We now list some properties of even $\ell$-template partitions that follow directly from the definition.

\begin{enumerate} [label=(\roman*)]
\item \label{event:threshold} $G[A]$ 
  and $G[A']$ are threshold graphs such that, - $A=A_K \cup A_S$ and $A'=A'_K \cup A'_S$, - $G[A_K]$ and $G[A'_K]$ are complete graphs having the same number of vertices, - $G[A_S]$ and $G[A'_S]$ are complement of complete graphs having the same number of vertices, - the subgraph of $G[A]$ induced by the edges between $A_K$ and $A_S$ is isomorphic to the complement of the subgraph of $G[A']$ induced by the edges between $A'_K$ and $A'_S$.

\item\label{modanticonn} For all $x\in B$, $H_x$ is a
  module of $G[A]$ and $G[H_x]$ is anticonnected. Also for all
  $x\in B'$, $H'_x$ is a module of $G[A']$ and $G[H'_x]$ is
  anticonnected.
  
  \item\label{event:qTh} $G[B]$ is isomorphic to the line graph of the hypergraph
  $\mathcal H$ on vertex set $A$ and hyperedge set
  $\{H_x : x\in B\}$.  Also $G[B']$ is isomorphic to the line graph of
  the hypergraph $\mathcal H'$ on vertex set $A'$ and hyperedge
  set $\{H'_x : x\in B'\}$.  Hence $G[B]$ and $G[B']$ are 
  quasi-threshold graphs by Theorem~\ref{th:laminar}.

  \item\label{adjAB} There is an edge between $v_i \in A$ and
  $x \in B$ if and only if $v_i \in N_A[H_x]$, and there is an edge
  between $v'_i \in A'$ and $x \in B'$ if and only if
  $v'_i \in N_{A'}[H'_x]$.

\end{enumerate}

%

By the fact that $G[A]$ and $G[A']$ are threshold graphs in even and odd template partitions, by  Properties \ref{modanticonn} and \ref{adjAB} of even template partitions (which are the same as Properties \ref{t:antiMod} and \ref{t:makeIt} of odd templates) and conditions \eqref{notconunivA} and \eqref{notconunivB} of even template partitions, we have the following lemma whose proof is similar to the one of Lemma \ref{l:ABuniv}.

\begin{lemma}
  \label{evenl:ABuniv}
  There exist vertices $w$ and $w'$ that are universal vertices in
  respectively $G[A \cup B]$ and $G[A' \cup B']$.
\end{lemma}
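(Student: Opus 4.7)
The plan is to mirror the proof of Lemma \ref{l:ABuniv} essentially verbatim, exploiting the fact that the structural ingredients on which that argument relies are preserved in the even template setting. Specifically, $G[A]$ is a threshold graph by property \ref{event:threshold}, each $H_x$ for $x\in B$ is an anticonnected module of $G[A]$ with $|H_x|\ge 2$ by condition \eqref{eventa6}, and the adjacencies between $A$ and $B$ are determined by property \ref{adjAB}.

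First I would apply Theorem \ref{th:threshold} to the threshold graph $G[A]$ to obtain a vertex $u$ that is either universal or isolated in $G[A]$. In the universal case, I set $w=u$. For any $x\in B$, the set $H_x$ cannot contain $u$, since otherwise $u$ would be universal in the anticonnected graph $G[H_x]$ of size at least~$2$; consequently $u\in N_A(H_x)\subseteq N_A[H_x]$, and property \ref{adjAB} yields $ux\in E(G)$. Thus $w$ is adjacent to every vertex of $A\cup B$.

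In the isolated case, since $|A|=k+s\ge 3$ by condition \eqref{eventa1}, the graph $G[A]$ is disconnected; therefore the second bullet of condition \eqref{eventa6} provides a hyperedge $W$ of $\mathcal H$ with $W=A$. I set $w=v_W\in B$: property \ref{adjAB} shows that $w$ is adjacent to every vertex of $A$, and for every other $x\in B$ the inclusion $H_x\subseteq W$ together with the defining rule \eqref{b:last} gives $wx\in E(G)$. Hence $w$ is universal in $G[A\cup B]$.

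The existence of $w'$ in $G[A'\cup B']$ follows by the symmetric argument applied to $G[A']$ using conditions \eqref{laminar2}, \eqref{eventempe4} and \eqref{eventempe6}. No serious obstacle arises; the only nuance compared with the odd case is that condition \eqref{eventa6} only requires a covering hyperedge when $G[A]$ is disconnected, but this weaker hypothesis is exactly what is needed, since a connected threshold graph on at least two vertices necessarily has a universal vertex by Theorem \ref{th:threshold}, so the two cases above are exhaustive.
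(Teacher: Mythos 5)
Your proof is correct and follows essentially the same route the paper takes: the paper states that the proof of this lemma is "similar to the one of Lemma~\ref{l:ABuniv}," and you have reconstructed precisely that argument (Theorem~\ref{th:threshold} to get a universal or isolated vertex of $G[A]$, with properties \ref{modanticonn} and \ref{adjAB} and the covering-hyperedge clause of condition~\eqref{eventa6} filling in for the odd-case ingredients). Your observation that the weaker hypothesis in \eqref{eventa6} (a covering hyperedge only when $G[A]$ is disconnected) is exactly what the two cases require is a correct and useful clarification of why the adaptation goes through.
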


 \begin{lemma}
  \label{l:ThetaPrismTemplate}
  For an integer $\ell\geq 4$, every  theta $\Theta$ such that $\Theta\in
  \mathcal C_{2\ell}$ has an even $\ell$-template partition, every prism $\Sigma$ such that $\Sigma \in
  \mathcal C_{2\ell}$ has an even $\ell$-template partition.
\end{lemma}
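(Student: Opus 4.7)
The plan is to exhibit explicit even $\ell$-template partitions for the theta and the prism, reading them off from the structure provided by Lemma~\ref{l:holeTruemperS}, and then check each of the conditions \eqref{eventa1}--\eqref{eventempe6} in turn. Since the partition is built directly from the vertex structure of the Truemper configuration, the proof is largely a matter of careful bookkeeping; the key modeling choices are where to place the ``apex'' vertices (for the theta) and the triangle vertices (for the prism), and how to set the parameters $k$ and $s$.

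For the theta case, by Lemma~\ref{l:holeTruemperS} the three paths have length~$\ell$; write them as $P_j = a, a_j, \ldots, b_j, b$ for $j \in \{1,2,3\}$. I would set $k=0$, $s=3$, with $A_K = A'_K = \emptyset$, $A_S = \{a_1,a_2,a_3\}$, $A'_S = \{b_1,b_2,b_3\}$, let $I$ consist of the internal vertices of the three subpaths $a_j\ldots b_j$ (each of length $\ell-2$), and put $B = \{a\}$, $B' = \{b\}$. For the hypergraphs I take $\mathcal{H} = \{\{a_1,a_2,a_3\}\}$ and $\mathcal{H}' = \{\{b_1,b_2,b_3\}\}$, identifying $a$ with $v_{A_S}$ and $b$ with $v'_{A'_S}$. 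Conditions \eqref{eventa1}--\eqref{eventa4} are immediate from the definition of a theta, and \eqref{eventa5} follows because $G[A]=3K_1$ and $G[A']=3K_1$ are threshold graphs with the required clique/stable decomposition. The single hyperedge $\{a_1,a_2,a_3\}$ is trivially a module, is anticonnected since its complement in $G[A]$ is $K_3$, and contains all of $A$ (which is needed because $G[A]$ is disconnected); this verifies \eqref{eventa6}, and symmetrically \eqref{laminar2}. Finally, \eqref{b:last}--\eqref{eventempe6} reduce to checking that $a$ is complete to $A$ and $b$ is complete to $A'$, which holds by definition of a theta.

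For the prism case, again by Lemma~\ref{l:holeTruemperS} the three paths have length $\ell-1$; write them as $P_j = a_j, \ldots, b_j$. Here I would set $k=3$, $s=0$, take $A_K = \{a_1,a_2,a_3\}$, $A'_K = \{b_1,b_2,b_3\}$, $A_S = A'_S = \emptyset$, let $I$ consist of the internal vertices of the $P_j$'s, and put $B = B' = \emptyset$, with $\mathcal{H} = \mathcal{H}' = \emptyset$. Conditions \eqref{eventa1}--\eqref{eventa4} are again direct from the definition of a prism. Condition \eqref{eventa5} holds because $G[A]=G[A']=K_3$ are threshold graphs; the edge-count condition between $A_K$ and $A_S$ is vacuous. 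Conditions \eqref{eventa6} and \eqref{laminar2} are vacuous since $\mathcal{H}$ and $\mathcal{H}'$ are empty, and this is permitted because both $G[A]$ and $G[A']$ are connected, so no covering hyperedge is required. The remaining edge conditions \eqref{b:last}--\eqref{eventempe6} are vacuous since $B = B' = \emptyset$.

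There is no real obstacle here: the statement is a verification lemma, and the two configurations have essentially canonical partitions that fit the definition. The only subtlety worth flagging during the write-up is the asymmetry in condition \eqref{eventa6} between the ``$G[A]$ connected'' case (where the empty hypergraph is allowed, as exploited by the prism) and the ``$G[A]$ disconnected'' case (where a covering hyperedge is mandatory, as used by the theta); both Truemper configurations land neatly on the correct side of this dichotomy.
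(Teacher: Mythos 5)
Your proposal is correct and takes essentially the same approach as the paper: the paper's proof for the theta sets $A$ and $A'$ to be the neighborhoods of the two apices, $B$ and $B'$ to be the singletons containing the apices, and $I$ the rest (so $k=0$, $s=3$), and for the prism sets $A$ and $A'$ to be the two triangles with $B=B'=\emptyset$ (so $k=3$, $s=0$); these are exactly your partitions, and the paper simply asserts the verification is easy where you spell it out in detail.
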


\begin{proof}
  Since $\Theta\in \mathcal C_{2\ell}$, its three paths have
  length~$\ell$. Let $x$ and $y$ be the common two extremities of these paths, let $A$ and $A'$ be respectively the set of neighbors of $x$ and $y$ and let $I$ be the set of vertices of $\Theta$ that are not in $A\cup A' \cup \{x, y\}$.  It is easy to verify that $(A, \{x\}, A', \{y\}, I,x,y)$ is an even $\ell$-template partition. Similarly all three paths of $\Sigma$ have length $\ell-1$. Let $A$ be the set of the vertices $v_1,v_2,v_3$ of one of the triangles and $A'$ be the set of vertices $v'_1,v'_2,v'_3$ of the other triangle. It is easy to verify that $(A,\emptyset,A', \emptyset, V(\Sigma) \sm (A\cup A' ), v_1, v'_1)$ is an even $\ell$-template partition. 
\end{proof}

Due to the similarities in odd and even template partitions as pointed out above we also get the following results similar to Lemmas \ref{l:recoverHx}, \ref{lt:deuxsommetsdeB} and \ref{l:Bdeg3}
 with the same proofs.

\begin{lemma}
  \label{evenl:recoverHx}
  If $x\in B$ (resp. $x\in B'$), then $H_x$ (resp.\ $H'_x$) is the
  unique anticomponent of $G[N_A(x)]$ (resp.\ $G[N_{A'}(x)]$) that
  contains at least two vertices.
\end{lemma}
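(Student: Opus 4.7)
\medskip
\noindent\emph{Proof proposal.} The approach mirrors the proof of Lemma~\ref{l:recoverHx} for odd templates, so I plan to reuse essentially the same three-step argument, but with the $C_4$-freeness coming from a different source. The statement being symmetric between $B$ and $B'$, I will only write the case $x\in B$.

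The first step is to observe that $G$ contains no $C_4$. This is because $G\in \mathcal C_{2\ell}$ with $\ell\geq 4$, so every hole of $G$ has length at least~$8$, and in particular $G$ has no induced~$C_4$. (We need the full graph $G$ to be $C_4$-free, not just $G[A]$, in order to run the uniqueness argument inside $N_A(x)$.)

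The second step is to show $G[N_A(x)]$ has a unique anticomponent of size at least~$2$. By condition~\eqref{eventa6} of an even $\ell$-template partition, $|H_x|\geq 2$, and by property~\ref{modanticonn}, $G[H_x]$ is anticonnected. By condition~\eqref{b:makeit} (equivalently property~\ref{adjAB}), $H_x\subseteq N_A(x)$, so $N_A(x)$ already contains two non-adjacent vertices and $G[N_A(x)]$ has at least one anticomponent of size $\geq 2$. If there were two such anticomponents, we could pick two non-adjacent vertices in each; they would form a $C_4$ in~$G$, contradicting the previous step.

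The third and last step is to identify that unique anticomponent with $H_x$. By property~\ref{modanticonn}, $H_x$ is an anticonnected module of $G[A]$. Take any $v\in N_A(x)\sm H_x$: by condition~\eqref{b:makeit}, $v\in N_A[H_x]$, and since $v\notin H_x$, $v$ has a neighbor in~$H_x$; as $H_x$ is a module of $G[A]$, $v$ is complete to $H_x$. Thus $H_x$ is an anticonnected module of $G[N_A(x)]$ with at least two vertices, so it must coincide with the unique anticomponent of $G[N_A(x)]$ of size at least~$2$. I do not expect any serious obstacle: the argument is essentially verbatim from Lemma~\ref{l:recoverHx}, with $C_4$-freeness of $G$ guaranteed by $\ell\geq 4$ rather than by the odd-template hypothesis.
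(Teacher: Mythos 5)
Your proof reproduces the paper's approach (the paper explicitly points to Lemma~\ref{l:recoverHx} and says the same proof applies), and steps~2 and~3 are correct and in line with the paper's argument. However, step~1 contains a genuine flaw, and the parenthetical remark is precisely backwards.

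You justify $C_4$-freeness of $G$ by invoking $G\in\mathcal C_{2\ell}$, but that hypothesis is not available here. Lemma~\ref{evenl:recoverHx} is stated in the subsection on even $\ell$-template \emph{partitions}, and the definition of an even $\ell$-template partition does not assume $G\in\mathcal C_{2\ell}$. The implication ``$G$ has such a partition $\Rightarrow$ $G\in\mathcal C_{2\ell}$'' is only established later (Lemma~\ref{l:HinTemplate_even}), and even then only for \emph{strong} partitions; indeed, a non-strong even $\ell$-template partition can yield holes longer than $2\ell$. So as written, step~1 rests on a statement that has not been proved and is not assumed.

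Moreover, your claim that ``we need the full graph $G$ to be $C_4$-free, not just $G[A]$'' is wrong, and this is exactly where the fix lies. The uniqueness argument picks two non-adjacent vertices in each of the two hypothetical anticomponents of $G[N_A(x)]$; all four lie in $N_A(x)\subseteq A$, so the $C_4$ they would form lives entirely in $G[A]$. Since $G[A]$ is a threshold graph by condition~\eqref{eventa5} of even $\ell$-template partitions, it is $C_4$-free, and that is all you need. Replacing ``$G$ is $C_4$-free because $G\in\mathcal C_{2\ell}$'' with ``$G[A]$ is $C_4$-free because it is a threshold graph'' repairs the argument and aligns it with what the paper makes available at this point.
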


\begin{lemma}
  \label{evenlt:deuxsommetsdeB}
  If $x, y \in B$ (resp.\ $x, y\in B'$) are such that $xy\notin E(G)$,
  then $H_x \cup \{x\}$ (resp.\ $H'_x \cup \{x\}$)  is anticomplete to
  $ H_y\cup \{y\}$ (resp.\ $H'_y \cup \{y\}$).   
\end{lemma}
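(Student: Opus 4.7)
The plan is to adapt the proof of Lemma~\ref{lt:deuxsommetsdeB} to the even template partition setting, since the relevant structural ingredients carry over almost verbatim. Assume $x, y \in B$ with $xy \notin E(G)$ (the $B'$ case will be symmetric). By condition~\eqref{b:last} of even $\ell$-template partitions, $xy \notin E(G)$ gives $H_x \cap H_y = \emptyset$.

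Next, I would show that $H_x$ is anticomplete to $H_y$ in $G[A]$. Suppose for contradiction that some vertex of $H_x$ has a neighbor in $H_y$. By property~\ref{modanticonn}, $H_x$ and $H_y$ are modules of $G[A]$, so the existence of one edge between them forces $H_x$ to be complete to $H_y$. In particular, $H_y \subseteq N_{G[A]}(H_x)$. Since $G[A]$ is a threshold graph (property~\ref{event:threshold}) and $H_x$ is an anticonnected module of $G[A]$ with $|H_x| \geq 2$, Lemma~\ref{l:nxcc} applies and gives that $N_{G[A]}(H_x)$ is a clique. Hence $H_y$ is a clique, contradicting the fact that $H_y$ is anticonnected of cardinality at least $2$ (property~\ref{modanticonn}, via condition~\eqref{laminar2}).

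Having established that $H_x$ and $H_y$ are disjoint and anticomplete in $G[A]$, I conclude as follows. For any $v \in H_y$, since $v \notin H_x$ and $v$ has no neighbor in $H_x$, we have $v \notin N_{G[A]}[H_x]$; by property~\ref{adjAB} of even $\ell$-template partitions, $xv \notin E(G)$. Symmetrically, $yu \notin E(G)$ for every $u \in H_x$. Combined with the hypothesis $xy \notin E(G)$, this yields that $H_x \cup \{x\}$ is anticomplete to $H_y \cup \{y\}$. The $B'$ case follows by exactly the same argument applied to the hypergraph $\mathcal H'$ and to $G[A']$.

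There is no real obstacle here: the reasoning of the odd case goes through unchanged because the only properties used are (a) that $G[A]$ is a threshold graph (so that Lemma~\ref{l:nxcc} applies), (b) that hyperedges are anticonnected modules of cardinality at least $2$, and (c) that adjacency between $B$ and $A$ is governed by the $N_{G[A]}[H_x]$ rule — all of which hold by the definition of an even $\ell$-template partition.
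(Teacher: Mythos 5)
Your proof is correct and follows exactly the approach the paper takes: the paper states that Lemma~\ref{evenlt:deuxsommetsdeB} has the same proof as Lemma~\ref{lt:deuxsommetsdeB}, and you have carried that proof over step by step, using Lemma~\ref{l:nxcc} in the same way and appealing to the corresponding even-template properties. One small slip: when you invoke the cardinality bound for $H_y$ with $y\in B$, the relevant condition is~\eqref{eventa6} (governing $\mathcal H$), not~\eqref{laminar2} (which governs $\mathcal H'$); this does not affect the argument.
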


\begin{lemma}
  \label{evenl:Bdeg3}
  Every vertex of $G$ has degree at least~2 and every vertex of
  $B\cup B'$ has degree at least~3.
\end{lemma}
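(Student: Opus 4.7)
The plan is to follow almost verbatim the argument of Lemma~\ref{l:Bdeg3}, since an even $\ell$-template partition shares the relevant structural features of the odd case. Three ingredients will drive the argument: every principal path, whether of length $\ell-1$ or $\ell-2$, has at least one internal vertex because $\ell\geq 4$; Lemma~\ref{evenl:ABuniv} provides universal vertices $w$ and $w'$ of $G[A\cup B]$ and $G[A'\cup B']$; and every hyperedge $H_x$ for $x\in B\cup B'$ is an anticonnected module of cardinality at least~$2$ by Property~\ref{modanticonn}.

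First I would dispose of the case $v\in I$, which is immediate from condition~\eqref{eventa4} since internal vertices of paths have degree exactly~$2$. For $v\in A$, I would locate a first neighbor of $v$ in $I$ via the interior of its principal path (nonempty because the path has length $\ell-1\geq 3$ or $\ell-2\geq 2$), and a second neighbor either as the universal vertex $w$ (when $v\neq w$) or, in case $v=w$, as any of the other $\geq 2$ vertices of $A$, using $|A|=k+s\geq 3$ and universality. The case $v\in A'$ is symmetric using $w'$.

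The only step with a little more content is $v\in B$ (and symmetrically $v\in B'$). By Property~\ref{modanticonn} we have $|H_v|\geq 2$, and by condition~\eqref{b:makeit}, $v$ is adjacent to every vertex of $H_v$, supplying two neighbors in $A$. If $v=w$ then universality plus $|A|\geq 3$ already yields at least three neighbors in $A$. Otherwise $vw\in E(G)$ by universality, and it suffices to verify that $w\notin H_v$, which gives a third neighbor distinct from $H_v$. If $w\in B$ this is immediate since $H_v\subseteq A$; if $w\in A$, then universality of $w$ in $G[A\cup B]$ makes $w$ adjacent to every vertex of $A\setminus\{w\}$, so $w\in H_v$ would make $w$ a universal vertex of $G[H_v]$, contradicting the anticonnectedness of $G[H_v]$ (an anticonnected graph on at least two vertices has no universal vertex, since its complement is a connected graph on $\geq 2$ vertices and therefore has no isolated vertex).

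I do not expect a substantive obstacle, as the proof is essentially a transcription of the odd case; the only point that truly requires checking is that principal paths of length $\ell-2$ still have a nonempty interior, which holds precisely because $\ell\geq 4$, and that $w$ cannot be swallowed by an anticonnected hyperedge, which is the step I treat in the last paragraph above.
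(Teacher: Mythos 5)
Your proposal is correct and follows essentially the same route as the paper, which explicitly states that Lemma~\ref{evenl:Bdeg3} has the same proof as Lemma~\ref{l:Bdeg3}. You merely unfold one step that the odd-case proof states tersely — namely that $w\notin H_v$ follows from anticonnectedness of $G[H_v]$ — by treating the subcases $w\in A$ and $w\in B$ explicitly; this is a faithful and sound elaboration, not a different argument.
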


We may also extend $\mathcal H$ into a hypergraph $\mathcal H_A$ with vertex-set $A$ by adding to its edge-set the hyperedge $H_v=N_A[v] \cap \{u \in A : u \leq_{G[A]} v\}$ for every vertex $v\in A$. Similarly we extend $\mathcal H'$ into a hypergraph $\mathcal H'_{A'}$. The following lemma has the same proof as Lemma \ref{l:abc}.

\begin{lemma}
  \label{evenl:abc}
  $\mathcal H_A$ is a laminar hypergraph and $G[A\cup B]$ is
  isomorphic to its line graph (in particular, $G[A\cup B]$ is a
  quasi-threshold graph and therefore a chordal graph).  A similar
  statements holds for $\mathcal H'_{A'}$ and $G[A'\cup B']$.
\end{lemma}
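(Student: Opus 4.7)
The plan is to adapt the proof of Lemma~\ref{l:abc} essentially verbatim, since the conditions on $G[A]$, $G[A']$, $\mathcal H$, and $\mathcal H'$ in even template partitions parallel those used in the odd-template case. The key ingredients to invoke are: $G[A]$ is a threshold graph by property~\ref{event:threshold}; every hyperedge of $\mathcal H$ is an anticonnected module of $G[A]$ of cardinality at least~$2$ by property~\ref{modanticonn}; edges between $A$ and $B$ are governed by property~\ref{adjAB}; and $G[B]$ is the line graph of $\mathcal H$ by property~\ref{event:qTh}. These match exactly the hypotheses used inside the proof of Lemma~\ref{l:abc}, which in turn invokes Lemma~\ref{l:nxcc} and Theorem~\ref{l:laminar} (neither of which uses anything specific to the odd setting).

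First, I would observe that every vertex of $A\cup B$ yields a hyperedge of $\mathcal H_A$: vertices $v\in A$ give $H_v = N_A[v]\cap \{u\in A:u\leq_{G[A]}v\}$ by definition, and vertices $x\in B$ give the hyperedges $H_x$ coming from $\mathcal H$. To prove that $G[A\cup B]$ is isomorphic to the line graph of $\mathcal H_A$ and simultaneously that $\mathcal H_A$ is laminar, it is enough to verify that for every pair of vertices of $A\cup B$ the corresponding hyperedges are either comparable by inclusion (when the two vertices are adjacent in $G$) or disjoint (when they are not). I would split this verification into three cases.

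The first case, two vertices $x,y\in B$, needs no work since $\mathcal H$ is already laminar and $G[B]$ is its line graph by property~\ref{event:qTh}. For the second case, two vertices $u,v\in A$, one uses the domination ordering in the threshold graph $G[A]$ (Theorem~\ref{th:threshold}) and may assume $u\leq_{G[A]} v$; if $uv\in E(G)$ then $H_v\supseteq H_u$ is immediate from the definition of $H_v$, and if $uv\notin E(G)$ then every vertex $t\leq_{G[A]}v$ satisfying $t\leq_{G[A]}u$ must also be non-adjacent to $v$ (else the edge $tv$ would force $uv\in E(G)$), forcing $H_v=\{v\}$ and thus $H_u\cap H_v=\emptyset$. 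For the third case, $u\in A$ and $x\in B$ with $ux\in E(G)$, property~\ref{adjAB} gives $u\in N_A[H_x]$; if $u\in H_x$, Lemma~\ref{l:nxcc} applied to the anticonnected module $H_x$ gives $N_A(H_x)>_{G[A]}H_x$ so $N_A(H_x)\cap H_u=\emptyset$, whence $H_u\subseteq H_x$; if $u\in N_A(H_x)$, then again by Lemma~\ref{l:nxcc} we get $H_x\subseteq H_u$. If $ux\notin E(G)$, then $u\notin N_A[H_x]$, so $u$ is neither in $H_x$ nor adjacent to it, giving $H_u\cap H_x=\emptyset$.

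The only point requiring care—really just bookkeeping—is ensuring that Lemma~\ref{l:nxcc} applies in the third case, which needs $G[H_x]$ anticonnected of size at least $2$; this is property~\ref{modanticonn}. Having checked all three cases, $\mathcal H_A$ is laminar and $G[A\cup B]$ is isomorphic to its line graph, so by Theorem~\ref{th:laminar} $G[A\cup B]$ is quasi-threshold and, in particular, chordal. The analogous statement for $\mathcal H'_{A'}$ and $G[A'\cup B']$ follows by exactly the same argument after swapping $A,B,\mathcal H$ with $A',B',\mathcal H'$, using properties~\ref{event:threshold}, \ref{modanticonn}, \ref{event:qTh}, \ref{adjAB} applied on the primed side (which hold symmetrically by conditions~\eqref{eventa5}, \eqref{laminar2}, \eqref{eventempe4}, \eqref{eventempe6} of the even $\ell$-template partition).
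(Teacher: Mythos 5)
Your proposal is correct and matches the paper's approach exactly: the paper's justification of Lemma~\ref{evenl:abc} is precisely the remark that the argument of Lemma~\ref{l:abc} carries over, and you correctly identify properties~\ref{event:threshold}, \ref{modanticonn}, \ref{event:qTh} and~\ref{adjAB} as supplying the even-template analogues of the hypotheses used there. (One small transcription slip: having chosen the orientation $u\leq_{G[A]} v$, the non-edge case should conclude $H_u=\{u\}$ rather than $H_v=\{v\}$---your argument shows that every $t\leq_{G[A]} u$ is non-adjacent to $v$, so it is the hyperedge attached to the \emph{smaller} vertex that collapses to a singleton---but since this still yields $H_u\cap H_v=\emptyset$, the proof goes through unchanged.)
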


  \subsection{Even $\ell$-templates}
  We will now need more notion and notation.

Given an even $\ell$-template partition of $G$, we define a hypergraph $\mathcal H_G$ whose vertex set is $\{k+1, k+2 , \ldots k+s\}$ and whose  hyperedges are sets of indices of the vertices of $A_S \cup A_{S'}$ in hyperedges of $\mathcal H_A \cup \mathcal H_{A'}$.
More formally, $E(\mathcal H_G)= E_A \cup E_{A'}$ where 

- $E_A= \{\{i: v_i  \in H \cap A_S\} : H \text{ hyperedge of } \mathcal H_A  \text{ s.t. } H \cap A_S \ne \emptyset \}$ 

- $E_{A'}= \{\{i: v_i  \in H \cap A'_S\} : H \text{ hyperedge of } \mathcal H_{A'}   \text{ s.t. } H \cap A'_S \ne \emptyset\}$. 

Notice that $\mathcal H_G$ may contain distinct hyperedges containing the same set of vertices. 

\bigskip

A circular sequence $\mathcal C=(j_1,e_1,j_2,..., j_t, e_t,j_1)$, where the $j_i$'s are distinct vertices of $\mathcal H_G$ and the $e_i$'s are distinct hyperedges of $\mathcal H_G$, is said to be a {\it hyper cycle} of length $t$ of $\mathcal H_G$ if 
\begin{itemize}
\item each $j_i$ belongs to $e_{i-1}$ and $e_i$  (where $e_{t+1}=e_1)$ and to no other hyperedge of $\mathcal C$,
\item any two distinct hyperedges of $\mathcal C$ that belong both to $E_A$ or both to $E_{A'}$ are disjoint.
\end{itemize}

We notice that by definition each $e_i$ contains $j_i$ and $j_{i+1}$ and no other vertex of $\mathcal C$, hence any two consecutive hyperedges $e_i$ and $e_{i+1}$ of $\mathcal C$  have a non empty intersection and none is included in the other.  So, since $\mathcal H_A$ and $\mathcal H_{A'}$ are both laminar hypergraphs, the hyperedges of $\mathcal C$ belong alternately to $E_A$ and $E_{A'}$. In particular the length of $\mathcal C$ is even.

\bigskip
    
For an integer $\ell\geq 3$ and a graph $G$, a \emph{strong even $\ell$-template partition} of $G$ is an even $\ell$-template  partition $(A, A', B, B', I)$ of $G$, such that $\mathcal H_G$ contains no hyper cycle of length greater than $2$. 

A graph $G$ which has a strong even $\ell$-template partition is called an \emph{even $\ell$-template}. We will denote by $(A,B,A',B',I,w,w')$ an even $\ell$-template $G$ with strong even $\ell$-template partition $(A,B,A',B',I)$ such that $w$ is a universal vertex of $G[A \cup B]$ and $w'$ is a universal vertex of $G[A' \cup B']$ (by Lemma \ref{evenl:ABuniv}, such $w$ and $w'$ do exist).


\medskip
\begin{lemma}
  \label{l:HinTemplate_even}
  Let $G$ be an even $\ell$-template and $(A, A', B, B', I)$ be a strong even $\ell$-template partition of  $G$. 
  Every hole $H$ of $G$ contains two principal paths of $G$ and either
  \begin{itemize}
\item[-] these two principal paths have length $\ell-1$ and they induce $H$,
\item[-]  one principal path has length $\ell-1$, the other has length $\ell -2$ and $H$ contains exactly one more vertex which belongs to $A_K \cup B \cup A'_K \cup B'$,
\item[-] both principal paths have length $\ell-2$, $H$ contains exactly two more vertices, one in $A_K\cup B$ and the other in $A'_K \cup B'$. 
\end{itemize}
In all cases $H$ has length $2\ell$, and so $G \in \mathcal C_{2\ell}$.
 \end{lemma}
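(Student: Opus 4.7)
By Lemma~\ref{evenl:abc} both $G[A\cup B]$ and $G[A'\cup B']$ are quasi-threshold graphs, hence chordal and $P_4$-free, so $H$ cannot lie in either side alone and must meet $I$. Since every vertex of $I$ has degree $2$ with both neighbors on the same principal path, once $H$ enters the interior of a principal path it is forced to traverse it entirely to both endpoints in $A$ and $A'$. As the only edges between $A\cup B$ and $A'\cup B'$ pass through $I$, the hole $H$ decomposes cyclically into full principal paths $P_{a_1},\dots,P_{a_r}$ (with $r$ even and $r\geq 2$) alternating with \emph{connectors}, which are induced paths in $G[A\cup B]$ or $G[A'\cup B']$; there are $r/2$ connectors on each side, and $P_4$-freeness forces each to have length $1$ or $2$.

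The crux is to prove $r=2$. Set $t=|\{i : a_i\leq k\}|$ and $s'=r-t$. If $t\geq 3$, three of the endpoints $v_{a_i}$ lie in the clique $A_K$; at most two of the three edges of the resulting triangle can lie on $H$, so the third is a chord. If $t=2$ and $r\geq 4$, the two $A_K$-endpoints $v_{b_1},v_{b_2}$ must share a single $A$-connector (otherwise $v_{b_1}v_{b_2}$ is a chord), but the alternating structure then places $v'_{b_1},v'_{b_2}$ on distinct $A'$-connectors, so the clique-edge $v'_{b_1}v'_{b_2}$ (in $A'_K$) becomes a chord. Contradiction in both cases.

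Now suppose $t\leq 1$ and $r\geq 4$, so $s'\geq 3$. When $t=0$, every connector has length $2$ with intermediate vertex $u_i\in A_K\cup B$ on the $A$-side (resp.\ $A'_K\cup B'$ on the $A'$-side), giving a hyperedge of $\mathcal{H}_A$ (resp.\ $\mathcal{H}_{A'}$) whose $A_S$-restriction belongs to $E_A$ (resp.\ $E_{A'}$). Together with the indices $a_1,\dots,a_r\in\{k+1,\dots,k+s\}$, these form a hyper cycle of length $r\geq 4$ in $\mathcal H_G$: both axioms translate directly into ``no chord in $H$'' (if $a_i\in e_j$ for non-adjacent $j$, then $u_j v_{a_i}$ is a chord; if $e_i\cap e_j\neq\emptyset$ for same-side hyperedges, a similar chord appears), contradicting strongness. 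When $t=1$, let $P_{b_1}$ be the $A_K$-indexed path and let $c_p,c_q$ (with $c_p\neq c_q$) be its $A$- and $A'$-cyclic neighbors. Since $s'\geq 3$, choose $c_i$ with $i\notin\{p,q\}$; chord-freeness forces $v_{b_1}v_{c_i}\notin E(G)$, so condition~\eqref{eventa5} gives $v'_{b_1}v'_{c_i}\in E(G)$, but these two vertices lie on distinct $A'$-connectors, producing a chord---contradiction.

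Thus $r=2$. According to whether $a_1,a_2\leq k$ or $>k$, the threshold-graph structure~\eqref{event:threshold} and~\eqref{eventa5} determine the lengths: both $\leq k$ yields case (i), two paths of length $\ell-1$ joined by two clique edges; exactly one $\leq k$ yields case (ii), paths of lengths $\ell-1$ and $\ell-2$ together with one edge-connector and one length-$2$ connector whose intermediate vertex lies in $A_K\cup B\cup A'_K\cup B'$; both $>k$ yields case (iii), two paths of length $\ell-2$ and two length-$2$ connectors with intermediate vertices in $A_K\cup B$ and $A'_K\cup B'$ respectively. A direct sum gives length $2\ell$ in each case, so $G\in\mathcal C_{2\ell}$. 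The delicate point is the $t=0$ hyper-cycle construction, where one must verify that intermediate vertices in $A_K$ (rather than $B$) still yield admissible elements of $E_A$ via the auxiliary edges $H_v$ of $\mathcal H_A$, and that distinctness of the hyperedges and both hyper-cycle axioms all reduce cleanly to chord-freeness of $H$.
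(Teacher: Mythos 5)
Your proposal is correct and follows essentially the same approach as the paper's proof: it decomposes $H$ into full principal paths alternating with connectors of length at most $2$ (using the $P_4$-freeness of $G[A\cup B]$ and $G[A'\cup B']$ from Lemma~\ref{evenl:abc}), uses the cliques $A_K$, $A'_K$ and condition~\eqref{eventa5} to manufacture chords when more than two paths are involved, and invokes the hyper-cycle restriction of a strong partition for the case where all involved paths have length $\ell-2$. The only noticeable difference is organizational: you first prove $r=2$ by casing on $t$ (with separate arguments for $t\geq 3$, $t=2$, $t=1$, $t=0$), whereas the paper cases directly on the length configuration and handles the ``two or more paths of length $\ell-1$'' situation more compactly by observing that any two such paths close off a $2\ell$-hole, forcing $H$ to equal it; your $t\geq 2$ subcases amount to the same chord observations.
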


\begin{proof}
Let $H$ be a hole of $G$.  By Lemma \ref{evenl:abc}, $G[A \cup B]$ contains no $P_4$ and no $C_4$, so $H$ cannot contain only vertices of
  $A\cup B$, and similarly, it cannot contain only vertices of $A'\cup B'$. So $H$
  must contain vertices of some principal path, and hence it contains an even number of
  principal paths. 
  
In case $H$ contains two principal paths of length $\ell - 1$ then their endpoints are linked by two disjoint edges (by the definition of an even template partition). So these paths form a hole of length $2\ell$ and $H$ contains no other principal path.

Assume now that  $H$ contains exactly one path $P_u=u  \ldots u'$ of length $\ell - 1$ and at least one path $P_v=v \ldots v'$ of length $\ell-2$, for some $u \in A_K$ and $v \in A_S$.  By the definition of an even template partition, there exists  exactly one edge between $P_u$ and any principal path of length $\ell-2$, hence $H$ cannot contain three  such paths and should hence contain exactly one, namely $P_v$.   Up to symmetry, we may assume that  $uv \notin E(G)$ and then the hole $H$ is
  made of the path $uP_uu'v' P_vv$ of
  length $2\ell -2$ and a path $Q = u\dots v$ of $G[A\cup B]$.  By
  Lemma \ref{evenl:abc}, $Q$ has length at most~2  and since $uv\notin E(G)$, we get that $Q=ubv$ for some $b\in A\cup B$.  So $H$ has length $2\ell$ and since $A_S$ is a stable set in $G[A]$ we have $b\in A_K\cup B$ as claimed.
  
 It remains to consider the case where all principal paths contained in $H$ have length $\ell-2$. Assume that $H$ contains $t\ge 3$ principal paths $P_1, P_2, \ldots P_t$ associated to some $v_{j_1}, v_{j_2}, \ldots v_{j_t}\in A_S$ (note that since $t$ is even, then $t \ge 4$). By the definition of an even template partition, there exists no edge connecting these paths. Since by Lemma \ref{evenl:abc} there is no $P_4$ in $G[A \cup B]$ and in $G[ A' \cup B']$, without loss of generality we may assume that $H=v_{j_1}P_1v'_{j_1}b_1v'_{j_2}P_2v_{j_2}b_2v_{j_3}, \ldots v'_{j_t}P_tv_{j_t}b_tv_{j_1}$ where $b_i$'s with odd index belong to $A'_K\cup B'$ and those with even index belong to $A_K\cup B$.  
 Hence, to each $b_i$ we may associate the hyperedge $e_i$ of $\mathcal H_G$ corresponding to $H_{b_i} \in \mathcal H_A \cup \mathcal H_{A'}$. We claim that $\mathcal C=(j_1,e_1,j_2,..., j_t, e_t,j_1)$ is a hyper cycle of $\mathcal H_G$. Assume there exists in $\mathcal C$ an hyperedge $e_l \neq e_{i-1}, e_i$ such that $j_i \in e_l$. So, up to symmetry, $v_{j_i} \in H_{b_l}$ and then by the definition of an even template, $H$ would contain a chord $v_{j_i} b_l$, a contradiction.  So the first condition for being a hyper cycle is satisfied by $\mathcal C$. Assume now, up to symmetry, that there exist $e_i,e_l \in H_A$ that are not disjoint. Then by the definition of an even template partition, $H$ would contain a chord $b_ib_l$, a contradiction again. Hence the second condition should be satisfied and $\mathcal C$ is a hyper cycle of $\mathcal H_G$, a contradiction to the fact that $(A, A', B, B', I)$ is a strong even $\ell$-template partition of a graph $G$. 
 So, we may now conclude that $H$ contains exactly two principal paths of length $\ell-2$ and two more vertices $b_1 \in A_K \cup B$ and $b_2 \in A'_K \cup B'$. \end{proof}

\subsection{Even $\ell$-pretemplates}

 For every integer $\ell \geq 4$, an \emph{even $\ell$-pretemplate partition} of a
graph $G$ is a partition of the vertex-set of $G$ into five sets $A=A_K \cup A_S$, $B$,
$A'= A'_K \cup A'_S$, $B'$ and $I$ that satisfy the following conditions.

\begin{enumerate}[label= \arabic*.]
\item\label{b:antiC} $N(B)\subseteq A$ and $N(A\cup B) \subseteq I$. 
\item\label{b:antiCP} $N(B')\subseteq A'$ and $N(A'\cup B') \subseteq I$. 
\item\label{b:cardinality1} $|A_K|=|A'_K|=k$, $A_K = \{v_1, \dots, v_k\}$ and 
  $A' _K= \{v'_1, \dots, v'_k\}$.
  \item\label{b:cardinality2} $|A_S|=|A'_S|=s$, $A_S = \{v_{k+1}, \dots, v_{k+s}\}$ and
  $A' _S= \{v'_{k+1}, \dots, v'_{k+s}\}$ are stable sets of $G$ where  $k+s \ge 3$.
\item\label{b:path} For every $i\in \{1, \dots, k+s\}$, there exists a
  unique path $P_i$ from $v_i$ to $v'_i$ whose interior is in $I$.
\item \label{b:interI} Every vertex in $I$ has degree 2 and lies on a path from $v_i$
  to $v'_i$ for some $i\in \{1, \dots, k\}$.
\item\label{b:sp} All paths $P_1$, \dots, $P_k$ have length $\ell-1$, all paths $P_{k+1}$, \dots, $P_{k+s}$ have length $\ell-2$.
\item\label{b:cn} $G[A\cup B]$ and $G[A'\cup B']$ are both connected
  graphs. 
\item\label{b:tn} Every vertex of $B$ is in the interior of a path of
  $G[A\cup B]$ with both ends in $A$.
\item\label{b:tnP} Every vertex of $B'$ is in the interior of a path of
  $G[A'\cup B']$ with both ends in $A'$.
\end{enumerate}

We then say that $(A, A', B, B', I)$ is an even \emph{$\ell$-pretemplate
  partition} of~$G$.  

 
\begin{lemma}
  \label{evenpret}
  Let $\ell\geq 4$ be an integer and $G$ be a graph of $\mathcal C_{2\ell }$. Any even $\ell$-pretemplate partition $(A, B, A', B', I)$ of $G$ is a strong even $\ell$-template partition of $G$. \end{lemma}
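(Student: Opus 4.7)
The plan is to mirror the proof of Lemma~\ref{th:ptist} (odd pretemplate $\Rightarrow$ template) while handling the $A_K \cup A_S$ split and then to address the new strong-partition condition separately. Let $(A, A', B, B', I)$ be an even $\ell$-pretemplate partition of $G \in \mathcal C_{2\ell}$, with $A = A_K \cup A_S$ and $A' = A'_K \cup A'_S$. I proceed in three stages: (I) basic adjacency analysis giving condition~\ref{eventa5}; (II) construction of $\mathcal H, \mathcal H'$ giving the remaining template-partition conditions; (III) ruling out hyper cycles of length $> 2$ in $\mathcal H_G$.

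For (I), I would form candidate cycles from pairs of principal paths closed either by direct edges or by shortest connecting paths in $G[A \cup B]$ and $G[A' \cup B']$ (available by pretemplate~\ref{b:cn}). The $2\ell$-hole constraint then yields: (a) $A_K$ and $A'_K$ are cliques, with $v_i v_j \in E(G) \iff v'_i v'_j \in E(G)$ for $v_i, v_j \in A_K$ (the alternatives give a hole of length $\geq 2\ell + 2$); (b) for $v_i \in A_K$ and $v_{k+j} \in A_S$, exactly one of $v_i v_{k+j}, v'_i v'_{k+j}$ is an edge (both present yields a hole of length $2\ell - 1$; both absent yields a hole of length $\geq 2\ell + 1$); (c) every path of $G[A \cup B]$ with both ends in $A$ has length at most $2$. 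Together with $A_S, A'_S$ stable (pretemplate~\ref{b:cardinality2}), a case analysis on $P_4$s in $G[A]$ (the only nontrivial case being $p_2, p_3 \in A_K$, $p_1, p_4 \in A_S$, which via (b) forces edges $p'_1 p'_3, p'_2 p'_3, p'_2 p'_4$ in $G[A']$ and hence the hole $p_1 p_2 p_3 p_4 P_{p_4} p'_4 p'_2 p'_3 p'_1 P_{p_1}^{-1} p_1$ of length $2\ell + 2$) together with the trivial $C_4$- and $2K_2$-freeness shows $G[A]$ and $G[A']$ are threshold, proving~\ref{eventa5}.

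For (II), pretemplate~\ref{b:tn} with (c) makes every $x \in B$ lie on a length-$2$ path $v_i x v_j$ in $G[A \cup B]$ with $v_i v_j \notin E(G)$; hence $G[N_A(x)]$ has a unique anticonnected component of size $\geq 2$ (unique by $C_4$-freeness), defined to be $H_x$. The arguments of Claims~\ref{l:PR}, \ref{l:edgexy}, \ref{l:nonedgexy} in the proof of Lemma~\ref{th:ptist} transfer almost verbatim (with target hole lengths $2\ell \pm 1$ or $2\ell + 2$ in place of their odd counterparts) to show $H_x$ is a module of $G[A]$, and that for distinct $x, y \in B$, $xy \in E(G) \iff H_x \cap H_y \neq \emptyset$, with $H_x \subseteq H_y$ or $H_y \subseteq H_x$ in that case. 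Setting $\mathcal H := \{H_x : x \in B\}$ and symmetrically $\mathcal H'$ gives a laminar hypergraph; the universal-hyperedge clauses of~\ref{eventa6}, \ref{laminar2} in the disconnected case follow by the same maximality argument as in the odd case using connectivity of $G[A \cup B]$. Combined with Stage (I), this verifies conditions~\ref{eventa1}-\ref{eventempe6}.

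For (III), suppose $\mathcal C = (j_1, e_1, \ldots, j_t, e_t, j_1)$ is a hyper cycle of length $t \geq 4$ in $\mathcal H_G$. By alternation, each $e_i$ with $i$ odd comes from a hyperedge of $\mathcal H_A$ whose witness is some $c_i \in B \cup A_K$ adjacent to both $v_{j_i}$ and $v_{j_{i+1}}$ (as every hyperedge of $\mathcal H_A$ containing $A_S$-vertices is either $H_x$ with $x \in B$ or $H_v$ with $v \in A_K$); symmetrically for even $i$ on the $A'$-side with witness $c'_i \in B' \cup A'_K$. Concatenating these with the principal paths $P_{j_1}, \ldots, P_{j_t}$ (each of length $\ell - 2$) gives a closed walk $C$ of $t \ell$ edges. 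Pretemplate~\ref{b:antiC}, \ref{b:antiCP} and the degree-$2$ condition~\ref{b:interI} exclude cross-side and internal-path chords; same-side-hyperedge disjointness combined with the module structure ($v_{j_r} \in H_{c_l}$ would force $j_r \in e_l$, forbidden by the hyper-cycle definition) excludes chords of the form $c_l v_{j_r}$ for $r \neq l, l+1$ and chords between two same-side witnesses at least one of which is in $B \cup B'$. The only remaining chord is $c_l c_{l+2}$ when both witnesses lie in $A_K$ (automatic in a clique); in that case the chord cuts $C$ into a sub-cycle of length $2\ell - 1$ (namely $c_l v_{j_{l+1}} P_{j_{l+1}} v'_{j_{l+1}} c'_{l+1} v'_{j_{l+2}} P_{j_{l+2}}^{-1} v_{j_{l+2}} c_{l+2} c_l$), which by the same chord analysis is itself chordless and hence a hole of odd length, contradicting $G \in \mathcal C_{2\ell}$. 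Absent such a chord, $C$ itself is a hole of length $t\ell \geq 4\ell > 2\ell$, again contradicting $G \in \mathcal C_{2\ell}$.

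The main obstacle is the chord analysis of Stage (III): the unavoidable edges between two $A_K$-witnesses (or two $A'_K$-witnesses) require the sub-cycle argument to recover a hole of forbidden length, a technical complication that has no analogue in the odd case.
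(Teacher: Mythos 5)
Your proposal follows the same overall architecture as the paper's proof: mirror the odd pretemplate argument for conditions \ref{eventa1}--\ref{eventempe6} and then address the hyper-cycle condition separately. Stages (I) and (II) match the paper's claims (the paper's argument for $P_4$-freeness of $G[A]$ is a little slicker — it just applies claim (c) directly to a $P_4$, rather than running a case analysis and building an auxiliary hole — but the two are equivalent).

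The one place you part ways with the paper, and where I think you've gone astray, is your Stage (III) and the ``main obstacle'' you flag at the end. You claim that when the consecutive same-side witnesses $c_l, c_{l+2}$ both lie in $A_K$ the chord $c_l c_{l+2}$ is ``automatic in a clique'' and requires a new sub-cycle argument with no odd-case analogue. But this configuration cannot occur. The hyper-cycle definition forces the $\mathcal H_G$-traces $e_l, e_{l+2}$ to be disjoint, and laminarity of $\mathcal H_A$ (a consequence of the even analogue of Lemma~\ref{l:abc}) then forces the full $\mathcal H_A$-hyperedges $H_{c_l}$ and $H_{c_{l+2}}$ to be disjoint in $A$. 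However, in a threshold graph two adjacent vertices always have intersecting down-sets: if $c_l \leq_{G[A]} c_{l+2}$ then $c_l \in N[c_{l+2}]$ and $c_l \leq c_{l+2}$, so $c_l \in H_{c_{l+2}} \cap H_{c_l}$. Since $A_K$ is a clique, $c_l c_{l+2} \in E(G)$ always, so disjointness of $H_{c_l}$ and $H_{c_{l+2}}$ is impossible, and the case you single out is vacuous. This is exactly what the paper compresses into one sentence: ``The first case is impossible since, by definition of a hyper cycle, the hyperedges $H_{b_i}$ are all disjoint.'' Your sub-cycle argument is not wrong (the $(2\ell-1)$-cycle you exhibit is indeed chordless under the same analysis and yields a hole of odd length), but it is dead code, and presenting it as the crux of the even case is misleading: the paper's chord analysis for the first case is in fact no harder than that of the second and third cases, once you observe that disjointness of the $e_i$'s in $\mathcal H_G$ upgrades, via laminarity, to disjointness of the full $\mathcal H_A$-hyperedges.
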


\begin{proof}

%
\begin{claim}
  \label{evenl:comp}
  For all distinct $i, j \in \{1, \dots, k\}$, $v_iv_j\in E(G)$ and
  $v'_iv'_j\in E(G)$. In particular, $A_K$ and  $A'_K$ are cliques of $G$.
\end{claim}

\begin{proofclaim}
  Assume that at least one of $v_iv_j, v'_iv'_j \notin E(G)$. By condition \ref{b:cn} in the definition of an even pretemplate partition
  there exist  a path between $v_i$ and $v_j$ in $ G[A \cup B] $ and a
  path between $v'_i$ and $v'_j$ in $G[A' \cup B']$.  Together with $P_i$ and  $P_j$, these paths form a hole of length at
  least $2\ell +1$, a contradiction.\end{proofclaim}

\begin{claim}
  \label{evenexaone}
  For all distinct $i \in \{1, \dots, k\}$ and $j \in \{k+1, \dots, k+s\}$, exactly one of $v_iv_j$ and
  $v'_iv'_j$ is an edge of  $G$. 
\end{claim}

\begin{proofclaim}
  By condition \ref{b:cn} in the definition of an even pretemplate partition
  there exist  a path between $v_i$ and $v_j$ in $ G[A \cup B] $ and a
  path between $v'_i$ and $v'_j$ in $G[A' \cup B']$.  Together with the principal paths $P_i$ (of length $\ell-1)$ and  $P_j$ (of length $\ell-2)$, these paths form a hole which will be of length $2\ell$ if and only if one of the paths is of length $1$ and the other path is of length $2$.
 \end{proofclaim}

\begin{claim}
  \label{even:length2}
  Every path of $G[A\cup B]$ with both ends in $A$ is of length at
  most~2. The same holds for a path of $G[A'\cup B']$ with both ends in $A'$.
\end{claim}

\begin{proofclaim}
  Assume on the contrary that there exists a path $P$ of length at least $3$ in $G[A\cup B]$ with both ends $v_i, v_j$ in
  $A$. Then by \eqref{evenl:comp} and condition \ref{b:sp} of an even pretemplate partition, at least one of $P_i,P_j$ is of length $\ell-2$, say $P_j$. If $P_i$ is of length $\ell-1$ then by \eqref{evenexaone} $v'_i v'_j \in E(G)$ and $P$, $P_i$, $P_j$ would induce a hole of length at least~$2\ell +1$, a contradiction. Hence $P_i$ and $P_j$ are  both of length $\ell-2$ and by conditions \ref{b:cardinality2}  and \ref{b:cn} of an even pretemplate partition, $P$, $P_i$, $P_j$ and any
  path between  $v'_i$ and $v'_j$ form a hole of length at least~$2\ell +1$, a
  contradiction again. 
  The proof for $G[A'\cup B']$ is similar.
  \end{proofclaim}

\begin{claim}
  \label{evenl:A}
  $G[A]$ is a threshold graph. The same holds for  $G[A']$.
\end{claim}

\begin{proofclaim}
  $G[A]$ is obviously $C_4$-free.  By \eqref{evenl:comp} and condition \ref{b:cardinality2} of an even pretemplate partition, $A$ is partitioned into a clique and a stable set of $G$,  so by Theorem \ref{th:split} $G[A]$ is $2K_2$-free. By~\eqref{even:length2},
  $G[A]$ is $P_4$-free.  So $G[A]$ is ($P_4$, $C_4$, $2K_2$)-free and
  is therefore a threshold graph. The proof for $G[A']$ is similar.
\end{proofclaim}

We now study the structure of $G[B]$ (respectively $G[B']$) and its relation with $G[A]$ (respectively $G[A']$).

\begin{claim}
  \label{evenl:twoInA}
  For every vertex $x \in B$, $G[N_A(x)]$ has a unique anticonnected
  component of size at least~2. The same holds for $G[N_{A'}(x)]$ when $x$ is any vertex in $B'$.

\end{claim}

\begin{proofclaim}
  By condition \ref{b:tn} of an even pretemplate partition, $x$ is in the interior of a path
  $P= v_i\dots v_j$ of $G[A\cup B]$ with both ends in $A$.
  By~\eqref{even:length2}, $P$ has length~2, so $x$ is adjacent to $v_i$
  and $v_j$.  Hence $G[N_A(x)]$ has an anticonnected component of size
  at least~2.  It is unique, for otherwise $G[A]$ would contain a $C_4$.
\end{proofclaim}

For all $x\in B$ (respectively $B'$), we define $H_x$ to be the anticonnected component
of $G[N_A(x)]$ (respectively $G[N_A'(x)]$) of size at least~2 whose existence follows
from~(\ref{evenl:twoInA}).

\begin{claim}
  \label{evenl:PR}
  For every $x$ in $B$ (respectively $B'$), $H_x$ is a module of $G[A]$ (respectively $G[A']$).  
\end{claim}

\begin{proofclaim}
 Let $x \in B$. If the claim does not hold, since $H_x$ is by definition anticonnected, there
  exist $v_h \in A \sm H_x$ and non-adjacent $v_i, v_j\in H_x$ such
  that $v_hv_i\in E(G)$ and $v_hv_j\notin E(G)$.  Note that
  $xv_h\notin E(G)$ because otherwise, $v_h$ would be in $H_x$. Then $v_hv_ixv_j$ is a path of length $3$, a contradiction to \eqref{even:length2}. The similar proof holds for $x$ in $B'$.
\end{proofclaim}

\begin{claim}
  \label{evenl:edgexy}
  If $xy$ is an edge of $G[B]$ or $G[B']$, then $H_x \subseteq H_y$ or
  $H_y\subseteq H_x$.  
\end{claim}

\begin{proofclaim}
  Let $xy$ be an edge of $G[B].$ Up to symmetry, we may assume that $N_A(x) \subseteq N_A(y)$, for
  otherwise vertices $v_i\in N_A(x)\sm N_A(y)$ and  $v_j\in
  N_A(y)\sm N_A(x)$ either form a $C_4$ with $x$ and $y$ or a contradiction to \eqref{even:length2}.

  By~(\ref{evenl:twoInA}), $G[N_A(y)]$ has only one anticonnected component of size
  at least~2, namely $H_y$.  Since $H_x$ is anticonnected, has size
  at least~2 and is included in $N_A(y)$, it must be included in
  $H_y$. The similar proof holds for an edge  $xy$ of $G[B'].$
 \end{proofclaim}

 \begin{claim}
   \label{evenl:nonedgexy}
   If $x$ and $y$ are non-adjacent vertices of $B$ or $B'$ then $H_x$ and
   $H_y$ are disjoint.
\end{claim}

\begin{proofclaim}
  On the contrary, suppose that $x$ and $y$ are nonadjacent vertices of $B$ but
  there exists a vertex $v \in H_x \cap H_y$.  Since $H_x$ is anticonnected and
  of size at least~2, there exists $v_i\in H_x$ non-adjacent to $v$. Note that
  $v_iy\notin E(G)$, for otherwise $x$, $y$, $v_i$ and $v$ form a $C_4$.
  Similarly, there exists a vertex $v_j\in H_y$ that is non-adjacent to $v$ and
  to $x$.  If $v_iv_j\in E(G)$, then $\{x, y, v, v_i, v_j\}$ induces a $C_5$, a
  contradiction.  Otherwise, $\{x, y, v, v_i, v_j\}$ induces a $P_5$, a
  contradiction to \eqref{even:length2}. The proof for $x,y \in B'$ is similar.
\end{proofclaim}

We are now ready to define the hypergraphs $\mathcal H$ and $\mathcal H'$.  For every
$x\in B$ (respectively $B'$), we defined a set $H_x\subseteq A$ (respectively $H_x\subseteq A'$).   From~\eqref{evenl:edgexy}
and~\eqref{evenl:nonedgexy}, the sets $H_x$ for $x\in B$ form a laminar
hypergraph $\mathcal H$ (with vertex set $A$).  Symetrically, the
sets $H_x$ for $x\in B'$ form a laminar hypergraph $\mathcal H'$
(with vertex set $A'$).

 \begin{claim}
   \label{even6B}
   If $G[A]$ is not connected then at least one hyperedge of $\mathcal H$ contains all vertices of $A$.
\end{claim}

\begin{proofclaim} 
Assume $G[A]$ is not connected. By  \eqref{evenl:A}, $G[A]$ is a threshold graph, and then by Theorem~\ref{th:threshold} it contains an isolated vertex $v_i$. By the definition of an even pretemplate partition, $G[A \cup B]$ is connected and $|A|\geq 3$, so there exists a path $P$ in
$G[A\cup B]$ from $v_i$ to a vertex $u\in A\sm\{v_i\}$. By \eqref{even:length2} and since $v_i$ has no neighbor in $A$, we have that $P=uyv_i$ where $y\in B$. So, $H_y$ contains $v_i$. We may
therefore consider the hyperedge $W$ of $\mathcal H$ that contains $v_i$
and that is inclusion-wise maximal w.r.t.\ this property.
If there exists $v_j\in A \sm W$, since
$v_jv_i\notin E(G)$, we deduce as above that $\mathcal H$ has a
hyperedge $Z$ that contains $i$ and $j$.  Because of $v_j$,
$Z\subseteq W$ is impossible; because of $v_i$, $W\cap Z=\emptyset$ is
impossible; and because of the maximality of $W$, $W\subsetneq Z$ is
impossible.  Hence, $W$ and $Z$ contradict $\mathcal H$ being
laminar.  So $W = A$.

  \end{proofclaim}
  
  At this point we can easily verify that $(A, B, A', B', I)$ is an even $\ell$-template partition of $G$:

- conditions \eqref{eventa1} \eqref{eventa2} \eqref{eventa3} and \eqref{eventa4} of an even template partition are satisfied because of conditions \ref{b:cardinality1}  \ref{b:cardinality2} \ref{b:path} \ref{b:interI} and \ref{b:sp} in the definition of an even pretemplate partition,

- \eqref{eventa5} of an even template partition is a consequence of condition \ref{b:cardinality2} of an even pretemplate partition and \eqref{evenl:comp}, \eqref{evenexaone}, \eqref{evenl:A},

- \eqref{eventa6}, \eqref{laminar2}  and \eqref{b:chooseB} of even template partition is a consequence of \eqref{evenl:twoInA}, \eqref{evenl:PR}, \eqref{evenl:edgexy}, \eqref{evenl:nonedgexy}, 

- \eqref{b:last} and \eqref{eventempe4} of an even template partition comes from \eqref{evenl:edgexy} and \eqref{evenl:nonedgexy},

- \eqref{b:makeit} and \eqref{eventempe6} of an even template partition follows easily from our previous results :
By definition of $H_x$, for every $x$ in $B$,
$N_A(x) \subseteq N_A[H_x]$.  Suppose that there exists
$u\in N_A[H_x]\sm N_A(x)$.  Since by~(\ref{evenl:PR}) $H_x$ is a module, 
it follows from Lemma~\ref{l:nxcc} that $u$ is complete to $H_x$, so
$x$ and $u$ together with two non-adjacent vertices from $H_x$ induce
a $C_4$, a contradiction. Hence, $N_A(x) = N_A[H_x]$ and
condition~\eqref{b:makeit} of an even template partition is satisfied. The proof for \eqref{eventempe6} is similar.


It remains to prove the following.


%
%
%
%

\begin{claim}
$\mathcal H_G$ contains no hyper cycle of length greater than $2$.
\end{claim}

\begin{proofclaim}
Suppose on the contrary that $\mathcal C=(j_1,e_1,j_2,..., j_t, e_t,j_1)$ is a hyper cycle of $\mathcal H_G$ of length $t > 2$. Without loss of generality, each $e_i$ with odd index belongs to $E_A$, and since it contains $j_i$ and $j_{i+1}$ such that $v_{j_i},v_{j_{i+1}}\in A_S$ we have that $e_i$ is the set of indices of the vertices of $H_{b_i}$ for some $b_i \in A_K \cup B$. Similarly each $e_i$ with even index belongs to $E_{A'}$, contains $j_i$ and $j_{i+1}$ such that $v'_{j_i}$ and $v'_{j_{i+1}} \in A'_S$ and corresponds to $H_{b'_i}$ for some $b'_i \in A'_K \cup B'$. Hence $C=v_{j_1}b_1v_{j_2}P_{v_{j_2}}v'_{j_2}b'_2 v'_{j_3}P_{v_{j_3}}v_{j_3} \ldots v'_{j_t}b'_t v'_{j_1}P_{v_{j_1}}v_{j_1}$ is a cycle in $G$ of length $t\ell>2 \ell$.

By definition of an even template partition, there are no edges between two distinct $P_i$'s contained in $C$, and no edges between vertices in $A_K \cup B$ and vertices in $A'_K \cup B'$. So the only  chords $C$ could contain are:

- $b_ib_j$ for some distinct $i,j \in \{1,2, \ldots, t\}$ of same parity, or 

- $v_{j_l}b_i$ for some odd $i \in \{1,2, \ldots, t\}$ and $l\neq i, i+1$ (where $t+1=1$), or 

- $v'_{j_l}b_i$ for some even $i \in \{1,2, \ldots, t\}$ and $l\neq i, i+1$ (where $t+1=1$). 

\medskip

The first case is impossible since, by definition of a hyper cycle, the hyperedges $H_{b_i}$ are all disjoint. The two other cases cannot occur because else $j_l$ would be contained in the hyperedge $e_i$ where $i\neq j_{l-1}, j_l$, a contradiction to the definition of a hyper cycle.

So $C$ is a hole which has length more than $2\ell$, a contradiction to $G\in \mathcal C_{2\ell }$.  \end{proofclaim}

This ends the proof of Lemma \ref{evenpret}.
\end{proof}

Notice that as a corollary of Lemma \ref{evenpret} we get that if a graph $G$ in $\mathcal C_{2\ell }$ has an even $\ell$-pretemplate partition then it is an even $\ell$-template and any  of its even $\ell$-pretemplate partition is a strong even $\ell$-template partition.

\bigskip

We will need the following analog of Lemma \ref{l:wTwins}.

\begin{lemma}
  \label{evenl:wTwins}
  Let $G=(A, B, A', B', I,w,w')$ be an even $\ell$-template.  Two vertices $x$ and $y$ of $G$ are twins if
  and only if $x, y\in B$ and $H_x=H_y$, or $x, y\in B'$ and
  $H'_x = H'_y$.
\end{lemma}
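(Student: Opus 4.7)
The plan is to mirror the strategy used for the odd case in Lemma~\ref{l:wTwins}. The ``if'' direction is a direct computation from the template definition. The ``only if'' direction reduces to showing that no vertex of $A \cup I \cup A'$ can have a twin, by producing, for each such $v$, two neighbors $a,b \in N(v)$ with $N[a]\cap N[b]=\{v\}$: any twin $y$ of $v$ would lie in $N[a]\cap N[b]$, forcing $y=v$.

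For the ``if'' direction, if $x,y\in B$ with $H_x=H_y$, then condition~\eqref{b:last} gives $xy\in E(G)$ (since $H_x\cap H_y=H_x\neq\emptyset$) and determines $N_B(x)=N_B(y)$ through hyperedge intersection; condition~\eqref{b:makeit} gives $N_A(x)=N_{G[A]}[H_x]=N_{G[A]}[H_y]=N_A(y)$. Since the exhaustive list of edges incident to $B\cup B'$ shows that $B$ is anticomplete to $A'\cup B'\cup I$, we get $N[x]=N[y]$. The $B'$ case is symmetric.

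For the ``only if'' direction, I will produce $a,b$ as follows. If $v\in I$, take $a,b$ to be the two neighbors of $v$ on its principal path; since this path is induced, $a\not\sim b$, and any common neighbor $w\neq v$ of $a,b$ would form a $C_4$ together with $a,v,b$, contradicting $G\in\mathcal C_{2\ell}$. If $v\in A$, take $a$ to be the $I$-neighbor of $v$ on its principal path; by condition~\eqref{eventa4}, $N[a]=\{a,v,a^+\}$ with $a^+\in I\cup A'$. For $b$, pick any neighbor of $v$ in $A\cup B$: one exists because either $v$ has a neighbor in $A$, or $v$ is isolated in $G[A]$ (making $G[A]$ disconnected, so by condition~\eqref{eventa6} there is a hyperedge $W$ containing all of $A$, and $v_W\in B$ is adjacent to $v$ by~\eqref{b:makeit}). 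Then $b\not\sim a$ (as $a\in I$ has only neighbors $v$ and $a^+$, both outside $A\cup B$), and $b\not\sim a^+$ either: if $a^+\in I$ its two neighbors lie in $I\cup A'$, and if $a^+\in A'$ (only possible when $\ell=4$ and $v\in A_S$) then $b\not\sim a^+$ because $B$ is anticomplete to $A'$ by the exhaustive edge description, and $A$ is anticomplete to $A'$ by the same reasoning (no edges between $A$ and $A'$ are allowed by the template structure since all inter-block edges are described explicitly and principal paths go through $I$). The case $v\in A'$ is symmetric.

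Finally, having ruled out twins in $A\cup I\cup A'$, a twin pair $\{x,y\}$ lies in $B\cup B'$. The case $x\in B$, $y\in B'$ is impossible since twins are adjacent but $B$ is anticomplete to $B'$. So both are in $B$ or both in $B'$. If $x,y\in B$, then $N_A(x)=N_A(y)$; by Lemma~\ref{evenl:recoverHx}, $H_x$ is the unique anticomponent of $G[N_A(x)]$ of size $\geq 2$, so $H_x=H_y$, and symmetrically in $B'$. The only mildly delicate point is the verification that $A$ is anticomplete to $A'$ and $B$ is anticomplete to $A'\cup B'\cup I$; this is the main obstacle, and I will handle it by appealing to the exhaustive description of edges incident to $B\cup B'$ in conditions~\eqref{b:last}--\eqref{eventempe6}, together with condition~\eqref{eventa4} forcing degree~$2$ for interior vertices, leaving no room for ``extra'' edges between $A$ and $A'$.
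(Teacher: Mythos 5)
Your proof is correct and follows essentially the same strategy as the paper's: verify the "if" direction directly from the template conditions, and for the "only if" direction show every vertex $v$ of $A\cup I\cup A'$ has two neighbors $a,b$ with $N[a]\cap N[b]=\{v\}$, which precludes $v$ having a twin. The only cosmetic difference is that the paper takes $b=w$ (exploiting universality of $w$ in $G[A\cup B]$) rather than "any neighbor in $A\cup B$", which lets it skip your case analysis on whether $v$ is isolated in $G[A]$ and invokes Lemma~\ref{evenl:ABuniv} rather than condition~\eqref{eventa6}; your more explicit verification of $N[a]\cap N[b]=\{v\}$ (including the boundary case $a^+\in A'$ when $\ell=4$) is sound and fills in what the paper calls obvious.
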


\begin{proof}
  If $x, y\in B$ and $H_x=H_y$, or $x, y\in B'$ and $H'_x=H'_y$, then
  $x$ and $y$ are obviously twins. 
  
We claim that for all $x\in A\cup I\cup A'$, there exist two
  vertices $a,b\in N_G(x)$ such that $N[a]\cap N[b]=\{x\}$. If
  $x\in I$, choose $a$ and $b$ to be the only two neighbors of $x$. If
  $x\in A$, then let $a$ be the neighbor of $x$ in $I$ and let $b=w$ if $x\neq w$,  else let $b$ be any
  vertex of $A\sm \{x\}$ (we remind that $w$ is by definition a vertex universal in $G[A \cup B]$). In all cases, by definition of an even
  template partition, $N_G[a]\cap N_G[b]=\{x\}$.  The proof is similar when
  $x\in A'$.  So, $x$ has no twin in $G$.
\end{proof}

An even $\ell$-partition $(A, B, A', B', I)$ of an even
$\ell$-template $G$ is \emph{proper} if all universal vertices of $G[A \cup B]$ (respectively $G[A'\cup B']$) are in $B$ (respectively $B'$). 

\begin{lemma}
  \label{l:wProperEven}
  Every twinless even $\ell$-template $G=(A, B, A', B', I,w,w')$ with a proper even $\ell$-partition satisfies the following:
  \begin{itemize}
\item $w\in B$, $w$ is the unique universal vertex of $G[A \cup B]$ and
  $G[A]$ contains at least one isolated vertex $u \in A_S$,

\item $w'\in B'$, $w'$ is the unique universal vertex of $G[A' \cup B']$ and
  $G[A']$ contains at least one isolated vertex $v' \in A'_S$,

\item $\vert A_S \vert \ge 2$.
  \end{itemize}
\end{lemma}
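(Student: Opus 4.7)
I would begin by invoking Lemma~\ref{evenl:ABuniv} to obtain vertices $w$ and $w'$ universal respectively in $G[A\cup B]$ and $G[A'\cup B']$. The properness of the partition forces $w\in B$ and $w'\in B'$. The key enabling observation, which underlies everything else, is that any vertex $v\in A$ that is universal in $G[A]$ must be universal in $G[A\cup B]$: for each $x\in B$, $H_x$ is an anticonnected module of $G[A]$ of size at least~$2$, so $v\notin H_x$ (otherwise $v$ would be isolated in $\overline{G[H_x]}$, contradicting anticonnectedness), hence $v\in N_A(H_x)$ and by property~\ref{adjAB} of even templates $vx\in E(G)$. Combined with properness, this tells us that neither $G[A]$ nor $G[A']$ contains a universal vertex.

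\textbf{Isolated vertices and uniqueness.} Theorem~\ref{th:threshold} applied to the threshold graph $G[A]$ then forces the existence of an isolated vertex $u$. If $u\in A_K$, the clique structure of $A_K$ forces $|A_K|=1$, and property~\eqref{eventa5} converts the non-adjacencies between $u$ and every $v_i\in A_S$ into adjacencies between the unique vertex $u'\in A'_K$ and every vertex of $A'_S$, making $u'$ universal in $G[A']$, contradiction. Therefore $u\in A_S$, and the symmetric argument yields $v'\in A'_S$ isolated in $G[A']$. For uniqueness of $w$, I would note that since $G[A]$ has no universal vertex, applying Theorem~\ref{th:threshold} to $\overline{G[A]}$ (also a threshold graph) shows $\overline{G[A]}$ is connected, so $G[A]$ is anticonnected; being of size at least~$3$, it is itself the unique anticonnected component of $G[N_A(w)]=G[A]$ of size at least~$2$, and Lemma~\ref{evenl:recoverHx} forces $H_w=A$. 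Any other universal vertex $w_1\in B$ of $G[A\cup B]$ would satisfy $N_A(w_1)=A$ and the same analysis would give $H_{w_1}=A=H_w$, so Lemma~\ref{evenl:wTwins} would make $w$ and $w_1$ twins, contradicting twinlessness. Uniqueness of $w'$ is symmetric.

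\textbf{Lower bound $|A_S|\geq 2$.} For the final item, I would rule out $|A_S|\leq 1$ by cases. If $|A_S|=0$, then $A=A_K$ is a clique on $k+s\geq 3$ vertices and every vertex is universal in $G[A]$, contradicting the opening paragraph. If $|A_S|=1$, then the unique vertex $u\in A_S$ is isolated in $G[A]$ by the previous paragraph, hence anticomplete to $A_K$; by property~\eqref{eventa5} the unique vertex $u'\in A'_S$ is complete to $A'_K$, and since $A'_S=\{u'\}$ this makes $u'$ universal in $G[A']$, contradiction. The main technical pressure throughout is the interplay between non-adjacencies in $G[A]$ and adjacencies in $G[A']$ coming from property~\eqref{eventa5}: once this translation is set up, the remaining work is a short chain of invocations of Theorem~\ref{th:threshold}, Lemma~\ref{evenl:recoverHx}, and Lemma~\ref{evenl:wTwins}.
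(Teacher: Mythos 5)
Your proof is correct and follows essentially the same path as the paper's: properness forces $w\in B$, that in turn prevents any universal vertex in $G[A]$, Theorem~\ref{th:threshold} then yields an isolated vertex, twinlessness gives uniqueness, and a pigeonhole argument on $A_S$, $A'_S$ via property~\eqref{eventa5} gives $|A_S|\ge 2$. Two small points where you are more careful than the paper are worth flagging. First, the paper never explicitly argues that the isolated vertex $u$ lies in $A_S$ (it only becomes apparent in the $|A_S|\le 1$ case), while you prove it directly by observing that $u\in A_K$ would force $A_K=\{u\}$ and then property~\eqref{eventa5} would make $u'$ universal in $G[A']$; this is a genuine filling-in of a terse step. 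Second, for uniqueness of $w$ you go via Lemma~\ref{evenl:recoverHx} to deduce $H_w=A$ and then invoke Lemma~\ref{evenl:wTwins}; the paper takes the more direct route that any two universal vertices of $G[A\cup B]$ lying in $B$ have identical closed neighborhoods in $G$ (since $N(B)\subseteq A$) and hence are twins. Your detour through $H_w=A$ is longer but harmless, and it does incidentally establish $H_w=A$ which the paper uses elsewhere.
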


\begin{proof}
Since the partition is proper, all universal vertices of $G[A \cup B]$ belong to $B$ and since $G$ is twinless, $B$ contains a unique vertex which is universal in $G[A \cup B]$. Hence $w$, which is by definition of $G$ a universal vertex of $G[A\cup B]$, belongs to $B$ and $G[A]$ contains no universal vertex since else, by the definition of an even template, this vertex would be universal in $G[A \cup B]$.
Then, by Theorem \ref{th:laminar}, $G[A]$ contains at least one isolated vertex say $u$. The proof for $w'$ is similar and there exists a vertex $v'$ isolated in $G[A']$. It remains to prove the third assertion. Assume that $\vert A_S \vert \le1$. Then, since $A$ contains at least three vertices, it should be that $A_S=\{u\}.$ Similarly we obtain that $A'_S=\{v'\}$, so $u'=v'.$ Then by the definition of an even template $v'$ is not isolated in $G[A']$, a contradiction.
\end{proof}

\begin{lemma}
  \label{l:TProperEven}
  For every integer $\ell \geq 4$, every even $\ell$-template
  $G$ admits an even $\ell$-partition which is proper.
\end{lemma}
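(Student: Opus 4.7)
The strategy parallels the proof of Lemma~\ref{l:TProper} for the odd case. Among all even $\ell$-template partitions of $G$, I would select one that maximizes the measure $M = m_A + m_{A'}$, where $m_A$ (respectively $m_{A'}$) denotes the number of isolated vertices of $G[A]$ (respectively $G[A']$). The observation underlying this choice is that the partition is proper if and only if $m_A \geq 1$ and $m_{A'} \geq 1$: since $G[A]$ and $G[A']$ are threshold graphs on at least two vertices (by condition~\eqref{eventa1}), Theorem~\ref{th:threshold} gives that each has a universal or an isolated vertex, and in a graph on at least two vertices these conditions cannot both hold; hence ``no universal vertex in $G[A]$ or $G[A']$''---which characterizes properness---is equivalent to $m_A \geq 1$ and $m_{A'} \geq 1$.

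Suppose for contradiction that the maximizing partition is not proper. Up to swapping $(A, B)$ and $(A', B')$, we may assume $m_A = 0$, so $G[A]$ contains a universal vertex $v_1$, which must lie in $A_K$ since $A_S$ is stable and $|A| \geq 3$. The plan is then to exhibit a new even $\ell$-pretemplate partition of $G$ with strictly larger $M$, reaching a contradiction via Lemma~\ref{evenpret}. I would perform a rotation along the principal path $P_1 = v_1 v_1^+ \dots v'^+_1 v'_1$, analogous to the one used in the odd proof: promote $v'^+_1 \in I$ to take $v'_1$'s role in $A'$, move $v'_1$ into $B'$ as a new hyperedge-vertex whose hyperedge is the new $A'$ (which is anticonnected because $v'^+_1$ becomes isolated in the new $G[A']$), and rearrange the $A$-side accordingly. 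A count analogous to the one in the odd case should show that $v'^+_1$ is a new isolated vertex of the new $G[A']$, and the ``top'' vertex of the domination ordering of $G[A] \setminus v_1$ yields a second new isolated vertex of the new $G[A']$, giving $M$ strictly larger than before.

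The principal obstacle is adapting the rotation to the $A$-side. In the odd case, Lemma~\ref{l:ABuniv} forces the universal vertex of $G[A \cup B]$ to lie in $B$ whenever $G[A]$ is anticonnected, and this vertex $w$ naturally replaces $v_1$ in the new $A$; in the even case, however, Lemma~\ref{evenl:ABuniv} gives no such constraint, and $v_1$ itself may be the only universal vertex of $G[A \cup B]$. Hence a case split seems unavoidable: when $G[A \cup B]$ admits a universal vertex in $B$, mimic the odd rotation directly by using this $B$-vertex in place of $v_1$; when the only universal vertex of $G[A \cup B]$ is $v_1$ itself, the rotation must shorten $P_1$ to length $\ell - 2$ and reassign $v_1$ from $A_K$ to $A_S$ (or, dually, a suitable new vertex of the new $A_S$ must be introduced). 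A secondary technicality is verifying that the new partition satisfies condition~\eqref{eventa5} (the edge-parity condition between $A_K \cup A_S$ and $A'_K \cup A'_S$) and that the resulting hypergraph $\mathcal{H}_G$ has no hyper cycle of length greater than~$2$, so that Lemma~\ref{evenpret} upgrades the new pretemplate to a strong even $\ell$-template partition; these checks are routine but are where the bookkeeping is heaviest.
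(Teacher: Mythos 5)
Your maximality framework is a legitimate alternative to the paper's direct argument, and your characterization of properness as $m_A\ge 1$ and $m_{A'}\ge 1$ is correct (a threshold graph on at least two vertices cannot have both an isolated and a universal vertex, and a universal vertex of $G[A]$ is automatically universal in $G[A\cup B]$ by conditions \ref{modanticonn} and \ref{adjAB}). However, the rotation you describe does not work, and the confusion runs deeper than bookkeeping.

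Your main rotation operates at the $A'$-end of $P_1$: move $v'_1$ into $B'$ with hyperedge equal to the new $A'$, and promote $v'^{+}_1$ into $A'$. This is the odd-case rotation, and it hinges on $v'_1$ being universal in $G[A'\cup B']$ (which, in the odd case, is forced because $v_1$ is isolated in $G[A]$). In the even case the relationship between the two sides is governed by condition~\eqref{eventa5}: since $v_1$ is universal in $G[A]$ it is complete to $A_S$, and therefore $v'_1$ is \emph{anticomplete} to $A'_S$. So $v'_1$ is not universal in $G[A']$, and you cannot place $v'_1$ in $B'$ with $H'_{v'_1}$ equal to the new $A'$: the required adjacency condition $N_{A'}(v'_1)=N_{A'}[H'_{v'_1}]$ fails for every vertex of $A'_S$. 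Your Case~1 (``mimic the odd rotation using a universal $B$-vertex in place of $v_1$'') does not resolve this, since the obstruction lives at the $A$-side, and moving an extra $B$-vertex into $A$ only creates a second universal vertex there. Your Case~2 gesture (``shorten $P_1$, reassign $v_1$ to $A_S$, or introduce a new $A_S$-vertex'') is the closest to what is actually needed, but as written it is not a construction.

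The correct rotation operates at the $A$-end and is quite different. Take $w$ universal in $G[A\cup B]$ with $w\in A_K$; let $w^+$ be its neighbour in $I$ (which exists because $\ell\ge 4$). Move $w$ from $A_K$ into $B$, promote $w^+$ from $I$ into $A_S$, and (this is the crucial use of condition~\eqref{eventa5}) move $w'$ from $A'_K$ into $A'_S$: since $w$ is complete to $A_S$, $w'$ is anticomplete to $A'_S$, so $A'_S\cup\{w'\}$ is still a stable set, and the principal path $w^+\dots w'$ now has length $\ell-2$ as required for $A_S$-to-$A'_S$ paths. Since $w^+$ has no neighbour in the new $A$, it is isolated there, so $m_A$ goes from $0$ to at least $1$ while $G[A']$ is literally unchanged as a graph and $m_{A'}$ stays the same; so $M$ strictly increases, which is what your maximality argument needs. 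Checking the new partition is an even $\ell$-pretemplate partition and invoking Lemma~\ref{evenpret} then completes the step.

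Two further remarks. First, the paper does not use maximality at all: it performs the rotation once to fix the $A$-side, observes that the $A'$-side is untouched, and then possibly performs it once more on the $A'$-side; two passes suffice. This is simpler than an extremal argument. Second, your claim that the ``top vertex of the domination ordering of $G[A]\setminus v_1$ yields a second new isolated vertex of the new $G[A']$'' transposes the odd-case bookkeeping, where one needs two isolated vertices on a single side; in the even case one isolated vertex per side is what is needed, and the count lives in $G[\mathbb A]$, not $G[\mathbb A']$.
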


\begin{proof} Let $G=(A, B, A', B', I)$ be a strong even $\ell$-partition of $G$. By Lemma \ref{evenl:ABuniv} we know that each of $G[A\cup B]$ and $G[A'\cup B']$ contains at least one universal vertex. If the lemma does not hold, we may assume up to symmetry, that there exists at least one vertex $w$ universal in $G[A\cup B]$ which is in $A$. Then from the definition of an even template it is clear that we may choose $w$ in $A_K$. 

We denote by $w_+$ the neighbor of $w$ on the principal path $P_w$ of $G$ in the partition $(A, B, A', B', I)$. Since $\ell \ge 4$, we have $w_+\in I$.
Consider now the partition $(\mathbb A_K, \mathbb A_S, \mathbb B, \mathbb A'_K, \mathbb A'_S, \mathbb B', \mathbb I)$ of $V(G)$ where   $\mathbb A_K=A_K \sm \{w\}$,  $\mathbb A_S=A_S \cup \{w_+\}$,  $\mathbb B= B \cup \{w\}$,  $\mathbb A'_K=A'_K \sm \{w'\}$, $\mathbb A'_S=A'_S \cup \{w'\}$,  $\mathbb B'=B'$, $\mathbb I=I \sm \{w_+\}$. We set $\mathbb A=\mathbb A_K \cup \mathbb A_S$ and $\mathbb A'=\mathbb A'_K \cup \mathbb A'_S$.

  Since $(A, B, A', B', I)$ is an even $\ell$-partition of $G$, it is clear that $\mathbb A_K$ and $\mathbb A'_K $ are cliques of the same cardinality. Since the only edge in $G$ between $w_+$ and $A$ is $ww_+$ we have that $\mathbb A_S=A_S \cup \{w_+\}$ is a stable set and since $w_+$ has no neighbor in $\mathbb A$, any universal vertex of $G[\mathbb A \cup \mathbb B]$ is in $\mathbb B$. As $w$ is complete to $A_S$ we get from the definition of an even template that $\mathbb A'_S=A'_S \cup \{w'\}$ is a stable set of the same cardinality as $A_S \cup \{w_+\}$. Remark that $|\mathbb A|= |A| \ge 3$. Furthermore there exists a unique path $P_{w_+}= w_+P_w w' $ between $w_+$ and $w'$ whose interior is in $\mathbb I$. This path has length $\ell-2$. It is also important to notice that $w$ is in the interior of a path $vww_+$ for any $v \in A \sm \{w\}$. Every other vertex of $\mathbb B$ is in the interior of a path of $G[\mathbb A \cup \mathbb B]$ with both ends in $\mathbb A$, path which is the same as in the initial partition since obviously this path did not contain $w$. With all these observations it is easy to conclude that $\mathbb P=(\mathbb A, \mathbb B, \mathbb A', \mathbb B', \mathbb I)$ fulfills all conditions to be a pretemplate partition of $G$. By Lemma \ref{l:HinTemplate_even} we know that $G\in \mathcal C_{2\ell }$.  So by Lemma \ref{evenpret} and the fact that $w_+$ is isolated in $G[\mathbb A]$, $\mathbb P$ is an even $\ell$-partition of $G$ such that no vertex of $ \mathbb A$ is universal in $G[\mathbb A \cup \mathbb B]$.

If $\mathbb A'$ contains no universal vertex of $G[\mathbb A' \cup \mathbb B']$ then $\mathbb P$ is a proper even $\ell$-partition of $G$, else a proof similar to the one above allows to obtain from $\mathbb P$ a proper even $\ell$-partition of $G$.
\end{proof}

\begin{lemma}
  For every integer $\ell\geq 4$, every even $\ell$-template $G$
  contains a prism or a theta.
\end{lemma}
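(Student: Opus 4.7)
The plan is a case analysis on $k$, the number of principal paths of length $\ell-1$; since $k+s\geq 3$, this splits into the regimes $k\geq 3$, $k=2$, $k=1$, and $k=0$. The case $k\geq 3$ is essentially immediate: the triangles $v_1v_2v_3\subseteq A_K$ and $v'_1v'_2v'_3\subseteq A'_K$, together with the three principal paths $P_1,P_2,P_3$ of length $\ell-1$, form a prism---the interior vertices of these paths have degree~$2$ in $G$ by condition~\eqref{eventa4}, and by the structure of the partition the only edges between two different principal paths $P_i,P_j$ are the triangle edges $v_iv_j$ and $v'_iv'_j$.

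For $k\leq 2$ the strategy exploits the duality condition~\eqref{eventa5} together with the existence, whenever $G[A]$ (resp.\ $G[A']$) is disconnected, of a universal vertex $b\in B$ (resp.\ $b'\in B'$) of $G[A\cup B]$ (resp.\ $G[A'\cup B']$); such $b,b'$ are produced by the hyperedge covering $A$ (resp.\ $A'$) forced by~\eqref{eventa6} (resp.~\eqref{laminar2}) together with the adjacency rule~\ref{adjAB}. When $k=2$, I classify each $v_{2+j}\in A_S$ by its adjacency to $\{v_1,v_2\}$: if some $v_{2+j}$ is adjacent to both, the triangle $v_1v_2v_{2+j}$ and the triangle $v'_1v'_2b'$ (which exists because $v'_{2+j}$ is then isolated in $G[A']$) together with the length-$(\ell-1)$ paths $P_1$, $P_2$, $v_{2+j}P_{2+j}v'_{2+j}b'$ form a prism, and the symmetric sub-case (some $v_{2+j}$ adjacent to neither) is analogous. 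When every $A_S$-vertex is adjacent to exactly one of $v_1,v_2$, pigeonhole (for $s\geq 2$) or direct inspection (for $s=1$ or the mixed $s=2$ case) provides three length-$\ell$ paths of the form $v_1P_1v'_1v'_2$, $v_1v_2P_2v'_2$ (possibly), and one or two $A_S$-paths $v_1v_{2+j}P_{2+j}v'_{2+j}v'_2$, forming a theta.

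When $k=1$ (so $s\geq 2$), let $T=\{j: v_1v_{1+j}\in E(G)\}$. If $|T|\geq 2$, the vertices $v'_{1+j}$ for $j\in T$ are isolated in $G[A']$, yielding a universal $b'\in B'$, and three length-$\ell$ paths $v_1P_1v'_1b'$, $v_1v_{1+j_1}P_{1+j_1}v'_{1+j_1}b'$, $v_1v_{1+j_2}P_{1+j_2}v'_{1+j_2}b'$ form a theta; the symmetric case $|T^c|\geq 2$ is identical. If $|T|=|T^c|=1$ (so $s=2$), both $G[A]$ and $G[A']$ are disconnected, giving $b,b'$ universal and a prism with triangles $v_1v_2b$, $v'_1v'_3b'$ and length-$(\ell-1)$ paths $P_1$, $v_2P_2v'_2b'$, $bv_3P_3v'_3$. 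Finally, when $k=0$ we have $s\geq 3$ and both $A,A'$ are stable of size $\geq 3$, hence $G[A]$ and $G[A']$ are disconnected; three paths $bv_{j_i}P_{j_i}v'_{j_i}b'$ of length $\ell$ (for any three indices $j_i$) form a theta.

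The main routine obstacle is to verify in each sub-case that the proposed configuration is chord-free. This is handled using the rigid structure of even templates: interior vertices of principal paths have degree~$2$, no edges run between $A$ and $A'$ outside the principal paths, $B$ is anticomplete to $B'$, $A_S$ and $A'_S$ are stable, and condition~\eqref{eventa5} ensures that $v_iv_{k+j}\in E(G)$ forces $v'_iv'_{k+j}\notin E(G)$ and conversely, repeatedly preventing unwanted chords between the two sides.
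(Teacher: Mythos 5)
Your proof is correct and follows essentially the same approach as the paper's one-line sketch: pick three vertices of $A$ (and their partners in $A'$), analyze their distribution between $A_K$ and $A_S$ together with the attachment pattern, and when the triangle or extra vertex must come from $B$ or $B'$, use a vertex universal in $G[A\cup B]$ or $G[A'\cup B']$. The only notable (and slightly cleaner) deviation is that instead of first invoking Lemma~\ref{l:TProperEven} to pass to a proper partition, you derive the needed universal $b\in B$ or $b'\in B'$ directly from conditions~\eqref{eventa6}/\eqref{laminar2} exactly when $G[A]$ or $G[A']$ is disconnected, which is precisely when your construction requires them; your write-up is substantially more detailed than the paper's ``it is easy to verify,'' which is fine.
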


\begin{proof}
By Lemma \ref{l:TProperEven}, $G$ admits a proper even $\ell$-partition $(A, B, A', B')$ of $G$ and by definition of an even template, $G$ contains three vertices $v_1, v_2$ and $v_3$ in $A$ and the corresponding
  vertices $v'_1, v'_2$ and $v'_3$ in $A'$. Considering all kinds of repartition of $v_1, v_2$ and $v_3$ in $A_K$ and $A_S$, all kinds of attachment between them respecting the rules of the partition and the fact that $B$ (respectively $B'$) contains a vertex universal in $G[A_K \cup A_S \cup B]$ (respectively $G[A'_K \cup A'_S \cup B']$) it is easy to verify that in each case we obtain either a prism or a theta.
\end{proof}

\subsection{Blowups and holes} \label{evendef_blowup}
We may define flat, solid and optional edges in even  $\ell$-template partitions exactly as in the case of odd $\ell$-template partitions (see subsection \ref{def_edges_temp}). In the following we will also use the notion of blowup and preblowup of an even template $\ell$-partition with the same definition as for  an odd $\ell$-template partition (see subsections \ref{def_blowup} and \ref{def_preblowup}).

The following lemma can be proved similarly as Lemma \ref{l:atMost1}. 

\begin{lemma}
  \label{evenl:atMost1}
  A hole $C$ in a blowup of a twinless even $\ell$-template contains at
  most one vertex in each blown up clique.
\end{lemma}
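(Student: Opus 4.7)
The plan is to mirror the proof of Lemma~\ref{l:atMost1} verbatim, since the definition of the blowup for an even template partition is identical (as a list of conditions) to the one in the odd case. In particular condition~\eqref{i:KKhalf} of the blowup, asserting that $G^*[K_u\cup K_v]$ is a half graph for every pair of blown up cliques, is available. Equivalently, the vertices of any single blown up clique $K_u$ are linearly ordered by inclusion of their closed neighborhoods in $G^*$.

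First, I would use the fact that a hole is triangle-free, so $C$ can meet any clique in at most two vertices; hence the only way the conclusion could fail is if some $K_v$ contains exactly two vertices $x,y$ of $C$. Then I would introduce the two neighbors $x'$, $y'$ of $x$ and $y$ along $C\sm\{y\}$ and $C\sm\{x\}$ respectively. By condition~\eqref{i:KKhalf} applied to $K_v$, either $N_{G^*}[x]\subseteq N_{G^*}[y]$ or $N_{G^*}[y]\subseteq N_{G^*}[x]$; in the first case $y$ is adjacent to $x'$ and $\{x,y,x'\}$ induces a triangle in $C$, while in the second case $x$ is adjacent to $y'$ and $\{x,y,y'\}$ induces a triangle in $C$. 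Either way we contradict the fact that $C$ is a hole.

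Since none of the steps use any specific odd/even parity or the specific structure of the template beyond the blowup conditions, there is no substantive obstacle: the argument is an immediate consequence of condition~\eqref{i:KKhalf}. The only thing to verify is that this condition is indeed part of the (unchanged) blowup definition for even template partitions, which is explicit in the paragraph preceding the statement.
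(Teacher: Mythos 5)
Your proof is correct and is essentially identical to the paper's argument for Lemma~\ref{l:atMost1}, which the paper then invokes for the even case exactly as you do, since the blowup conditions are unchanged. Nothing further is needed.
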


We remark that Lemmas \ref{c:FXsubY}, \ref{l:NuSolid} and \ref{l:removeFr} are valid for any odd or even template as they rely only on the definition of solid and optional edges. Hence, the  following  lemma has the same proof as Lemma  \ref{blowoddtemp} of the odd case, except that we use Lemma \ref{l:HinTemplate_even} instead of Lemma \ref{l:HinTemplate}, and Lemma \ref{evenl:atMost1} instead of Lemma \ref{l:atMost1}.

\begin{lemma} \label{bloweventemp}
  In a blowup $G^*$ of a twinless even $\ell$-template $G$, every hole
  has length $2\ell$.
\end{lemma}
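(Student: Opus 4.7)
The plan is to follow verbatim the strategy of Lemma~\ref{blowoddtemp}, substituting the even analogues where appropriate. Let $C^*$ be an arbitrary hole of $G^*$. First I would invoke Lemma~\ref{evenl:atMost1} to conclude that every blown-up clique $K_v$ meets $V(C^*)$ in at most one vertex. Then I would define $C$ to be the subgraph of $G$ induced by the set $\{v\in V(G) : K_v\cap V(C^*)\neq\emptyset\}$; by the previous observation, $|V(C)|=|V(C^*)|$.

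Next I would establish that $C^*$ is isomorphic to the graph obtained from $C$ by removing its flat and optional edges. This uses conditions~\eqref{i:bKKanti} and~\eqref{i:Kcomp} of the blowup: non-adjacent vertices of $G$ yield anticomplete cliques and solid edges yield complete cliques, so consecutive vertices of $C^*$ correspond exactly to consecutive vertices of $C$, and any solid chord of $C$ would translate into a chord of $C^*$. Hence $C$ is a cycle of $G$ of length $|V(C^*)|$ with no solid chord.

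The easy case is when $C$ is actually a hole of $G$: Lemma~\ref{l:HinTemplate_even} then gives $|V(C^*)|=|V(C)|=2\ell$ and we are done. The remaining case is when $C$ has a chord. Since all chords are necessarily optional (flat edges involve a vertex of $I$, which has degree $2$ and cannot host a chord), I would apply Lemma~\ref{l:removeFr} to produce three consecutive vertices $x,y,u$ of $C$ such that, up to symmetry, $u\in A$, $x,y\in B$, $\{u\}\subseteq H_y\subseteq H_x$, and $u$ is isolated in $H_x$; in particular $ux$ is an optional chord of $C$. The vertex $u_i\in K_u\cap V(C^*)$ has a neighbor in $K_y$ (namely the vertex of $C^*$ in $K_y$), so by condition~\eqref{i:condOpt} of blowups, $u_i$ is complete to $K_x$, producing a chord of $C^*$ — a contradiction.

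The routine work in carrying this out is verifying that Lemmas~\ref{c:FXsubY}, \ref{l:NuSolid} and \ref{l:removeFr}, which were proved in the odd setting, apply unchanged in the even setting; this is already noted in the excerpt because those lemmas depend only on the definitions of solid, flat and optional edges, which are identical in both cases. The only true substitution is the appeal to Lemma~\ref{l:HinTemplate_even} in place of Lemma~\ref{l:HinTemplate}. I do not anticipate a genuine obstacle: the argument is a direct transcription, and no new structural feature of even templates (e.g.\ the distinction between $A_K$ and $A_S$ paths, or the strongness condition on $\mathcal H_G$) needs to enter the proof, since these features are already absorbed into the length statement of Lemma~\ref{l:HinTemplate_even}.
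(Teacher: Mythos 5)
Your proposal is correct and is precisely the paper's own argument: the paper states that Lemma~\ref{bloweventemp} has the same proof as Lemma~\ref{blowoddtemp}, substituting Lemma~\ref{l:HinTemplate_even} for Lemma~\ref{l:HinTemplate} and Lemma~\ref{evenl:atMost1} for Lemma~\ref{l:atMost1}, which is exactly what you do. You also correctly identify that Lemmas~\ref{c:FXsubY}, \ref{l:NuSolid}, and~\ref{l:removeFr} carry over unchanged since they depend only on the classification of edges as solid, flat, or optional.
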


Recall that to blowup (resp. preblowup) a template, one needs to
first fix an $\ell$-partition.  If this partition is proper, the
blowup (resp. preblowup) is \emph{proper}. 
Recall also that when $G^*$ is a preblowup of a template
$G$, the \emph{domination score} of $G$ w.r.t.\ $G^*$ is defined as (where $N$ refers to the neighborhood in $G^*$):
$$
s(G, G^*) = \sum_{x\in A\cup A' \cup I}\left| \left\{ x^*\in K_x :
    N[x^*] \subseteq N[x] \right\} \right|
$$
\begin{lemma}
  \label{l:preblowup_even}
  Let $\ell \ge 3$ and let $G^*$ be a proper preblowup of an even $\ell$-template
  with $k\geq 3$ principal paths. If $G^*\in \mathcal C_{2\ell}$,
  then $G^*$ is a proper blowup of a twinless even $\ell$-template $G$
  with $k$ principal paths (in particular, $G$ is an induced subgraph
  of $G^*$).
\end{lemma}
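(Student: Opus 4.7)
The plan is to imitate the proof of Lemma~\ref{l:preblowup} from the odd case, exploiting the parallel structure of even template partitions. Among all induced subgraphs of $G^*$ that are even $\ell$-templates for which $G^*$ is a proper preblowup, I would pick $G$ maximizing the domination score $s(G,G^*)$, and denote by $(A,B,A',B',I)$ its proper even $\ell$-partition and by $(A^*,B^*,A'^*,B'^*,I^*)$ the corresponding partition of $V(G^*)$. First I would establish the analog of claim~(\ref{cb:chapeau}): since the partition is proper, Lemma~\ref{l:wProperEven} forces $w\in B$ and $w'\in B'$, so condition~\eqref{pb:Bw} applied to both sides yields vertices $w^*\in B^*$ and $w'^*\in B'^*$ complete respectively to $A^*$ and $A'^*$. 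Then I would set up the principal-path machinery: for every principal path $P_u=u\dots u'$ of $G$ and every $u^*\in K_u$, the conditions on $I$ and $I^+$-connectivity of the preblowup furnish a path $P_{u^*}$ from $u^*$ to some $u'^*\in K_{u'}$ of the same length as $P_u$ (either $\ell-1$ or $\ell-2$), with interior anticomplete to $V(G^*)\sm\bigcup_{v\in V(P_u)}K_v$.

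Next, I would proceed through the sequence of structural claims of the odd proof in the same order. The non-adjacency claim ``$uv\notin E(G)$ in $A$ implies $K_u$ anticomplete to $K_v$'' would be obtained by extracting a forbidden-length hole from a cycle formed by $u^*v^*$ together with two appropriate principal paths and $w^*$ (or the direct $v'u'$ edge, depending on whether $u,v$ lie in $A_K$ or $A_S$). The adjacency claim ``$uv\in E(G)$ in $A$ implies $K_u$ complete to $K_v$'' uses the presence of $w^*$ to close off $C_4$s and longer forbidden cycles; here the arithmetic must respect that a hole of length $2\ell$ can be assembled as $(\ell-1)+(\ell-1)$, $(\ell-1)+(\ell-2)+1$, or $(\ell-2)+(\ell-2)+2$, so the counterpart of~(\ref{cb:Complet_in_A}) will require a case split on whether $u,v\in A_K$, $u,v\in A_S$, or $u\in A_K,v\in A_S$. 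The nesting-in-a-clique claims~(\ref{cb:Inested})--(\ref{cb:Anestedu}) for $u\in I\cup A\cup A'$ go through verbatim, the maximality of $s(G,G^*)$ forcing $N[u^*]\subseteq N[u]$. The $B^*$-part then proceeds identically: nestedness of neighborhoods in $A^*$ along edges of $G^*[B^*]$, the existence of two non-adjacent common neighbors for each $x\in B^*$ (from condition~\eqref{pb:BAN}), partition of $B^*$ into equivalence classes by $N_A$, pairwise adjacency within each class, and selection of a maximal representative, yielding $B_1$ (with $w^*\in B_1$); and similarly $B'_1$.

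Setting $G_1=G^*[A\cup B_1\cup A'\cup B'_1\cup I]$, the next step is to verify that $(A,B_1,A',B'_1,I)$ satisfies all conditions~\ref{b:antiC}--\ref{b:tnP} of an even $\ell$-pretemplate partition. Since $G_1[A\cup I\cup A']=G[A\cup I\cup A']$, conditions~\ref{b:cardinality1}--\ref{b:sp} are inherited; conditions~\ref{b:antiC} and~\ref{b:antiCP} follow from~\eqref{pb:B}; connectivity~\ref{b:cn} holds because $w^*$ and $w'^*$ are universal in the respective halves; conditions~\ref{b:tn} and~\ref{b:tnP} follow from the analog of~(\ref{c:nonAdjBstar}). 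Because $G^*\in\mathcal C_{2\ell}$, Lemma~\ref{evenpret} then promotes $(A,B_1,A',B'_1,I)$ to a strong even $\ell$-template partition of~$G_1$, and Lemma~\ref{evenl:wTwins} ensures $G_1$ is twinless by choice of $B_1,B'_1$.

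Finally, I would check that $G^*$ is a proper blowup of $G_1$ by verifying the blowup axioms~\eqref{i:buK}--\eqref{i:condIso}, each following from a corresponding claim already established: disjoint cliques~\eqref{i:buK} from the preblowup structure and~(\ref{cb:Bclique})-analog; nested neighborhoods~\eqref{i:KKhalf} from the $I$, $A$, $A'$, $B_1$, $B'_1$ nestedness claims; anticompleteness~\eqref{i:bKKanti} from the analog of~(\ref{cb:anticomplete}); solid-edge completeness~\eqref{i:Kcomp} from the analog of~(\ref{cb:cliquescompletes}); flat-edge~\eqref{i:flat} and optional-edge~\eqref{i:optAB} conditions from the nesting claims together with the definition of $K_x$ for $x\in B_1$; and the cascade condition~\eqref{i:condOpt} from a forbidden-hole argument mirroring~(\ref{cb:cascade}). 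The main obstacle I anticipate is the adjacency/completeness claim in $A$: because $A$ splits into $A_K$ and $A_S$ and principal paths have two different lengths, several subcases need separate forbidden-length computations, and the availability of the right shortcut (via $w^*$, $w'^*$, or a direct edge) must be handled case by case, using Lemma~\ref{l:wProperEven} to guarantee the needed isolated vertices in $A_S$ and $A'_S$.
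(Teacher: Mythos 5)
Your proposal mirrors the paper's proof of Lemma~\ref{l:preblowup_even} essentially step for step: choosing $G$ to maximize $s(G,G^*)$, establishing the analog of~(\ref{cb:chapeau}) with $w^*\in B^*$ and $w'^*\in B'^*$ via the definition of proper and condition~\eqref{pb:Bw}, replaying the nestedness and (anti)completeness claims with the $A_K$/$A_S$ and path-length $\ell-1$/$\ell-2$ case splits you flag, constructing $B_1,B'_1$ from equivalence classes, invoking Lemma~\ref{evenpret} and Lemma~\ref{evenl:wTwins} to get a twinless template $G_1$, and finally verifying the blowup axioms. The obstacle you anticipate is precisely what the paper handles; your plan is correct and matches the paper's route.
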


\begin{proof}
Among all the induced subgraphs of $G^*$ that are even
  $\ell$-templates and for which $G^*$ is a proper preblowup,
  we suppose that $G$ is one that maximizes $s(G, G^*)$. We denote
  by $(A, B, A', B', I, w, w')$ the proper $\ell$-partition of $G$ that
  is used for its preblowup and by $(A^*, B^*, A'^*, B'^*, I^*)$ the
  corresponding partition of the vertices of $G^*$.
   
  \begin{claim}\label{cb:chapeau_even}
    There exist vertices $w^* \in B^*$ and $w'^*\in B'^*$ that are complete to
    respectively $A^*$ and $A'^*.$   \end{claim}

  \begin{proofclaim}
    Since the partition is proper, $w\in B$ and then by condition~\eqref{pb:Bw} of a preblowup, there
    exists $w^*\in B^*$ that is complete to $A^*$.
    
    The proof of the statement about $w'^*$ is similar.  
  \end{proofclaim}

  \begin{claim}
    \label{c:principB}
    For every principal path $P_u=u\dots u'$ of $G$ and $u^*\in K_u$,
    there exists in $G^*$ a path $P_{u^*}$ 
    from $u^*$ to some ${u}'^*\in K_{u'}$ whose interior is in
    $\bigcup_{x\in I\cap V(P_u)} K_x$ and whose length is equal to the length of $P_u$. Moreover, the interior of $P_{u^*}$ is
    anticomplete to $V(G^*) \sm \bigcup_{v\in V(P_u)} K_v$.
  \end{claim}
  
  \begin{proofclaim}
    The existence of a path of same length as $P_u$ from $u^*$ to some $u'^*\in K_{u'}$ whose
    interior is in $\bigcup_{x\in I\cap V(P)} K_x$ follows from conditions~\eqref{pb:AI},~\eqref{pb:I} and ~\eqref{pb:II},  
    of preblowup.  The statement about its interior follows from
    conditions~\eqref{pb:A}, \eqref{pb:B} and~\eqref{pb:I} of
    preblowup.
  \end{proofclaim}

  \begin{claim}\label{cb:Anticomplet_in_AEven}
    For all $u,v\in A$ such that $uv\notin E(G)$, $K_u$ is anticomplete
    to $K_v$. A similar statement holds for $A'$.
  \end{claim}
  
  \begin{proofclaim}
  Suppose that there exist $u^*\in K_u$ and $v^*\in K_v$ such that
    $u^*v^*\in E(G^*)$. By condition~\eqref{pb:Acomp} of preblowup,
    $u\neq u^*$ and $v\neq v^*$.
    
 Since $uv\notin E(G)$ then at least one of $u,v$ is in $A_S$, say $v \in A_S$.
 
  Consider first the case where $u\in A_K.$
        The principal paths $P_u = u\dots u'$ and
    $P_v=v\dots v'$ have length respectively $\ell-1$ and $\ell-2$. Denote by $u^+$ the neighbor
    of $u$ in $P_u$ and by $v^+$ the neighbor of $v$ in $P_v$. By
   property~\eqref{eventa5} of an even template,
    $u'v'\in E(G)$. Hence
    $uP_uu'v'P_vvv^*u^*u$ is a cycle $C$ of length $2\ell+1$. By
    conditions \eqref{pb:A} and \eqref{pb:Acomp} of preblowup, the only
    possible chords in $C$ are $u^*u^+$ and $v^*v^+$. Assume that $u^*u^+\in E(G^*)$.
    Let $P_{v^*}$ be a path of length $\ell -2$ from $v^*$ to $v'^{*}$
    as defined in~\eqref{c:principB}. Since $v'^{*}\in K_{v'}$ and
    by~\eqref{pb:Acomp} of the preblowup applied to $A'$, $v'^{*}u'\in E(G^*)$. So
    $v^*P_{v^*}v'^{*}u'P_uu^+u^*v^*$ is a hole of length $2\ell-1$, a
    contradiction. Hence $v^+v^*$ should be a chord of $C$. Let then $P_{u^*}$ be a path of length $\ell -1$ from $u^*$ to $u'^{*}$ as defined in~\eqref{c:principB}. Since $u'^{*}\in K_{u'}$ and
    by~\eqref{pb:Acomp} of the preblowup applied to $A'$, $u'^{*}v'\in E(G^*)$. So
    $u^*P_{u^*}u'^{*}v'P_vv^+v^*u^*$ is a hole of length $2\ell-1$, a
    contradiction again.
    
    It remains to consider the case where both $u$ and $v$ are in $A_S$.  Let $P_u = u\dots u'$ and $P_v=v\dots v'$ be principal paths of length $\ell-2$. Denote by $u^+$ the neighbor
    of $u$ in $P_u$ and by $v^+$ the neighbor of $v$ in $P_v$.   By property~\eqref{eventa5} of an even template,
    $u'v'\notin E(G)$, hence
    $uP_uu'w'^*v'P_vvv^*u^*u$ is a cycle $C$ of length $2\ell+1$. By conditions \eqref{pb:A} and \eqref{pb:Acomp} of preblowup, the only possible chords in $C$ are $u^+u^*$ and $v^+v^*$.
    
     Assume without loss of generality that $u^+u^* \in E(G^*)$. Let $P_{v^*}$ be a path of length $\ell -2$ from $v^*$ to $v'^{*}$
    as defined in~\eqref{c:principB}. Since $v'^{*}\in K_{v'}$ and
    by~\eqref{pb:Acomp} applied to $A'$, $v'^{*}u'\in E(G^*)$. So
    $v^*P_{v^*}v'^{*}w'^*u'P_uu^+u^*v^*$ is a hole of length $2\ell-1$, a
    contradiction. 
     
    The result for $A'$ holds symmetrically.
  \end{proofclaim}

  \begin{claim}\label{cb:Complet_in_A}
    For all $u,v\in A$ such that $uv\in E(G)$, $K_u$ is complete to
    $K_v$.  A similar statement holds for $A'$.
  \end{claim}
  
  \begin{proofclaim}
    Suppose that there exist $u^*\in K_u$ and $v^*\in K_v$ such that
    $u^*v^*\notin E(G^*)$. Let $P_{u^*} = u^*\dots u'^*$ and
    $P_{v^*} = v^*\dots v'^*$ be defined as
    in~\eqref{c:principB}. Observe that $u'^{*}\in K_{u'}$ and
    $v'^{*}\in K_{v'}$. 
    
    Since $uv\in E(G)$ then at least one of $u,v$ is in $A_K$, say $u \in A_K$.
 
  Consider first the case where $v\in A_K$ too. Then both $P_{u^*}$ and $P_{v^*}$ have length $\ell-1$. 
  If $u'^*v'^*\in E(G^*)$ then $u^*P_{u^*} u'^*v'^*P_{v^*} v^*w^*u^*$ is a hole of length $2\ell+1$ and else $u^*P_{u^*} u'^*w'^*v'^*P_{v^*} v^*w^*u^*$ is a hole of length $2\ell+2$, so it should be that $v\in A_S$. Then $P_{u^*}$ and $P_{v^*}$ have length respectively $\ell-1$ and $\ell-2$. Moreover by property~\eqref{eventa5} of an even template,
    $u'v'\notin E(G^*)$, so by \eqref{cb:Anticomplet_in_AEven} $u'^*v'^*\notin E(G^*)$. Now $u^*P_{u^*} u'^*w'^*v'^*P_{v^*} v^*w^*u^*$ is a hole of length $2\ell+1$, a contradiction again.

%
%
%
  \end{proofclaim}
  
  \begin{claim}\label{cb:Inested}
    For all $u\in I$ and $u_1, u_2\in K_u$, either
    $N[u_1]\subseteq N[u_2]$ or $N[u_2]\subseteq N[u_1]$.
  \end{claim}

  \begin{proofclaim}
    Otherwise, there exist $x^*_1\in N[u_1]\setminus N[u_2]$ and
    $x^*_2\in N[u_2] \setminus N[u_1]$.  Note that
    $x^*_1x^*_2\notin E(G^*)$ for otherwise,
    $\{x^*_1, x^*_2, u_1, u_2\}$ induces a $C_4$.  It follows that $x^*_1$ and $x^*_2$ 
    belong respectively to distinct cliques $K_{x_1}$ and $K_{x_2}$, 
    where $x_1$ and $x_2$ are the two neighbors of $u$
    along some principal path $P _v= v\dots v'$  of $G$.  Because of
    $x^*_1$, $x^*_2$ and condition~\eqref{pb:II} of
    preblowup, there exists a path $P^*$  from some
    $v^*\in K_v$ to some $v'^*\in K_{v'}$ whose interior is  in
    $\bigcup_{x\in I\cap V(P_v)} K_x$ which contains $u_1$ and $u_2$ and has a length equal to the length of $P_v$ plus one.
    
    Assume first that $v \in A_K$. Then $P^*$ has length $\ell$. 
By Lemma \ref{l:wProperEven}, since $(A, B, A', B', I, w, w')$ is a proper $\ell$-partition of $G$, $A_S$ is not empty. Let $q \neq v$ be a vertex  in $A_S
$ and $P_q = q\dots q'$ be the principal path of length $\ell-2$ joining $q$ and $q'$ in $G$. 
    Up to symmetry we may assume that $qv\notin E(G)$ and $q'v' \in E(G)$.  Now, by
    condition~\eqref{pb:I}  of preblowup
    and~\eqref{cb:chapeau}, $P^*$, $P_q$ and $w^*$ form a hole of length
    $2\ell+1$, a contradiction.
    
    So $v$ should be in $A_S$ and $P^*$ has length $\ell-1$. Since the $\ell$-partition of $G$ is proper, there exists a vertex $q \in A_S$ distinct from $v$, $P_q$ has length $\ell-2$ and there exists no edge between $P^*$ and $P_q$. Then $P^*,P_q, w^*$ and $w'^*$ induce a hole of length $2\ell+1$, a contradiction again. 
  \end{proofclaim}

  \begin{claim}\label{cb:Inestedu}
    For all $u\in I$ and $u^*\in K_u$, 
    $N[u^*]\subseteq N[u]$.
  \end{claim}

  \begin{proofclaim}
    Otherwise, by~\eqref{cb:Inested}, there exists a vertex
    $u^*\in K_u$ such that $N[u] \subsetneq N[u^*]$.  Hence
    $(V(G) \sm \{u\}) \cup \{u^*\}$ induces a subgraph $G_0$ of $G^*$
    and it is easy to verify that $G^*$ is a preblowup of $G_0$. This
    contradicts the maximality of $s(G, G^*)$.
  \end{proofclaim}
  
  By~(\ref{cb:Inested}), for every $u\in I$, the clique $K_u$ can be
  linearly ordered by the inclusion of the neighborhoods as
  $u_1, \dots, u_{k_u}$ with $u=u_{k_u}$ by~\eqref{cb:Inestedu} (so,
  for $1\leq i \leq j \leq k_u$, $N[u_i] \subseteq N[u_j]$). From 
  condition~\eqref{pb:I} of the preblowup it also follows that, in $G^*$, $u$ is complete to the cliques
  associated to its two neighbors in~$G$.

  \begin{claim}\label{cb:Anested}
    For every $u\in A$ and $u_1,u_2\in K_u$, either
    $N[u_1]\subseteq N[u_2]$ or $N[u_2]\subseteq N[u_1]$. A similar statement holds for $A'$.
  \end{claim}
  
  \begin{proofclaim} 
    Otherwise, there exist $x_1\in N[u_1]\setminus N[u_2]$ and
    $x_2\in N[u_2]\setminus N[u_1]$.  Note that $x_1x_2\notin E(G^*)$
    for otherwise, $\{x_1, x_2, u_1, u_2\}$ induces a $C_4$.
    
    Observe first that by~\eqref{cb:Anticomplet_in_A} and~\eqref{cb:Complet_in_A}, $N_{A^*}[u_1]=N_{A^*}[u_2]$. Hence
    by condition~\eqref{pb:A} of preblowup, $x_1,x_2\in B^*\cup K_{u^+}$ where $u^+$ is the neighbor of $u$ in the principal path that contains $u$. Without loss of
    generality and since $K_{u^+}$ is a clique, $x_1\in B^*$.

    By condition~\eqref{pb:BAN} of preblowup, there exist non-adjacent $a, b\in A$
    such that $x_1$ has neighbors $a^* \in K_a$ and $b^*\in K_b$, and  by~\eqref{cb:Anticomplet_in_A} $ a^*b^*\notin E(G^*).$
    Note that $a^*, b^*\neq u_2$ because $u_2x_1\notin E(G^*)$.  If
    $u_2$ is complete to $\{a^*, b^*\}$, then $\{u_2,a^*,x_1,b^*\}$
    induces a $C_4$, a contradiction.  So, up to symmetry
    $u_2a^*\notin E(G)$.  So, $a^*\notin K_u$ and
    by~\eqref{cb:Complet_in_A} and~\eqref{cb:Anticomplet_in_A}, $a^*u_1\notin E(G^*)$.
   Observe
    that $x_2a^*\notin E(G^*)$ for otherwise $\{a^*,x_1,u_1,u_2,x_2\}$
    induces a $C_5$.

    Suppose that $x_2\in B^*$. As above, we can show that $x_2$ has a neighbor
    $c^* \in A^*$ that is anticomplete to $\{u_1, u_2, x_1\}$. 
   Let 
   $P_{a^*} = a^*\dots a'^*$ and $P_{c^*} = c^*\dots c'^*$ 
    be defined as in \eqref{c:principB}. To avoid a hole $c^*x_2u_2u_1x_1a^*c^*$ of length $6$, we have
      $a^*c^*\notin E(G^*)$.  
Since the lengths of $P_{a^*}$ and $P_{c^*}$ are at least $\ell-2$, depending whether $a'^*c'^*\in E(G^*)$,   $u_1x_1a^*P_{a^*}a'^*c'^*P_{c^*}c^*x_2u_2u_1$ or $u_1x_1a^*P_{a^*}a'^*w'^*c'^*P_{c^*}c^*x_2u_2u_1$ is a hole of length
   at least $2\ell+2$, a contradiction. 
       So $x_2\in K_{u^+}$.  Using condition~\eqref{pb:II} of
    preblowup, it is easy to verify that there exists a path $P_{u_2}$ from $u_2$ to some $u'^*\in K_{u'}$ defined similarly than in \eqref{c:principB}, which contains $x_2$. Now $u_2P_{u_2}u'^*a'^*P_{a^*}a^*x_1u_1u_2$ or $u_2P_{u_2}u'^*w'^*a'^*P_{a^*}a^*x_1u_1u_2$
    is a hole of length $2\ell +1$, a contradiction.
     
    The result for $A'$ holds symmetrically.
  \end{proofclaim}

  \begin{claim}\label{cb:Anestedu}
    For all $u\in A$ and $u^*\in K_u$, $N[u^*]\subseteq N[u]$. A
    similar statement holds for $A'$.
  \end{claim}

  \begin{proofclaim}
    Otherwise, by \eqref{cb:Anested} there exists a vertex $u^*\in K_u$ such that
    $N[u] \subsetneq N[u^*]$.  Hence, $(V(G)\sm \{u\}) \cup\{u^*\} $
    induces a subgraph $G_0$ of $G^*$ which is a template (by
    Lemma~\ref{evenpret}) with a  proper partition
    (by~\eqref{cb:Anticomplet_in_A} and~\eqref{cb:Complet_in_A}). Iit is easy to verify that
    $G^*$ is a preblowup of $G_0$. This
    contradicts the maximality of $s(G, G^*)$.  The result for $A'$
    holds symmetrically.
  \end{proofclaim}

  By~(\ref{cb:Anested}), for every $u\in A \cup A'$, the clique $K_u$ can be
  linearly ordered by the inclusion of the neighborhoods as
  $u_1, \dots, u_{k_u}$, and by~(\ref{cb:Anestedu}) $u_{k_u} =u$ (so,
  for $1\leq i \leq j \leq k_u$, $N[u_i] \subseteq N[u_j]$).

  \begin{claim}
    \label{cb:Bnested}
    If $xy$ is an edge of $G[B^*]$, then either
    $N_{A^*}(x)\subseteq N_{A^*}(y)$ or
    $N_{A^*}(y)\subseteq N_{A^*}(x)$. 
  \end{claim}
  
  \begin{proofclaim}
    Otherwise, there exist $u^*\in N_{A^*}(x)\setminus N_{A^*}(y)$
    and $v^*\in N_{A^*}(y)\setminus N_{A^*}(x)$. Note that
    $u^*v^*\notin E$ for otherwise $\{u^*,x,y,v^*\}$ induces a
    $C_4$. 
    So, for some distinct $u, v\in A$, we have $u^*\in K_u$ and
    $v^*\in K_v$. Hence, by \eqref{cb:Complet_in_A},
    $uv\notin E(G)$.  Let $P_{u^*} = u^*\dots u'^*$ and  
    $P_{v^*} = v^*\dots v'^*$ be defined 
    as in \eqref{c:principB}.  So, $xu^*P_{u^*}u'^*v'^*P_{v^*}v^*yx$ or $xu^*P_{u^*}u'^*w'^*v'^*P_{v^*}v^*yx$ forms a hole
    of length $2\ell+1$, a contradiction. 
  \end{proofclaim}

  \begin{claim}
    \label{c:nonAdjBstar}
    For every $x\in B^*$, there exist non-adjacent $u,v\in A$ such
    that $xu, xv \in E(G^*)$. 
  \end{claim}
    
  \begin{proofclaim}
    This follows from condition~\eqref{pb:BAN} of
    preblowup and from (\ref{cb:Anestedu}).
  \end{proofclaim}

  Two vertices $x, y$ in $B^*$ are \emph{equivalent} if
  $N_{A}(x) = N_{A}(y)$.

  \begin{claim}\label{cb:Bclique}
    If $x$ and $y$ are equivalent vertices of $B^*$, then
    $xy\in E(G^*)$.
  \end{claim}
  
  \begin{proofclaim}
    If $xy\notin E(G^*)$, then $x$, $y$ and two of their neighbors
    provided by~\eqref{c:nonAdjBstar} induce a $C_4$. 
  \end{proofclaim}

  Vertices of $B^*$ are partitioned into equivalence
  classes. By~\eqref{cb:Bclique}, each equivalence class is a clique
  $X$, and by~\eqref{cb:Bnested}, vertices of $X$ can be linearly
  ordered according to the inclusion of neighborhoods in $A^*$.  In each such a
  clique $X$ we choose a vertex $x$ maximal for the order and call
  $B_1$ the set of these maximal vertices.  For every $x\in B_1$, we
  denote by $K_x$ the clique of $B^*$ of all vertices equivalent to
  $x$. Remind that $w^*\in B^*$ : $w^*$ is a maximal vertex of its
  clique. Hence, we can set $w^*\in B_1$.
 
So, for every $u\in B_1$, the clique $K_u$ can be linearly
  ordered by the inclusion of the neighborhod in $A^*$ as
  $u_1, \dots, u_{k_u}$ with $u=u_{k_u}$ (so, for
  $1\leq i \leq j \leq k_u$, $N_{A^*}(u_i) \subseteq N_{A^*}(u_j)$).
  
 Statements similar to \eqref{cb:Bnested}, \eqref{c:nonAdjBstar}, \eqref{cb:Bclique}
hold for $B'^*$ and we define $B'_1$ as well. 
  
We set $G_1 = G^*[A \cup B_1 \cup A' \cup B'_1 \cup I]$ and claim that
$(A, B_1, A',B'_1, I)$ is an even $\ell$-pretemplate partition of $G_1$.
Since $G_1[A\cup I \cup A']$ is exactly $G[A\cup I \cup A']$,
conditions~\eqref{b:cardinality1}, \eqref{b:cardinality2} \eqref{b:path}, \eqref{b:interI}  and \eqref{b:sp}
hold. Adding the fact that $N_{G_1}(B_1)\subseteq A^*\cap V(G_1)=A$
by condition \eqref{pb:B} of preblowup, condition \eqref{b:antiC} for
a pretemplate holds and symmetrically also condition
\eqref{b:antiCP}. Now condition \eqref{b:cn} holds because $w^*$ and
$w'^*$ are universal in respectively $G^*[A\cup B_1]$ and $G^*[A'\cup B'_1]$. By
\eqref{c:nonAdjBstar}, the last two conditions for a pretemplate are
fulfilled by $(A, B_1, A', B'_1, I)$. Hence, by Lemma \ref{evenpret},
$G_1$ is a an even $\ell$-template.  It is twinless by
Lemma~\ref{evenl:wTwins}. We also notice that by construction $w^*$ (respectively $w'^*$) belongs to $G_1$ and  is universal in
$G_1[A \cup B_1]$ (respectively $G_1[A' \cup B'_1]$). Since $(A, B, A', B', I, w, w')$ is a proper $\ell$-partition of $G$, there exist isolated vertices in $G^*[A]$ and $G^*[A']$. Hence
$(A, B_1, A', B'_1, I,w^*,w'^*)$ is a proper even $\ell$-partition of
$G_1$.

   We now prove that
  $G^*$ is a proper blowup of $G_1$.

 By the definition of a preblowup and by~\eqref{cb:Bclique}, for all $u\in V(G_1)$, $K_u$ is a clique and $V(G^*)= \bigcup_{u\in V(G_1)} K_u$ 

  \begin{claim}\label{cb:anticomplete}
    If $u, v\in V(G_1)$ and $uv\notin E(G_1)$, then $K_u$ is anticomplete to $K_v$.
  \end{claim}

  \begin{proofclaim}
    If $u\in I$ or
    $v\in I$, the conclusion follows directly from
    condition~\eqref{pb:I} of preblowup.  So we may assume up to
    symmetry that $u\in A\cup B_1$.  By conditions~\eqref{pb:A}
    and~\eqref{pb:B} of preblowup, we may assume $v\in A \cup B_1$.
    If $u, v\in A$, then the result follows from
  \eqref{cb:Anticomplet_in_A}, so we may assume that $v\in B_1$. 
  
  Now suppose for a contradiction that there exist $u^*\in K_u$ and
  $v^*\in K_v$ such that $u^*v^*\in E(G_1)$. By the choice of vertices
  in $B_1$, for all $v^*\in K_v$, $N[v^*]\subseteq N[v]$. So
  $u^*v\in E(G_1)$. For the same reason or by~\eqref{cb:Anestedu}, for
  all $u^*\in K_u$, $N[u^*]\subseteq N[u]$. Hence $uv \in E(G_1)$, a
  contradiction.
  \end{proofclaim}

 \begin{claim}\label{cb:cliquescompletes}
   If $uv$ is a solid edge of $G_1$ then $K_u$ is complete to $K_v$.
  \end{claim}

  \begin{proofclaim}
    Otherwise, let $u^*\in K_u$ and $v^*\in K_v$ such that $u^*v^*\notin
    E(G)$. Since $uv$ is a solid edge, up to symmetry, $u,v\in A$ or
    $u,v\in B_1$ or $u\in A$, $v\in B_1$ and in this last case $u$ is not an isolated vertex
    of $G[H_v]$.
    
    By~\eqref{cb:Complet_in_A} the case where $u$ and $v$ are in $A$
    cannot happen.  Assume then that
    $v \in B_1$.  By Lemma \ref{evenl:recoverHx}, there exist
    $a,b \in H_v$ (and hence in $A$) that are not adjacent. Assume
    that $u$ is also in $B_1$. Since $u$ and $v$ are adjacent, by \eqref{cb:Bnested} we may
    assume without loss of generality that $H_v \subseteq H_u$ and so
    $a$ and $b$ belong to $H_u$ too.  Then, by the definition of $K_u$
    and $K_v$, we get a $C_4$ induced by $\{u^*,v^*, a,b\}$, a
    contradiction.

So $u$ should be in $A,$ and to avoid a $C_4$
    induced by $\{u^*,v^*, a,b\}$, $u^*$ should be non-adjacent to at
    least one of $a$ and $b$, say $a$. In particular, $a\neq u$.
    Then, by~\eqref{cb:Complet_in_A}, $ua \notin E(G_1)$. So $u$ does
    not belong to $N(H_v)$ and since $uv$ is an edge of $G_1$, we get
    that $u \in H_v$. Since $uv$ is solid, $u$ has at least one neighbor in
    $H_v$, and we know that $u$ is not adjacent to at least one vertex in $H_v$ (namely $a$). Hence, as $H_v$ is anticonnected, there exist  non-adjacent
    vertices $c, d \in H_v$ such that $uc \notin E(G_1)$ and
    $ud \in E(G_1)$. Now $u^*P_{u^*}u'^*c'P_ccv^*du^* $ or $u^*P_{u^*}u'^*w'^*c'P_ccv^*du^* $ is a hole of
    length $2\ell+1$, a contradiction again.

  \end{proofclaim}

  \begin{claim}\label{cb:nested}
    For all $u\in V(G_1)$ and $1\leq i \leq j \leq k_u$, 
    $N[u_i]\subseteq N[u_j]$. 
  \end{claim}

  \begin{proofclaim}
    The result follows from how vertices are ordered after the proof of
    \eqref{cb:Inestedu}
    (vertices in $I$), \eqref{cb:Anestedu} (vertices in $A$ or $A'$) and
    \eqref{cb:Bclique} (vertices in $B_1$ or $B'_1$) and  from \eqref{cb:cliquescompletes}.
  \end{proofclaim}

  \begin{claim}
    \label{cb:flat} If $uv$ is a flat edge of $G_1$, then $u$ is
    complete to $K_v$ and $v$ is complete to $K_u$.
  \end{claim}
  
  \begin{proofclaim}
    By definition of a flat edge, either $u$ and $v$ are in $I$ or one
    is in $I$ and the other is in $A$ or in $A'$. The result follows
    from~\eqref{cb:Inestedu}, \eqref{cb:Anestedu}, and
    conditions~\eqref{pb:AI} (applied to $A$ or $A'$)
    and~\eqref{pb:II} of the preblowup.
  \end{proofclaim}

  \begin{claim}
    \label{c:optAB}
    If $ux$ is an optional edge of $G_1$ with $u\in A$
    and $x\in B_1$ (resp.\ $u\in A'$ and $x\in B_1'$), then $u$ is
    complete to $K_x$.
  \end{claim}

  \begin{proofclaim}
    The result follows from the definition of $K_x$ when $x \in B_1$. 
  \end{proofclaim}

  \begin{claim}\label{cb:cascade}
    If $ux$ and $uy$ are optional edges with $u\in A$, $x, y\in B_1$
    and $H_y\subsetneq H_x$ (resp.\ $u\in A'$, $x, y\in B_1'$ and
    $H'_y\subsetneq H'_x$), then every vertex of $K_u$ with a neighbor
    in $K_y$ is complete to $K_x$.
  \end{claim}

  \begin{proofclaim}
    Otherwise, let $u^*$ be a vertex in $K_u$ that has a neighbor
    $y^*$ in $K_y$ and a non-neighbor $x^*$ in $K_x$. Since
    $H_x$ and $H_y$ are not disjoint, $xy$ is a solid edge of $G_1$ and
    by~(\ref{cb:cliquescompletes}), $x^*y^*\in E(G_1)$.
    
  
  Since $x$ and $y$ are not equivalent, there exists a vertex $a$  such that $a\in N_A(y)\sm N_A(x)$ or $a\in N_A(x)\sm N_A(y)$. In the first case, by definition of a template, $a \in A \sm N_A[H_x]$. Then since $H_y\subsetneq H_x$ and $H_x$ is a module of $A$ we get that $a$ is anticomplete to $H_x$ and hence to $H_y$. So $a \notin N_A(y)$, a contradiction; we may then conclude that $a\in N_A(x)\sm N_A(y)$
  
  By definition of the cliques in $B$, $x^*a\in E(G^*)$ and $y^*a\notin E(G^*)$. Therefore, to avoid a $C_4$ induced by $\{x^*, y^*, u^*,a\}$, it should be that $u^*a\notin E(G^*)$. 

    
    Now $aP_aa'u'^*P_{u^*}u^*y^*x^*a$ or $aP_aa'w'^*u'^*P_{u^*}u^*y^*x^*a$ is a hole of length $2\ell +1$ a
    contradiction.
 \end{proofclaim}

  \begin{claim}
    \label{cb:condIso} $w^*$ (resp.\ $w'^*$) is a universal vertex of
    $G^*[\bigcup_{u\in A\cup B_1} K_u]$ (resp.\
    $G^*[\bigcup_{u\in A'\cup B_1'} K_u]$).
  \end{claim}

  \begin{proofclaim}
  
    By \eqref{cb:chapeau}, $w^*$ is complete to $A^*$ and
    so to $\bigcup_{u\in A} K_u$. Furthermore, from the definition of
    $G_1$ we know that $w^*$ is complete to $B_1 \sm \{w^*\}$. Since all edges between vertices in $B_1$ are solid,
    by~(\ref{cb:cliquescompletes}), $w^*$ is complete to
    $B^*\sm \{w^*\}$.   Hence $w^*$ is a universal
    vertex of $G^*[\bigcup_{u\in A\cup B} K_u]$. The proof for $w'^*$ is
    symmetric.
  \end{proofclaim}

  From all the claims above, $G^*$ satisfies all conditions to be a
  proper blowup of $G_1$.
\end{proof}

\section{Graphs in $\mathcal C_{2\ell}$ that contain a theta or a prism}

The goal of this section is to prove the following.

\begin{lemma}
  \label{l:GPyEven}
  Let $\ell\geq 4$ be an integer.  If $G$ is a graph in
  $\mathcal C_{2\ell}$ and $G$ contains a theta or a prism, then one of the
  following holds:

  \begin{enumerate}
  \item\label{c:PyblowupEven} $G$ is a proper blowup of a twinless even
    $\ell$-template;
  \item\label{c:Pyuniv} $G$ has a universal vertex;
  \item\label{c:Pycliquecut} $G$ has a clique cutset.
  \end{enumerate}
\end{lemma}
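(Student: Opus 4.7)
The plan is to mirror closely the architecture of the proof of Lemma~\ref{l:GPy}, systematically replacing ``odd'' by ``even'' throughout. Since $G\in\mathcal{C}_{2\ell}$ contains a theta or a prism $\Sigma$, Lemma~\ref{l:holeTruemperS} guarantees that the three paths of $\Sigma$ all have length $\ell$ (theta) or $\ell-1$ (prism), and Lemma~\ref{l:ThetaPrismTemplate} then shows that $\Sigma$ carries an even $\ell$-template partition. Using Lemma~\ref{evenl:wTwins} to pass to a twinless template and Lemma~\ref{l:TProperEven} to make the partition proper, I would define $k$ to be the maximum number of principal paths in a twinless even $\ell$-template $F_0\subseteq G$ admitting a proper partition, then pick $F_1$ to be a proper blowup of such an $F_0$ whose vertex set is inclusion-maximal, and set $F_2=V(F_1)\cup\{v\in V(G)\setminus V(F_1):v\text{ is complete to }F_1\}$. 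As in the odd case, $V(F_2)\setminus V(F_1)$ is a clique complete to $F_1$ by $C_4$-freeness, giving the exact even analogue of Lemma~\ref{l:F2-F1Clique}.

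Next, I would reprove the technical results of Section~5.1 in the even setting, using Lemmas~\ref{evenl:ABuniv}, \ref{evenl:recoverHx}, \ref{evenl:abc}, and~\ref{l:HinTemplate_even} in place of their odd counterparts. Lemma~\ref{lb:universalevenforblow-up} and its use pattern carry over verbatim, and analogues of Lemmas~\ref{l:twoNonAdj}, \ref{l:htab}, \ref{l:onlyKaKb}, \ref{l:xMinor}, \ref{l:xKaKb} need only the slack $2\ell\ge 8$ to run the usual hole-length/parity arguments, now forbidding holes of length $\neq 2\ell$ instead of $\neq 2\ell+1$. The connecting lemmas (analogues of Lemmas~\ref{l:patatesBlowUp} and~\ref{lt:patateBBPblowup}) must be redone to reflect that principal paths now come in two lengths $\ell-1$ and $\ell-2$, so the case split on the end types (in $A_K$, $A_S$, $B$, $A'_K$, $A'_S$, $B'$) produces slightly finer bounds on the distances realisable inside $F_1$; and Lemma~\ref{l:suspect} extends to paths of length at most $\ell-1$ since $\ell+1<2\ell$.

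Following Subsections~5.3--5.4, I would then prove the even analogues of Lemmas~\ref{l:blowupInB}, \ref{l:blowupInA}, \ref{l:blowupInI}, \ref{l:attachVertex10}: for every $x\in V(G)\setminus V(F_2)$, $N_{V(F_1)}(x)$ is a clique. Each lemma is handled by assuming the contrary, enlarging $F_0$ or $F_1$ by placing $x$ (or a set of vertices containing $x$) into an appropriate part, verifying that the resulting structure is an even $\ell$-pretemplate or a preblowup of such, and then invoking Lemmas~\ref{evenpret} and~\ref{l:preblowup_even} to upgrade it to an even $\ell$-template/blowup that contradicts the maximality of $k$ or of $|V(F_1)|$. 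After these local attachment results, the even analogue of Lemma~\ref{l:attachComp} (for every connected component $D$ of $G\setminus F_2$, $N(D)$ is a clique) is proved by the same ``pick a minimal internal path, build a long cycle together with principal paths, and extend $F_0$ by a new principal path'' strategy. Once these are in place, the trichotomy of the lemma is immediate: if $V(G)=V(F_1)$ conclusion~(\ref{c:PyblowupEven}) holds; if $V(F_2)=V(G)\neq V(F_1)$ conclusion~(\ref{c:Pyuniv}) holds; otherwise any component of $G\setminus F_2$ is separated from $F_1$ by a clique, giving~(\ref{c:Pycliquecut}).

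The main obstacle will be the attachment lemmas. In the even setting the template carries two kinds of principal paths (the $(\ell-1)$-paths through $A_K,A'_K$ and the $(\ell-2)$-paths through $A_S,A'_S$) and, crucially, the notion of a \emph{strong} even $\ell$-template partition imposes the extra global condition that $\mathcal{H}_G$ contains no hyper cycle of length greater than two. Every time the proof extends $F_0$ by absorbing a new vertex, one must verify not only the local pretemplate conditions but also that no new long hyper cycle is created; this amounts to tracking carefully how the new vertex interacts with the existing $A_S/A'_S$ hyperedges of $\mathcal{H}_A$ and $\mathcal{H}_{A'}$. The bookkeeping is further complicated by the fact that $B\cup B'$ cliques and $A_K/A_S$ partitions interact differently with pyramids, prisms, and thetas, so the universal-wheel/twin-wheel detection arguments in Lemmas~\ref{l:xMinor} and~\ref{l:xKaKb} require splitting into more subcases than in the odd case. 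Writing out these subcases in full is where the bulk of the technical work lies.
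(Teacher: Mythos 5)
Your plan matches the paper's proof almost line for line: replace the pyramid seed by the theta/prism seed, use Lemma~\ref{l:ThetaPrismTemplate}, Lemma~\ref{evenl:wTwins} and Lemma~\ref{l:TProperEven} to set up the triple $F_0\subseteq F_1\subseteq F_2$, prove the even analogues of the ``$N_{V(F_1)}(x)$ is a clique'' lemmas and of Lemma~\ref{l:attachComp}, then read off the trichotomy. Two small refinements are worth flagging. First, the analogue of Lemma~\ref{l:suspect} still applies to paths of length at most $\ell+1$, not $\ell-1$: the relevant bound is $\ell+3 < 2\ell$, which is precisely where the hypothesis $\ell\geq 4$ is used; your weaker ($\ell-1$) version would fail to discharge the paths of length $\ell$ and $\ell+1$ produced by Lemma~\ref{l:patatesBlowUpEven}, so keep the full $\ell+1$ range. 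Second, you do not need to re-verify the ``no long hyper cycle'' condition by hand at each extension step: Lemma~\ref{evenpret} shows that any even $\ell$-pretemplate partition of a graph in $\mathcal C_{2\ell}$ is automatically a strong even $\ell$-template partition, so the bookkeeping you anticipate is absorbed into that single lemma, exactly as in the paper.
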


The rest of this section is devoted to the proof of
Lemma~\ref{l:GPyEven}. So from here on, $\ell\geq 4$ is an integer and $G$
is graph in $\mathcal C_{2\ell}$ that contains a theta~$\Theta$ or a prism $\Sigma$.
By Lemma~\ref{l:holeTruemperS}, the three paths of $\Theta$  have
length~$\ell$ and those of $\Sigma$ have length $\ell-1$.  By Lemma~\ref{l:ThetaPrismTemplate}, $\Theta$ and $\Sigma$ are even $\ell$-templates.  Hence, we may define an integer $k$ and a sequence
$F_0, F_1, F_2$ of induced subgraphs of $G$ as follows.

\begin{itemize}
\item $k$ is the maximum integer such that $G$ contains
  an even $\ell$-template with $k$ principal paths. Observe that by
  Lemma~\ref{evenl:wTwins}, $G$ in fact contains a twinless even $\ell$-template with
  $k$ principal paths, because twins can be eliminated from templates
  by deleting hyperedges with equal vertex-set while there are some.

\item In $G$, pick a proper blowup $F_1$ of a twinless even
  $\ell$-template $F_0$ with $k$ principal paths.  Note that $F_0$
  exists and the proper $\ell$-partition needed for the proper blowup
  exists by Lemma~\ref{l:TProperEven}.

\item Suppose that $F_0$ and $F_1$ are chosen subject to the
  maximality of the vertex-set of $F_1$ (in the sense of
  inclusion). Note that possibly $F_0$ is not a maximal template in
  the sense of inclusion, it can be that a smaller template leads to a
  bigger blowup (but $F_0$ has $k$ principal paths).

\item $F_2$ is obtained from $F_1$ by adding all vertices of
  $G\sm F_1$ that are complete to $F_1$. 
\end{itemize}

\begin{lemma}
  \label{l:F2-F1CliqueEven}
  $V(F_2)\sm V(F_1)$ is a (possibly empty) clique that is
  complete to $F_1$.
\end{lemma}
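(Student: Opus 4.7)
The plan is to mimic the very short proof of the analogous statement in the odd case (Lemma~\ref{l:F2-F1Clique}): a $C_4$ in $G$ would contradict $G \in \mathcal{C}_{2\ell}$, since $2\ell \geq 8 > 4$ means $G$ has no hole of length~$4$.

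First I would observe that by definition of $F_2$, every vertex $x \in V(F_2) \setminus V(F_1)$ is complete to $V(F_1)$. So the only thing that could go wrong is that two such vertices $x, y$ are non-adjacent in $G$. Suppose for contradiction that this happens. Since $F_1$ is a proper blowup of the twinless even $\ell$-template $F_0$ and $F_0$ contains the principal paths that form (together with $w$ and $w'$) holes of length $2\ell$ (Lemma~\ref{l:HinTemplate_even}), the graph $F_1$ certainly contains at least two non-adjacent vertices $a, b$ (for example take $a, b$ to be two non-adjacent vertices of any hole of $F_1$, whose existence is guaranteed by Lemma~\ref{bloweventemp}, or simply two internal vertices of two distinct principal paths that are far enough apart in $F_0$).

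Then $\{x, a, y, b\}$ induces a $C_4$ in $G$: both $x$ and $y$ are adjacent to both $a$ and $b$ since they are complete to $V(F_1)$, while $xy \notin E(G)$ and $ab \notin E(G)$ by assumption. This contradicts $G \in \mathcal{C}_{2\ell}$, because $2\ell \geq 8$ implies that $G$ has no hole of length~$4$.

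There is essentially no obstacle here — the argument is a one-liner, as in the odd case. The only minor care needed is to justify that $F_1$ does contain two non-adjacent vertices, which is immediate since $F_1$ contains a hole of length $2\ell \geq 8$.
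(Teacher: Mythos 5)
Your proposal is correct and takes essentially the same approach as the paper: the paper's proof is the one-line ``Otherwise, $G$ contains a $C_4$,'' and you have simply spelled out the $C_4$ explicitly, including the (correct) observation that $F_1$ contains two non-adjacent vertices because it contains a hole.
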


\begin{proof}
  Otherwise, $G$ contains a $C_4$.  
\end{proof}

We now introduce some notation.  We denote by $(A, B, A', B', I, w, w')$ the
twinless proper even $\ell$-partition that is used to blow up $F_0$. We have $A= A_K \cup A_S$ and $A'= A'_K \cup A'_S$.
 When $u$ is a vertex
of $F_0$, we denote by $K_u$ the clique of $F_1$ that is blown up from
$u$.  We set $A_K^* = \bigcup_{u\in A_K} K_u$.  We use a similar notation $A_S^*$, ${A'_K}^*$, ${A'_S}^*$,
$B^*$, $B'^*$, $I ^*$, $A^*$ and $A'^*$.

\subsection{Technical lemmas}

We now prove lemmas that sum up several structural properties of $G$.

\begin{lemma}\label{lb:universalevenforblow-upEven}
  If $u\in A\cup A' \cup I \cup \{w, w'\}$ and $v\in N_{V(F_0)}(u)$,
  then $u$ is complete to $K_v$. 
\end{lemma}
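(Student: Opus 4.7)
The plan is to give the same case analysis as in the proof of Lemma~\ref{lb:universalevenforblow-up}, since the definition of a blowup (and in particular the conditions~\eqref{i:Kcomp}, \eqref{i:flat}, \eqref{i:optAB} and~\eqref{i:condIso}) has been carried over verbatim from the odd case to the even case (see Subsection~\ref{evendef_blowup}). So I would simply walk through the three possibilities for $u$.

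First, if $u\in\{w,w'\}$, the conclusion is immediate from condition~\eqref{i:condIso} of blowups: $w$ is universal in $G^*[\bigcup_{x\in A\cup B}K_x]$ and $w'$ is universal in $G^*[\bigcup_{x\in A'\cup B'}K_x]$, so in particular $u$ is complete to $K_v$ for every neighbor $v$ of $u$ in $F_0$. Second, if $u\in A\cup A'$, then the edge $uv$ of $F_0$ is of one of three types: if it is a solid edge (which covers the cases where both endpoints lie in $A$ or both in $A'$, or where $uv$ is a solid $A$-$B$ or $A'$-$B'$ edge) then condition~\eqref{i:Kcomp} forces $K_u$ complete to $K_v$; if it is flat (which in the even setting can happen because $v\in I$ is the neighbor of $u\in A\cup A'$ along its principal path) then condition~\eqref{i:flat} gives that $u$ is complete to $K_v$; finally, if $uv$ is optional (so $v\in B\cup B'$ and $u$ is an isolated vertex of $G[H_v]$ or $G[H'_v]$), then condition~\eqref{i:optAB} gives $u$ complete to $K_v$.

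Third, if $u\in I$, then by condition~\eqref{eventa4} of an even $\ell$-template partition $u$ has degree exactly $2$ in $F_0$, and both edges incident to $u$ have at least one end in $I$, so both of them are flat by the definition of a flat edge from Subsection~\ref{def_edges_temp}; hence condition~\eqref{i:flat} of blowups yields $u$ complete to $K_v$.

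The only thing to double-check is that the classification of edges of $F_0$ used above exhausts all edges of $F_0[A\cup A'\cup I\cup\{w,w'\}]$ incident to a vertex in $A\cup A'\cup I\cup\{w,w'\}$, and that each time the relevant blowup condition indeed applies in the symmetric direction we need; this is immediate from the definitions and contains no real obstacle. Since the argument amounts to unpacking the definitions of blowup and of flat/solid/optional edges, there is no substantive difficulty beyond checking that no case is missed.
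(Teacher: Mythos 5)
Your proof is correct and follows exactly the same case analysis as the paper's own proof (which itself is a verbatim carry-over of the proof of Lemma~\ref{lb:universalevenforblow-up}): split on $u\in\{w,w'\}$, $u\in A\cup A'$, and $u\in I$, and invoke blowup conditions~\eqref{i:condIso}, \eqref{i:Kcomp}/\eqref{i:flat}/\eqref{i:optAB}, and~\eqref{i:flat} respectively. You have simply expanded the edge-type classification a bit more explicitly; there is no substantive difference.
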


\begin{proof}
  We prove this lemma using the conditions from the definition of
  blowups. If $u\in \{w,w'\}$, then the result follows from
  condition~\eqref{i:condIso}. If $u\in A\cup A'$, then the conclusion
  follows from conditions~\eqref{i:Kcomp}, \eqref{i:flat} and~\eqref
  {i:optAB}.  If $u\in I$, then the conclusion follows from
  condition~\eqref {i:flat}.
\end{proof}

Very often, Lemma~\ref{lb:universalevenforblow-upEven} will be used in the
following way. Suppose there exists a principal path $P=u\dots u'$ of
$F_0$.  Suppose there exists a vertex $x$ of $P$ and $x^*\in K_x$.
Then by Lemma~\ref{lb:universalevenforblow-upEven} and
condition~\eqref{i:bKKanti} of blowups,
$\{x^*\} \cup( V(P) \sm \{x\})$ induces a path of $F_1$. If $y\neq x$
is a vertex of $P$ and $y^*\in K_y$, then
$\{x^*, y^*\} \cup (V(P) \sm \{x, y\})$ might fail to induce a path of
$F_1$, because it is possible that $xy\in E(G)$ while
$x^*y^*\notin E(G)$.  But under the assumption that $x^*y^*\in E(G)$
or $xy\notin E(G)$, we do have that
$\{x^*, y^*\} \cup (V(P) \sm \{x, y\})$ induces a path of
$F_1$. Several variant of this situation will appear soon and we will
simply justify them by refering to
Lemma~\ref{lb:universalevenforblow-upEven}.

When $u$ is a vertex in $A$, we denote by $P_u$ the unique principal
path of $F_0$ that contains $u$. Its end in $A'$ is then denoted by
$u'$.  We denote by $u^+$ the neighbor of $u$ in $P_u$. We denote by
$u^{++}$ the neighbor of $u^+$ in $P_u\sm u$.  Note that $u^+\in I$
and $u^{++}\in I\cup A'$ ($u^{++} \in A'$ if and only if $\ell=4$ and $u \in A_S$).

For any distinct $u, v\in A$, from the definition of even templates,
exactly one of 
$V(P_u) \cup V(P_v)$ or $V(P_u) \cup V(P_v) \cup \{w\}$ or 
$V(P_u) \cup V(P_v) \cup \{w'\}$ or
$V(P_u) \cup V(P_v) \cup \{w,w'\}$ induces a hole that is denoted by
$C_{u, v}$.  Such a hole is called a \emph{principal hole}.

So, there are three kinds of principal holes: those that contain
exactly one of $w$ and $w'$, those that contain none of $w$ and $w'$ and those that contain both $w$ and $w'$.  Recall that by 
Lemma~\ref{l:HinTemplate_even}, some holes of a template contain two
principal paths plus one or two extra vertices, but it may fail to be a
principal hole (because the extra vertices may fail to be $w$ or $w'$).  Though
we do not use this information formally, it is worth noting that by
Lemma~\ref{lb:universalevenforblow-upEven}, when $C$ is a principal hole,
$\bigcup_{v\in V(C)} K_v$ induces a ring. But when $C$ is a non-principal
hole, it may happen that $\bigcup_{v\in V(C)} K_v$ does not induce a
ring (because there might be in $C$ an optional edge $uv$ with
$u\in A$ and $v\in B$, and after the blowup process, there might be that no
vertex in $K_v$ is complete to $K_u$).

\begin{lemma}
  \label{evenl:twoNonAdj}
  If $u\in V(F_0)$ and $u^*\in K_u$, then $u^*$ has two 
  neighbors in $V(F_0)\sm K_u$ that are not adjacent.
\end{lemma}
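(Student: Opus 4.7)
The plan is to mirror the proof of the analogous odd lemma (Lemma~\ref{l:twoNonAdj}), splitting on which part of the template $u$ lives in, and using Lemma~\ref{lb:universalevenforblow-upEven} as the main transfer tool from $F_0$ to the blowup $F_1$. Throughout, note that $K_u \cap V(F_0) = \{u\}$, so finding the required two non-adjacent neighbors of $u^*$ in $V(F_0)\setminus K_u$ amounts to exhibiting two non-adjacent vertices of $F_0\setminus\{u\}$ that are both adjacent to $u^*$ in $G$.

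First I would handle the easy case $u\in I$. Here $u$ belongs to a principal path $P$ whose length is either $\ell-1$ or $\ell-2$; since $\ell\ge 4$, both lengths are at least $2$, so $u$ has two neighbors $a,b$ along $P$, and these two neighbors are non-adjacent because they are separated by $u$ in the induced path $P$. By Lemma~\ref{lb:universalevenforblow-upEven} applied to $a\in I\cup A\cup A'$ and to $b\in I\cup A\cup A'$, both $a$ and $b$ are adjacent to $u^*$.

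Next I would treat $u\in A$ (the case $u\in A'$ is symmetric). I want to produce a neighbor $z$ of $u$ in $A\cup B\subseteq V(F_0)$ that lies outside $I$; together with $u^+\in I$ this will give the required pair, since no vertex of $A\cup B$ other than $u$ itself is adjacent to $u^+$ (because $u^+$ has degree $2$ in $F_0$ with neighborhood $\{u,u^{++}\}\subseteq\{u\}\cup I\cup A'$). If $u$ is not isolated in $G[A]$, take $z\in N_A(u)$. If $u$ is isolated in $G[A]$, then by the properness of the $\ell$-partition (Lemma~\ref{l:wProperEven}), $w\in B$ exists and is universal in $G[A\cup B]$, so set $z=w$. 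In both cases Lemma~\ref{lb:universalevenforblow-upEven} guarantees that $u^*$ is adjacent to $z$ and to $u^+$, and $z\,u^+\notin E(G)$ as observed.

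Finally, for $u\in B$ (and symmetrically $u\in B'$), use property~\ref{modanticonn} of even templates: $H_u$ is an anticonnected module of $G[A]$ of cardinality at least $2$, so it contains two non-adjacent vertices $a,b\in A$. By condition~\eqref{b:makeit} of an even template partition, both $a$ and $b$ are neighbors of $u$, and by Lemma~\ref{lb:universalevenforblow-upEven} they are both adjacent to $u^*$; moreover $a,b\in V(F_0)\setminus\{u\}$, so they lie in $V(F_0)\setminus K_u$. I do not expect a genuine obstacle here: the only subtlety is making sure, in the $u\in A$ case, that the substitute $z=w$ really is in $V(F_0)$ and is non-adjacent to $u^+$, which is immediate from $w\in B$ combined with the degree-$2$ condition on $u^+$.
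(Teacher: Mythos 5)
Your proof is correct and follows the same case analysis ($u\in I$, $u\in A\cup A'$, $u\in B\cup B'$) and the same key tool (Lemma~\ref{lb:universalevenforblow-upEven}) as the paper's own proof; you simply fill in some details the paper leaves implicit, such as why $z$ and $u^+$ are non-adjacent.
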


\begin{proof}
  If $u\in I$, then let $P$ be the principal path that contains $u$. By
  Lemma~\ref{lb:universalevenforblow-upEven}, $u^*$ is adjacent to the two
  neighbors of $u$ in $P$.

  If $u\in A\cup A'$, say $u\in A$ up to symmetry, then we claim that
  $u$ has a neighbor $z$ in $A\cup B$.  This is clear if $u$ is not
  isolated in $A$  and otherwise we set $z=w$.  By
  Lemma~\ref{lb:universalevenforblow-upEven}, $z$ and $u^+$ are
  non-adjacent neighbors of $u^*$.

  If $u\in B$, then by the definition of a template, $H_u$ contains two non adjacent vertices $a$ and $b$ that are neighbors of $u$.  
  By Lemma~\ref{lb:universalevenforblow-upEven}, $a$ and $b$ are
  both adjacent to $u^*$. 
\end{proof}

%
%
%

\begin{lemma}
  \label{evenl:onlyKaKb}
  If $K$ is a clique of $F_0$, $K^* = \bigcup_{v\in K} K_v$ and $D$ is a
  connected induced subgraph of $G\sm F_2$ such that
  $N_{V(F_1)}(D) \subseteq K^*$, then $N_{V(F_1)}(D)$ is a clique.
\end{lemma}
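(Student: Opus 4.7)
The plan is to adapt the proof of Lemma~\ref{l:onlyKaKb} to the even-template setting, using Lemma~\ref{lb:universalevenforblow-upEven} as the substitute for the corresponding odd-case tool. Suppose for contradiction that $N_{V(F_1)}(D)$ is not a clique. Then there exist $u^* \in K_u\cap K^*$ and $v^*\in K_v\cap K^*$ with $u^*v^*\notin E(G)$ and vertices $x_u,x_v\in D$ with $x_uu^*,x_vv^*\in E(G)$; choose a path $P$ from $x_u$ to $x_v$ in $D$ of minimum length. By minimality, $R:=u^*x_uPx_vv^*$ is an induced path of $G$ whose interior is anticomplete to $V(F_1)\sm K^*$ by hypothesis, and $R$ has length at least~$2$. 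Since $uv\in K$ is an edge of $F_0$ but $u^*v^*\notin E(G)$, condition~\eqref{i:Kcomp} of blowups forces $uv$ to be non-solid, hence either flat or optional.

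Assume first that $uv$ is flat. Then at least one endpoint lies in $I$, which, since vertices of $I$ have degree~$2$ in $F_0$, forces $K\subseteq V(P_t)$ for some principal path $P_t$ of $F_0$. I would then produce a principal hole $C$ of $F_0$ of length $2\ell$ through $uv$ by combining $P_t$ with a suitable second principal path, possibly using $w$ and/or $w'$ to close the ends: the precise choice is dictated by whether $P_t$ has length $\ell-1$ or $\ell-2$ and by the adjacencies at the endpoints of the two paths, which are governed by property~\eqref{eventa5} of even template partitions. By Lemma~\ref{lb:universalevenforblow-upEven}, replacing $u,v$ in $C$ by $u^*,v^*$ yields an induced path $Q$ of length $2\ell-1$ from $u^*$ to $v^*$ in $F_1\sm K^*$ (apart from its ends). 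Then $Q$ together with $R$ induces a hole of length at least $2\ell+1$, contradicting $G\in\mathcal C_{2\ell}$.

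Now suppose $uv$ is optional; up to symmetry $u\in A$, $v\in B$, and $u$ is an isolated vertex of $G[H_v]$. Since $H_v$ is anticonnected of size at least~$2$, pick $a\in H_v\sm\{u\}$, so $ua\notin E(G)$, and both $u$ and $a$ are adjacent to $v$ in $F_0$. Consider the closed walk $v\,u\,P_u\,u'\,\cdots\,a'\,P_a\,a\,v$, where the segment $a'\cdots u'$ is either the edge $u'a'$ or uses $w'$ (or both $w,w'$), chosen so that the total length equals $2\ell$; a small case split on the $A_K/A_S$ membership of $u$ and of $a$, combined with property~\eqref{eventa5} and the module property of $H_v$, shows that this choice is always possible and produces a principal-type hole $C^*$ of $F_0$ of length $2\ell$ through the edge $uv$. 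By condition~\eqref{i:Kcomp} or~\eqref{i:optAB} of blowups, $a$ is complete to $K_v$, so by Lemma~\ref{lb:universalevenforblow-upEven} again, replacing $u,v$ in $C^*$ by $u^*,v^*$ yields an induced path $Q$ of length $2\ell-1$ from $u^*$ to $v^*$. Combined with $R$, this gives a hole of length at least $2\ell+1$, the same contradiction.

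The main obstacle will be the case analysis required to produce the hole $C$ (or $C^*$) of length exactly $2\ell$ through $uv$. In the odd setting, Lemma~\ref{l:htab} provides a single uniform principal hole through any edge of $F_0[A\cup A'\cup I\cup\{w,w'\}]$, since all principal paths have length $\ell$. In the even setting, principal paths come in two lengths and the endpoint-adjacencies in $A$ versus $A'$ are complementary, so one must verify on a small set of subcases that the appropriate combination of two principal paths plus zero, one, or two of $\{w,w'\}$ always yields a hole of length $2\ell$. The cleanest presentation would likely isolate this as a preliminary even-template analog of Lemma~\ref{l:htab} before invoking it here.
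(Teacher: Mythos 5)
Your proposal is correct and follows essentially the same two-case split (flat vs.\ optional edge) as the paper, producing a path $Q$ of length $2\ell-1$ from $u^*$ to $v^*$ that, joined to $R$, yields a too-long hole. The only difference is presentational: the paper absorbs the "which combination of $w,w'$ closes the cycle" case analysis into the definition of principal holes $C_{u,v}$ (for the flat case) and into the phrase "$P_u$, $P_a$, $v$ and possibly $w'$" (for the optional case) rather than spelling it out; also, note that in the optional case $w$ is never needed, since both $u$ and $a$ lie in $H_v$ and hence $v$ itself bridges the $A$-side endpoints, so only $w'$ can be required.
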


\begin{proof}
  For suppose not.  This means that there exists
  $u^*, v^*\in K^*$ and $x_u, x_v\in D$ such that
  $u^*v^*\notin E(G)$ and $x_uu^*, x_vv^*\in E(G)$ (possibly
  $x_u=x_v$).  Since $D$ is connected, there exists a path $P$ in $D$
  from $x_u$ to $x_v$.  Suppose that $u^*$, $x_u$, $v^*$, $x_v$ and
  $P$ are chosen subject to the minimality of $P$.  It follows that
  $u^*x_uPx_vv^*$ is a path, and recall that by assumption
  its interior is anticomplete to $F_1\sm K^*$.
  
  Since $u^*v^*\notin E(G)$, $u^*$ and $v^*$ are in different blown-up
  cliques. Denote by $K_u$ and $K_v$ the blown-up cliques such that
  $u^*\in K_u$ and $v^*\in K_v$. By hypothesis, $u,v\in K$ and so
  $uv\in E(G)$. Since $u^*v^*\notin E(G)$, by
  condition~\eqref{i:Kcomp} of blowups, $uv$ is not a solid edge of
  $G$.

  If $uv$ is a flat edge of $F_0$, then $uv$ is an edge of a
  principal path and we know that this principal path belongs to a
  principal hole $C$. Note that apart from $u$ and
  $v$, no vertex of $C$ is in $K$ since $K$ is a clique. By
  Lemma~\ref{lb:universalevenforblow-upEven}, in $G$,
  $(\{u^*, v^*\}) \cup V(C))\sm \{u, v\}$ induces a path $Q$ of length
  $2\ell-1$.  So $P$ and $Q$ form a hole of length at least $2\ell +1$,
  a contradiction.

  If $uv$ is an optional edge of $F_0$, say with $u\in A$ and
  $v\in B$, then $u\in H_v$, and there exists $a$ in $H_v$ such that
  $au\notin E(F_0)$.  Therefore, $P_u$, $P_a$, $v$ and possibly $w'$ form a hole
  $C^*$. By condition~\eqref{i:optAB} of blowups (if $va$ is optional),
  or by condition~\eqref{i:Kcomp} (if $va$ is solid), $a$ is complete
  to $K_v$.  By Lemma~\ref{lb:universalevenforblow-upEven} it follows that
  $(\{u^*, v^*\}) \cup V(C^*))\sm \{u, v\}$ induces a path $Q$ of length
  $2\ell-1$.  So $P$ and $Q$ form a hole of length at least $2\ell +1$,
  a contradiction again.
\end{proof}


\medskip

\begin{lemma}
  \label{evenl:xMinor}
  If $x\in V(G) \sm V(F_2)$ and $C$ is a principal hole of $F_0$, then $x$
  is minor w.r.t.\ $C$. 
  
  (We remind that $x$ is minor w.r.t.\ $C$ if the neighborhood of $x$ in $C$ is included in a $3$-vertex path of $C$.)
\end{lemma}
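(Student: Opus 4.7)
The plan is to adapt the proof of Lemma~\ref{l:xMinor} to the even setting. Proceed by contradiction: suppose $x$ is major w.r.t.\ some principal hole $C$, with the goal of showing $x$ is complete to $V(F_1)$, contradicting $x \notin V(F_2)$.

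First I classify the Truemper configuration that $C \cup \{x\}$ induces. A single vertex together with a hole cannot produce two vertex-disjoint triangles, so prisms and pyramids are ruled out. If $x$ has exactly two neighbors $a, b$ on $C$ at distance at least $3$ in $C$, then $C$ together with the path $axb$ forms a theta whose three paths have lengths $2$, $p$, $q$ with $p + q = 2\ell$ and $p, q \geq 3$; Lemma~\ref{l:holeTruemperS} forces every theta in $G$ to have all three paths of length $\ell$, so $\ell = 2$, contradicting $\ell \geq 4$. Otherwise $x$ has at least three neighbors on $C$ not contained in any common $3$-vertex path, so $(C, x)$ is a wheel that is not a twin wheel; by Lemma~\ref{l:holeTruemperS} it is a universal wheel, and therefore $x$ is complete to $V(C)$.

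Next I propagate completeness to every other principal path $P_t$. Pick an endpoint $u \in A$ of a principal path in $C$ and consider the principal hole $C_{u, t}$. If at least one of the two principal paths in $C$ has its $A$-end in $A_K$, choose $u$ to be that endpoint: then $|V(P_u)| = \ell \geq 4$, so $x$ has $\ell$ consecutive neighbors on $C_{u, t}$, which cannot lie in a common $3$-vertex path. Otherwise both principal paths of $C$ end in $A_S$; since $A_S$ and $A'_S$ are stable sets, this forces both $w$ and $w'$ into $V(C)$, hence into $N(x)$; and for every $t \in A \setminus \{u\}$, the hole $C_{u, t}$ contains at least one of $w, w'$, giving $x$ at least $|V(P_u)| + 1 = 4$ neighbors on $C_{u, t}$ not contained in a common $3$-vertex path. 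In each case $x$ is major w.r.t.\ $C_{u, t}$, and the first step applied to $C_{u, t}$ yields that $x$ is complete to $V(C_{u, t})$, in particular to $V(P_t)$.

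Hence $x$ is complete to $A \cup A' \cup I$. For any $y \in B$, the template structure provides two non-adjacent $a, b \in H_y \subseteq A$, and a $C_4$-argument on $\{x, y, a, b\}$ forces $xy \in E(G)$; symmetrically $x$ is complete to $B'$, so $x$ is complete to $V(F_0)$. Finally, for any $z^* \in K_z$ with $z \in V(F_0)$, Lemma~\ref{evenl:twoNonAdj} furnishes two non-adjacent neighbors of $z^*$ in $V(F_0)$, and another $C_4$-argument gives $xz^* \in E(G)$. Thus $x$ is complete to $V(F_1)$, so $x \in V(F_2)$, the desired contradiction. The main delicate point is the propagation step in the edge case $\ell = 4$ where both principal paths of $C$ end in $A_S$: there one must exploit the simultaneous presence of both $w$ and $w'$ inside $V(C)$ to ensure that $x$ retains at least four non-contiguous neighbors on the transport hole $C_{u, t}$.
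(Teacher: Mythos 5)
Your proof is correct and follows essentially the same route as the paper's: show $(C,x)$ is a universal wheel, propagate completeness to every principal path via the transport holes $C_{u,t}$ (splitting on whether one of $u,v$ lies in $A_K$ or both lie in $A_S$, in which case $w,w'\in V(C)$ rescues the count of neighbors on $C_{u,t}$), then push completeness to $B\cup B'$ and finally to $F_1$ via $C_4$-arguments and Lemma~\ref{evenl:twoNonAdj}. One thing you do that the paper's write-up glosses over: in $\mathcal C_{2\ell}$ the theta is a permitted Truemper configuration, so a major vertex with exactly two neighbors on $C$ does produce a theta rather than a wheel, and you explicitly eliminate this by observing the path of length~$2$ forces $\ell=2$; the paper jumps straight to ``universal wheel'' without that check, so your version is, if anything, more complete. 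Minor slip: for $u\in A_S$ you have $|V(P_u)|+1=\ell$, not $4$; this is $\geq 4$ for $\ell\geq 4$, so the conclusion is unaffected.
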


\begin{proof}
Otherwise let $C=C_{u, v}$ for some $u, v \in A$ such that
  $x$ is major w.r.t.\ $C$. By Lemma~\ref{l:holeTruemperS},  $x$ and $C$ form a universal wheel.
  
   \begin{claim} \label{univAI}
   $x$ is complete to all principal paths.
    \end{claim}

 \begin{proofclaim}
 
We know already that $x$ is complete to $P_u$ and $P_v$.
Let $P_t = t\dots t'$ be a
  principal path where $t\neq u, v$.
  If one of $u,v$, say $u$, is in $A_K$ then $P_u$ contains at least $4$ vertices, so $x$ is major w.r.t.\ $C_{u,t}$ and as above we may conclude that $x$ and $C_{u,t}$ form a universal wheel. Consider now the case where both $u$ and $v$ are in $A_S$. Then $C$ contains $w$ and $w'$ and $C_{u,t}$ contains at least one of $w,w'$. So $x$ is  is adjacent to at least $4$ vertices of $C_{u,t}$ and  we may again conclude that $x$ is complete to $P_t$.  
  \end{proofclaim}

 \begin{claim} \label{univB}
   $x$ is complete to $B\cup B'$.
    \end{claim}
    
\begin{proofclaim} 
   Let $y\in B\cup B'$.  By definition of a
  template, $y$ has two neighbors $a$ and $b$, both in $A$ or both in
  $A'$, that are non-adjacent. Therefore $a$, $b$, $y$ and $x$ form a
  $C_4$, unless $x$ is adjacent to $y$.  
\end{proofclaim}

By  \eqref{univAI} and \eqref{univB}, $x$ is complete to  $I\cup A \cup A' \cup B\cup B'= V(F_0)$. 

  Let $z$ be a vertex of $F_0$ and $z^*\in K_z$.  By
  Lemma~\ref{evenl:twoNonAdj}, there exists $a,b \in V(F_0)$ such that $z^*a, z^*b \in E(G)$
  and $ab \notin E(G)$. Since there is no $C_4$ in $G$ it should be that $xz^*\in E(G)$.  
  This proves that $x$ is complete to $F_1$.  Hence, $x\in V(F_2)$, a
  contradiction.
\end{proof}

\begin{lemma}
  \label{evenl:xKaKb}
  Let $a$ and $b$ be two non-adjacent vertices of some principal hole
  $C$ of $F_0$.  If some vertex $x$ of $V(G)\sm V(F_2)$ has neighbors
  in both $K_a$ and $K_b$, then $a$ and $b$ have a common neighbor $c$
  in $C$, $x$ is adjacent to $c$, and $x$ is anticomplete to every
  $K_d$ such that $d\in V(C) \sm \{a, b, c\}$.
\end{lemma}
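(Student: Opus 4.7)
The plan is to mirror the proof of Lemma~\ref{l:xKaKb} (the odd analog), adapting the Truemper-configuration dichotomy from Lemma~\ref{l:holeTruemperS} to the even case. I begin by picking $a^*\in K_a$ and $b^*\in K_b$ adjacent to $x$; since $ab\notin E(G)$, condition~\eqref{i:bKKanti} of blowups gives $a^*b^*\notin E(G)$. Using Lemma~\ref{lb:universalevenforblow-upEven} (and the fact that each $a^*$, $b^*$ inherits exactly the hole-neighbors of $a$, $b$ through the blowup conditions \eqref{i:Kcomp}, \eqref{i:flat}, \eqref{i:optAB}), the set $\{a^*,b^*\}\cup(V(C)\sm\{a,b\})$ induces a hole $C^*$ of $F_1$ of length $2\ell$.

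Next I analyze $x\cup C^*$. If $x$ had only $a^*$ and $b^*$ as neighbors in $C^*$, then $x$ together with $C^*$ would form a theta in which one of the three paths, namely $a^*xb^*$, has length $2$; since $\ell\geq 4$, this contradicts Lemma~\ref{l:holeTruemperS}, which forces all paths of a theta in $\mathcal C_{2\ell}$ to have length $\ell$. Hence $x$ has a third neighbor $c\in V(C^*)\sm\{a^*,b^*\}$, which must lie in $V(C)\sm\{a,b\}$, and $(C^*,x)$ is a wheel. By Lemma~\ref{l:holeTruemperS}, this wheel is either a universal wheel or a twin wheel. A universal wheel would make $x$ adjacent to all $2\ell-2\geq 6$ vertices of $V(C)\sm\{a,b\}$, so $N_{V(C)}(x)$ could not be contained in a $3$-vertex path, contradicting Lemma~\ref{evenl:xMinor}. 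Therefore $(C^*,x)$ is a twin wheel, so $N_{C^*}(x)$ induces a $P_3$; since $a^*b^*\notin E(G)$, this $P_3$ is $a^*cb^*$, which shows that $c$ is a common neighbor of $a$ and $b$ in $C$ and that $x$ has no neighbor in $V(C)\sm\{a,b,c\}$.

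For the final statement, suppose for contradiction that $x$ has a neighbor $d^*\in K_d$ for some $d\in V(C)\sm\{a,b,c\}$. I will apply the first half of the lemma to the pair $(c,d)$: since $c$'s only neighbors in $C$ are $a$ and $b$, we have $cd\notin E(G)$, so $c$ and $d$ must share a common neighbor in $C$; this forces that common neighbor to be $a$ or $b$, say $a$ up to symmetry, so $ad\in E(C)$. Now consider the pair $(b,d)$: if $bd\in E(C)$ then $d$ is a common neighbor of $a$ and $b$ distinct from $c$, impossible because $|V(C)|=2\ell\geq 8$ gives at most one common neighbor; so $bd\notin E(G)$ and by the first half of the lemma applied to $(b,d)$ they share a common neighbor in $C$, which must be $c$ (impossible, as $cd\notin E(G)$) or the second neighbor $e$ of $b$ in $C$. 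Then $de\in E(C)$, and a short case check derives a contradiction from the resulting configuration (using that $d$'s two neighbors in $C$ would both be adjacent to $b$, violating the length of $C$).

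The main obstacle will be the last paragraph's bookkeeping: carefully verifying that when we replace $a$ with $a^*$ and $b$ with $b^*$ the resulting set really is an induced hole (which requires checking that $a^*$, $b^*$ inherit the two relevant neighbors in $C^*$ regardless of whether $a,b$ lie in $A$, $A'$, $I$, $B$, or $B'$ and regardless of the flat/optional/solid status of incident edges), and chasing the small residual case analysis in the "anticomplete" step without ever creating a hole of length different from $2\ell$ in $G$. The theta-exclusion argument in step~2 is the genuinely new ingredient compared to the odd case, where the parity of the path lengths automatically ruled out a $2$-neighbor configuration.
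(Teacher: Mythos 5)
Your proof is correct and follows essentially the same approach as the paper: replace $a$, $b$ by $a^*$, $b^*$ to get a hole $C^*$ (justified by Lemma~\ref{lb:universalevenforblow-upEven}), rule out the theta (two-neighbor) case via Lemma~\ref{l:holeTruemperS}, rule out a major wheel via Lemma~\ref{evenl:xMinor}, and then apply the first conclusion twice in the anticompleteness step. The paper's final step is a touch cleaner (it applies the first half directly to the pair $(b,d)$ after noting they have no common neighbor in $C$, rather than detouring through the second neighbor $e$ of $b$), but the two arguments are materially identical.
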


\begin{proof}
  Let $a^* \in K_a$ and $b^* \in K_b$ be two neighbors of $x$.
  Since $ab\notin E(G)$, by Lemma~\ref{lb:universalevenforblow-upEven},
  $\{a^*, b^*\} \cup V(C)\sm \{a, b\}$ induces a hole~$C^*$.  Since
  $x$ is adjacent to $a^*$ and $b ^*$, by Lemma~\ref{l:holeTruemperS}, $x$
  has another neighbor $c$ in $C^*$ (and in fact in $C$ since $c\neq
  a^*, b^*$).  If $c$ is not adjacent to $a^*$ or $b^*$, then $x$ is major
  w.r.t.\ $C^*$, so by Lemma~\ref{l:holeTruemperS}, $C^*$ and
 $x$ form a universal wheel.  It follows that $x$ is major w.r.t.\
 $C$, a contradiction to Lemma~\ref{evenl:xMinor}. 

 We proved that $a$ and $b$ have a common neighbor $c$ in $C$ and that
 $x$ is adjacent to $c$.  Suppose for a contradiction that $x$ has a
 neighbor $d^*\in K_d$ where $d\in V(C) \sm \{a, b, c\}$.  By the same
 argument as above, since $x$ has neighbors in $K_d$ and $K_c$, $c$
 and $d$ must have a common neighbor in $C$, and this common neighbor
 must be $a$ or $b$, say $a$ up to symmetry.  So, $x$ has neighbors in
 $K_d$ and $K_b$ while $b$ and $d$ have no common neighbors in $C$, so
 we may reach a contradiction as above. 
\end{proof}

\subsection{Connecting vertices of a template}
\label{subs:connectF0Even}

%
%
%

\begin{lemma}\label{lt:cheminsentrepatatesEven}
  If $x\in A\cup B$ and $y\in A'\cup B'$, then there exists in $G$ a
  path $P$ of length $\ell-2$, $\ell-1$, $\ell$ or $\ell +1$ from $x$ to
  $y$ that contains a principal path.

  More specifically:
  \begin{itemize}
  \item
    If $x\in A$ and $y\in A'$, then $P$ has length $\ell-2$, $\ell-1$,
    $\ell$ or $\ell+1$.
  \item If $x\in A$ and $y\in B'$, or if $x\in B$ and $y\in A'$,
    then $P$ has length $\ell-1$, $\ell$ or $\ell+1$.
  \item If $x\in B$ and $y\in B'$, then $P$ has length
$\ell$ or $\ell+1$. Furthermore in that case, there exists another path $Q$ from $x$ to
  $y$ of length
  $\ell$ or $\ell+1$ containing a principal path and $Q$ contains no interior vertex of $P$.
  \end{itemize}
\end{lemma}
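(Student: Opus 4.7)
My plan is to follow the structure of the odd analog (Lemma~\ref{lt:cheminsentrepatates}) but with more subcases, because the even template has two kinds of principal paths (those of length $\ell-1$ with endpoints in $A_K$/$A'_K$, and those of length $\ell-2$ with endpoints in $A_S$/$A'_S$) and because the adjacency pattern between $A$ and $A'$ now depends on whether the endpoints are of $K$-type or $S$-type.

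First I would dispose of the case $x \in A$, $y \in A'$. Write $x = v_i$ and $y = v'_j$. If $i = j$, use $P_i$, which has length $\ell-1$ or $\ell-2$ depending on whether $v_i \in A_K$ or $v_i \in A_S$. If $i \neq j$, use condition~\eqref{eventa5} of the even template partition: when $v_iv_j \in E(G)$, take the path $v_iv_jP_jv'_j$, and when $v'_iv'_j \in E(G)$ take $v_iP_iv'_iv'_j$, each of length $\ell-1$ or $\ell$ (depending on the type of the involved principal path). The only remaining configuration is when both $v_iv_j$ and $v'_iv'_j$ are non-edges, which by property~\ref{event:threshold} forces $v_i, v_j \in A_S$ (and hence $v'_i, v'_j \in A'_S$); here I use that the partition is proper, so by Lemma~\ref{l:wProperEven} the vertex $w \in B$ is universal in $G[A \cup B]$, and the path $v_iwv_jP_jv'_j$ (or symmetrically $v_iP_iv'_iw'v'_j$) has length $\ell$.

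Next I would handle $x \in A$, $y \in B'$ (the case $x \in B$, $y \in A'$ being symmetric). Here $y = v'_Y$ for a hyperedge $Y$ of $\mathcal{H}'$ and $y$ is complete to $N_{A'}[Y]$. If $v'_i \in N_{A'}[Y]$ (where $x = v_i$), then $v_iP_iv'_iy$ is already the path we need. Otherwise, pick any $v'_q \in Y$ and use the $A$--$A'$ case to build a path from $v_i$ to $v'_q$, then append $v'_qy$; the length increases by exactly one, giving $\ell-1$, $\ell$ or $\ell+1$ as required.

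The main difficulty is the case $x \in B$, $y \in B'$, where we must produce two paths whose interiors share no principal path. Let $x = v_X$ and $y = v'_Y$, and set $X_* = \{i : v_i \in X\}$, $Y_* = \{i : v'_i \in Y\}$. If $|X_* \cap Y_*| \geq 2$, pick distinct $i,j \in X_* \cap Y_*$ and take $xv_iP_iv'_iy$ and $xv_jP_jv'_jy$. Otherwise, at least one of the sets $X_* \sm Y_*$ or $Y_* \sm X_*$ has size $\geq 2$; using anticonnectedness of $G[X]$ (resp.\ $G[Y]$) together with condition~\eqref{eventa5} of the even template partition, I can find indices $i,j$ and produce suitable edges from $x$ (resp.\ $y$) into the relevant principal paths, exactly as in Lemma~\ref{lt:cheminsentrepatatesBBP}. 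The principal paths here may be of length $\ell-1$ or $\ell-2$, accounting for the stated lengths $\ell$ or $\ell+1$; the claim that the second path $Q$ shares no interior vertex of the first $P$ follows because the two chosen principal paths are distinct (by construction $i \neq j$) and principal paths are pairwise vertex-disjoint by condition~\eqref{eventa3}. The main obstacle will be the configuration where neither $X_* \cap Y_*$ nor the symmetric differences are large enough, which I expect to resolve by invoking $w$ and $w'$ and the proper partition, much as in the $A_S$--$A'_S$ subcase above.
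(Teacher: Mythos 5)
Your plan for the cases $x\in A$, $y\in A'$ and $x\in A$, $y\in B'$ (and the symmetric one) is essentially the paper's argument, modulo some bookkeeping: in the $A$--$B'$ case you must choose the $A$--$A'$ subpath so that it avoids $w'$ before appending $v'_qy$ (if it runs through $w'$, the edge $w'y$ creates a chord), which the paper sidesteps by directly using $v_iP_iv'_iy$ or $v_iP_iv'_iw'y$. These are fixable details.

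The real gap is in the case $x\in B$, $y\in B'$, and it is substantial. You propose to proceed ``exactly as in Lemma~\ref{lt:cheminsentrepatatesBBP}'' once one of $X_*\sm Y_*$ or $Y_*\sm X_*$ has size $\geq 2$. But the odd-case argument there hinges on the fact that $G[A]$ is isomorphic to the \emph{complement} of $G[A']$: from $v_iv_q\in E(G)$ one immediately infers $v'_iv'_q\notin E(G)$, and the module property of $H'_y$ then forces $v'_iv'_r\notin E(G)$, hence $v_iv_r\in E(G)$. In the even template this complementation fails: by condition~\eqref{eventa5}, only the $A_K$-to-$A_S$ adjacencies are complemented between $A$ and $A'$; within $A_K$ (a clique) and within $A_S$ (a stable set) nothing flips. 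So ``$v_iv_q\in E(G)$ implies $v'_iv'_q\notin E(G)$'' is simply false when both indices are of $K$-type, and the chain of deductions collapses. You cannot transplant the odd argument; the even case needs a different route.

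Indeed, the paper's proof does not use your dichotomy at all. It splits on a different criterion: (i) whether there exist two indices $i,j$ with $x$ adjacent to \emph{both} $v_i,v_j$ and $y$ adjacent to \emph{both} $v'_i,v'_j$ (this is broader than $|X_*\cap Y_*|\geq 2$, since $x$ is also adjacent to $N_A(H_x)$); (ii) whether exactly one index $i$ has $v_i\in H_x$ and $v'_i\in H_y$; and (iii) no such index. In cases (ii) and (iii) it uses anticonnectedness of $H_x$, $H_y$ together with the observation that a $3$-vertex stable set in the split threshold graph $G[A]$ must lie entirely in $A_S$, and then routes both paths through $w$ and $w'$ along principal paths of length $\ell-2$. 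You flag exactly this residual configuration at the end (``neither $X_*\cap Y_*$ nor the symmetric differences are large enough'') and say you ``expect to resolve'' it — but that configuration is where essentially all the work happens, and your $|X_*\cap Y_*|\geq 2$ / ``large symmetric difference'' dichotomy is not even exhaustive (e.g.\ $|X_*|=|Y_*|=2$ with $|X_*\cap Y_*|=1$). As written, the proof of the $B$--$B'$ case is missing.
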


\begin{proof}
  Suppose first that $x\in A$, say $x=v_i$.  If $y\in A'$, then set
  $y=v'_j$. If $i=j$, then $P_i$ has length $\ell-2$ or $\ell-1$. Assume $i\neq j$.
  If $v_iv_j$ or $v'_iv'_j$, say $v_iv_j$ is an edge then $v_iv_jP_{v_j}v'_j$ is a path of length $\ell-1$ or $\ell$.
  If none of $v_iv_j$ or $v'_iv'_j$ is an edge then $v_i$ and $v_j$ are in $A_S$ and $v_iP_{v_i}v'_iw'v'_j$ is a path of length $\ell$.
   If $y\in B'$, then one of $v_iP_iv'_iy$ or $v_iP_iv'_iw'y$
  is the path of length at most $\ell+1$ we are looking for.  The proof is similar when
  $y\in A'$.

  We may therefore assume that $x\in B$ and $y\in B'$. Assume first that there exist $v_i$ and $v_j$ in $A$ such that $x$ is adjacent to $v_i$ and $v_j$ and $y$ is adjacent to $v'_i$ and $v'_j$. Then $xv_iP_{v_i}v'_iy$ and  $xv_jP_{v_j}v'_jy$ are two paths of length $\ell$ or $\ell+1$. Consider now the case when there exists one vertex $v_i$ in $H_x$ such that $v'_i$ belongs to $H_y$.  Since both $G[H_x]$ and $G[H_y]$ are anticonnected and contain at least two vertices, there exists $u \in H_x$ non adjacent to $v_i$ and $v' \in H_y$ non adjacent to $v'_i$. We may assume that $u'$ is not adjacent to $y$ and that $v$ is not adjacent to $x$ since else we are in the previous case. So now $v$ should be anticomplete to $H_x$ and $u'$ should be anticomplete to $H_y$. So $\{v_i, u,v\}$ and $\{v'_i, u',v'\}$ are both stable sets of $G$, this is possible if and only if $\{v_i, u,v\} \subseteq A_S$. We have then two paths $xuP_uu'w'y$ and $xwvP_vv'y$ of length $\ell+1$. It remains to consider the case where each $v_i \in H_x$ is such that $v'_i \notin H_y$. By definition each of $H_x$ and $H_y$ contains a pair of non adjacent vertices and hence there exist two distinct vertices  $u,v'$ such that $u \in H_x \cap A_S$ and $v' \in H_y \cap A'_S$.  We have then again two paths $xuP_uu'w'y$ and $xwvP_vv'y$ of length $\ell+1$.\end{proof}

\subsection{Connecting vertices of $F_1$}

We here explain how  lemmas of Subsection~\ref{subs:connectF0Even} are
extended from $F_0$ to $F_1$.

\begin{lemma}
  \label{l:patatesBlowUpEven}
  If $u^* \in A^*\cup B^*$ and $v^*\in A'^* \cup B'^*$, then there exists
  in $F_1$ a path $P^*$ of length $\ell-2$, $\ell-1$, $\ell$ or $\ell +1$ from $u^*$
  to $v^*$ that contains the interior of a principal path $P$. More specifically:
  \begin{itemize}
  \item
    If $u^*\in A^*$ and $v^*\in A'^*$, then $P^*$ has length $\ell-2$, $\ell-1$,
    $\ell$ or $\ell+1$.
  \item If $u^*\in A^*$ and $v^*\in B'^*$, or if $u^*\in B^*$ and $v^*\in A'^*$,
    then $P^*$ has length $\ell-1$, $\ell$ or $\ell+1$.
  \item If $u^*\in B^*$ and $v^*\in B'^*$, then $P^*$ has length
 $\ell$ or $\ell+1$. Furthermore in that case there exists another path $Q^*$ from $u^*$
  to $v^*$ of length
   $\ell$ or $\ell+1$ which contains the interior of a principal path  $Q \neq P$.
  \end{itemize}
\end{lemma}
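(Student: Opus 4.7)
The plan is to reduce the statement directly to Lemma~\ref{lt:cheminsentrepatatesEven} via the blowup machinery, exactly as in the odd case (Lemma~\ref{l:patatesBlowUp}). Let $u, v \in V(F_0)$ be the vertices with $u^* \in K_u$ and $v^* \in K_v$. The classification $u^* \in A^*, B^*$ (resp.\ $v^* \in A'^*, B'^*$) transfers to $u$ and $v$, so that Lemma~\ref{lt:cheminsentrepatatesEven} applies and yields a path $P$ in $F_0$ of the prescribed length from $u$ to $v$ containing a principal path.

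The key step is then to substitute $u^*$ for $u$ and $v^*$ for $v$ along $P$. For every interior vertex $z$ of $P$, the neighbor(s) of $z$ in $\{u,v\}$ lie in $N_{V(F_0)}(z)$, so by Lemma~\ref{lb:universalevenforblow-upEven} $z$ is adjacent in $G^*$ to the corresponding element of $\{u^*,v^*\}$ (here we use that $z \in A \cup A' \cup I \cup \{w,w'\}$ since $P$ contains a principal path and its endpoints are $u,v$). Moreover, for any non-edge $xy$ of $P$ with $x, y \in V(P)\setminus\{u,v\}$, condition~\eqref{i:bKKanti} of blowups ensures the non-edge persists after the substitution, while the remaining edges of $P$ are preserved either because they are edges of $F_0$ not touching $\{u,v\}$ or again by Lemma~\ref{lb:universalevenforblow-upEven}. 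Hence $P^* := (V(P) \setminus \{u,v\}) \cup \{u^*, v^*\}$ induces in $F_1$ a path of the same length as $P$ that contains the interior of the same principal path as $P$ does.

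For the final item, when $u^* \in B^*$ and $v^* \in B'^*$, we additionally invoke the ``furthermore'' clause of Lemma~\ref{lt:cheminsentrepatatesEven}: it supplies a second path $Q$ from $u$ to $v$ of length $\ell$ or $\ell+1$, containing the interior of a principal path $Q_0 \ne P_0$ (where $P_0$ is the principal path contained in $P$), and with $Q$ sharing no interior vertex with $P$. Applying the same substitution to $Q$ produces a path $Q^*$ with the required properties, and disjointness of interiors is preserved since the interiors of $P^*$ and $Q^*$ coincide (setwise) with those of $P$ and $Q$.

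The entire proof is therefore a routine lift of Lemma~\ref{lt:cheminsentrepatatesEven} via Lemma~\ref{lb:universalevenforblow-upEven} and condition~\eqref{i:bKKanti}; the only mild subtlety is to check in the $B^* \times B'^*$ case that the two paths obtained from the ``furthermore'' clause keep their interiors vertex-disjoint after the blowup, which is automatic since we only alter the endpoints.
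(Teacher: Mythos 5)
Your proposal matches the paper's proof exactly: reduce to Lemma~\ref{lt:cheminsentrepatatesEven} for the base graph $F_0$, then lift the resulting path(s) into $F_1$ by substituting $u^*$ for $u$ and $v^*$ for $v$, justified by Lemma~\ref{lb:universalevenforblow-upEven} and condition~\eqref{i:bKKanti} of blowups. You have simply supplied more detail than the paper's one-line argument, in particular by explicitly checking that the interiors of the paths from Lemma~\ref{lt:cheminsentrepatatesEven} lie in $A\cup A'\cup I\cup\{w,w'\}$ (so that Lemma~\ref{lb:universalevenforblow-upEven} applies) and by treating the ``furthermore'' clause in the $B^*\times B'^*$ case; both of these details are correct and implicit in the paper's phrasing.
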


\begin{proof}
  Let $u$ and $v$ be such that $u^*\in K_u$ and $v^*\in K_v$.  Let $P$
  be a path in $F_0$ like in Lemma~\ref{lt:cheminsentrepatatesEven}
  from $u$ to $v$ (so $P$ contains the interior of some principal path $Q$). By Lemma~\ref{lb:universalevenforblow-upEven},
  $\{u^*, v^*\} \cup V(P) \sm \{u, v\}$ induces a path of the same length
  as $P$ that contains the interior of $Q$.
\end{proof}

%

\begin{lemma}
  \label{l:suspectEven}
   If in $G$ some vertex $x$ is adjacent to the ends of a path $P$ of length
  at most $\ell+1$ not containing $x$, then $x$ is complete to $V(P)$.  
\end{lemma}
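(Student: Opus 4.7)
The plan is to mimic the proof of the odd analog (Lemma \ref{l:suspect}) with the modified length bound. I would assume for contradiction that $x$ is not complete to $V(P)$ and exhibit a hole in $G[V(P) \cup \{x\}]$ whose length lies strictly between $3$ and $2\ell$, contradicting $G \in \mathcal C_{2\ell}$.

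Concretely, I would write $P = u_0 u_1 \cdots u_m$ with $m \leq \ell+1$ and $xu_0, xu_m \in E(G)$. Since by assumption $x$ has some non-neighbor on $P$, I would pick the smallest $i$ with $xu_i \notin E(G)$ (so $1 \leq i \leq m-1$) and then the smallest $j > i$ with $xu_j \in E(G)$; both exist by the endpoint hypothesis. Then the subpath $u_{i-1} u_i \cdots u_j$ of $P$ together with the edges $x u_{i-1}$ and $x u_j$ forms an induced cycle of length $j - i + 3$, which is at least $4$ (since $xu_i$ is a non-edge) and at most $m + 2 \leq \ell+3$.

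The key arithmetic step is that $\ell + 3 < 2\ell$ whenever $\ell \geq 4$; thus the cycle just produced is a hole whose length is at least $4$ and strictly less than $2\ell$, contradicting the fact that every hole of $G$ has length exactly $2\ell$. The assumption $\ell \geq 4$ (rather than $\ell \geq 3$ as in the odd case) is exactly what is needed to close the gap $\ell+3 < 2\ell$, paralleling how the odd version uses $\ell \geq 3$ to get $\ell+3 < 2\ell+1$.

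I do not anticipate any real obstacle: once one writes down the shortest-detour induced cycle through $x$, the length bound is immediate, and the standing hypothesis $\ell \geq 4$ of the even-template section finishes the argument. The proof should therefore be as short as its odd counterpart.
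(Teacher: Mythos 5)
Your proposal is correct and is essentially the paper's proof, just made more explicit: the paper asserts tersely that a hole of length at least~$4$ and at most $\ell+3$ exists in $G[V(P)\cup\{x\}]$ and then invokes $\ell+3<2\ell$, while you construct the hole by hand via the shortest-detour subpath. If anything, your version is slightly more careful, since the paper's phrase ``a shortest cycle ... has length at least~$4$'' is technically off (triangles are possible); what is needed, and what you actually produce, is an induced cycle of length at least~$4$.
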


\begin{proof}
  Otherwise, a shortest cycle in $G[V(P) \cup \{x\}]$ has length at
  least~4 and at most $\ell + 3$. Since
  $\ell\geq 4$ implies $\ell+3 < 2 \ell$, this is a contradiction.
\end{proof}

\subsection{Attaching a vertex to $F_1$}

In this subsection, we show that for all vertices $x$ of $V(G)\sm V(F_2)$,
$N_{V(F_1)}(x)$ is a clique (see Lemma~\ref{l:attachVertex10Even}).  

\begin{lemma}
  \label{l:blowupInBEven}
  If $x\in V(G)\sm V(F_2)$ has no neighbor in $I^*$, then $N_{V(F_1)}(x)$ is a 
  clique.
\end{lemma}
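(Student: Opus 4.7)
The plan is to follow the structure of the odd analog, Lemma~\ref{l:blowupInB}, adapting the arithmetic to the $2\ell$ hole-length constraint and to the $A_K/A_S$ split in even templates. Assuming for contradiction that $N_{V(F_1)}(x)$ is not a clique, I would first reduce to the case $N_{V(F_1)}(x) \subseteq A^* \cup B^*$. If $x$ had neighbors in both $A^* \cup B^*$ and $A'^* \cup B'^*$, Lemma~\ref{l:patatesBlowUpEven} would furnish a path of length at most $\ell+1$ between two such neighbors whose interior contains at least one vertex of $I^*$ (since $\ell \ge 4$, any principal path interior is non-empty); Lemma~\ref{l:suspectEven} would then force $x$ to be complete to this path, contradicting the hypothesis that $x$ has no neighbor in $I^*$. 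By the symmetry between $(A, B)$ and $(A', B')$, we may assume $N_{V(F_1)}(x) \subseteq A^* \cup B^*$.

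Next I would produce non-adjacent $a, b \in A$ with $x$ having neighbors in both $K_a$ and $K_b$. Lemma~\ref{evenl:onlyKaKb} directly yields non-adjacent $a, b \in V(F_0)$ with that property, and the reduction step forces $a, b \in A \cup B$. The task is then to argue away the cases $(a, b) \in A \times B$ and $(a, b) \in B \times B$. When $b \in B$, for any $v \in H_b$ we have $av \notin E(G)$ (directly when $a \in A$, since $H_b$ is a module of $G[A]$ not containing $a$, and via Lemma~\ref{evenlt:deuxsommetsdeB} when $a \in B$). Either some such $v$ is already adjacent to $x$, producing a non-adjacent pair entirely in $A$, or $x$ is anticomplete to $H_a \cup H_b$. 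In the latter situation, I would close a cycle through $P_v$, a modified principal path $P_a^*$ (or $P_u^*$ for $u \in H_a$), and the short detour $a^* x b^* v$ (or $u a^* x b^* v$), using $w'$ (and possibly $w$) to bridge primed endpoints when both principal paths involved have the shorter length $\ell-2$. The sub-cases are indexed by the $A_K/A_S$ memberships of the relevant endpoints, and the arithmetic yields hole lengths of $2\ell+1$ or $2\ell+2$ in every sub-case, each contradicting $G \in \mathcal{C}_{2\ell}$; the sub-case where both endpoints lie in $A_K$ is vacuous since $A_K$ is a clique while the relevant vertices are required to be non-adjacent.

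With non-adjacent $a, b \in A$ in hand, I would verify that $\{K_u : u \in A \cup A' \cup I\} \cup \{B^* \cup \{x\}, B'^*\}$ is an even $\ell$-preblowup of $F_0$. Condition~\eqref{pb:BAN} holds for $x$ because of the pair $(a, b)$; condition~\eqref{pb:Bw} is inherited from $F_1$ since the old witness $w \in B$ still works; and the global condition $N(B^* \cup \{x\}) \subseteq A^*$ follows from the reduction step combined with the convention $N(X) = \bigcup_{v \in X} N(v) \setminus X$ (which permits internal edges within $B^* \cup \{x\}$). Lemma~\ref{l:preblowup_even} then upgrades this preblowup to a proper blowup of a twinless even $\ell$-template with $k$ principal paths on $V(F_1) \cup \{x\}$, contradicting the maximality of $F_1$.

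The main obstacle will be the case analysis in the second paragraph: the $A_K/A_S$ split in even templates roughly doubles the number of sub-cases compared to the odd proof, and for each sub-case one must verify that the constructed cycle is genuinely a hole (no hidden chords, correct length, all vertices distinct) and that the appeals to $w$ or $w'$ to close the cycle are legitimate — the latter relying on the properness of the chosen $\ell$-partition of $F_0$ and the existence of universal vertices $w \in B$, $w' \in B'$ guaranteed by Lemma~\ref{l:wProperEven}.
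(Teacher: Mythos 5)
Your proposal follows the paper's proof essentially verbatim: the same reduction to $N_{V(F_1)}(x)\subseteq A^*\cup B^*$ via Lemmas~\ref{l:patatesBlowUpEven} and~\ref{l:suspectEven}, the same production of non-adjacent $a,b\in A$ via Lemma~\ref{evenl:onlyKaKb} and elimination of the $B$-cases by closing holes of length $2\ell+1$ or $2\ell+2$ using $w'$ to bridge primed endpoints, and the same final appeal to Lemma~\ref{l:preblowup_even} to contradict the maximality of $F_1$. The only nit is that the parenthetical ``and possibly $w$'' is unnecessary — the paper only ever uses $w'$ in this claim's case analysis, since the short detour through $x$ already supplies the bridge on the unprimed side.
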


\begin{proof}
  Suppose for a contradiction that $N_{V(F_1)}(x)$ is not a 
  clique. 

  \begin{claim}
    \label{c:bBnoNeigh}
    We may assume that $N_{V(F_1)}(x) \subseteq A^*\cup B^*$.  
  \end{claim}

  \begin{proofclaim}
    If $x$ has neighbors in both $A^* \cup B^*$ and $A'^* \cup B'^*$,
    then consider a path $P$ as in Lemma~\ref{l:patatesBlowUpEven} from a
    neighbor of $x$ in $A^* \cup B^*$ to a neighbor of $x$ in
    $A'^* \cup B'^*$.  By Lemma~\ref{l:suspectEven}, $x$ is complete to
    $V(P)$.  This is a contradiction since $x$ has no neighbor in
    $I^*$.  Hence $x$ does not have neighbors in both $A^* \cup B^*$ and
    $A'^* \cup B'^*$, and our claim follows up to symmetry.
  \end{proofclaim}

  \begin{claim}
    \label{c:bCondPT}
    There exist non-adjacent $a, b\in A$ such that $x$ has neighbors
    in both $K_a$ and $K_b$.  
  \end{claim}
  
  \begin{proofclaim}
        
    By Lemma~\ref{evenl:onlyKaKb}, since $N_{V(F_1)}(x)$ is not a clique,
    there should exist two non-adjacent vertices $a,b \in V(F_0)$ such
    that $x$ has a neighbor $a^* \in K_a$ and a neighbor
    $b^* \in K_b.$ By~\eqref{c:bBnoNeigh}, $a,b \in A\cup B.$

    If $a, b\in A$, then our conclusion holds, so we may assume that
    $b\in B$.

    If $a\in A$, then since $ab\notin E(G)$, $H_b$ is anticomplete to
    $a$. Let $P^*_a$ be the path induced by
    $\{a^*\} \cup (V(P_a) \sm \{a\})$. Let $v\in H_b$.  We may assume
    that $xv\notin E(G)$ for otherwise our claim holds (with $a$ and
    $v$). Note that since $ab, av \notin E(G),$ by~\eqref{i:bKKanti} of blowup, $a^*b^*, a^*v \notin E(G)$.
    Now,  $a^*xb^*vP_vv'a'P_a^*a^*$ (in case one of $a,v$ belongs to $A_K$)  or $a^*xb^*vP_vv'w'a'P_a^*a^*$ (in case both $a,v$ belong to $A_S$ ) form a hole of
    length  $2\ell +1$, a contradiction.  Hence, we may assume
    $a\in B$. 

    Since $ab\notin E(G)$, by Lemma~\ref{evenlt:deuxsommetsdeB},
    $\{a\}\cup H_a$ is
    anticomplete to $\{b\} \cup H_b$.  We may assume that $x$ is
    anticomplete to $H_a\cup H_b$ for otherwise we may apply the proofs
    above. Hence, for $u\in H_a$ and $v\in H_b$, $ua^*xb^*vP_vv'u'P_uu$ or $ua^*xb^*vP_vv'w'u'P_uu$ is
    a hole of length $2\ell+2$. 
  \end{proofclaim}

  Now the sets $K_u$ for all $u\in A\cup A' \cup I$, $B^*\cup \{x\}$
  and $B'^*$ form a preblowup of $F_0$.  All conditions are easily
  checked. In particular $x$ satisfies condition~\eqref{pb:B}
  by~\eqref{c:bBnoNeigh} and~\eqref{pb:BAN} by~\eqref{c:bCondPT}).
  So, by Lemma~\ref{l:preblowup_even}, $G[V(F_1)\cup \{x\}]$ is a proper
  blowup of some $\ell$-template with $k$ principal paths. This contradicts
  the maximality of $F_1$.
\end{proof}

\begin{lemma}
  \label{l:blowupInAEven}
  If there exist $x\in V(G)\sm V(F_2)$ and $u\in A$ such that $x$
  has neighbors in both $K_u$ and $K_{u^{+}}$ and is anticomplete to
  $K_{u^{++}}$, then $N_{V(F_1)}(x)$ is a clique.
\end{lemma}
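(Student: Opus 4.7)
The plan is to mimic the strategy of Lemma~\ref{l:blowupInA} from the odd case: suppose for a contradiction that $N_{V(F_1)}(x)$ is not a clique, and then show that $x$ can be absorbed into $K_u$ so as to produce a preblowup of $F_0$ with $|V(F_1)|+1$ vertices, contradicting the maximality of $F_1$ via Lemma~\ref{l:preblowup_even}. Throughout I will use $u^* \in K_u$ and $u^{+*}\in K_{u^+}$ for fixed neighbors of $x$, and the principal path $P_u=u\dots u'$.

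First I would establish a location claim: $x$ is anticomplete to $A'^*\cup B'^*\cup (I^*\sm K_{u^+})$. For $I^*$ and $A'^*$ this is handled by taking a principal hole $C$ containing $u^+$ and the alleged neighbor, and invoking Lemma~\ref{evenl:xKaKb}, which forces any two neighbors of $x$ in cliques blown up from non-adjacent vertices of $C$ to share a common neighbor on $C$; since $u, u^+$ are already such a pair and $u^{++}$ is the only common neighbor, no third neighbor can occur. For $B'^*$ I would use Lemma~\ref{l:patatesBlowUpEven} to build a path from $u^*$ to the alleged neighbor in $B'^*$ of length $\ell$ or $\ell+1$ and apply Lemma~\ref{l:suspectEven} to conclude that $x$ would then be complete to this path, producing either a neighbor in $K_{u^{++}}$ or a neighbor in some other principal interior, both contradictions.

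Next I would show $N_A(x)\sm\{u\}=N_A(u)$. For the forward inclusion, a vertex $v\in N_A(x)\sm N_A[u]$ combined with the path $P_u$, the path $P_v$, and (depending on whether $u,v$ lie in $A_K$ or $A_S$) possibly $w'$, yields a hole of length $2\ell-1$ or $2\ell+1$ through $x$, a contradiction. For the reverse inclusion I would use Lemma~\ref{l:wProperEven}: since the partition is proper we have $w\in B$, so there is $w^*\in B^*$ complete to $A^*$ (by Lemma~\ref{cb:chapeau_even} applied to the blowup, or directly by condition~\eqref{pb:Bw}). I then produce a path of length $2$ from $x$ to any missing neighbor $v\in N_A(u)\sm N_A(x)$ through $w^*$, and close a hole $z'P_zzQxu^{+*}u^{++}P_uu'w'z'$ (with possible adjustment of $w'$ depending on parities in $A_K/A_S$), producing a hole of length $2\ell+1$, a contradiction. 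Once $N_A(x)\sm\{u\}=N_A(u)$ is established, the usual $C_4$ argument with $w^*$ (or with a universal vertex of $G[A]$) shows that $x$ is complete to the whole clique $K_u$.

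With these claims in hand, the final step is to define $K_u^{\mathrm{new}}=K_u\cup\{x\}$, keep every other $K_v$ and every $B^*, B'^*$ unchanged, and verify that the resulting tuple is an even $\ell$-pretemplate preblowup of $F_0$: conditions~\eqref{pb:A}--\eqref{pb:I} and~\eqref{pb:BAN} follow from the location claim, from $N_A(x)\sm\{u\}=N_A(u)$, from the existence of $w^*$ complete to $A^*$, and from the fact that $x$ is adjacent to $u^{+*}\in K_{u^+}$. Lemma~\ref{l:preblowup_even} then produces a proper blowup of a twinless even $\ell$-template with $k$ principal paths that strictly contains $F_1$, contradicting the maximality of $F_1$. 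The main obstacle I expect is the even/odd parity bookkeeping: because in the even case $P_u$ has length $\ell-1$ or $\ell-2$ depending on whether $u\in A_K$ or $u\in A_S$, and because one must sometimes insert $w$ or $w'$ to glue together two principal paths into a hole of length exactly $2\ell$, every hole argument splits into cases according to the location of $u$ (and of the other vertex involved) in $A_K$ versus $A_S$; getting the length exactly $2\ell$ (rather than $2\ell\pm 1$) in every such case is where care is needed.
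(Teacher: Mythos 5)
Your plan follows the paper's structure (location claim, then pin down $N_A(x)$, then absorb $x$ into $K_u$ and invoke Lemma~\ref{l:preblowup_even}), and the first and last steps are sound. However, there is a genuine gap in the middle: when you handle the reverse inclusion $N_A(u)\subseteq N_A(x)$ and when you prove $x$ is complete to $K_u$, you route a path from $x$ to a missing neighbor $v$ ``through $w^*$'' and run a $C_4$ argument ``with $w^*$''. This tacitly assumes that $x$ is adjacent to $w^*$ (or to some vertex of $B^*$ complete to $A^*$). That adjacency is not given to you by condition~\eqref{pb:Bw} or by the existence of $w^*$: those only say that \emph{some} $w^*\in B^*$ complete to $A^*$ exists, not that it lies in $N(x)$.

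Establishing that $x$ has a neighbor $y^*\in B^*$ that is complete to $A^*$ is exactly the paper's claim~\eqref{c:wstaricomplete}, and its proof is a nontrivial multi-step argument: (i) by the location claim and Lemma~\ref{evenl:onlyKaKb}, $x$ has a neighbor $y^*\in K_y$ with $y\in (A\cup B)\sm\{u\}$; if $y^*=w$ you are done, so one may assume $xw\notin E(G)$; (ii) one then shows (via hole arguments through $w$ and the principal paths) that $y^*$ is complete to $K_v$ for every $v\in A\sm\{u\}$; (iii) $u^*y^*\in E(G)$ by a $C_4$ check; (iv) $y^*\in A^*$ would force $G[A]$ to have no isolated vertex, contradicting Lemma~\ref{l:wProperEven}, so $y^*\in B^*$; (v) finally, short-hole arguments rule out a non-neighbor of $y^*$ in $K_u$. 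Without this chain you cannot produce the length-$2$ path $v\,y^*\,x$ that your hole-closing and $C_4$ arguments rely on. The same issue already appears in the odd analogue (Lemma~\ref{l:blowupInA}), where the corresponding claim is also proved, not assumed; your proposal treats it as automatic, and that is where it breaks down.
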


\begin{proof}
  Suppose for a contradiction that $N_{V(F_1)}(x)$ is not a clique.

  \begin{claim}
    \label{c:bAxanti}
    $x$ is anticomplete to $A'^* \cup B'^* \cup (I^*\sm K_{u^+})$.
  \end{claim}
  
  \begin{proofclaim}
    If $x$ has a neighbor $t^*$ in some $K_t$ such that
    $t\in (A' \cup I) \sm \{u^+\}$, then note that $t\neq u^{++}$ by
    assumption.  Let $C$ be a principal hole that contains $t$
    and~$u$.  By Lemma~\ref{evenl:xKaKb} applied to $t$ and $u$ (that are by definition non adjacent in $C$) we should have that $u^+$ is a common neighbor of $u$ and $t$ in $C$. This is not possible since the neighbors of $u^+$ in $C$ are $u$ and $u^{++} \neq t$. So $x$ is anticomplete to $A'^* \cup (I^*\sm K_{u^+})$.

    It remains to prove that $x$ is anticomplete to $B'^*$. Otherwise,
    $x$ has a neighbor $t\in B'^*$. Consider a path $P$ from $t$ to
    a neighbor of $x$ in $K_u$ as in Lemma~\ref{l:patatesBlowUpEven} and let $Q$ be the principal
    path whose interior is contained in~$P$.  By
    Lemma~\ref{l:suspectEven}, $x$ is complete to $V(P)$ and hence to $V(Q)$.  
    This is impossible since we have shown that $x$ is anticomplete to $A'^*$.
  \end{proofclaim}

   From here on, $u^*$ and $u^{+*}$ are neighbors of $x$ in respectively $K_{u}$ and $K_{u^+}$.

  \begin{claim}\label{c:wstaricomplete}
   $x$ has a neighbor $y^* \in B^*$ that is
    complete to $A^*$.
  \end{claim}

 \begin{proofclaim}
Note that $x$ has a neighbor
  $y^*\in K_y$ for some $y \in A \cup B\sm \{u\}$, for otherwise,
  by~\eqref{c:bAxanti}, $N_{V(F_1)}(x) \subseteq K_u\cup K_{u^+}$ and
  by Lemma~\ref{evenl:onlyKaKb}, $N_{V(F_1)}(x)$ is a clique, a
  contradiction. In case $y^*=w$ we are done, so from now on we will assume that $w$ is not adjacent to $x$ and $y^*\neq w$.
  
  Let $v\in A \sm \{u\}$ and assume that there exists $v^*\in K_v$ which not adjacent to $y^*$. Then $xv^*\notin E(G)$ for
    otherwise $\{x,y^*,w,v^*\}$ induces a $C_4$. Let $P^*_v$ be the path induced by
    $\{v^*\} \cup (V(P_v) \sm \{v\})$. Either
    $xy^*wv^*P^*_vv'w'u'P_uu^{++}u^{+*}x$ 
    (in case $v'u' \notin E(G)$) or
    $xy^*wv^*P^*_vv'u'P_uu^{++}u^{+*}x$ (in case $v'u' \in E(G)$) is a hole of length at least $2\ell+1$. In both cases we get a contradiction. So we have shown that for every $v\in A \sm \{u\}$, $y^*$ is complete to $K_v$ and hence $y$ is complete to $A \sm \{u,y\}$.
    
 We also have that $u^*y^*\in E(G)$ for otherwise, $\{x,y^*,w,u^*\}$ induces a $C_4$. So, if $y^*\in A^*$ then $A$ contains no isolated vertex, a contradiction to Lemma \ref{l:wProperEven}. So $y^*\in B^*$. If $y^*$
has a non-neighbor $v^*\in A^*$ then from what precedes we have $v^*\in K_u$. Now there exists
    a path $Q$ of length 1, 2 or 3 from $x$ to $v^*$ with interior in
    $K_{u^+}$ (either $xv^*$ or $x u^{+*}v^*$, or  $xu^+v^*$ or $xu^{+*}u^+v^*$).
    Hence, $xQv^*wy^*x$ is a hole of length~4, 5 or~6, a
    contradiction. \end{proofclaim}

  \begin{claim}  
    \label{c:bAneigheq}
    $N_{A}(x) \sm \{u\} = N_{A}(u)$.
  \end{claim}
  
  \begin{proofclaim}
    If there exists $v\in N_{A}(x)\sm N_{A}[u]$, then
    $vP_vv'u'P_uu^{++}u^{+*}xv$ (in case exactly one of $u,v$ is in $A_K$) or $vP_vv'w'u'P_uu^{++}u^{+*}xv$ (in case $u,v$ are both in $A_S$)  is a hole of length $2\ell-1$, a
    contradiction.

    Conversely, suppose there exists $v\in N_{A}(u) \sm N_{A}(x)$.  
    


%
Then $v'P_vvy^*xu^{+*} u^{++}P_uu'w'v'$ (in case exactly one of $u,v$ is in $A_K$) or $v'P_vvy^*xu^{+*} u^{++}P_uu'v'$ (in case $u,v$ are both in $A_K$) is
    a hole of length $2\ell+1$, a contradiction.
  \end{proofclaim}

  \begin{claim}\label{c:EtLaClique}
    $x$ is complete to $K_u$.
  \end{claim}
  
  \begin{proofclaim}
    Suppose there exists $r\in K_u$ such that $rx\notin E(G)$.  
%

    If $xu^+\in E(G)$ then $\{r,y^*,u^+,x\}$ induces a $C_4$, a
    contradiction. Hence $xu^+\notin E(G)$.  Now by condition
    \eqref{i:flat} of blowups, either $\{x,y^*,r,u^{+*}\}$ induces a
    $C_4$ or $\{x,y^*,r,u^+,u^{+*}\}$ induces a $C_5$.
  \end{proofclaim}

  Now, the sets $K_v$ for all $v\in (A\sm{u})\cup I\cup A'$,
  $K_u\cup \{x\}$, $B^*$ and $B'^*$ form a preblowup of $F_0$.  All
  conditions are easy to check. In particular, $K_u\cup \{x\}$ is a
  clique by \eqref{c:EtLaClique}, conditions \eqref{pb:A},
  \eqref{pb:B} and \eqref{pb:I} follows from \eqref{c:bAxanti},
  condition \eqref{pb:Acomp} from \eqref{c:bAneigheq}, condition
  \eqref{pb:Bw} from \eqref{c:wstaricomplete} and condition
  \eqref{pb:AI} from our assumptions.

  Hence by Lemma~\ref{l:preblowup_even}, $G[V(F_1)\cup \{x\}]$ is a
  proper blowup of some twinless odd $\ell$-template with $k$
  principal paths that is an induced subgraph of $G$ a contradiction
  to the maximality of $F_1$.
\end{proof}

\begin{lemma}
  \label{l:blowupInIEven}
  If $x\in V(G)\sm V(F_2)$ has no neighbor in $B^* \cup B'^*$, then
  $N_{V(F_1)}(x)$ is a clique.
\end{lemma}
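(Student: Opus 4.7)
The plan is to mirror the proof of the odd analog, Lemma~\ref{l:blowupInI}, replacing each invoked lemma by its even counterpart already established in this section. I would start by assuming for contradiction that $N_{V(F_1)}(x)$ is not a clique, and invoke Lemma~\ref{l:blowupInBEven} to obtain a neighbor of $x$ in $I^*$. This forces $x$ to have a neighbor in $K_t$ for some internal vertex $t$ of some principal path $P_v=v\dots v'$ of $F_0$. I would then define $a$ (resp.\ $b$) as the vertex of $P_v$ closest to $v$ (resp.\ $v'$) along $P_v$ such that $x$ has a neighbor in $K_a$ (resp.\ $K_b$).

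The argument would then split into three cases on the relative positions of $a$ and $b$. First, if $a=b$, then $x$ must also have a neighbor in some $K_y$ with $y\neq a$ and, because by hypothesis $y\notin B\cup B'$, we have $y\in (A\cup A'\cup I)\sm\{a\}$, so $a$ and $y$ are non-adjacent members of some principal hole $C$. Applying Lemma~\ref{evenl:xKaKb} to $C$ forces a common neighbor of $a$ and $y$ on $C$, contradicting $a=b$. Second, if $ab\in E(G)$ and both are interior to $P_v$, then Lemma~\ref{evenl:xKaKb} applied to a principal hole through $P_v$ yields $N_{V(F_1)}(x)\subseteq K_a\cup K_b$, whence Lemma~\ref{evenl:onlyKaKb} gives that $N_{V(F_1)}(x)$ is a clique, a contradiction. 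If instead one endpoint, say $a=v$ and $b=v^+$, lies at an end of $P_v$, then $x$ is anticomplete to $K_{v^{++}}$, and Lemma~\ref{l:blowupInAEven} immediately contradicts the assumption.

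In the remaining and main case, $a\neq b$ and $ab\notin E(G)$, Lemma~\ref{evenl:xKaKb} supplies a common neighbor $u$ of $a$ and $b$ on $P_v$, and I would verify $u\in I$, so that $a,u,b$ are three consecutive vertices of $P_v$. I would then establish three structural claims: that $x$ is complete to $K_u$ (any non-neighbor $u^*\in K_u$ would yield, via paths of length $2$ or $3$ from $x$ to $u^*$ through $K_a$ and through $K_b$, a hole of length~$4$, $5$ or~$6$, all forbidden since $\ell\geq 4$); that $x$ is anticomplete to $V(F_1)\sm(K_a\cup K_u\cup K_b)$ (from Lemma~\ref{evenl:xKaKb} together with the hypothesis that $x$ has no neighbor in $B^*\cup B'^*$); and that $x$ has a neighbor in each of $K_a$ and $K_b$ (directly from the choice of $a,b$). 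Armed with these, the sets $K_z$ for $z\in(A\cup A'\cup I)\sm\{u\}$, $K_u\cup\{x\}$, $B^*$ and $B'^*$ satisfy all the conditions in the definition of a preblowup of $F_0$, so Lemma~\ref{l:preblowup_even} upgrades $G[V(F_1)\cup\{x\}]$ into a proper blowup of a twinless even $\ell$-template with $k$ principal paths, contradicting the maximality of $F_1$.

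The main obstacle I anticipate is in the first two cases: because $P_v$ may have length $\ell-1$ or $\ell-2$ depending on whether $v\in A_K$ or $v\in A_S$, the principal hole used to invoke Lemma~\ref{evenl:xKaKb} may include one or both of $w,w'$, and one must pick the right principal hole so that the forced common neighbor genuinely contradicts the definition of $a$ and $b$. The short-cycle argument in the third case is comfortable thanks to $\ell\geq 4$, which gives $\ell+3<2\ell$ and rules out holes of length $4$, $5$, or $6$.
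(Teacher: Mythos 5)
Your proposal matches the paper's proof of Lemma~\ref{l:blowupInIEven} step by step: the same reduction via Lemma~\ref{l:blowupInBEven}, the same choice of $a$ and $b$ on $P_v$, the same three-way case split (with Lemma~\ref{evenl:xKaKb}, Lemma~\ref{evenl:onlyKaKb} and Lemma~\ref{l:blowupInAEven} handling the degenerate cases), the same short-cycle argument to show $x$ is complete to $K_u$, and the same conclusion by exhibiting a preblowup contradicting the maximality of $F_1$. The worry you raise at the end about principal holes possibly containing $w$ or $w'$ is unfounded: Lemma~\ref{evenl:xKaKb} is stated for an arbitrary principal hole, and since $a\in I$ has degree~$2$ with both neighbors on $P_v$, the forced common neighbor automatically lands on $P_v$ regardless of whether $w$ or $w'$ are present in the hole.
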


\begin{proof}
  Suppose for a contradiction that $N_{V(F_1)}(x)$ is not a clique.  By
  Lemma~\ref{l:blowupInBEven}, $x$ has a neighbor in $I^*$.  So $x$ has a
  neighbor in a clique blown up from an internal vertex of some
  principal path $P_v = v\dots v'$.  Let $a$ (resp.\ $b$) be  the
  vertex of $P_v$ closest to $v$ (resp.\ to $v'$) along $P_v$ and such
  that $x$ has a neighbor in $K_a$ (resp.\ $K_{b}$).

  Suppose first that $a=b$, so $a \in I$ and $a$ is the only vertex of $P_v$ whose clique contains a neighbor of $x$.  Hence, as $N_{V(F_1)}(x)$ is not a clique, $x$ has a neighbour in
  some $K_y$ with $y \in V(F_0) \sm V(P_v)$, and since by assumption
  $x$ has no neighbor in $B^* \cup B'^*$, $y\in A\cup A' \cup I$. So,
  $y$ and $a$ are non-adjacent members of some principal hole.  By
  Lemma~\ref{evenl:xKaKb}, $x$ has a neighbor in some clique $K_{c}$
  where $c$ is adjacent to both $a$ and $y$, a contradiction to the properties implied by the supposition that $a=b$.  
  
  Suppose now that $ab\in E(G)$. Then $a$ and $b$ are the only vertices of $P_v$ whose cliques contain a neighbor of $x$.
   If both $a$ and $b$ are internal
  vertices of $P_v$, then as in the previous paragraph, we can show that no neighbour of $x$ is in
  some $K_y$ with $y \in V(F_0) \sm V(P_v)$ and hence $N_{V(F_1)}(x) \subseteq K_a \cup K_{b}$. 
  So, by Lemma~\ref{evenl:onlyKaKb}, $N_{V(F_1)}(x)$ is a clique, a
  contradiction.
  It follows that at least one of $a$ or $b$ is an
  end of $P_v$.  Up to symmetry, we may assume that $a=v$ and
  $b=v^+$.  Note that, by the definition of $b$, $x$ is then anticomplete to $K_{v^{++}}$.  Hence, by
  Lemma~\ref{l:blowupInAEven}, $N_{V(F_1)}(x)$ is a clique, a contradiction.

  Hence, $a\neq b$ and $ab\notin E(G)$.  So, by Lemma~\ref{evenl:xKaKb},
  $a$ and $b$ have a common neighbor $u$ in $P_v$. So, $a$, $u$ and
  $b$ are consecutive along $P_v$ (in particular, $u\in I$).

  \begin{claim}
    \label{c:xuInE}
    $x$ is complete to $K_u$. 
  \end{claim}

  \begin{proofclaim}
    Otherwise, let $u^*\in K_u$ be non-adjacent to $x$. There exists
    a path $Q_a$ of length 2 or 3 from $u^*$ to $x$ with interior in
    $K_a$ (either $xa^*u^*$, or $xa^*au^*$ for some $a^*$ in $K_a$).
    There exists a similar path $Q_b$.  So, $Q_a$ and $Q_b$ form a
    hole of length 4, 5 or 6, a contradiction.
  \end{proofclaim}

  \begin{claim}
    \label{c:BlowIanti}
    $x$ is anticomplete to $V(F_1) \sm (K_a \cup K_u \cup K_b)$. 
  \end{claim}

  \begin{proofclaim}
    This follows from Lemma~\ref{evenl:xKaKb} and from the assumption that $x$ is anticomplete to $B^* \cup B'^*$.
  \end{proofclaim}

  \begin{claim}
    \label{c:xInotIso}
    $x$ has neighbors in each of $K_a$, $K_b$.  
  \end{claim}

  \begin{proofclaim}
    This follows from the definition of $a$ and $b$. 
  \end{proofclaim}

  Now the sets $K_v$ for all $v\in (A \cup A' \cup I) \sm \{u\}$, $K_u\cup \{x\}$,
  $B^*$ and $B'^*$ form a preblowup of $F_0$. All conditions are
  easily checked, in particular $K_u\cup \{x\}$ is a clique
  by~\eqref{c:xuInE}, it satisfies condition~\eqref{pb:I}
  by~\eqref{c:BlowIanti} and condition~\eqref{pb:II}
  by~\eqref{c:xInotIso}.
  
  Hence, by Lemma \ref{l:preblowup_even} $G[V(F_1)\cup \{x\}]$ is a
  proper blowup of some twinless odd $\ell$-template with $k$
  principal paths that is an induced subgraph of $G$. This contradicts
   the maximality of $F_1$.
\end{proof}

\begin{lemma}
  \label{l:attachVertex10Even}
  For all vertices $x$ of $V(G)\sm V(F_2)$, $N_{V(F_1)}(x)$ is a clique. 
\end{lemma}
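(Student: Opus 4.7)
The plan is to follow the same line as Lemma~\ref{l:attachVertex10} in the odd case, relying on the three preparatory lemmas (\ref{l:blowupInBEven}, \ref{l:blowupInAEven}, \ref{l:blowupInIEven}) already established for the even setting, and then showing that any counterexample would let us enlarge the blowup $F_1$, contradicting its maximality.

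Assume for a contradiction that $N_{V(F_1)}(x)$ is not a clique. By Lemma~\ref{l:blowupInBEven}, $x$ has a neighbor in $I^*$, and by Lemma~\ref{l:blowupInIEven}, $x$ has a neighbor in $B^*\cup B'^*$. The first step is to prove that the neighbors of $x$ in $I^*$ all lie in cliques blown up from a single principal path $P_u=u\dots u'$: if $x$ had neighbors in the interiors of two distinct principal paths, these two paths would lie in a common principal hole $C$ of $F_0$, and Lemma~\ref{evenl:xKaKb} would force the two $I$-neighbors to share a common neighbor in $C$, which is impossible. Next, using Lemma~\ref{l:patatesBlowUpEven} and the two disjoint $B$-$B'$ paths of length at most $\ell+1$, together with Lemma~\ref{l:suspectEven}, we rule out the case where $x$ has neighbors in both $B^*$ and $B'^*$ (a vertex dominating such two interior-disjoint paths would have neighbors in the interior of two distinct principal paths). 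So, up to symmetry, $x$ has a neighbor $y^*\in K_y$ with $y\in B$ and has no neighbor in $B'^*$.

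The next block of claims describes precisely how $x$ attaches to $P_u$. Using Lemmas~\ref{evenl:xKaKb} and~\ref{l:blowupInAEven}, together with the walk $y^*(w)uP_uv$ (of length at most $\ell+1$) and Lemma~\ref{l:suspectEven}, we show that the neighbor of $x$ in the interior of $P_u$ farthest from $u$ is precisely $u^{++}$: a farther neighbor would force $x$ into too many cliques of $P_u$, contradicting Lemma~\ref{evenl:xKaKb}, while a closer one would bring us back to the situation of Lemma~\ref{l:blowupInAEven}. Hence $x$ is adjacent to $u$ and $u^+$ and has a neighbor in $K_{u^{++}}$, and is anticomplete to every other clique built from a vertex of $P_u$ and to $A^*\sm K_u$ (using that any $a\in A\sm\{u\}$ with $a^*$ adjacent to $x$ would give a short hole through $C_{u,a}$ or $P_a^*$ and $P_u^*$). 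In particular, $N_{F_1}(x)\subseteq K_{u^{++}}\cup K_{u^+}\cup K_u\cup K_y$.

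One then proves that $y$ is the unique neighbor of $x$ in $B^*$ and satisfies $N_A(y)=N_A[u]$. For the inclusion $N_A(y)\subseteq N_A[u]$, an element $v\in N_A(y)\sm N_A[u]$ gives a short hole $xy^*vP_vv'(w')u'P_u^*u^{++*}x$ as in the odd case, with length $2\ell-1$ or $2\ell$ depending on whether $uv$ is crossing between $A_K$ and $A_S$; for $N_A(u)\subseteq N_A(y)$ one uses the universal vertex of $G[A\cup B]$ guaranteed by Lemma~\ref{evenl:ABuniv}, together with Lemma~\ref{l:wProperEven} (properness forces $w\in B$ and an isolated vertex in $A_S$), to build a detour hole. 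This twin-like property, together with Lemma~\ref{evenl:wTwins}, excludes a second neighbor of $x$ in $B^*$. Finally, one checks $y\neq w$ (else $u$ would be universal in $G[A]$, contradicting the proper partition), that $x$ is complete to $K_{u^+}$ (forbidding a $C_4$ or $C_5$ using the flat edge $uu^+$), and that $K_u\cup K_y$ is a clique (since $N_A(y)=N_A[u]$ forces $uy$ solid, so condition~\eqref{i:Kcomp} applies).

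Putting it all together, set $B_0=B^*\sm N_{K_y}(x)$ and consider the partition where $K_v$ is unchanged for $v\in (A\cup I\cup A')\sm\{u,u^+\}$, the cliques $K_u\cup N_{K_y}(x)$ and $K_{u^+}\cup\{x\}$ replace $K_u$ and $K_{u^+}$, and the $B$-side becomes $B_0$ (with $B'^*$ unchanged). All conditions of a preblowup of $F_0$ are satisfied: \eqref{pb:A}, \eqref{pb:B}, \eqref{pb:I} follow from the neighborhood restrictions above, \eqref{pb:Acomp} from $N_A(y)=N_A[u]$, \eqref{pb:Bw} from $y\neq w$, \eqref{pb:AI} from $x$ being complete to its neighbors in $K_y$, and \eqref{pb:II} from the adjacencies forced on $P_u$. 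By Lemma~\ref{l:preblowup_even}, the enlarged graph $G[V(F_1)\cup\{x\}]$ is then a proper blowup of a twinless even $\ell$-template with $k$ principal paths, contradicting the maximality of $F_1$. The main obstacle compared to the odd case is bookkeeping: principal paths now have two possible lengths, so every hole argument above splits into a couple of subcases depending on whether the paths involved are of length $\ell-1$ or $\ell-2$ and on which of $w,w'$ must be inserted to close the hole to length $2\ell$; the inequality $\ell\geq 4$ is exactly what is needed so that these ad hoc holes never collapse below length $4$ nor accidentally hit length $2\ell$.
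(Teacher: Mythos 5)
Your proposal tracks the paper's own proof very closely: the same sequence of preparatory reductions via Lemmas~\ref{l:blowupInBEven}, \ref{l:blowupInIEven}, \ref{l:blowupInAEven}, the same localization of $x$'s attachment to a single principal path $P_u$ using Lemma~\ref{evenl:xKaKb}, the same elimination of a $B'^*$-neighbor via the two $B$--$B'$ paths of Lemma~\ref{l:patatesBlowUpEven} and Lemma~\ref{l:suspectEven}, the same claims about $N_A(y)=N_A[u]$, $y\neq w$, completeness to $K_{u^+}$, $K_u\cup K_y$ being a clique, and the same final preblowup construction $(K_u\cup N_{K_y}(x),\ K_{u^+}\cup\{x\},\ B_0,\ B'^*)$ fed into Lemma~\ref{l:preblowup_even}. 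This is essentially the paper's argument.

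One small slip worth fixing: in your treatment of $N_A(y)\subseteq N_A[u]$ you write that the hole $xy^*vP_vv'(w')u'P_u^*u^{++*}x$ has length $2\ell-1$ \emph{or} $2\ell$ depending on whether $u,v$ straddle $A_K/A_S$. A length-$2\ell$ hole would of course \emph{not} yield a contradiction, so as written this step would not close. Fortunately the arithmetic actually always gives $2\ell-1$: since $uv\notin E(G)$, either exactly one of $u,v$ lies in $A_K$ (then $u'v'\in E(G)$, no $w'$ needed, and the path lengths $\ell-1$ and $\ell-2$ combine with the three short edges to give $2\ell-1$), or both lie in $A_S$ (then one inserts $w'$, and $(\ell-2)+(\ell-4)+5 = 2\ell-1$ again); the case both in $A_K$ is excluded because $A_K$ is a clique. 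So the contradiction you need is in fact available in every subcase, but you should state the correct count.
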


\begin{proof}
  Suppose for a contradiction that $N_{V(F_1)}(x)$ is not a clique.

  \begin{claim}
    \label{l:atMost1internal}
    There exists a principal path $P_u= u\dots u'$ of $F_0$ such that
    $x$ is anticomplete to $I^*\sm \bigcup_{v\in V(P_u)} K_v$.
  \end{claim}

  \begin{proofclaim}
    Otherwise, there exist two distinct principal paths $P$ and $Q$ of $F_0$ such that
    $a$ in the interior of $P$, $b$ in the interior of $Q$ and
    $x$ has neighbors in both $K_a$ and~$K_b$. Note that then $a$ and $b$ are non adjacent and do not share any neighbour. This contradicts
    Lemma~\ref{evenl:xKaKb}, applied to the principal hole $C$ of $F_0$ containing $P$ and
    $Q$.  
  \end{proofclaim}

  \begin{claim}
    \label{l:notBBP}
    We may assume that $x$ has no neighbor in $B'^*$ and has a
    neighbor $y^*\in K_y$ where  $y\in B$.
  \end{claim}

  \begin{proofclaim}
    Suppose that $x$ has a neighbor $u^*\in B^*$ and a neighbor
    $v^*\in B'^*$.  Let $P$ and $Q$ be as in
    Lemma~\ref{l:patatesBlowUpEven}.  By Lemma~\ref{l:suspectEven}, $x$ is
    complete to both $V(P)$ and $V(Q)$.  In particular, $x$ has
    neighbors in the interior of two distinct principal paths, a
    contradiction to~\eqref{l:atMost1internal}. So, up to symmetry, we
    may assume that $x$ has no neighbor in $B'^*$.  Hence, by
    Lemma~\ref{l:blowupInIEven}, $x$ has neighbors in $B^*$.
  \end{proofclaim}

  \begin{claim}
    \label{c:pathI}
    $x$ is adjacent to $u$ and $u^+$ and has a neighbor in
    $K_{u^{++}}$. Moreover, $x$ is anticomplete to
    $(A^*\cup I^* \cup A'^* \cup B'^*) \sm (K_u\cup K_{u^+} \cup
    K_{u^{++}})$.
  \end{claim}

  \begin{proofclaim}
    By Lemma~\ref{l:blowupInBEven}, $x$ has at least one neighbor in $I^*$
    and by~\eqref{l:atMost1internal}, such a neighbor is in a clique
    blown up from an internal vertex of $P_u$.  So, let $v$ be the
    vertex of $P_u$ closest to $u'$ along $P_u$ such that $x$ has a
    neighbor $v^*\in K_v$. So $v\neq u$ and $v\in A'\cup I$. We set
    $Q = y^*uP_uv$ if $y^*u\in E(G)$ and $Q=y^*wuP_uv$ otherwise.  Let
    $Q^*$ be the path induced by $\{v^*\} \cup (V(Q) \sm \{v\})$ and
    observe that $Q^*$ has length at most $\ell +1$.  By
    Lemma~\ref{l:suspectEven}, $x$ is complete to $Q^*$.  If
    $v\notin \{u^+, u^{++}\}$, then $x$ has neighbors in at least~4
    cliques blown up from vertices of $P_u$ and this contradicts
    Lemma~\ref{evenl:xKaKb}.  If $v=u^+$, $x$ is adjacent to $u$ (since
    $x$ is complete to $Q^*$) and anticomplete to $K_{u^{++}}$, so by Lemma~\ref{l:blowupInAEven},
    $N_{V(F_1)}(x)$ is a clique, a contradiction.  So, $v=u^{++}$,
    meaning that $x$ is adjacent to $u$ and $u^+$, and is anticomplete
    to $I^* \sm (K_{u^+} \cup K_{u^{++}})$
    by~\eqref{l:atMost1internal}.
    
    If $x$ has neighbors in some $K_a$ for $a\in A\sm \{u\}$ then $x$
    and $C_{u,a}$ contradict Lemma~\ref{evenl:xKaKb}. Hence $x$ is
    anticomplete to $A^*\sm \{K_u\}$.

    By~\eqref{l:notBBP}, $x$ is anticomplete to $B'^*$.  It remains to
    check that $x$ is anticomplete to $A'^*\sm K_{u^{++}}$.  So,
    suppose $x$ has a neighbor $z^*$ in some $K_z$ where
    $z\in A'\sm \{u^{++}\}$. Then a principal hole that contains $z$
    and $u$ contradicts Lemma~\ref{evenl:xKaKb}.
  \end{proofclaim}

  Let $u^{++*}$ be a neighbor of $x$ in $K_{u^{++}}$ and $P_u^*$ be
  the path induced by $(V(P_u)\sm \{u^{++}\}) \cup \{u^{++*}\}$.

  \begin{claim}
    \label{c:yTwinu}
    For every $z\in B$ such that $x$ is adjacent to some $z^*$ in
    $K_z$ we have $N_A(z) = N_A[u]$ (in particular $N_A(y) = N_A[u]$).
  \end{claim}

  \begin{proofclaim}
    Suppose there exists $v\in N_A(z) \sm N_A[u]$.  By
    condition~\eqref{i:Kcomp} or~\eqref{i:optAB} of blowups,
    $vz^*\in E(G)$.  So, by \eqref{c:pathI},
    $xz^*vP_vv'u'P^*_uu^{++*}x$ (if one of $u,v \in A_K$) or $xz^*vP_vv'w'u'P^*_uu^{++*}x$ (if both $u$ and $v$ are in $A_S$) is a hole of length $2\ell-1$, a
    contradiction.  This proves that
    $N_A(z) \subseteq N_A[u]$.  In particular, $u$ has at
    least one neighbor in $H_z$, so by condition~\eqref{b:makeit} of
    templates, $uz\in E(G)$.
    
    Suppose there exists $v\in N_A(u) \sm N_A(z)$ (so $z\neq w$). By
    condition~\eqref{i:bKKanti} of blowups, $vz^*\notin
    E(G)$. By~\eqref{c:pathI}, $xv\notin E(G)$.  Hence $xw\in E(G)$, for otherwise $xz^*wvP_vv'u'P_u^*u^{++*}x$ or 
    $xz^*wvP_vv'w'u'P_u^*u^{++*}x$ is a hole of length $2\ell +1$. Since the partition is proper there exists
   a vertex $c\in A_S$  which is isolated in $G[A]$ and, since $v \in N(u),$  $c \neq u$. Again
    by~\eqref{c:pathI}, $xc\notin E(G)$ and $xwcP_cc'u'P_u^*u^{++*}x$ (in case $u \in A_K$) or $xwcP_cc'w'u'P_u^*u^{++*}x$ (in case $u\in A_S$) 
    is a hole of length $2\ell-1$, a contradiction.
  \end{proofclaim}

  \begin{claim}
    \label{c:xanticomAB}
    $N_{F_1}(x)\subseteq K_{u^{++}}\cup K_{u^+}\cup K_u\cup K_y$
  \end{claim}
  
  \begin{proofclaim}
    By \eqref{c:pathI}
    $N_{F_1}(x)\subseteq K_{u^{++}}\cup K_{u^+}\cup K_u\cup
    B^*$. Suppose there exists $z^*\in K_z$ such that $xz^*\in E(G)$
    and $z\in B\sm \{y\}$.  By~\eqref{c:yTwinu}, $N_A(z)= N_A[u]$ and
    $N_A(y)= N_A[u]$.  So, by Lemma~\ref{evenl:wTwins}, $y$ and $z$ are
    twins of $F_0$, a contradiction.
  \end{proofclaim}

  \begin{claim}
    \label{c:yneqw}
    $y\neq w$. 
  \end{claim}

  \begin{proofclaim}
    If $y=w$, then by~\eqref{c:yTwinu},
    $N_A(w) = N_A[u]=A$ and so $u$ is a universal vertex of $G[A]$.  By Lemma~\ref{l:wProperEven}, there exists also at least one isolated vertex in $G[A]$, a contradiction to $\vert A\vert \ge 3$.
  \end{proofclaim}

  \begin{claim}\label{c:lemevoisindans1}
    $N_{K_y}(x)$ is complete to $N_A[u]$.
  \end{claim}
  
  \begin{proofclaim}
    By \eqref{c:yTwinu}, $N_A(y)=N_A[u]$. The result follows from
    conditions~\eqref{i:Kcomp} and~\eqref{i:optAB} of blowups.
  \end{proofclaim}

  \begin{claim}\label{c:Uneseuleclique1}
    $x$ is complete to $K_{u^+}$.
  \end{claim}
  
  \begin{proofclaim}
    By~\eqref{c:pathI}, $ux\in E(G)$.  Suppose for a contradiction
    that there exists $u^{+*}\in K_{u^+}$ non-adjacent to $x$. By
    condition~\eqref{i:flat} of blowups,
    $u^{+*}u, u^{+*}u^{++}\in E(G)$. Hence $xu^{++}\notin E(G)$ for
    otherwise $\{x,u^{++},u^{+*},u\}$ induces a $C_4$. But now, either
    $\{x,u^{++*},u^{+*},u\}$ induces a $C_4$ (if
    $u^{+*}u^{++*}\in E(G)$) or $\{x,u^{++*},u^{++},u^{+*},u\}$
    induces a $C_5$ (if $u^{+*}u^{++*}\notin E(G)$), a contradiction.
  \end{proofclaim}

  \begin{claim}\label{c:Uneseuleclique2}
    $K_u\cup K_y$ is a clique.
  \end{claim}
  
  \begin{proofclaim}
    Since by~\eqref{c:yTwinu} $N_A(y) = N_A[u]$, $u$ cannot be an
    isolated vertex of $H_y$.  Hence, $uy$ is a solid edge. So, by
    condition~\eqref{i:Kcomp} of blowups, $K_u$ is complete $K_y$. 
  \end{proofclaim}
  
  We define $B_0=B^*\sm N_{K_y}(x)$. 
  
  Now the sets $K_v$ for all $v\in (A\cup I\cup A')\sm \{u,u^+\}$,
  $K_u\cup N_{K_y}(x)$, $K_{u^+}\cup \{x\}$, $B_0$ and $B'^*$ form a
  preblowup of $F_0$. All conditions are easy to check. In particular,
  $K_u\cup N_{K_y}(x)$ is a clique by \eqref{c:Uneseuleclique2},
  $K_{u^+}\cup \{x\}$ is a clique by \eqref{c:Uneseuleclique1},
  conditions \eqref{pb:A}, \eqref{pb:B} and \eqref{pb:I} follows from
  \eqref{c:xanticomAB}, condition \eqref{pb:Acomp} from
  \eqref{c:lemevoisindans1}, condition \eqref{pb:AI} holds because $x$
  is complete to $N_{K_y}(x)$, condition \eqref{pb:II} follows from
  \eqref{c:pathI} and condition \eqref{pb:Bw} holds because of
  \eqref{c:yneqw}.

  Hence, by Lemma \ref{l:preblowup_even}, $G[V(F_1)\cup \{x\}]$ is a proper
  blowup of some twinless odd $\ell$-template with $k$ principal paths
  that is an induced subgraph of $G$, a contradiction to the maximality
  of $F_1$.
\end{proof}

\subsection{Attaching a component}

\begin{lemma}
  \label{l:attachCompEven}
  If $D$ is a connected component of $G\sm F_2$, then $N(D)$ is a
  clique. 
\end{lemma}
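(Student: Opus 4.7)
The plan is to follow closely the strategy of Lemma~\ref{l:attachComp} from the odd case, adjusting for the fact that Truemper configurations arising here must be thetas of side length $\ell$ or prisms of side length $\ell-1$ rather than pyramids, and that $F_0$ now has two types of principal paths (those of length $\ell-1$ between $A_K$ and $A'_K$, and those of length $\ell-2$ between $A_S$ and $A'_S$).

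Suppose for contradiction that $N(D)$ is not a clique. By Lemma~\ref{l:F2-F1CliqueEven}, $N_{V(F_1)}(D)$ is not a clique, so there exist $a,b\in D$ and a path $P=a\dots b$ of $D$ such that $N_{V(F_1)}(a)\cup N_{V(F_1)}(b)$ fails to be a clique. We choose such $a,b,P$ with $P$ of minimum length. By Lemma~\ref{l:attachVertex10Even}, $a\neq b$. As in the odd case, set $S_a^*=N_{V(F_1)}(a)$, $S_b^*=N_{V(F_1)}(b)$, $S_\circ^*=N_{V(F_1)}(\mathrm{int}(P))$, and let $S_a,S_b,S_\circ$ be the sets of vertices of $F_0$ whose blown-up cliques meet those neighborhoods. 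Using Lemma~\ref{l:attachVertex10Even} together with the minimality of $P$, the arguments that proved $S_a^*\cup S_\circ^*$ and $S_b^*\cup S_\circ^*$ are cliques of $F_1$, and that $S_a\cup S_\circ$ and $S_b\cup S_\circ$ are cliques of $F_0$, carry over verbatim.

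First I would establish the even analog of the key structural claim that underlay (\ref{d:holeTS}) of the odd case: if $C$ is a hole of $F_1$ containing non-adjacent $x\in S_a^*$ and $y\in S_b^*$, then $C\cup P$ forms a theta or a prism of $G$; moreover the shared ``long'' path of this Truemper configuration has length $\ell$ (theta case) or $\ell-1$ (prism case), by Lemma~\ref{l:holeTruemperS}. The proof is essentially the same: $S_\circ^*\cap V(C)=\emptyset$ and $a,b$ have no common neighbor in $C$, for otherwise one obtains a proper wheel, forbidden in $\mathcal C_{2\ell}$. From this I would then derive, exactly as in (\ref{d:pasI}), (\ref{d:attachpath:pas_trop_prche}), (\ref{d:Pdisjoint}) and (\ref{d:length}), that $S_\circ=\emptyset$, that $S_a\subseteq A\cup B$ and $S_b\subseteq A'\cup B'$ (up to symmetry), and that $P$ has length $\ell-1$, $\ell-2$ or $\ell-3$, the precise value being constrained by whether the neighbors of $a$ and $b$ lie in $A_K/A'_K$ or in $A_S/A'_S$. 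Here I must be careful because the principal paths incident to $a$ and $b$ may have either length $\ell-1$ or $\ell-2$, so the concrete length of $P$ produced by Lemma~\ref{l:patatesBlowUpEven} depends on this; I would do a short case split according to whether the chosen endpoints of the Truemper paths belong to $A_K\cup B$ or to $A_S$, and similarly on the primed side.

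Finally, in the style of the concluding paragraphs of the proof of Lemma~\ref{l:attachComp}, I would build from $a,b,P$ a new even $\ell$-pretemplate partition $(A_0,B_0,A'_0,B'_0,I_0)$ of an induced subgraph of $G$ by incorporating $a$ and $b$ (and possibly redefining some $B$- or $A$-vertices, as done with the maximal $H_x$-choice in the odd case) so that $P$ becomes a new principal path of the appropriate length: length $\ell-1$ is used when both endpoints sit in the ``$A_K$-world,'' length $\ell-2$ when both sit in the ``$A_S$-world,'' and mixed cases are handled by the same trick of absorbing $a$ or $b$ into the adjacent clique. Conditions~\eqref{b:antiC}--\eqref{b:tnP} of the even $\ell$-pretemplate partition definition are verified from the claims proved above; then Lemma~\ref{evenpret} turns this into a strong even $\ell$-template partition with $k+1$ principal paths, contradicting the maximality of~$k$ in the choice of $F_0$. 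The main obstacle will be this last construction step: unlike the odd case, we have to ensure that the newly added principal path respects the $A_K/A_S$ partition on each side and that this does not create a hyper cycle of length greater than $2$ in the associated hypergraph $\mathcal H_G$, which would prevent Lemma~\ref{evenpret} from applying. Handling that constraint by choosing $a$ (or the absorbed $B$-vertex) to have inclusion-wise maximal $H_x$, as in the odd proof, should suffice, but verifying absence of hyper cycles is the genuinely new work.
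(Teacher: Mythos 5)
Your proposal follows the same overall strategy as the paper's proof: minimize $P$, show $S_a^*\cup S_\circ^*$, $S_b^*\cup S_\circ^*$ and their $F_0$-projections are cliques, prove the theta/prism analogue of the hole-plus-$P$ lemma, derive $S_\circ=\emptyset$ and $S_a\subseteq A\cup B$, $S_b\subseteq A'\cup B'$, then build a $(k+1)$-path pretemplate to contradict maximality of $k$. That much is right. However, you misidentify where the real difficulty lies, and you miss the key intermediate claim.

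First, your concern about hyper cycles is unfounded: Lemma~\ref{evenpret} already proves that \emph{any} even $\ell$-pretemplate partition of a graph in $\mathcal C_{2\ell}$ is automatically a strong one, so once the pretemplate conditions~\eqref{b:antiC}--\eqref{b:tnP} are verified on an induced subgraph of $G\in\mathcal C_{2\ell}$, the hyper cycle condition comes for free. The genuinely new work is elsewhere.

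Second, the claim analogous to \eqref{d:pasI} (that $S_a\cap I=S_b\cap I=\emptyset$) is \emph{not} verbatim: because \eqref{c:holeTSEven} allows the configuration to be a prism as well as a theta, Case~2 requires a different argument than the odd case (the paper reduces to a theta, then produces a pyramid, which is forbidden in $\mathcal C_{2\ell}$), and Case~3 already ends with an explicit pretemplate construction, not just a hole of wrong length. More importantly, the paper proves a genuinely new claim \eqref{c:lengthEven} that $(S_a\cup S_b)\cap(A_S\cup A'_S)=\emptyset$. You allude to ``a short case split according to whether the chosen endpoints... belong to $A_K\cup B$ or to $A_S$,'' but you do not realize that the outcome of that case split is to eliminate the $A_S$-possibilities altogether, and proving this requires a detailed analysis of several subcases, each ending in a new pretemplate of $k+1$ paths. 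Without \eqref{c:lengthEven}, the final construction you sketch (assigning $a$ and $b$ to $A_K$, $A_S$, or absorbing into adjacent cliques) has too many free cases to control the length of $P$: the paper needs $S_a\subseteq A_K\cup B$ and $S_b\subseteq A'_K\cup B'$ before the final argument goes through, and even then the last few paragraphs split further on whether $S_a$ or $S_b$ meet $A_K$, $A'_K$, $B$, or $B'$, with $P$ having length $\ell-1$, $\ell-2$, or $\ell-3$ in different cases. As it stands, your outline would stall at the step you flag as ``the main obstacle,'' but for a different reason than you anticipate.
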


\begin{proof}
  Suppose that $N(D)$ is not a clique.  Since $D$ is a connected component of $G\sm F_2$ we have that $N(D)=N_{V(F_1)}(D)$. By Lemma~\ref{l:F2-F1CliqueEven},
  $N_{V(F_1)}(D)$ is not a clique.  So, there exist $a$ and $b$ in $D$
  such that $N_{V(F_1)}(a) \cup N_{V(F_1)}(b)$ is not a clique, and a
  path $P$ from $a$ to $b$ in $D$.  We choose $a$ and $b$ subject to
  the minimality of the length of $P$.  By
  Lemma~\ref{l:attachVertex10Even}, $a \neq b$ (so $P$ has length at
  least~1).

  We set $S^*_a = N_{V(F_1)}(a)$ and $S^*_b= N_{V(F_1)}(b)$.  By
  Lemma~\ref{l:attachVertex10Even}, $S^*_a$ and $S^*_b$ are both cliques.
  Note that possibly $S^*_a\cap S^*_b\neq \emptyset$. We denote by
  $\text{int}(P)$ the set of the internal vertices of $P$.  We set
  $S^*_{\circ} = N_{V(F_1)}(\text{int}(P))$.

  We set $S_a = \{t\in V(F_0) : S_a^*\cap K_t \neq \emptyset\}$. We
  define $S_b$ and $S_{\circ}$ similarly.  Note that $S_a$ is possibly
  not included in $S_a^*$, and the same remark holds for $S_b$ and
  $S_{\circ}$.

  \begin{claim}
    \label{c:PCL}
    There exist non-adjacent $x^*_a\in S^*_a$ and $x^*_b\in S^*_b$. Moreover,
    for all such $x^*_a$ and $x^*_b$, $x^*_aaPbx^*_b$ is a  path.
  \end{claim}

  \begin{proofclaim}
    The existence of $x^*_a$ and $x^*_b$ follows from the definition of $a$
    and $b$, and $x^*_a a P b x^*_b$ is a path because of the
    minimality of $P$.
  \end{proofclaim}

  \begin{claim}\label{c:attachpath:pas_de_voisin}
    $S^*_a\cup S^*_\circ$ and $S^*_b\cup S^*_{\circ}$ are cliques (in
    particular, $S^*_\circ$ is a (possibly empty) clique of $F_1$
    that is complete to both $S^*_a \sm S^*_{\circ}$ and
     $S^*_b \sm S^*_{\circ}$).
  \end{claim}
  
  \begin{proofclaim}
    If $S^*_a \cup S^*_{\circ}$ is not a clique, then let $x^*y^*$ be
    a non-edge in $S^*_a \cup S^*_{\circ}$.  Since $S^*_a$ is a clique
    by Lemma~\ref{l:attachVertex10Even}, we may assume
    $y^*\in S^*_{\circ}$.  By definition of $S^*_{\circ}$, $y^*$ has a
    neighbor in $\text{int}(P)$, and then $x^*,y^*$ and some subpath of $P$
    contradict the minimality of $P$.
The proof is similar for $S^*_b\cup S^*_{\circ}$.
 \end{proofclaim}

  Note that while $S^*_a\cup S^*_b$ is not a
  clique by assumption, it might be that $S_a\cup S_b$ is a clique
  (for instance when $S_a = \{u\}$, $S_b=\{v\}$ and $uv$ is an
  optional edge of $F_0$).
  
  \begin{claim}\label{long2} 
  If $S^*_{\circ} \neq \emptyset$ then any two non adjacent vertices $x \in S^*_a$ and $y \in S^*_b$ are at distance $2$ in $F_1$.
  \end{claim}
  
   \begin{proofclaim}
   Let $s^* \in S^*_\circ$. By \eqref{c:attachpath:pas_de_voisin}, $s^* \neq x, y$ and $xs^* y$ is a path in $F_1$.
   \end{proofclaim}
  
  \begin{claim}\label{c:pathF0}
    $S_a\cup S_\circ$ and $S_b\cup S_{\circ}$ are cliques of $F_0$ (in
    particular, $S_a$ and $S_b$ are (non-empty) cliques of $F_0$ and
    $S_\circ$ is a (possibly empty) clique of $F_0$ that is complete
    to both $S_a \sm S_{\circ}$ and $S_b \sm S_{\circ}$).
  \end{claim}
  
  \begin{proofclaim}
    If $S_a \cup S_{\circ}$ is not a clique, then let $xy$ be a
    non-edge of $S_a \cup S_{\circ}$.  Since
    $x\in S_a \cup S_{\circ}$, there exists
    $x^*\in K_x\cap (S^*_a \cup S^*_{\circ})$ and
    $y^*\in K_y\cap (S^*_a \cup S^*_{\circ})$.  By
    condition~\eqref{i:bKKanti} of blowups, since $xy\notin E(G)$,
    $K_x$ is anticomplete to $K_y$.  So, $x^*y^*\notin E(G)$, a
    contradiction to~\eqref{c:attachpath:pas_de_voisin}.

    The proof is similar for $S_b\cup S_{\circ}$. 
  \end{proofclaim}

For the next claim we use the path $P$ defined at the very beginning of the proof.

\begin{claim}\label{c:holeTSEven}
  If a hole $C$ of $F_1$ contains two non adjacent vertices
  $x\in S^*_a$ and $y\in S^*_b$, then $P$ and $C$ form either:
   \begin{itemize}
   \item a theta, and $S^*_a\cap V(C) = \{x\}$, $S^*_b\cap V(C) = \{y\}$ and the
      three paths, all of length $\ell$, are the two paths between $x$ and $y$ in
      $C$ and the path
      between $x$ and $y$ obtained by adding the edges $ax$ and $by$ to $P$ ; or
\item a prism, $S^*_a\cap V(C) = \{x, z\}$, $S^*_b\cap V(C) = \{y, t\}$, the triangles of the prism are $axz$ and $byt$ and the
      three disjoint paths all of length $\ell-1$, are either:
     \begin{itemize}
      \item the path $P$,  the shortest path between $x$ and $y$ in $C$ and the shortest path 
      between $z$ and $t$ in $C$ ; or
     \item the path $P$, the shortest path between  $x$ and $t$ in $C$ and the shortest path between $z$ and $y$ in $C$.

    \end{itemize}
     \end{itemize}

  \end{claim}
  
  \begin{proofclaim} 
  Note that since $S^*_a$ is a clique,
    $S^*_a\cap V(C)$ contains $x$ and at most one other vertex which
    should be adjacent to $x$. The same holds for $S^*_b$ and $y$.

    Let us assume that $S^*_\circ \cap V(C) \neq \emptyset$.  Then
    by~\eqref{long2}, there exists a unique
    vertex $t \in S^*_\circ \cap V(C)$, and $t$ is such that
    $S^*_a\cap V(C) \subseteq \{x, t\}$ and
    $S^*_b\cap V(C) \subseteq \{y, t\}$. Hence $C$ and $P$ form a
    proper wheel centered at $t$, a contradiction to
    Lemma~\ref{l:holeTruemperS}.  So,
    $S^*_\circ \cap V(C) = \emptyset$.

    If $a$ and $b$ have a common neighbor $t$ in $C$, then $x$ and $y$
    are the two neighbors of $t$ in $C$ and so, $C$ and $P$ form a
    proper wheel centered at $t$, again a contradiction to
    Lemma~\ref{l:holeTruemperS}. So the neighborhoods of $a$ and $b$
    in $C$ are disjoint.

    From the remarks above and Lemma~\ref{l:holeTruemperS}, we obtain that $C$ and $P$ form a theta whose three paths have length $\ell$ or a prism whose three paths have length $\ell-1$.  This can happen only
    if we are in one of the three cases described in \eqref{c:holeTSEven}.
    
  \end{proofclaim}

  \begin{claim}\label{c:pasI}
    $S_a\cap I = S_b \cap I = \emptyset$. 
  \end{claim}

\begin{proofclaim}
  Otherwise, up to symmetry, $S_a\cap I \neq \emptyset$. So, there
  exists a principal path $P_u=u\dots u'$ of $F_0$ whose interior
  intersects $S_a$. By~\eqref{c:pathF0}, $S_a$ is a clique, so
  $1\leq |S_a|\leq 2$ and $S_a\subseteq V(P_u)$.  
  We now break into three cases.

  \medskip
  
  \noindent{\bf Case 1:} $S_b\subseteq V(P_u)$. 
  
  By~\eqref{c:PCL} there exist vertices $x_a$ and
 $x_b$ of $P_u$ such that there exist non adjacent vertices $x^*_a\in S^*_a \cap K_{x_a}$ and $x^*_b\in S^*_b \cap K_{x_b}$.
 
 We first show that there exist such $x_a$ and $x_b$ that are not adjacent. Otherwise, and since $S_a\subseteq V(P_u)$ and $S_b\subseteq V(P_u)$, we have that  $S_a \cup S_b= \{x_a,x_b\}$. By replacing $x_a$ and $x_b$ by $x^*_a$ and $x^*_b$ in any principal hole $C$ containing $P_u$ we obtain a path $P_C$ of length $2\ell-1$, by \eqref{long2} $S^*_{\circ} \cap P_C = \emptyset$ and so $V(P_C) \cup V(P)$ induces a hole of length at least $2\ell+2$, a contradiction. So we may assume that $x_a$ and $x_b$ are not adjacent.

  Let $C$ be any principal hole of $F_0$ that contains
  $P_u$. By Lemma~\ref{lb:universalevenforblow-upEven},
  $\{x^*_a, x^*_b\} \cup (V(C) \sm \{x_a, x_b\})$ induces a hole
  $C^*$. Let us apply~\eqref{c:holeTSEven} to $C^*$, $x^*_a$ and
  $x^*_b$. We obtain that the shortest path in $C^*$ between $x^*_a$
  and $x^*_b$ has length $\ell$ or $\ell-1$. 
  However the path $P^*$  obtained from $P_u$ by replacing $x_a$ by $x^*_a$ and $x_b$ by
  $x^*_b$ is contained in $C^*$ and it has length at most $\ell-1$, 
 So wlog, $x_a=u$ and $x_b=u'$.   
  We should then be in the second situation described in \eqref{c:holeTSEven} and there should exist $z \in S^*_a\cap V(C)$, and $t \in S^*_b\cap V(C)$ such that the shortest path between them on $C^*$ has length $\ell-1$ and is disjoint from $P^*$, a contradiction to the assumption that  $S_a$ and $S_b\subseteq V(P_u)$.

  \medskip
  
  \noindent{\bf Case 2:} $S_b$ contains a vertex of some principal
  path $P_v$ distinct from $P_u$.  Up to symmetry, since $S_b$ is a
  clique (by~\eqref{c:pathF0}), we assume that $b$ is anticomplete to $K_{v'}$.

  Let $y$ be the vertex of $P_u$ closest to $u'$ such that $a$ has a
  neighbor $y^*\in K_y$.  Let $z$ be the vertex of $P_v$ closest to
  $v$ such that $b$ has a neighbor $z^*\in K_z$.  Possibly $y=u'$
  and $z=v$, but $z\neq v'$ and $y\neq u$ since $a$ has a neighbor in
  $I^*$ by assumption.  In particular, $yz\notin E(G)$.  By
  condition~\eqref{i:bKKanti} of blowups, $y^*z^*\notin E(G)$.

  Let $C$ be the principal hole of $F_0$ that contains $P_u$ and
  $P_v$.  By Lemma~\ref{lb:universalevenforblow-upEven},
  $\{y^*, z^*\} \cup (V(C) \sm \{y, z\})$ induces a hole $C^*$.
  Applying ~\eqref{c:holeTSEven} to $C^*$, $y^*$ and $z^*$, we obtain that
   $P$ has length $\ell-2$ or $\ell-1$.  We denote by $P^*_u$ the path
  obtained from $P_u$ by replacing $y$ by $y^*$, and by $P^*_v$ the
  path obtained from $P_v$ by replacing $z$ by $z^*$. Let $P^*$ be
  the path $vP^*_vz^*bPay^*P^*_uu'$ (in case $z=v$ one should replace $vP^*_vz^*$ by $z^*$, and in case $y= u'$ one should replace $y^*P^*_uu'$ by $y^*$).  The length of $P^*$ is at least
  $\ell$.

Consider now a vertex $r \in A\sm \{u,v\}.$ Depending on the adjacencies of $r$ with $v$ and of $r'$ with $u'$, one of
 $rvP^*u'w'r'P_rr$ or $rwvP^*u'w'r'P_rr$ or $rwvP^*u'r'P_rr$ or $rvP^*u'r'P_rr$ (with possibly $v$ replaced by $z^*$ in case $z=v$ and $u'$ replaced by $y^*$ in case $y=u$) is a cycle $C_r$ with at most one possible chord $br$ or $bw$ in case $z=v$. Hence the length of this cycle should be at most $2 \ell +1$. Since the length of $P_r$ is at least $\ell-2$ the case where $C_r= rwvP^*u'w'r'P_rr$ cannot occur and we get that at least one of $vr$, $u'r'$ is an edge of $G$ and so at least one of $u,v,r$ is in $A_K$.
We also notice that if the length of $P^*$ is at least $\ell +1$ then $vr$ and $u'r'$ should be edges of $G$, $r \in A_S,$ $u,v \in A_K$, $z=v$, $y=u'$ and $br$ is a chord of the cycle $rz^*P^*y^*r'P_rr$. 
This should be valid for any $r \in A\sm \{u,v\}$. However since the partition is proper, $A_S$ contains at least $2$ elements and we get a contradiction to the fact that $S_b$ is a clique.
Hence  $P^*$ has length $\ell$, $y=u'$, $z=v$ and $P$ has length $\ell-2$. 

So by~\eqref{c:holeTSEven}, $P$ and $C^*$ form a theta.   
However, since we have assumed $S_a\cap I \neq \emptyset$, $a$ has a neighbor $t^* \in K_{u^-}$ where $u^-$ is the neighbor of $u'$ in $P_u$. By Lemma~\ref{lb:universalevenforblow-upEven}, since $S^*_a$ is a clique, we have that
  $\{t^*\} \cup (V(C^*) \sm \{u^-\})$ induces a hole. This hole and $P$ form a pyramid, a contradiction to  \eqref{c:holeTSEven}.

  \medskip

  \noindent{\bf Case 3:} We are neither in Case~1 nor in Case~2.

  Since we are not in Case~1, $S_b$ contains a vertex of $F_0\sm P_u$,
  and since we are not in Case~2, this vertex must be in $B\cup B'$.
  Up to symmetry, we assume that $S_b\cap B\neq \emptyset$.  Since
  $S_b$ is a clique  (by~\eqref{c:pathF0}), $S_b\cap (B' \cup A' \cup I) = \emptyset$.  
  Hence, $S_b \subseteq B\cup \{u\}$ and there exist $x \in B \cap S_b$ and $x^* \in K_x \cap S^*_b$.

Let $u_a$ be the vertex of $S_a$ which is the closest to $u$ in
  $P_u$ and let $u'_a$ be the vertex of $S_a$ which is the closest to
  $u'$ in $P_u$. Notice that, since $S_a$ is a clique (by~\eqref{d:pathF0}), either $u_a=u'_a$ or $u_au'_a$ is an edge. So it may be that $u_a= u$ or $u'_a= u'$ but since $S_a \cap I \neq \emptyset$ we know that $u_a \neq u'$ and $u'_a \neq u$. Let now
  $u^*_a \in K_{u_a} \cap S^*_a$ and $u'^*_a \in K_{u'_a} \cap S^*_a$. We denote by $P^*_u$ the path obtained from $P_u$ by replacing $u_a$ by  $u^*_a$
  and, in case $u_a \neq u'_a$,  by replacing $u'_a$ by $u'^*_a$.
Notice that if $u^*_a \neq u'^*_a$ then
  $u^*_au'^*_a\in E(G)$ since $S_a^*$ is a clique. 
  
  Suppose that $u'_a = u^+$, where $u^+$ is the neighbor of $u$ in $P_u$. Since $H_x$ contains at least two vertices there exists $v \in H_x \setminus \{u\}$. By~\eqref{d:pathF0}, by the fact that $P$ contains at least one edge, and because $P_u$ and $P_v$ belong to a hole of $F_0$
  of length $2 \ell$ in which $u^+$ and $v$ are at distance at most $3$,  one of $aPbx^*vP_vv'u'P^*_uu'^*_aa$ or $aPbx^*vP_vv'w'u'P^*_uu'^*_aa$ is a hole of length at least $2\ell+1$, a contradiction. Hence  from now on, we may assume that $u_a \neq u$ (hence $a$ is not adjacent to $u$) and that if $u_a= u^+$ then $u'_a \neq u_a$. Now  by~\eqref{d:pathF0} we get that $S_\circ = \emptyset$.
  
%

  Suppose first that  $u\in H_x$.  Let $v\in H_x$ be non-adjacent to
  $u$ (this is possible since $G[H_x]$ is anticonnected and contains more than one vertex). So, $P_u$,
  $P_v$, $x$ and possibly $w'$ form a hole $C$ (possibly not principal).  Let $z$ be the
  vertex in $I \cap S_a$ which is the closest to $u$ in $P_u$ and let
  $z^* \in K_z \cap S^*_a$.  By
  Lemma~\ref{lb:universalevenforblow-upEven},
  $\{x^*, z^*\} \cup V(C) \sm \{x, z\}$ induces a hole $C^*$ of $F_1$
  ($ux^*, vx^*\in E(G)$ by condition~\eqref{i:optAB} of blowups).  The
  distance between $x^*$ and $z^*$ in $C^*$ is at most $\ell-1$ since
  $P_u$ has length at most $\ell-1$, a contradiction to~\eqref{c:holeTSEven}
  applied to $C^*$, $x^*$ and $z^*$ (we cannot have a theta because $x^*$ and $z^*$ are too close in $C^*$ and we cannot have a prism because else $b$ would have a neighbor in  $K_v$, a contradiction to $S_b \subseteq B\cup \{u\}$). Hence, from here on, we may assume that no vertex
  $x\in B \cap S_b$ is such that $u\in H_x$.

  
  Suppose now that 
  $u\in N(H_x)$.   Since $S_b \subseteq B\cup \{u\}$,
  we have $S_b\cap H_x = \emptyset$.  Depending on whether $b$ is
  adjacent to $u$ or not, one of $u^*_aaPbuP^*_uu^*_a$ or
  $u^*_aaPbx^*uP^*_uu^*_a$ (remind that $u_a \neq u$) is a hole, implying that $P$ has length at
  least $\ell-1$.  Let $v\in H_x$,  then $uv$ is an edge (since $u\in N(H_x)$). So, $x^*bPau'^*_a P^*_u u' w' v' P_v v x^*$ or $x^*bPau'^*_a P^*_u u'v' P_v v x^*$
  (in case $u'^*_a=u'$ one should replace $u'^*_a P^*_u u'$ by $u'^*_a$)
  is a hole of length at least $2\ell +1$, a contradiction. 
  
  Hence, from here on, we may assume that no vertex in $B \cap S_b$ is adjacent to $u$ and so $w \notin B \cap S_b$, in particular $x \neq w$ and $x^*$ is not adjacent to $u$. Then to avoid a $C_4$ induced by $\{b,x^*,w,u\}$, $b$ is not adjacent to $u$ and $u^*_a a P b x^* w u P^*_u u^*_a$  is a hole, implying that $P$ has length at least $\ell-2$. 
So, for any $v\in H_x$, the hole
$x^*bPau'^*_a P^*_uu' v' P_v v x^*$ or the hole $x^*bPau'^*_a P^*_uu' w'v' P_v v x^*$ (in case $u'_a=u'$ one should
replace $u'^*_a P^* u'$ by $u'^*_a$) has length  $2\ell$ if and only if $P$ has length $\ell-2$, $u'_a=u'$, $u'$ is adjacent to $v'$ and $P_v$ has length $\ell-2$. This implies that $v \in A_S$ and $u \in A_K$. Furthermore, since $S_a\cap I \neq \emptyset$ and $S_a$ is a clique we get that $u_a$ is the neighbor of $u'$ on the path $P_u$. 

 Assume there exists $r \in A_S$ which is adjacent to $u$. Then $r \notin H_x$ and $rP_rr'w'u'^*_aaPbx^*wr$ is a hole of length $2\ell + 2$, a contradiction. Hence no such $r$ exists and  by setting $\mathbb A_K= (A_K \sm \{u\}) \cup \{x^*\}, \mathbb A_S= A_S \cup \{u\}, \mathbb B = \{w\}, \mathbb A'_K= (A'_K \sm \{u'\}) \cup \{a\}, \mathbb A'_S= A'_S \cup \{u^*_a\}, \mathbb B'= \{u'^*_a\}$ 
we obtain an even pretemplate partition contained in $G$ with $k+1$ paths (the new paths are $uP_uu^*_a$ of length $\ell-2$ and $x^*bPa$ of length $\ell-1$), a contradiction.

%

\end{proofclaim}

\begin{claim}\label{c:attachpath:pas_trop_prche}
  We may assume that $S_a \subseteq A \cup B$ and
  $S_b \subseteq A' \cup B'$.  
\end{claim}

\begin{proofclaim}
  Otherwise, by~\eqref{c:pasI} and since $S_a$ and $S_b$ are cliques (by~\eqref{c:pathF0}),
  we may assume that $S_a, S_b \subseteq A\cup B$.

 We will show that there exists a path $Q^*$ of length at
  least $2\ell - 2$ whose union with $P$ induces a hole. This is a
  contradiction because it implies that $P$ has length at most $0$.
  So, to conclude the proof, it remains to prove the existence of
  $Q^*$.

  By~\eqref{c:PCL}, there exist non-adjacent $x^*_a\in S^*_a$ and $x^*_b\in S^*_b$ and
    for all such $x^*_a$ and $x^*_b$, $x^*_aaPbx^*_b$ is a  path.
 Let $x_a$ and $x_b$ be the vertices of $F_0$ such
  that $x^*_a \in K_{x_a}$ and $x^*_b \in K_{x_b}$.  Note that
  possibly $x_ax_b$ is an edge, but this happens only if $x_ax_b$ is
  an optional edge of $F_0$ (since $x^*_ax^*_b$ is not an edge). We break into three cases.

\medskip

  \noindent{\bf Case 1:} $x_a, x_b\in A.$
  
  Then $x_ax_b\notin E(G)$ (otherwise $x^*_a$ and $x^*_b$ would be adjacent) and at least one of $x_a,x_b$ belongs to $A_S$. So from the definition of templates, there exists a path $Q$
  of length $2\ell -2$ from $x_a$ to $x_b$ consisting in $P_{x_a}$ and $P_{x_b}$ joined either by an edge $x'_ax'_b$ or by a path $x'_aw'x'_b$. By Lemma~\ref{lb:universalevenforblow-upEven},
  $\{x^*_a, x^*_b\} \cup (V(Q) \sm \{x_a, x_b\})$ induces the path
  $Q^*$ that we are looking for.  Note that $Q^*$ and $P$ form a hole
  by~\eqref{c:attachpath:pas_de_voisin} and our assumption that
  $S_a, S_b \subseteq A\cup B$.

\medskip

\noindent{\bf Case 2:} $x_a \in A$ and $x_b \in B.$

Whether $x_ax_b$ is an optional edge or a non-edge, an immediate
consequence of the definition of a template is that there exists a
vertex $z\in H_{x_b}$ that is non-adjacent to $x_a$.  By \eqref{c:pathF0}, $S_a$ is a clique so $z\notin S_a$. We may
furthermore assume that $z\notin S_b$ since else we are in the same
situation as in Case 1. By definition of a template, there exists a
path $Q_0$ of length $2\ell -2$ between $x_a$ and $z$ consisting in $P_{x_a}$ and  $P_z$ joined either by an edge $x'_az'$ or by a path $x'_aw'z'$. Then by
Lemma~\ref{lb:universalevenforblow-upEven}, $\{x^*_a, x^*_b\} \cup (V(x_bzQ_0x_a) \sm \{x_a, x_b\})$ induces a path $Q^*$ of length
$2 \ell-1$. Note that $Q^*$ and $P$ form a hole
by~\eqref{c:attachpath:pas_de_voisin} and our assumption that
$S_a, S_b \subseteq A\cup B$, $z\notin S_b$ and $z\notin S_a$.  
\medskip

 \noindent{\bf Case 3:} $x_a, x_b\in B.$

 Then $x_ax_b\notin E(G)$ (otherwise $x^*_a$ and $x^*_b$ would be adjacent).  Hence, by Lemma~\ref{evenlt:deuxsommetsdeB},
 $H_{x_a} \cup \{x_a\}$ is anticomplete to $H_{x_b}\cup \{x_b\}$.  So,
 let $u_a\in H_{x_a}$ and $u_b\in H_{x_b}$, we may assume that $u_a, u_b \notin S_a\cup S_b$ since else we
 are in the  situation of Case 1 or 2. By definition of a template, there exists a
path $Q_0$ of length $2\ell -2$ between $u_a$ and $u_b$ consisting in $P_{u_a}$ and  $P_{u_b}$ joined either by an edge $u'_au'_b$ or by a path $u'_aw'u'_b$.  By Lemma~\ref{lb:universalevenforblow-upEven},
 $Q^*=x^*_au_aQ_0u_bx^*_b$ is also a path, it is of length $2\ell$.
  Now,
it is easy to verify that $Q^*$ and $P$ form a hole of length more than $2 \ell$, a contradiction..

   \end{proofclaim}

\begin{claim}\label{c:Pdisjoint}
  $S_\circ=\emptyset$.
\end{claim}

\begin{proofclaim}
  By~\eqref{c:pathF0} and~\eqref{c:attachpath:pas_trop_prche}, if
  $S_\circ\neq \emptyset$, then $\ell=4$, and there exists a principal
  path $P_u=ucu'$ of $F_0$ such that $S_a=\{u\},$ $S_b=\{u'\}$
  and $S_\circ = \{c\}$.  Let $u^*\in K_u\cap S^*_a$, $c^*\in S^*_\circ$ and
  $u'^*\in K_{u'} \cap S^*_b$.  Observe that by~\eqref{c:attachpath:pas_de_voisin}, each of $u^*c^*, u'^*c^*$ is an edge and by definition, $c^*$ has a neighbor in $\text{int}(P)$.


  Let $P_v=v\dots v'$ be any principal path distinct from $P_u$.  Now, $P_v$, $P$, $u^*$,
  $u'^*$, $c^*$ and possibly $w$ and/or $w'$ form a proper wheel centered at $c^*$, a
  contradiction to Lemma~\ref{l:holeTruemperS}.
\end{proofclaim}

\begin{claim}\label{c:lengthEven}
$(S_a \cup S_b) \cap (A_S \cup A'_S) = \emptyset$.
\end{claim}

\begin{proofclaim}
Notice that each of $S_a  \cap A_S$ and $S_b \cap A'_S$ contains at most one vertex since $S_a$ and $S_b$ are cliques.
\medskip

\noindent{\bf Case 1:}  $S_a$ contains $x \in A_S$ and $S_b$ contains $y' \in A'_S$. 

Let $x^* \in K_x \cap S^*_a$ and  $y'^* \in K_{y'} \cap S^*_b$. We denote by $P^*_x$ (resp. $P^*_y$) the path obtained from $P_x$ (resp. $P_y$) by replacing $x$ (resp. $y'$) by $x^*$ (resp. $y'^*$). 
If $x=y$ then $ax^*P^*_xx'bPa$ is a hole, so $P$ has length $\ell$. Since the partition is proper there exists an other vertex $r \in A_S$ and $ax^*wrP_rr'w'y'^*bPa$ is a cycle of length $2 \ell+4$ whose possible chords are $aw$ and $bw'$, in any case the cycle contains a too long hole. Hence $x \neq y$. 
 
Assume that $aw, bw' \notin E(G)$. 
Then $ax^*P^*_xx'w'y'^*bPa$ is a hole and hence $P$ has length $\ell-2$. We know that the template has at least one more principal path $P_r$. In case there exists $r \in A_S$ then $ax^*wrP_rr'w'y'^*bPa$ is a hole of length $2 \ell +2$, a contradiction. So any $r \in A \sm \{x,y\}$ belongs to $A_K$. If $rx$ and  $ry$ are in $E(G)$ then $ax^*rP_rr'w'y'^*bPa$ is a cycle of length $2 \ell +2$ whose only possible chord is $ar$, so we get a contradiction. So by symmetry we get that $r$ is adjacent to exactly one of $x$ and $y$ and $r'$ is adjacent to exactly one of $x'$ and $y'$. In case $r$ is adjacent to $y$ and not to $x$ then $ax^*wrP_rr'w'y'^*bPa$ is a hole of length $2 \ell+3$, a contradiction. So we may assume that $r$ is adjacent to $x$ and not to $y$ (and then $r'$ is adjacent to $y'$ and not to $x'$), and this is true for any $r \in A_K$. 

Let $x^+$ be the neighbour of $x$ in $P_x$ and $y^-$ be the neighbour of $y'$ in $P_y$. By setting $\mathbb A_K= A_K, \mathbb A_S= \{a,x^+,w\}, \mathbb B = \{x^*\}, \mathbb A'_K= A'_K , \mathbb A'_S= \{b,w',y^-\}, \mathbb B'=\{y'^*\}$ 
we obtain an even pretemplate partition contained in $G$ with $k+1$ paths (the new paths are $aPb$, $x^+P_xx'w'$, $wyP_yy^-$ each of length $\ell-1$), a contradiction. Hence $aw$ or $bw'$ is an edge of $G$.

\medskip

So by symmetry we may set that $aw\in E(G)$.
Then $awyP_y^*y'^*bPa$ is a hole, so $P$ has length $\ell-1$. If $bw'\notin E(G)$ then $ax^*P^*_xx'w'y'^*bPa$ is a hole of length $2\ell+1$, a contradiction. So  $bw' \in E(G)$.
Suppose $A_S$ contains a vertex $r \neq x,y$ then $awrP_rr'w'y'^*bPa$ is hole of length $2 \ell + 1$, a contradiction. Hence $A_S= \{x,y\}$ and $ar, br' \in E(G)$ for every $r \in A_K$.

So, by setting $\mathbb A_K= A_K\cup \{a\}, \mathbb A_S=\{x^*,y\}, \mathbb B =  \{w\}, \mathbb A'_K= A'_K \cup \{b\}, \mathbb A'_S= \{x',y'^*\}, \mathbb B'=\{w'\}$ 
we obtain an even pretemplate partition contained in $G$ with $k+1$ paths (the new paths are $aPb$ of length $\ell-1$ and $P^*_x, P^*_y$ of length $\ell-2$), a contradiction.

\bigskip

\noindent{\bf Case 2:} $S_a \cap A_S= \emptyset$, $S_a$ contains a vertex $x \in A_K$ and $S_b$ contains a vertex $y' \in A'_S$ (symmetric to the case where $S_b \cap A'_S= \emptyset$, $S_b$ contains a vertex $y' \in A'_K$ and $S_a$ contains a vertex $x \in A_S$). 

Let $x^* \in K_x \cap S^*_a$ and $y'^* \in K_{y'} \cap S^*_b$.
Suppose that $xy \in E(G)$. Then $ax^*yP^*_yy'^*bPa$ is a hole and hence $P$ has length $\ell-1$. Then $ax^*P^*_xx'w'y'^*bPa$ is a cycle with at most one possible chord ($bw'$) of length $2 \ell +2$, a contradiction. So $xy \notin E(G)$ and consequently  $x'y' \in E(G)$. Since $x^*$ and $y'^*$ both belong to the hole $x^*P_xx'y'^*P^*_yywx^*$, by \eqref{c:holeTSEven} we get that $P$ and this hole induce either a theta or a prism.

\medskip

- Subcase 2.1: $x^*P^*_xx'y'^*P^*_yywx^*$ and $P$ induce a theta.  

Then $P$ has length $\ell -2$ and $aw, bx' \notin E(G)$. 
Assume that $y$ has a neighbour $v$ in $A$. Then since $y \in A_S$ we have that $v \in A_K$ and hence $v'y'^* \notin E(G)$ and $xv, x'v' \in E(G)$. Hence to avoid a $C_4$ induced by $\{b, x', y'^*, v'\}$ it should be that $bv' \notin E(G)$. So $aPby'^*x'v'P_vvxa$ is a cycle of length $2\ell+2$ whose only possible chord is $va$, a contradiction to the fact that $G$ belongs to $C_{2\ell}$.
We may then conclude that $y$ has no neighbor in $A$ and so by setting $\mathbb A_K= (A_K\sm \{x\}) \cup \{x^*,w\}, \mathbb A_S= (A_S \sm \{y\})\cup \{a\}, \mathbb B = \emptyset, \mathbb A'_K=A'_K  \cup \{y'^*\}, \mathbb A'_S= (A'_S \sm \{y'\}) \cup \{b\}, \mathbb B'=B'$,
we obtain an even pretemplate partition contained in $G$ with $k+1$ paths (the new paths are $aPb$ of length $\ell-2$ and $x^*P^*_xx'$ and $wyP^*_yy'^*$ of length $\ell-1$, $\mathbb A_S$ and $\mathbb A'_S$ are both stable sets), a contradiction.

\medskip

- Subcase 2.2: $x^*P^*_xx'y'^*P^*_yywx^*$ and $P$ induce a prism. Then $P$ has length $\ell-1$ and $aw, bx' \in E(G)$. 

So, by setting $\mathbb A_K= (A_K \sm \{x\}) \cup \{x^*,a\}, \mathbb A_S= A_S, \mathbb B = B, \mathbb A'_K= A'_K \cup \{b\}, \mathbb A'_S= (A'_S \sm \{y'\}) \cup \{y'^*\}, \mathbb B'=B'$ 
we obtain an even pretemplate partition contained in $G$ with $k+1$ paths (the new paths are $aPb$, $x^*P^*_xx'$ and $yP^*_yy'^*$ of lengths respectively $\ell-1$, $\ell-1$, $\ell-2$), a contradiction.

\bigskip

\noindent{\bf Case 3:} $S_a \cap A= \emptyset$, $S_a$ contains a vertex $x \in B$ and $S_b$ contains a vertex $y' \in A'_S$ (symmetric to the case where $S_b \cap A'= \emptyset$, $S_b$ contains a vertex $y' \in B'$ and $S_a$ contains a vertex $x \in A_S$).

If $xy \in E(G)$ then because of the hole $ax^*yP_yy'^*bPa$ the path $P$ has length $\ell-1$ and once again it is enough to replace $x$ by $x^*$, $y'$ by $y'^*$ and add $a$ to $A_K$ and $b$ to $A'_K$ in order to obtain an $\ell$-pretemplate partition with $k+1$ principal paths, hence $xy \notin E(G)$. So, we may assume that $a$ is not adjacent to $w$. Then because of $ax^*wyP^*_yy'^*bPa$, the path $P$ has length $\ell-2$. 

So, by setting $\mathbb A_K= A_K \cup \{x^*\}, \mathbb A_S= A_S, \mathbb B =  \{w\}, \mathbb A'_K= A'_K \cup \{b\}, \mathbb A'_S= (A'_S \sm \{y'\}) \cup \{y'^*\}, \mathbb B'=B'$ 
we obtain an even pretemplate partition contained in $G$ with $k+1$ paths (the new paths are $x^*aPb$ of length $\ell-1$ and $yP^*_yy'^*$ of length $\ell-2$), a contradiction.

\end{proofclaim}

We may now conclude the proof. By \eqref{c:lengthEven} we have that $S_a \subseteq A_K \cup B$ and $S_b \subseteq A'_K \cup B'$. Notice that if the length of $P$ is $\ell-1$ (resp.\ $\ell-2$) then we get a contradiction since we can add $a$ to $A_K$ and $b$ to $A'_K$ (resp.\  $a$ to $A_S$ and $b$ to $A'_S$) and replace any vertex $x \in S_a$ and any vertex $y' \in S_b$ by respectively  $x^* \in K_x \cap S^*_a$ and $y'^* \in K_y \cap S^*_b$ in order to get an $\ell$-pretemplate partition with one more principal path. So from now on we may assume that the length of $P$ is neither $\ell-1$ nor $\ell-2$.

Let us assume that there exist $x \in S_a \cap A_K$ and $y' \in S_b \cap A'_K$ and let $x^* \in K_x \cap S^*_a$ and  $y'^* \in K_y' \cap S^*_b$. If $x=y$ then $ax^*P^*_xy'^*bPa$ is a hole, and hence $P$ has length $\ell-1$, a contradiction. So we may now assume  that $x \neq y$, $S_a \cap K_y = \emptyset$ and $S_b \cap K_{x'} = \emptyset$. Then because of the hole $ax^*yP^*_yy'^*bPa$ the length of $P$ is $\ell-2$, a contradiction again.


So from now on we may assume by symmetry that $S_a \cap A_K= \emptyset$. Then there exists $x^* \in K_x \cap S^*_a$ for some $x \in B$. Assume that there exists $y'^* \in K_y' \cap S^*_b$ for some $y' \in A_K.$ In case $xy \in E(G)$ then $P$ has length $\ell-2$ because of the hole $ax^*yP^*_yy'^*bPa$ and we would get a contradiction. Hence $xy, aw \notin E(G)$. Now the hole $ax^*wyP^*_yy'^*bPa$ implies that $P$ has length $\ell-3$. Then any vertex $u\in A$ which is adjacent to $x$, is adjacent to $y$: if $u \in A_K$ by definition of an even template, if $u \in A_S$ because else $ax^*uP_uu'y'^*bPa$ is hole of length $2\ell -1$. However, as any vertex in $B$, $x$ has two non adjacent neighbors $r,s \in A$ and so $G$ contains a square $x^*rysx^*$, a contradiction. So from now on we may assume that $S_b \subseteq B'$ and there exists $y'^* \in K_{y'} \cap S^*_b$ for some $y' \in B'$. By definition, $x$ has at least one neighbor $r$ in $A_S$. If $y'r' \in E(G)$ then because of the hole $ax^*rP_rr'y'^*bPa$ we get that $P$ has length $\ell-2$, a contradiction. So $r'y' \notin E(G)$ and then $y' \neq w'$, $bw' \notin E(G)$ and $P$ should have length $\ell -3$,  because of the cycle $ax^*rP_rr'w'y'^*bPa$. By symmetry, we also have $x \neq w$. 

So, by setting $\mathbb A_K= A_K \cup \{x^*\}, \mathbb A_S= A_S, \mathbb B =  \{w\}, \mathbb A'_K= A'_K \cup  \{y^*\}, \mathbb A'_S= A'_S, \mathbb B'=\{w'\}$ 
we obtain an even pretemplate partition contained in $G$ with $k+1$ paths (the new path is $x^*aPby^*$ of length $\ell-1$), a contradiction.

\end{proof}

\subsection{End of the proof}

We may now conclude the proof of Lemma~\ref{l:GPyEven}. 
 If $G\sm F_1$ is empty, then conclusion~\eqref{c:PyblowupEven}
holds.  If $G\sm F_1$ is non-empty and $G\sm F_2$ is empty, then
conclusion~\eqref{c:Pyuniv} holds.  Otherwise, we consider a connected
component $D$ of $G\sm F_2$ and apply Lemma~\ref{l:attachCompEven}.  We
then see that $G$ has a clique cutset, so
conclusion~\eqref{c:Pycliquecut} holds. 

\section{Proof of Theorem~\ref{th:structEven}}

\begin{theorem}
  \label{th:structEven}
  Let $\ell \ge 4$ be an integer.  If $G$ is a graph in $\mathcal C_{2\ell}$ then
  one of the following holds:

  \begin{enumerate}
  \item\label{c:ring} $G$ is a ring of length $2\ell$;
  \item\label{c:blowup} $G$ is a proper blowup of a twinless even
    $\ell$-template;
  \item\label{c:univ} $G$ has a universal vertex or
  \item\label{c:cliquecut} $G$ has a clique cutset.
  \end{enumerate}
\end{theorem}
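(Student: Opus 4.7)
The plan is to mirror the proof of Theorem~\ref{th:struct} in the odd case, reducing the even case to Lemma~\ref{l:GPyEven} when $G$ contains a theta or a prism, and to Theorem~\ref{th:ring} otherwise. Concretely, I will first use Lemma~\ref{l:holeTruemperS} to cut down the list of Truemper configurations that $G$ can contain: since $G \in \mathcal C_{2\ell}$, it contains no pyramid and no proper wheel. Moreover, since $\ell \geq 4$, every hole of $G$ has length $2\ell \geq 8$, so $G$ is trivially $C_4$-free and $C_5$-free.

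Next, I split on whether $G$ contains a theta or a prism. If it does, then Lemma~\ref{l:GPyEven} directly yields that one of the conclusions~\eqref{c:blowup}, \eqref{c:univ}, or~\eqref{c:cliquecut} holds (note that these are exactly the last three conclusions of the theorem, so nothing further is needed in this branch).

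If $G$ contains neither a theta nor a prism, then combining the previous observations, $G$ is (prism, theta, pyramid, proper wheel, $C_4$, $C_5$)-free, and we may apply Theorem~\ref{th:ring}. This gives either a clique cutset (conclusion~\eqref{c:cliquecut}), a universal vertex (conclusion~\eqref{c:univ}), or that $G$ is a ring of length at least~$6$. In the last case, since every ring of length $k$ has all its holes of length exactly $k$ (by the lemma proved just after the definition of rings), and $G \in \mathcal C_{2\ell}$ forces all holes to have length $2\ell$, the ring must have length $2\ell$, giving conclusion~\eqref{c:ring}.

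Unlike the odd case, where essentially all the work is packaged into Lemma~\ref{l:GPy}, here there is nothing substantial left to prove beyond assembling these ingredients, so the theorem follows immediately. The only subtlety to mention explicitly is the argument ruling out rings of length different from $2\ell$, which is a one-line consequence of the definition of $\mathcal C_{2\ell}$ and the lemma characterizing hole lengths in rings. There is no real obstacle: all the difficulty has already been absorbed into Lemma~\ref{l:GPyEven}.
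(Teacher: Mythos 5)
Your proof is correct and follows essentially the same route as the paper: reduce via Lemma~\ref{l:holeTruemperS} (equivalently Lemma~\ref{l:holeTruemper}) to the case where $G$ contains a theta or a prism, handle that case by Lemma~\ref{l:GPyEven}, and otherwise apply Theorem~\ref{th:ring}. Your extra remark that the ring produced by Theorem~\ref{th:ring} must have length exactly $2\ell$ (because a ring of length $k$ contains a hole of length $k$, and $G\in\mathcal C_{2\ell}$) is a detail the paper leaves implicit, and is a nice small addition.
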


\begin{proof}
  By Lemma~\ref{l:holeTruemper}, $G$ contains no unbalanced prism, no unbalanced theta, no pyramid and no
  proper wheel.  Also, clearly $G$ contains no $C_4$ and no $C_5$.
  Hence, by Theorem~\ref{th:ring}, we may assume that $G$ contains a
  prism or a theta for otherwise one of the conclusions~\eqref{c:ring},
  \eqref{c:univ} or~\eqref{c:cliquecut} holds.  The result then follows
  from Lemma~\ref{l:GPyEven}. 
\end{proof}

\end{document}